\numberwithin{equation}{section}
\newcommand{\R}{\mathbb{R}}
\newcommand{\HH}{\mathbb{H}}
\renewcommand{\a}{{\bf a}}
\newcommand{\x}{{\bf x}}
\renewcommand{\v}{{\bf v}}
\renewcommand{\u}{{\bf u}}
\newcommand{\y}{{\bf y}}
\newcommand{\m}{\mathfrak{m}}
\newcommand{\ww}{\mathfrak{w}}
\newcommand{\F}{\mathcal{F}}
\newcommand{\E}{\mathcal{E}}
\newcommand{\W}{\mathcal{W}}
\newcommand{\X}{{\bf X}}
\newcommand{\XX}{\widetilde{\X}}
\newcommand{\V}{{\bf V}}
\newcommand{\VV}{\widetilde{\V}}
\renewcommand{\L}{{\bf L}}
\renewcommand{\l}{{\bf l}}
\newcommand{\A}{\mathcal{A}}
\newcommand{\B}{\mathcal{B}}
\newcommand{\eps}{\varepsilon}
\providecommand{\ip}[1]{\langle#1\rangle}
\providecommand{\abs}[1]{\left\lvert#1\right\rvert}
\providecommand{\aabs}[1]{\lvert#1\rvert}
\providecommand{\norm}[1]{\left\|#1\right\|}
\providecommand{\nnorm}[1]{\Vert#1\Vert}
\providecommand{\dm}[1]{[\![#1]\!]}
\newtheorem{theorem}{Theorem}[section]
\newtheorem*{theorem*}{Theorem}
\newtheorem{lemma}[theorem]{Lemma}
\newtheorem{corollary}[theorem]{Corollary}
\newtheorem{proposition}[theorem]{Proposition}
\newtheorem*{proposition*}{Proposition}
\newtheorem{definition}[theorem]{Definition}
\newtheorem{remark}[theorem]{Remark}
\begin{document}

\title{\vspace*{-2cm}Stability of a point charge for the repulsive Vlasov-Poisson system}

\author{Benoit Pausader}
\address{Brown University, Providence, RI, USA}
\email{benoit\_pausader@brown.edu}

\author{Klaus Widmayer}
\address{University of Zurich, Switzerland}
\email{klaus.widmayer@math.uzh.ch}

\author{Jiaqi Yang}
\address{ICERM, Brown University, Providence, RI, USA}
\email{jiaqi\_yang1@brown.edu}

\begin{abstract}
We consider solutions of the repulsive Vlasov-Poisson system which are a combination of a point charge and a small gas, i.e.\ measures of the form $\delta_{(\mathcal{X}(t),\mathcal{V}(t))}+\mu^2d{\bf x}d{\bf v}$ for some $(\mathcal{X},
\mathcal{V}):\mathbb{R}\to\mathbb{R}^6$ and a small gas distribution $\mu:\mathbb{R}\to L^2_{{\bf x},{\bf v}}$, and study asymptotic dynamics in the associated initial value problem. If initially suitable moments on $\mu_0=\mu(t=0)$ are small, we obtain a global solution of the above form, and the electric field generated by the gas distribution $\mu$ decays at an almost optimal rate. Assuming in addition boundedness of suitable derivatives of $\mu_0$, the electric field decays at an optimal rate and we derive a modified scattering dynamics for the motion of the point charge and the gas distribution.

Our proof makes crucial use of the Hamiltonian structure. The linearized system is transport by the Kepler ODE, which we integrate exactly through an asymptotic action-angle transformation. Thanks to a precise understanding of the associated kinematics, moment and derivative control is achieved via a bootstrap analysis that relies on the decay of the electric field associated to $\mu$. The asymptotic behavior can then be deduced from the properties of Poisson brackets in asymptotic action coordinates.
\end{abstract}

\maketitle

\setcounter{tocdepth}{1}
\vspace*{-.75cm}\tableofcontents\vspace*{-.75cm}

\section{Introduction}

This article is devoted to the study of the time evolution and asymptotic behavior of a three dimensional collisionless gas of charged particles (i.e.\ a plasma) that interacts with a point charge. Under suitable assumptions, a statistical description of such a system is given via a measure $M$ on $\mathbb{R}^3_{\bf x}\times\mathbb{R}^3_{\bf v}$ that models the charge distribution, which is transported by the long-range electrostatic (Coulomb) force field generated by $M$ itself, resulting in the Vlasov-Poisson system 
\begin{equation}\label{eq:VP}
\begin{split}
\partial_tM+\hbox{div}_{{\bf x},{\bf v}}\left(M\mathfrak{V}\right)=0,\qquad \mathfrak{V}={\bf v}\cdot \nabla_{\bf x}+\nabla_{\bf x}\phi_M\cdot\nabla_{\bf v},\qquad \Delta_{\bf x}\phi_M=\int_{\mathbb{R}^3_{\bf v}}Md{\bf v}.
\end{split}
\end{equation}
The Dirac mass $M_{eq}=\delta_{(0,0)}({\bf x},{\bf v})$ is a formal stationary solution of \eqref{eq:VP}, and we propose to investigate its stability. 
We thus consider solutions of the form\footnote{Here the initial continuous density $f_0=\mu_0^2$ is assumed to be non-negative, a condition which is then propagated by the flow and allows us to work with functions $\mu$ in an $L^2$ framework rather than a general non-negative function $f$ in $L^1$ -- see also the previous work \cite{IPWW2020} for more on this.} $M(t):=q_c\delta_{(\mathcal{X}(t),\mathcal{V}(t))}(\x,\v)+q_g\mu^2({\bf x},{\bf v},t)d{\bf x} d{\bf v}$, representing a small, smooth charge distribution $\mu^2d{\bf x}d{\bf v}$ (with mass-per-particle $m_g>0$ and charge-per-particle $q_g>0$) coupled with a point charge located at $(\mathcal{X},\mathcal{V}):\mathbb{R}_t\to\mathbb{R}^3\times\mathbb{R}^3$ (of mass $M_c>0$ and charge $q_c>0$). The equations \eqref{eq:VP} then take the form
\begin{equation}\label{VPPC}
\begin{split}
\left(\partial_t+{\bf v}\cdot\nabla_{\bf x}+\frac{q}{2}\frac{{\bf x}-\mathcal{X}}{\vert {\bf x}-\mathcal{X}\vert^3}\cdot\nabla_{\bf v}\right)\mu+Q\nabla_{\bf x}\phi\cdot\nabla_{\bf v}\mu&=0,\qquad\Delta_{\bf x}\phi=\varrho=\int_{\mathbb{R}^3_{\bf v}}\mu^2d{\bf v},\\
\frac{d\mathcal{X}}{dt}=\mathcal{V},\qquad\frac{d\mathcal{V}}{dt}=\mathcal{Q}\nabla_{\bf x}\phi(\mathcal{X},t),
\end{split}
\end{equation}
for positive constants $q=q_cq_g/(2\pi \epsilon_0 m_g)$, $Q=q_g^2/(\epsilon_0m_g)$, $\mathcal{Q}=q_gq_c/(\epsilon_0M_c)$. This system couples a singular, nonlinear transport (Vlasov) equation for the continuous charge distribution $\mu^2$ to an equation for the trajectory of the point $(\mathcal{X},\mathcal{V})$ mass via their electrostatic Coulomb interaction through a Poisson equation.

\begin{remark}
\begin{enumerate}
 \item The physically relevant setting for these equations relates to electron dynamics in a plasma, when magnetic effects are neglected. In this context, our sign conventions correspond to the non-negative distribution function of a negatively charged gas. In this spirit, we will denote the electric field of the gas by $\mathcal{E}=\nabla_{\bf x}\phi$, a slightly unconventional choice that allows to save some minus signs in the formulas.
 \item The crucial qualitative feature of the forces in \eqref{VPPC} is the \emph{repulsive} nature of interactions between the gas and the point charge, i.e.\ the fact that $q>0$. Our analysis can also accommodate the setting where the gas-gas interactions are attractive. This corresponds to replacing $Q>0$ by $-Q<0$ in \eqref{VPPC}, so that (up to minor algebraic modifications) these two cases can be treated the same way. We shall henceforth focus on \eqref{VPPC} with $Q>0$, as above. (We refer to the discussion of future perspectives below in Section \ref{ssec:futpersp} for some comments regarding the attractive case.)
\end{enumerate}

\end{remark}

\subsection{Main result}
Our main result concerns \eqref{VPPC} with sufficiently small and localized initial charge distributions $\mu_0$. We establish the existence and uniqueness of global, strong solutions and we describe their asymptotic behavior as a modified scattering dynamic. While our full result can be most adequately stated in more adapted ``action-angle'' variables (see Theorem \ref{thm:global_asymptIntro} below on page \pageref{thm:global_asymptIntro}), for the sake of readability we begin here by giving a (weaker, slightly informal) version in standard Cartesian coordinates:
\begin{theorem}\label{thm:main_rough}
Given any $(\mathcal{X}_0,\mathcal{V}_0)\in\mathbb{R}^3_{\bf x}\times\mathbb{R}^3_{\bf v}$ and any initial data $\mu_0\in C^1_c((\mathbb{R}^3_{\bf x}\setminus\{\mathcal{X}_0\})\times\mathbb{R}^3_{\bf v})$, there exists $\varepsilon^\ast>0$ such that for any $0<\varepsilon<\varepsilon^\ast$, there exists a unique global strong solution of \eqref{VPPC} with initial data
\begin{equation*}
(\mathcal{X}(t=0),\mathcal{V}(t=0))=(\mathcal{X}_0,\mathcal{V}_0),\qquad \mu({\bf x},{\bf v},t=0)=\varepsilon\mu_0({\bf  x},{\bf v}).
\end{equation*}
Moreover, the electric field decays pointwise at optimal rate, and there exists a modified point charge trajectory, an asymptotic profile $\mu_\infty\in L^2((\mathbb{R}^3\setminus\{0\})\times\mathbb{R}^3)$ and a Lagrangian map $({\bf Y},{\bf W}):\mathbb{R}^3\times\mathbb{R}^3\times \mathbb{R}_+^\ast\to \R^3\times\R^3$ along which the particle distribution converges pointwise
\begin{equation*}
\begin{split}
\mu({\bf Y}({\bf x},{\bf v},t),{\bf W}({\bf x},{\bf v},t), t)\to \mu_\infty({\bf x},{\bf v}),\qquad t\to\infty.
\end{split}
\end{equation*}

\end{theorem}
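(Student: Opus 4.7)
The plan is to exploit the Hamiltonian structure of \eqref{VPPC} by \emph{linearizing the Kepler transport} -- the leading-order dynamics felt by each gas particle in the field of the point charge -- and then close a bootstrap on weighted norms of $\mu$ using the decay of the self-generated field $\mathcal{E}=\nabla_{\bf x}\phi$. Working in a frame centered on $\mathcal{X}(t)$, I would first introduce asymptotic action-angle coordinates for the repulsive Kepler Hamiltonian $\tfrac12|{\bf v}|^2-\tfrac{q}{2|{\bf x}-\mathcal{X}|}$. In the repulsive case every nontrivial orbit is hyperbolic and scatters to infinity, so one obtains a global symplectic diffeomorphism from phase space (minus the charge) onto action variables in which the pure Kepler flow becomes a straight translation with constant frequency. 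The asymptotic action captures the outgoing velocity ${\bf v}_\infty$, which is the natural variable in which to formulate the limit profile $\mu_\infty$.

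Next, on any interval $[0,T]$ where a smooth solution exists with $\mathrm{dist}(\mathcal{X}(t),\mathrm{supp}\,\mu(t))>0$, I would close a bootstrap of the form $\|\mathcal{E}(t)\|_{L^\infty_{\bf x}}\lesssim \varepsilon^2\langle t\rangle^{-2+\delta}$ (the rate improves to $\langle t\rangle^{-2}$ under the additional derivative assumptions needed for optimal decay). Under this hypothesis the full characteristics remain a small perturbation of pure Kepler orbits, particles of distinct asymptotic velocities spatially separate at linear rate, and a stationary-phase dispersive estimate on $\varrho=\int\mu^2\,d{\bf v}$ written in action-angle coordinates recovers the postulated decay with a smaller constant. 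In parallel, weighted $L^2$ moments (and, for optimal decay, derivatives) of $\mu$ in action variables are propagated by energy estimates, because the commutator of the nonlinear force $Q\mathcal{E}\cdot\nabla_{\bf v}$ with the weights is integrable in time thanks to the $t^{-2}$ rate. The ODE for $(\mathcal{X},\mathcal{V})$ is integrated alongside: it yields $\mathcal{V}(t)=\mathcal{V}_\infty+O(\langle t\rangle^{-1+\delta})$ with a logarithmically corrected trajectory, and the $\varepsilon$-smallness enforces the no-collision condition $\mathrm{dist}(\mathcal{X}(t),\mathrm{supp}\,\mu(t))\gtrsim t$, justifying continuation.

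Finally, for the modified scattering statement, Duhamel along the Kepler flow $\Phi^t_{\mathrm{Kep}}$ gives
\begin{equation*}
\mu(t)\circ\Phi^{t}_{\mathrm{Kep}} = \mu_0 - \int_0^t Q\{\phi,\mu\}(s)\circ\Phi^{s}_{\mathrm{Kep}}\,ds,
\end{equation*}
and in asymptotic action-angle coordinates the Poisson bracket $\{\phi,\mu\}$ decomposes into an integrable remainder plus a resonant piece -- the analog of the Coulomb logarithmic phase -- that I would absorb into a phase/Lagrangian correction, thereby defining $({\bf Y},{\bf W})$. The pointwise limit $\mu_\infty$ is then the infinite-time value of $\mu$ precomposed with this modified Lagrangian map. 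The hardest step, which I expect to be the principal technical obstacle, is the explicit identification of this resonant piece and the simultaneous construction of $({\bf Y},{\bf W})$: the action-angle parametrization must be quantitatively controlled both at large distances and near the singularity (where the Kepler map degenerates), and one must propagate enough regularity of $\mu$ in these partially degenerate coordinates to justify stationary phase uniformly in $t$, while keeping all bounds compatible with the Vlasov nonlinearity.
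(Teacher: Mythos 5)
Your overall strategy is the paper's: straighten the repulsive Kepler transport by asymptotic action--angle variables, bootstrap moments/derivatives using the decay of the self-consistent field, and extract a logarithmically modified scattering dynamic from the leading $t^{-1}$ shear. But two steps, as written, would fail. First, the continuation mechanism: you claim that ``$\varepsilon$-smallness enforces the no-collision condition $\mathrm{dist}(\mathcal{X}(t),\mathrm{supp}\,\mu(t))\gtrsim t$.'' Smallness of $\varepsilon$ has nothing to do with close encounters: even for the \emph{linearized} (Kepler) flow, particles in the support with large energy or small angular momentum swing past the point charge at periapsis distance $r_{min}=\big(q+\sqrt{q^2+4HL^2}\big)/(2H)$, which is determined by the data and is independent of $\varepsilon$; the separation grows linearly only after a data-dependent time, and at intermediate times the gas genuinely visits a neighborhood of the charge. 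The method only works because the singular field $\frac{q}{2}\frac{\mathbf{x}-\mathcal{X}}{|\mathbf{x}-\mathcal{X}|^3}$ is placed \emph{inside} the linear operator and integrated exactly (this is the whole point of the action--angle map), not avoided by a support-separation hypothesis. Relatedly, centering the frame on $\mathcal{X}(t)$ produces non-inertial correction terms --- either $\mathcal{W}(t)\cdot\mathbf{w}$ (centering velocities at $\mathcal{V}_\infty$) or $\dot{\mathcal{V}}(t)\cdot\mathbf{x}$ (centering at $\mathcal{V}(t)$) --- which your bootstrap never mentions; the second is dangerous at large $|\mathbf{x}|$ and the paper is forced to run two formulations (``close'' and ``far'') with controlled transitions, or, for compactly supported data, to at least justify why one formulation suffices.

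Second, the quantitative core is underestimated. The decay of $\varrho$ and $\mathcal{E}$ is not a stationary-phase estimate (there is no oscillatory phase); it is a phase-space volume/Jacobian bound for the map $\mathbf{a}\mapsto\widetilde{\mathbf{X}}(\vartheta+t\mathbf{a},\mathbf{a})$, and it requires two-sided kinematic control of that map including near the fold/periapsis and for small actions --- with moments alone one only reaches $t^{-2}\ln t$, and the optimal rate needs derivative control fed back through an effective-field resummation. More importantly, propagating derivatives is not a routine ``integrable commutator'' energy estimate: Poisson brackets of the force with the straightening map contain factors growing linearly in $t$ (e.g.\ $\{\widetilde{\mathbf{X}},\eta\}\sim t$), and the resulting system for the symplectic gradients is ill-conditioned; closing it requires exploiting super-integrability (a spanning set of mostly conserved coordinates), carefully chosen weights, the substitution of $\widetilde{\mathbf{X}}-t\widetilde{\mathbf{V}}$ for $\widetilde{\mathbf{X}}$ at small scales, and a change between past and future asymptotic actions across periapsis (with quantified losses in $\xi$). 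You correctly flag the degeneracy of the action map near the singularity as the hardest point, but the fix is this structural bookkeeping of Poisson brackets and regions, not a uniform-in-$t$ stationary-phase argument; without it the Gr\"onwall loop for derivatives (and hence the optimal field decay and the identification of the resonant shear $\Psi^\infty(\mathbf{a})/t$, including the point-charge contribution $\mathcal{Q}\mathcal{E}^\infty(0)$ in the Lagrangian map) does not close.
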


\begin{remark}\label{rem:asymptotics}
\begin{enumerate}
\item Our main theorem is in fact much more precise and requires fewer assumptions, but is better stated in adapted ``action angle'' variables. We refer to Theorem \ref{thm:global_asymptIntro}. In particular, we allow initial particle distributions with positive measure in any ball around the charge $(\mathcal{X}_0,\mathcal{V}_0)$ and with noncompact support $\hbox{supp}(\mu_0)=\mathbb{R}^3_{\bf x}\times\mathbb{R}^3_{\bf v}$.

\item Under the weaker assumption that $\mu_0\in C^0_c$, we still obtain in Proposition \ref{prop:global_derivsIntro} a global solution with almost optimal decay of the electric field.

\item The charge trajectory and the Lagrangian map can be expressed in terms of an asymptotic ``electric field profile'' $\mathcal{E}^\infty$ and asymptotic charge velocity $\mathcal{V}_\infty$ and position shift $\mathcal{X}_\infty$: As $t\to+\infty$, we have that
\begin{equation}\label{eq:asymptotics}
\begin{split}
\mathcal{X}(t)&=\mathcal{X}_\infty+t\mathcal{V}_\infty-\mathcal{Q}\ln(t)\mathcal{E}^\infty(0)+O(t^{-1/10}),\\
\mathcal{V}(t)&=\mathcal{V}_\infty-\frac{\mathcal{Q}}{t}\mathcal{E}^\infty(0)+O(t^{-11/10}),\\
{\bf Y}({\bf x},{\bf v},t)&=\left(at-\frac{1}{2}\frac{q}{a^2}\ln(ta^3/q)+\ln(t)[Q\mathcal{E}^\infty(\a)+\mathcal{Q}\mathcal{E}^\infty(0)]\right)\cdot \frac{q^2}{4a^2L^2+q^2}\left(\frac{2}{q}{\bf R}+\frac{4a}{q^2}{\bf L}\times{\bf R}\right)\\
 &\qquad+
\mathcal{V}_\infty t+\mathcal{Q}\mathcal{E}^\infty(0)\ln(t)+O(1),\\
{\bf W}({\bf x},{\bf v},t)&=a\left(1-\frac{q}{2ta^3}\right)\cdot \frac{q^2}{4a^2L^2+q^2}\left(\frac{2}{q}{\bf R}+\frac{4a}{q^2}{\bf L}\times{\bf R}\right)+\mathcal{V}_\infty+O(\ln(t)t^{-2}),
\end{split}
\end{equation}
where we used the following abbreviations to allow for more compact formulas
\begin{equation}
 a^2=\aabs{\v}^2+\frac{q}{\aabs{\x}},\quad \L=\x\times\v,\quad L=\aabs{\L},\quad {\bf R}=\v\times\L+\frac{q}{2}\frac{\x}{\aabs{\x}},
\end{equation}
and $\a$ is defined in \eqref{eq:explicitATheta} below (these quantities are conservation laws for the linearized problem associated to \eqref{VPPC}). 

In the dynamics of the point charge, the term $\mathcal{Q}\ln(t)\mathcal{E}^\infty(0)$ (resp.\ $\frac{\mathcal{Q}}{t}\mathcal{E}^\infty(0)$) corresponds to a nonlinear modification of a free trajectory with velocity $\mathcal{V}_\infty$, and is also reflected in ${\bf Y}$. In addition, the first term in the expansion of ${\bf Y}$ (involving the factor $at$) derives from conservation of the energy along trajectories, the second term (involving a first logarithmic correction $\ln(ta^3/q)$) is a feature of the \emph{linear} trajectories. The term $t\mathcal{V}_\infty$ reflects a centering around the position of the point charge, and the remaining logarithmic terms are nonlinear corrections to the position. This can be compared with the asymptotic behavior close to vacuum in \cite{IPWW2020,Pan2020} by setting $q=Q=\mathcal{Q}=0$ and ignoring the motion of the point charge.

\end{enumerate}
\end{remark}

\subsubsection{Prior work}
In the absence of a point charge, the Vlasov-Poisson system has been extensively studied and the corresponding literature is too vast to be surveyed here appropriately. We focus instead on the case of three spatial and three velocity dimensions, which is of particular physical relevance. Here we refer to classical works \cite{BD1985,GI2020,LP1991,Pfa1992,Sch1991} for references on global wellposedness and dispersion analysis, to \cite{CK2016,FOPW2021,IPWW2020,Pan2020} for more recent results describing the asymptotic behavior, to \cite{Gla1996,Rei2007} for book references and  to \cite{BM2018} for a historical review.

The presence of a point charge introduces singular force fields and significantly complicates the analysis. Nevertheless, when the gas-point charge interaction is \emph{repulsive}, global existence and uniqueness of strong solutions when the support of the density is separated from the point charge has been established in \cite{MMP2011}, see also \cite{CM2010} and references therein. Global existence of weak solutions for more general support was then proved in \cite{DMS2015} with subsequent improvements in \cite{LZ2017,LZ2018,Mio2016}, and a construction of ``Lagrangian solutions'' in \cite{CLS2018}. For attractive interactions, strong well-posedness remains open, even locally in time, but global weak solutions have been constructed \cite{CMMP2012,CZW2015}. Concentration, creation of a point charge and subsequent lack of uniqueness were studied in a related system for ions in $1d$, see \cite{MMZ1994,ZM1994}. To the best of our knowledge, the only work concerning the asymptotic behavior of such solutions is the recent \cite{PW2020}, which studies the repulsive, radial case using a precursor to the asymptotic action method we develop here.

The existence and stability of other (smooth) equilibriums has been considered for the Vlasov-Poisson system, most notably in connection to Landau damping near a homogeneous background in the confined or screened case \cite{AW2021,BMM2018,FR2016,HNR2019,MV2011}, with recent progress also in the unconfined setting \cite{HNR2020,BMM2020,IPWW2022}. In the case of attractive interactions or in the presence of several species, there are many more equilibriums and a good final state conjecture seems beyond the scope of the current theory. However, there have been many outstanding works on the \emph{linear} and \emph{orbital} (in-)stability of nontrivial equilibria \cite{GL2017,GS1995,LMR2008,LMR2012,Mou2013,Pen1960}. We further highlight \cite{FHR2021,GL2017,HRS2021} which use action-angle coordinates to solve efficiently an elliptic equation in order to understand the spectrum of the linearized operator.

Finally, we note that the recent work \cite{HW2022} studies the interaction of a fast point charge with a {\it homogeneous} background satisfying a Penrose condition, for a variant of \eqref{VPPC} with a screened potential (see also the related \cite{AW2021} on Debye screening). We also refer to \cite{IJ2019} which addresses the stability of a Dirac mass in the context of the $2d$ Euler equation.

\subsection{The method of asymptotic action}
We describe now our approach to the study of asymptotic dynamics in \eqref{VPPC}, which is guided by their Hamiltonian structure.\footnote{We refer the reader to the recent \cite{MNP2022} for a derivation of this Hamiltonian structure from the underlying classical many-body problem.}

\medskip
\paragraph{\emph{Brief overview}}
We first study the linearized problem, i.e.\ the setting without nonlinear self-interactions of the gas (i.e.\ we ignore the contributions of $\phi$). There the point charge moves freely along a straight line, while the gas distribution is still subject to the electrostatic field generated by the point charge and thus solves a singular transport equation which can be integrated explicitly through a canonical change of coordinates to suitable action-angle variables. Upon appropriate choice of unknown $\gamma$ in these variables, we can thus reduce to the study of a purely nonlinear equation, given in terms of the electrostatic potential $\phi$. This and the derived electric field can be conveniently expressed (thanks to the canonical nature of the change of coordinates) as integrals of $\gamma$ over phase space, and we study their boundedness properties. In particular, assuming moments and derivatives on $\gamma$, we establish that electrical functions decay pointwise. With this, we show how to propagate such moments and derivatives, relying heavily on the Poisson bracket structure. Finally, this reveals the asymptotic behavior through an asymptotic shear equation that builds on a phase mixing property of asymptotic actions.

\medskip
Next we present our method in more detail. It is instructive to first consider the case where \emph{the point charge is stationary}, i.e.\ that $(\mathcal{X}(t),\mathcal{V}(t))\equiv (0,0)$ in a suitable coordinate frame. This happens naturally e.g.\ if $(\mathcal{X}(0),\mathcal{V}(0))=(0,0)$ and the initial distribution is symmetric with respect to three coordinate planes, which is already a nontrivial case. In practice, \eqref{VPPC} then reduces to an equation for the gas distribution alone, which (starting from its Liouville equation reformulation) can be recast in Hamiltonian form as 
\begin{equation}\label{eq:VPPC-H}
\begin{split}
\partial_t\mu-\{\mathbb{H},\mu\}=0,\qquad
\mathbb{ H}=\frac{1}{2}\mathbb{H}_2-\mathbb{H}_4\qquad
\mathbb{H}_2({\bf x},{\bf v}):=\vert {\bf v}\vert^2+\frac{q}{\vert {\bf x}\vert},\qquad\mathbb{H}_4({\bf x},{\bf v},t):=Q\psi({\bf x},t),
\end{split}
\end{equation}
where the Poisson bracket and phase space $\mathcal{P}_{\x,\v}$ are given by
\begin{equation}\label{PB}
\{f,g\}=\nabla_{\bf x}f\cdot\nabla_{\bf v}g-\nabla_{\bf v}f\cdot\nabla_{\bf x}g,\qquad \mathcal{P}_{{\bf x},{\bf v}}:=\{({\bf x},{\bf v})\in\mathbb{R}^3\times\mathbb{R}^3:\,\, \vert{\bf x}\vert>0\}.
\end{equation}
This simplified setting facilitates the presentation of the main aspects of the quantitative analysis of the gas distribution dynamics. We will subsequently explain the (numerous) modifications needed to incorporate the point charge motion in Section \ref{SecAddingPC}.

\subsubsection{Linearized equation and asymptotic actions}
We start by considering the linearization of \eqref{eq:VPPC-H},
\begin{equation}\label{LinearHamiltonianFlow}
\begin{split}
2\partial_t\mu-\{\mathbb{H}_2,\mu\}=0,\qquad \mathbb{H}_2({\bf x},{\bf v}):=\vert {\bf v}\vert^2+\frac{q}{\vert {\bf x}\vert}.
\end{split}
\end{equation}
This is nothing but transport along the characteristics of the well-known {\it repulsive two-body} system,
\begin{equation}\label{KeplerODE}
\dot{\bf x}={\bf v},\qquad \dot{\bf v}=\frac{q}{2}\frac{{\bf x}}{\vert{\bf x}\vert^3},
\end{equation}
which is, in particular, \emph{completely integrable}. Due to the repulsive nature of \eqref{KeplerODE}, its trajectories are hyperbolas (and thus open), with well-defined asymptotic velocities.

Our first main result is the construction of \emph{asymptotic action-angle variables} which provide adapted, canonical coordinates for the phase space. We denote the phase space in these angles $\vartheta$ and actions $\a$ by
\begin{equation}
 \mathcal{P}_{\vartheta,{\bf a}}:=\{(\vartheta,{\bf a})\in\mathbb{R}^3\times\mathbb{R}^3:\,\,\vert{\bf a}\vert>0\},
\end{equation}
and have:
\begin{proposition}\label{PropAA}
There exists a smooth diffeomorphism $\mathcal{T}:\mathcal{P}_{\x,\v}\to\mathcal{P}_{\vartheta,\a}$, $({\bf x},{\bf v})\mapsto(\Theta(\x,\v),\mathcal{A}(\x,\v))$ with inverse $(\vartheta,\a)\mapsto (\X(\vartheta,\a),\V(\vartheta,\a)$, which
\begin{enumerate}[wide]
 \item\label{it:canonical} is canonical, i.e.\
 \begin{equation}\label{eq:canonical}
  d\x\wedge d\v=d\Theta\wedge d\mathcal{A},
 \end{equation}

 \item\label{it:cons-comp} is compatible with conservation of energy and angular momentum:
 \begin{equation}
 H({\bf x},{\bf v})=\abs{\mathcal{A}}^2,\qquad{\bf x}\times{\bf v}=\Theta\times\mathcal{A},
 \end{equation}

\item\label{it:lin-diag} linearizes the flow of \eqref{KeplerODE} in the sense that $({\bf x}(t),{\bf v}(t))$ solves \eqref{KeplerODE} if and only if
\begin{equation}\label{LinearizationFlow}
\Theta({\bf x}(t),{\bf v}(t))=\Theta({\bf x}(0),{\bf v}(0))+t\mathcal{A}({\bf x}(0),{\bf v}(0)),\qquad\mathcal{A}({\bf x}(t),{\bf v}(t))=\mathcal{A}({\bf x}(0),{\bf v}(0)),
\end{equation}

\item\label{it:asymp-act} satisfies the ``asymptotic action property''
\begin{equation}\label{AsymptoticActions}
\begin{split}
\vert{\bf X}(\vartheta+t{\bf a},{\bf a})-t{\bf a}\vert=o(t),\qquad \vert{\bf V}(\vartheta+t{\bf a},{\bf a})-{\bf a}\vert=o(1)\quad\hbox{ as }t\to+\infty.
\end{split}
\end{equation}
\end{enumerate}
\end{proposition}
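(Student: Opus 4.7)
The plan is to build $\mathcal{T}$ from the conserved quantities of the repulsive Kepler problem \eqref{KeplerODE} together with the time-independent Hamilton--Jacobi equation for $\mathbb{H}_2/2$: with $\mathcal{T}$ in hand, items \eqref{it:canonical}--\eqref{it:lin-diag} come out as essentially structural consequences, while \eqref{it:asymp-act} is the genuinely analytic step. Along \eqref{KeplerODE} the quantities $a^2 = \abs{\v}^2+q/\abs{\x}$ (energy), $\L = \x\times\v$ (angular momentum), and the Laplace--Runge--Lenz vector ${\bf R} = \v\times\L + (q/2)\x/\abs{\x}$ are all conserved (direct differentiation, the sign of the last term coming from the repulsive force). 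On the scattering set $\{a>0\}$ every orbit is therefore a hyperbola with a well-defined asymptotic velocity $\v_\infty$ of modulus $a$ satisfying $\hat\v_\infty = \hat\x_\infty$.

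For $\mathcal{A}$, I take the $t\to\infty$ limit of the identity defining ${\bf R}$ to obtain the linear vector equation $\v_\infty\times\L + (q/(2a))\v_\infty = {\bf R}$; crossing with $\L$ and substituting back solves this explicitly as
\begin{equation*}
\mathcal{A}(\x,\v) := \v_\infty = \frac{2a(q{\bf R}+2a\L\times{\bf R})}{4a^2\abs{\L}^2+q^2},
\end{equation*}
which recovers the prefactor appearing in \eqref{eq:asymptotics} and satisfies $\abs{\mathcal{A}}^2 = a^2$ by construction. For $\Theta$, I would produce a generating function $S(\x,\a)$ of the second kind solving the Hamilton--Jacobi equation $\abs{\nabla_\x S}^2 + q/\abs{\x} = \abs{\a}^2$, obtained explicitly by separation of variables in parabolic coordinates aligned with $\hat\a$. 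The map $(\vartheta,\a)\mapsto(\X,\V)$ is then defined implicitly by $\v=\nabla_\x S$, $\vartheta = \nabla_\a S$, with the implicit function theorem applicable away from $\abs{\a}=0$ thanks to non-degeneracy of the Hessian $\partial_\a\partial_\x S$ (to be checked directly from the separated form).

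Items \eqref{it:canonical}--\eqref{it:lin-diag} follow in short order. Canonicity \eqref{it:canonical} is the standard equality-of-mixed-partials computation for generating functions. The energy identity in \eqref{it:cons-comp} is the Hamilton--Jacobi equation itself; the angular-momentum identity $\x\times\v = \Theta\times\mathcal{A}$ is obtained from the $SO(3)$-equivariance of $S$ under simultaneous rotations of $\x$ and $\a$ (an invariance that we arrange by fixing the $\a$-only additive part of $S$ rotation-invariantly), which, upon differentiating $S(O\x, O\a) = S(\x,\a)$ at $O=\mathrm{Id}$, yields precisely the claimed identity. Property \eqref{it:lin-diag} is immediate: in the new coordinates the generator of the flow $\mathbb{H}_2/2 = \abs{\a}^2/2$ depends only on $\a$, so Hamilton's equations read $\dot\vartheta = \a$, $\dot\a = 0$.

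The bulk of the work, and the main obstacle, is the asymptotic action property \eqref{AsymptoticActions}. The additive $\a$-only normalization of $S$ must be chosen so that in the outgoing cone, as $\abs{\x}\to\infty$ along $\hat\a$,
\begin{equation*}
S(\x,\a) = \a\cdot\x - \frac{q}{2\abs{\a}}\ln\!\bigl(\abs{\a}\abs{\x}-\a\cdot\x\bigr) + O(\abs{\x}^{-1}),
\end{equation*}
reproducing the standard Coulomb scattering phase. Differentiating in $\a$ then yields $\Theta(\x,\v) = \x + O(\ln\abs{\x})$ along a trajectory, and combining with $\abs{\x(t)}\sim a\,t$ one reads off $\abs{\X(\vartheta+t\a,\a)-t\a} = O(\ln t) = o(t)$ and $\abs{\V(\vartheta+t\a,\a)-\a} = O(t^{-1}) = o(1)$. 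The delicate point is producing this expansion of $S$ uniformly in $\a$ with explicit error bounds, and checking that the generating-function definition of $\mathcal{A}$ is consistent with the LRL-based formula above; both require a careful asymptotic analysis of the separated radial ODE and a precise matching of integration constants, which will be the main technical content of the proof.
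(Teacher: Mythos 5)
Your construction is essentially the paper's: the explicit LRL-based formula for $\mathcal{A}$ is exactly \eqref{ExplicitA1}, and the separable solution of $\abs{\nabla_\x S}^2+q/\abs{\x}=\abs{\a}^2$ in parabolic coordinates aligned with $\hat\a$ is precisely the paper's generating function $\mathcal{S}_\pm(\x,\a)=\pm\frac{q}{\aabs{\a}}K(\rho)-\frac{\aabs{\x}\aabs{\a}-\x\cdot\a}{2}$ with $\rho=\frac{\aabs{\a}}{2q}(\x\cdot\a+\aabs{\x}\aabs{\a})$. Items (1)--(3) and the outgoing Coulomb-phase asymptotics for item (4) are then handled in the same spirit as the paper (which, for (4), prefers the explicit inverse formulas \eqref{ExpressionsX}--\eqref{ExpressionsV} together with the bounds on $\sigma$ in Lemma \ref{EstimRho}).

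The genuine gap is that you treat $S(\x,\a)$ as a single-valued, globally non-degenerate generating function, which it is not: this is the fold structure that constitutes the main difficulty of the proposition. For fixed $\a$, the scattering problem has \emph{no} solution where $\rho(\x,\a)<1$ and \emph{two} solutions where $\rho>1$ (Lemma \ref{lem:trajectories}), corresponding to the two branches $\v_\pm=\nabla_\x\mathcal{S}_\pm$; each single branch parametrizes only the half of phase space on one side of the fold $\Gamma=\{\x\cdot\v=-\sqrt{H}L^2/q\}$ (the incoming or the outgoing piece of each trajectory), so no choice of the "$\a$-only additive part" of one separable solution can make $(\x,\v)\mapsto(\nabla_\a S,\a)$ a diffeomorphism of all of $\mathcal{P}_{\x,\v}$. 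Moreover your invocation of the implicit function theorem via "non-degeneracy of $\partial_\a\partial_\x S$ away from $\abs{\a}=0$" fails exactly on the projection of the fold: since $K'(\rho)=\sqrt{1-1/\rho}$, the derivatives of $\nabla_\x\mathcal{S}_\pm$ have a square-root singularity at $\rho=1$, which is where the two branches meet. What is missing is the gluing argument: one must define $\Theta$ by $\nabla_\a\mathcal{S}_-$ on $\Omega_p=\{\x\cdot\v<-\sqrt{H}L^2/q\}$ and $\nabla_\a\mathcal{S}_+$ on $\Omega_f=\{\x\cdot\v>-\sqrt{H}L^2/q\}$, check that this particular sign-flip matching (and, as the paper notes after Lemma \ref{EstimRho}, only this one) is \emph{smooth} across $\Gamma$ — in the paper this follows from \eqref{eq:deriv_rho}, where $\nabla\rho$ vanishes and $\nabla\sigma$ stays smooth at $\rho=1$ — and then verify global bijectivity, which the paper does by producing the explicit inverse $(\X,\V)$ in \eqref{ExpressionsX}, \eqref{ExpressionsV}. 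Without this step your map is neither globally defined nor provably smooth on the codimension-one set where the two HJ branches collide, and the consistency of the generating-function label $\a$ with the LRL formula (your final caveat) is exactly the content of Lemma \ref{lem:gen_functions}, not an afterthought.
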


The \emph{asymptotic action-angle} property \eqref{it:asymp-act} will be crucial to the asymptotic analysis. In short, it ensures that {\bf a} parameterizes the trajectories that stay at a bounded distance from each other\footnote{This is a similar idea to the Gromov Boundary, see also \cite{MV2020} where this is developed for the Jacobi-Maupertuis metric.} as $t\to\infty$ and connects in an effective way the trajectories of \eqref{KeplerODE} to those of the free streaming $\ddot{\bf x}=0$. Put differently, different trajectories of \eqref{KeplerODE} asymptotically diverge linearly with time and their difference in $\a$, a property sometimes referred to as ``shearing'' or ``phase mixing''.

General dynamical systems are not, of course, completely integrable, and when they are, there are many different action-angle coordinates. Here the \emph{asymptotic-action property} fixes the actions and helps restrict the set of choices. Besides, since the actions are defined in a natural way (as asymptotic velocities, see \eqref{AsymptoticActions}), one can aim to find $\mathcal{T}$ through a \emph{generating function} $\mathcal{S}({\bf x},{\bf a})$ by solving a

\smallskip
\noindent
\begin{tabularx}{\textwidth}{lX}
 \emph{Scattering problem:} & Given an asymptotic velocity $\v_\infty=:\a\in\R^3$ and a location ${\bf x}_0\in\R^3$, find (if they exist) the trajectories $(\x(t),\v(t))$ through ${\bf x}_0$ with asymptotic velocity $\a$.
\end{tabularx}
\smallskip
\begin{figure}[h]
 \centering
 \includegraphics[width=0.5\textwidth]{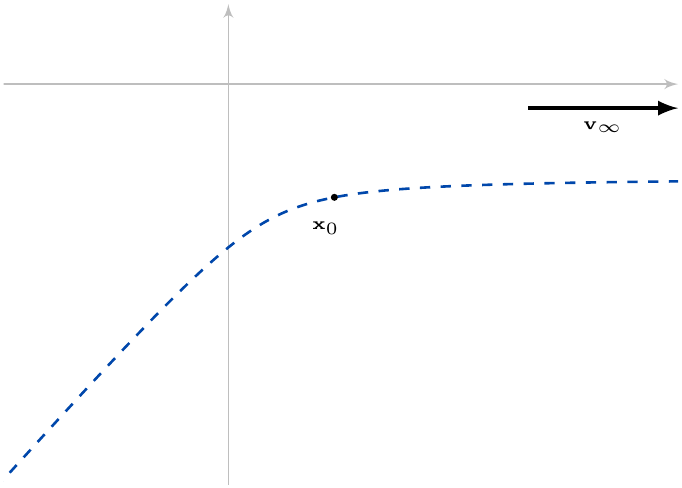}
 \caption{The scattering problem: Given a point $\x_0$ and asymptotic velocity $\v_\infty$, how to determine trajectories (one possibility dashed in blue) through $\x_0$ with asymptotic velocity $\v_\infty$?}
 \label{fig:scatter}
\end{figure}

Once such a trajectory has been found, one can define $\mathfrak{V}({\bf x},{\bf a})$ as the velocity along the trajectory at ${\bf x}$, and look for a putative $\mathcal{S}$ such that $\mathfrak{V}({\bf x},{\bf a})=\nabla_{\bf x}\mathcal{S}$. By classical arguments (see e.g.\ \cite[Chapter 8]{Meyer2017}), setting $\vartheta:=\nabla_\v\mathcal{S}$ then yields a (local) canonical change of variables.

We note that there are two related difficulties with this approach in the present context, namely $(i)$ that given ${\bf a}$, there are points ${\bf x}_0$ through which no trajectory as above passes, and $(ii)$ when the scattering problem can be solved, there are in general \textit{two different trajectories} (and thus two different ``velocity'' maps $\mathfrak{V}_\pm(\x,\a)$) through a given point ${\bf x}_0$. In fact, the set of trajectories with a given asymptotic velocity has a fold\footnote{One can think of the set of trajectories associated to a given ${\bf a}$ and angular momentum direction ${\bf L}/L$ as a (planar) sheet of paper $\mathbb{R}^2$, flatly folded over a curve $\Gamma$ (the fold), so that away from $\Gamma$ every point ${\bf x}$ corresponds to either $0$ or $2$ trajectories, depending on the side of the fold.}. Once we identify the correct projection (in phase space) of this fold, we are able to define a smooth gluing of the functions $\mathfrak{V}_\pm$ to obtain a globally smooth choice of generating function $\mathcal{S}$.

\begin{remark}
\begin{enumerate}
 \item In the present setting, we find a generating function by calculating trajectories. It is interesting to note that a reverse approach, solving Hamilton-Jacobi equations to obtain trajectories through a point with prescribed asymptotic velocity, has been used to construct families of asymptotically diverging trajectories in the more general $N$-body problem \cite{MV2020}.
 \item We note that a common way to obtain a generating function is by solving a Hamilton Jacobi equation $H({\bf x},\nabla_{\bf x}\mathcal{S}({\bf x},\a))=const$. Here we recover families of solutions, and observe that these solutions develop a singularity in finite time, so that the full generating function is obtain by gluing two such solutions along each trajectory.
\end{enumerate}

\end{remark}

The action angle property \eqref{LinearizationFlow} allows to conjugate the linearized equation \eqref{LinearHamiltonianFlow} to the free streaming for the image particle distribution
\begin{equation*}
\left(\partial_t+{\bf a}\cdot\nabla_{\vartheta}\right)\nu=0,\qquad\nu(\Theta({\bf x},{\bf v}),\mathcal{A}({\bf x},{\bf v}),t)=\mu({\bf x},{\bf v},t),
\end{equation*}
which can then be easily integrated to give $\partial_t(\nu\circ\Phi_t^{-1})=0$, where
\begin{equation}\label{DefPhiFreeStreaming}
\Phi_t({\bf x},{\bf v})=({\bf x}-t{\bf v},{\bf v}).
\end{equation}

\subsubsection{Choice of nonlinear unknown}
We next integrate explicitly the linear flow as above, and introduce the nonlinear unknown $\gamma:=\mu\circ\mathcal{T}\circ\Phi_t^{-1}$,
\begin{equation}\label{NewNLUnknown_Intro}
\begin{split}
\gamma(\vartheta,{\bf a},t)&=\mu({\bf X}(\vartheta+t{\bf a},{\bf a}),{\bf V}(\vartheta+t{\bf a},{\bf a}),t),\\
\mu({\bf x},{\bf v},t)&=\gamma(\Theta({\bf x},{\bf v})-t\mathcal{A}({\bf x},{\bf v}),\mathcal{A}({\bf x},{\bf v}),t),
\end{split}
\end{equation}
which satisfies a \emph{purely nonlinear} equation
\begin{equation}\label{NLVP}
\begin{split}
\partial_t\gamma+\{\mathbb{H}_4,\gamma\}=0,\qquad\mathbb{H}_4:=Q\psi(\widetilde{\bf X},t),\qquad\widetilde{\bf X}(\vartheta,{\bf a})={\bf X}(\vartheta+t{\bf a},{\bf a}).
\end{split}
\end{equation}
Equation \eqref{NLVP} involves the electric potential $\psi$ through the electric field $\mathcal{E}=\nabla\psi$, both of which can be expressed as integrals \emph{over phase space}, both in terms of $\mu$ and -- since $\mathcal{T}$ and $\Phi_t$ are canonical -- also conveniently in $\gamma$ as
\begin{equation}\label{PsiE}
\begin{split}
\psi({\bf y},t)&:=-\frac{1}{4\pi}\iint \frac{1}{\vert {\bf y}- \x\vert}\mu^2(\x,\v,t)\, d\x d\v=-\frac{1}{4\pi}\iint \frac{1}{\vert {\bf y}- {\bf X}(\vartheta+t\a,\a)\vert}\gamma^2(\vartheta,\a,t)\, d\vartheta d\a,\\
 \mathcal{E}_j({\bf y},t)&:=\frac{1}{4\pi}\iint \frac{{\bf y}^j-\x^j}{\vert {\bf y}- \x\vert^3}\mu^2(\x,\v,t)\, d\x d\v=\frac{1}{4\pi}\iint \frac{{\bf y}^j-{\bf X}^j(\vartheta+t\a,\a)}{\vert {\bf y}- {\bf X}(\vartheta+t\a,\a)\vert^3}\gamma^2(\vartheta,\a,t)\, d\vartheta d\a.
\end{split}
\end{equation}

\subsubsection{Analysis of the (effective) electric field and weak convergence}
The proper analysis of the electric field $\mathcal{E}$ in terms of $\gamma$ requires precise kinematic bounds on the (inverse of the) asymptotic action-angle map $({\bf X},{\bf V})$ and its derivatives. 

Using moment bounds on $\gamma$ alone, one can reduce the question of its pointwise decay to control of an \emph{effective} electric field, which captures the leading order dynamics. This in turn can be bounded in terms of moments on $\gamma$, at the cost of some logarithmic losses, which yields almost optimal decay of the electric field. More precisely, we note that as per \eqref{PsiE}, the nonlinear evolution is governed by various integrals of the measure $\gamma^2d\vartheta d\a$ on phase space, and we thus aim to prove its weak convergence. Using a variant of the continuity equation (an argument somewhat related to \cite{LP1991}), we can obtain {\it vague, scale-localized} convergence of this measure, i.e.\
\begin{equation}
\langle\varphi\rangle_R({\bf y},t):=\iint\varphi(R^{-1}({\bf y}-\widetilde{\bf X}(\vartheta,\a)))\gamma^2(\vartheta,\a,t)\, d\vartheta d\a\to\langle\varphi\rangle^\infty_R({\bf y}),\quad t\to\infty,
\end{equation}
uniformly in ${\bf y}, R$. For particle distributions $\gamma$ solving \eqref{NLVP}, this allows to obtain uniform control on the (scale-localized) effective electric field, and, after resummation of the scales, almost optimal control on the effective electric field $\mathcal{E}^{eff}$.

To obtain optimal decay bounds (and precise asymptotics) we need to control extra regularity on $\gamma$. This turns out to be significantly more involved than moment control, as described below.

\subsubsection{Moment propagation and almost optimal decay}

In order to propagate moments, we use the Poisson bracket structure \eqref{NLVP}, and the fact that for any weight function $\ww:\mathcal{P}_{{\bf x},{\bf v}}\to\mathbb{R}$ or $\ww:\mathcal{P}_{\vartheta,{\bf a}}\to\mathbb{R}$ there holds that
\begin{equation}\label{eq:mom-prop_Intro}
\begin{split}
\partial_t(\ww\gamma)+\{\mathbb{H}_4,(\ww\gamma)\}=\{\mathbb{H}_4,\ww\}\gamma=Q\mathcal{E}_j\{\widetilde{\bf X}^j,\ww\}\gamma,
\end{split}
\end{equation}
together with bounds on the electric field and some classical identities such as
\begin{equation}\label{ClassicalPB}
\begin{split}
\Big\{{\bf x},\frac{\vert{\bf v}\vert^2}{2}+\frac{q}{\vert{\bf x}\vert}\Big\}={\bf v},\qquad \{{\bf x},\vert {\bf L}\vert^2\}={\bf L}\times{\bf x},\quad{\bf L}={\bf x}\times{\bf v}.
\end{split}
\end{equation}
Choosing as weight functions the conserved quantities for the linear equation $a:=\sqrt{\mathbb{H}_2}$ resp.\ $\xi:=qa^{-1}$ and $\lambda:=\vert{\bf L}\vert$, as well as the dynamically evolving quantity $\eta:=(a/q)\vartheta\cdot{\bf a}$, this enables a bootstrap argument that leads to almost optimal moment bounds and electric field decay, assuming only initial moments control.

Our first global result for the dynamics then reads:
\begin{theorem}[see Theorem \ref{thm:global_moments}]\label{thm:global_momentsIntro}
Let $m\ge 30$, and assume that the initial density $\mu_0=\gamma_0$ satisfies
\begin{equation}\label{eq:mom_id_Intro}
\begin{split}
\Vert \langle a\rangle^{2m}\mu_0\Vert_{L^r}+\Vert \langle \xi\rangle^{2m}\mu_0\Vert_{L^r}+\Vert \langle \lambda\rangle^{2m}\mu_0\Vert_{L^r}+\Vert \langle\eta\rangle^m\mu_0\Vert_{L^\infty}&\le \varepsilon_0,\qquad r\in\{2,\infty\}.
\end{split}
\end{equation}
Then there exists a global solution $\gamma$ to \eqref{NLVP} that satisfies the bounds for $r\in\{2,\infty\}$
\begin{equation}\label{eq:gl_mom_bds_Intro}
\begin{split}
\Vert \langle a\rangle^{2m}\gamma(t)\Vert_{L^r}+\Vert \langle \xi\rangle^{2m}\gamma(t)\Vert_{L^r}&\le 2\varepsilon_0,\\
\Vert \langle \lambda\rangle^{2m}\gamma(t)\Vert_{L^r}&\le 2\varepsilon_0 (\ln(2+t))^{2m},\\
\Vert \langle\eta\rangle^m\gamma(t)\Vert_{L^\infty}&\le 2\varepsilon_0 (\ln(2+t))^{2m},
\end{split}
\end{equation}
and the associated electric field decays as
\begin{equation*}
\begin{split}
\Vert \mathcal{E}(t)\Vert_{L^\infty_x}\lesssim \varepsilon_0^2\ln(2+t)/\langle t\rangle^2.
\end{split}
\end{equation*}
\end{theorem}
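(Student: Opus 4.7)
\smallskip
The plan is a continuity/bootstrap argument combined with the decay of $\mathcal{E}$ and the Poisson bracket identity \eqref{eq:mom-prop_Intro}. I would first construct a local strong solution $\gamma$ to \eqref{NLVP} on a maximal interval $[0,T^\ast)$, which is standard for the transport equation once the electric field is seen to be Lipschitz in $\x$ from the moment assumptions. I then fix the bootstrap hypothesis that \eqref{eq:gl_mom_bds_Intro} holds on $[0,T]\subset[0,T^\ast)$ with $2\varepsilon_0$ replaced by, say, $4\varepsilon_0$, together with the decay $\|\mathcal{E}(t)\|_{L^\infty_\x}\le C\varepsilon_0^2\ln(2+t)/\langle t\rangle^2$, and I aim to improve all constants.

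\smallskip
The first step is to deduce the electric field decay from the moment assumption alone. Starting from \eqref{PsiE}, I would split phase space dyadically according to the value of $|\widetilde{\X}(\vartheta,\a)|$, using the kinematic consequences of Proposition \ref{PropAA} and especially the asymptotic-action property \eqref{AsymptoticActions} to estimate the measure contribution of each shell in terms of the moments $\langle a\rangle^{2m}\gamma$, $\langle\xi\rangle^{2m}\gamma$, $\langle\lambda\rangle^{2m}\gamma$ and $\langle\eta\rangle^m\gamma$. Passing through the effective electric field described in the text before the theorem (which encodes the vague, scale-localized convergence of the pushforward measure $(\widetilde{\X})_\ast(\gamma^2\,d\vartheta\,d\a)$) precisely isolates the contributions for which moments are sufficient. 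Summing over dyadic scales produces the stated almost-optimal rate $\|\mathcal{E}(t)\|_{L^\infty}\lesssim\varepsilon_0^2\ln(2+t)/\langle t\rangle^2$, with the logarithm appearing at the resummation step.

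\smallskip
The second step is to propagate each of the four moments via \eqref{eq:mom-prop_Intro}. Since the Hamiltonian transport $\{\mathbb{H}_4,\cdot\}$ is divergence-free, for any time-independent weight $\ww$ on $\mathcal{P}_{\vartheta,\a}$ one has
\begin{equation*}
\frac{d}{dt}\|\ww\gamma(t)\|_{L^r}\le Q\|\mathcal{E}(t)\|_{L^\infty_\x}\cdot\|\{\widetilde{\X},\ww\}\gamma(t)\|_{L^r}.
\end{equation*}
For $\ww=\langle a\rangle^{2m}$ and $\ww=\langle\xi\rangle^{2m}$, the linear conservation laws $\{\mathbb{H}_2,a\}=\{\mathbb{H}_2,\xi\}=0$ imply that these weights are functions of $\a$ alone in action-angle coordinates; the bracket then reduces to $\partial_\vartheta\X\cdot\nabla_\a\ww$, bounded by $\langle a\rangle^{2m}$ respectively $\langle\xi\rangle^{2m}$ using the kinematic bounds on $\partial_\vartheta\X$. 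Gr\"onwall, together with the integrable bound $\int_0^\infty\|\mathcal{E}(s)\|_{L^\infty_\x}ds\lesssim\varepsilon_0^2$, then returns a factor $1+O(\varepsilon_0^2)$, strictly improving the bootstrap. For $\ww=\langle\lambda\rangle^{2m}$ with $\lambda=|\vartheta\times\a|$ in action-angle coordinates, $\nabla_\a\lambda$ grows like $|\vartheta|\sim t|\a|$; combined with the decay $\|\mathcal{E}\|_{L^\infty}\sim\ln(t)/t^2$, the time integral produces exactly the $(\ln(2+t))^{2m}$ factor after iteration over the $2m$ powers. Finally, for $\ww=\langle\eta\rangle^m$ with $\eta=(a/q)\vartheta\cdot\a$, the identity $\{\mathbb{H}_2,\eta\}=a^3/q$ shows that $\eta$ grows linearly along the linear flow; the bracket $\{\widetilde{\X},\eta\}$ can again be controlled by kinematic estimates, yielding the same $(\ln(2+t))^{2m}$ growth.

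\smallskip
The principal obstacle is the precise kinematic control of $\widetilde{\X}$ and its derivatives $\partial_\vartheta\widetilde{\X}$, $\nabla_\a\widetilde{\X}$ in action-angle coordinates, together with the construction of the effective electric field decay from purely moment information in step 1: the phase-space integrand in \eqref{PsiE} is not absolutely bounded in the most singular directions, and only the shearing/phase-mixing property inherent to asymptotic actions produces the almost-optimal $\langle t\rangle^{-2}$ decay. Once these kinematic estimates are in hand, a continuity argument upgrades the improved bounds on $[0,T]$ to all $T<T^\ast$, and the standard continuation criterion (boundedness of $\mathcal{E}$ in $L^\infty$) rules out $T^\ast<\infty$, giving the claimed global solution.
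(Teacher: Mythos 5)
Your overall architecture (local existence, bootstrap coupling moment bounds with electric field decay, propagation via \eqref{eq:mom-prop_Intro}, effective field plus scale resummation) matches the paper, but two of your key steps do not work as written. The most concrete gap is in the $\xi$-moments: you claim $\{\widetilde{\X},\langle\xi\rangle^{2m}\}$ is bounded by $\langle\xi\rangle^{2m}$, but by \eqref{PBXV2} one has $\{\xi,\widetilde{\X}\}=\frac{\xi^2}{q}\frac{\VV}{a}$, so the bracket costs an \emph{extra} power of $\xi$, i.e.\ $|\{\widetilde{\X},\langle\xi\rangle^{2m}\}|\lesssim\langle\xi\rangle^{2m}\xi$. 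With your Gr\"onwall inequality, which only uses $\|\mathcal{E}(t)\|_{L^\infty_\x}$, this produces the $(2m+1)$-st moment on the right-hand side and the estimate cannot be closed (one cannot interpolate upward), so the uniform-in-time bound for $\|\langle\xi\rangle^{2m}\gamma\|$ in \eqref{eq:gl_mom_bds_Intro} is not reached. The paper closes this by propagating the \emph{spatially weighted} decay $[1+t^2+|\y|^2]\,|\mathcal{E}(\y,t)|\lesssim\varepsilon^2\ln(2+t)$ (see \eqref{eq:mom_btstrap_assump} and \eqref{AlmostSharpDecayEF}) and pairing the extra $\xi$ with the kinematic lower bound $|\widetilde{\X}|\gtrsim\xi^2/q$ of \eqref{CrudeBoundX}, so that $\xi\,|\mathcal{E}(\widetilde{\X},t)|\lesssim|\widetilde{\X}|^{1/2}|\mathcal{E}(\widetilde{\X},t)|\lesssim\varepsilon^2\langle t\rangle^{-9/8}$ as in \eqref{PBMomXi2}; the same weighted bound is what makes $|\widetilde{\X}|\,|\mathcal{E}(\widetilde{\X},t)|\lesssim\ln(2+t)/\langle t\rangle$ for the $\lambda$- and $\eta$-moments, whereas your heuristic ``$|\vartheta|\sim t|\a|$ times $\ln t/t^2$'' silently drops a factor of $a$ (and misses that $|\widetilde\X|$ can be large independently of $t$ when $\xi$ is large), which must be absorbed either by the weighted field bound or by H\"older against $a$-moments.

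The second issue is your Step 1 framing ``deduce the electric field decay from the moment assumption alone''. A fixed-time estimate of the (effective) field in terms of moments is proportional to the \emph{current} size of the $\eta$- and $\lambda$-moments, which under the bootstrap grow like powers of $\ln(2+t)$; feeding $t^{-2}\ln^{C(m)}t$ back into the moment propagation creates a self-amplifying log cascade that does not close at the stated rates, and in particular cannot give the single logarithm in $\|\mathcal{E}(t)\|_{L^\infty}\lesssim\varepsilon_0^2\ln(2+t)/\langle t\rangle^2$. The mechanism in the paper is genuinely dynamical: one controls $\partial_t M_R(\y,t;\phi)$ using the equation, the boundedness of $\{\widetilde{\X},\a\}$-type brackets and the (bootstrapped, weighted) field decay, so that $M_R(\y,t)$ differs from its \emph{initial} value by a time-integrable error (Proposition \ref{PropEeff}, \eqref{BesovBoundMF1}); only then does the scale resummation produce a single logarithm, with Proposition \ref{PropControlEF} reducing $\mathcal{E}$ to $\mathcal{E}^{eff}$ up to $t^{-1/5}$-suppressed moment errors. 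You allude to the ``vague, scale-localized convergence'', but your proof as written never uses the time-integrated continuity-equation structure, and without it the claimed decay and hence the whole bootstrap does not close. (Finally, note that your argument is tailored to \eqref{NLVP} with a stationary charge; for the full statement of Theorem \ref{thm:global_moments}, where $\mathbb{H}_4$ contains $\mathcal{W}\cdot\widetilde{\V}$, the paper additionally needs the close formulation for the $a$-moments and the close/far splitting with Lemmas \ref{lem:prime_bds} and \ref{lem:traj_cl_far} for the $\eta$-moments, none of which appear in your proposal.)
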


\subsubsection{Derivative propagation}

In order to obtain bona fide classical solutions, we need to propagate bounds on the gradient of $\gamma$. This requires considerable care, notably because the kinematic formulas are rather involved and some derivatives produce large factors of $t$. To minimize the presence of ``bad derivatives'', we make use of the fact that the two-body problem is \emph{super-integrable}: this allows us to express all kinematically relevant quantities in terms of a set of coordinates $\textnormal{SIC}$, of which all but \emph{one, scalar} variable are constant under the flow of the two-body problem \eqref{KeplerODE} (and thus constant along the characteristics of the linearized problem \eqref{LinearHamiltonianFlow}). A natural such choice is the reduced basis $(\xi,\eta,{\bf u},{\bf L})$, where $\u=a^{-1}\a,\L\in\R^3$, $\xi,\eta\in\R$ (see also \eqref{SIC_Intro} below), and only $\eta$ evolves in the linear problem. This collection has a built-in redundancy, as ${\bf u}\cdot{\bf L}=0$. In order to work with such an \emph{overdetermined} set of coordinates, we take advantage of the symplectic structure to propagate Poisson brackets with respect to a \emph{spanning family} $\textnormal{SIC}$, i.e.\ a collection $(\{f,\cdot\})_{f\in \textnormal{SIC}}$ which spans the cotangent space. Letting $\textnormal{SIC}:=\{\xi,\eta,\u,\L\}$, we then obtain a system for such derivatives of $\gamma$ that reads
\begin{equation}\label{eq:derivIntro}
\begin{split}
\partial_t\{f,\gamma\}+\{\mathbb{H}_4,\{f,\gamma\}\}&=-\{\{f,\mathbb{H}_4\},\gamma\}=-Q\mathcal{F}_{jk}\{\widetilde{\bf X}^j,\gamma\}\{f,\widetilde{\bf X}^k\}-Q\mathcal{E}_j\{\{f,\widetilde{\bf X}^j\},\gamma\},
\end{split}
\end{equation}
where $\mathcal{F}=\nabla^2\psi$ denotes the Hessian of the electric potential. Since $\textnormal{SIC}$ is a spanning set, we can then resolve $\{\widetilde{\bf X},\gamma\}$ in terms of $(\{f,\gamma\})_{f\in \textnormal{SIC}}$, leading to a self-consistent system for bounds on the Poisson brackets:
\begin{equation*}
\begin{split}
\partial_t\vert\{f,\gamma\}\vert\lesssim \sum_{g\in \textnormal{SIC}}\mathfrak{m}_{fg}\vert\{g,\gamma\}\vert.
\end{split}
\end{equation*}
Here the coefficients $\mathfrak{m}_{fg}$ have formally enough decay, but are {\it ill-conditioned} in the sense that they do not admit bounds uniformly in the coordinates outside of a compact set. To remedy this, we introduce a set of weights $\ww_{f}$ and manage to propagate appropriate bounds on $\ww_{f}\vert\{f,\gamma\}\vert$. To account for the fact that the past ($t\to-\infty$) and future ($t\to\infty$) asymptotic velocities of a linear trajectory may differ drastically in direction, we will need to work with two different sets of spanning functions in different parts of phase space: ``past'' asymptotic action-angles in an ``incoming'' region and ``future'' asymptotic action-angles in an ``outgoing'' region. For simplicity, we thus prefer to work with pointwise bounds on the above symplectic gradients.

Altogether, a slightly simplified version of our result concerning global propagation of derivatives is the following:
\begin{proposition}[Informal Version of Proposition \ref{prop:global_derivs}]\label{prop:global_derivsIntro}
 Let $\gamma$ be a global solution of \eqref{NLVP} as in Theorem \ref{thm:global_momentsIntro}, and assume that for the selection of weights $\ww_f$ as below in \eqref{eq:def_weights} there holds that
 \begin{equation}\label{eq:deriv_init_Intro}
  \sum_{f\in \textnormal{SIC}}\norm{(\xi^{5}+\xi^{-8})\ww_f\{f,\gamma_0\}}_{L^\infty}\lesssim \eps_0.
 \end{equation}
Then we have that
 \begin{equation}\label{eq:deriv_concl_Intro}
  \sum_{f\in \textnormal{SIC}}\norm{\ww_f\{f,\gamma(t)\}}_{L^\infty}\lesssim \eps_0\ln^2(2+t).
 \end{equation}
 Moreover, the electric field $\mathcal{E}$ decays at the optimal rate
 \begin{equation}
  \norm{\mathcal{E}(t)}_{L^\infty}\lesssim \eps_0^2\ip{t}^{-2}.
 \end{equation}
\end{proposition}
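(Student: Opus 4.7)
The plan is to close a bootstrap on
\[
\mathcal{D}(t):=\sum_{f\in\textnormal{SIC}}\norm{\ww_f\{f,\gamma(t)\}}_{L^\infty},
\]
starting from the bootstrap assumption $\mathcal{D}(t)\le C\eps_0\ln^2(2+t)$ on $[0,T]$. Applying the derivation $\{f,\cdot\}$ to $\partial_t\gamma+\{\mathbb{H}_4,\gamma\}=0$ and using the Jacobi identity yields the system \eqref{eq:derivIntro}. Because $\textnormal{SIC}=\{\xi,\eta,\u,\L\}$ is a \emph{spanning} family for the cotangent bundle, both the first-order bracket $\{\XX^j,\gamma\}$ and the second-order bracket $\{\{f,\XX^j\},\gamma\}$ appearing on the right hand side can be expanded as $\sum_{g\in\textnormal{SIC}}c_{f,j}^{g}(t,\vartheta,\a)\{g,\gamma\}$, with coefficients $c_{f,j}^{g}$ given by explicit kinematic quantities of the asymptotic action map $(\X,\V)$. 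Along characteristics of $\mathbb{H}_4$ the vector $(\{f,\gamma\})_{f\in \textnormal{SIC}}$ therefore satisfies a pointwise linear ODE with matrix $\mathfrak{m}_{fg}(t,\vartheta,\a)$ combining the electric quantities $\mathcal{E}$ and $\mathcal{F}=\nabla^2\psi$ with the kinematic coefficients $c_{f,j}^{g}$.

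The electric input is controlled via Theorem \ref{thm:global_momentsIntro}, which gives $\norm{\mathcal{E}(t)}_{L^\infty}\lesssim \eps_0^2\ln(2+t)/\ip{t}^2$; differentiating the kernel in \eqref{PsiE} once more and using an extra moment from \eqref{eq:gl_mom_bds_Intro} to absorb the stronger singularity yields an analogous bound $\norm{\mathcal{F}(t)}_{L^\infty}\lesssim \eps_0^2\ln(2+t)/\ip{t}^3$. The kinematic coefficients $c_{f,j}^g$ however \emph{grow} as powers of $t$, and they become ill-conditioned as $a\to 0$ (where Kepler trajectories cease to disperse) and as $a\to\infty$. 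The main technical obstacle is precisely to design weights $\ww_f$ so that the conjugated coefficients $\widetilde{\mathfrak{m}}_{fg}=(\ww_f/\ww_g)\mathfrak{m}_{fg}$ become integrable in $t$, uniformly on phase space. This is the role of \eqref{eq:def_weights}, whose polynomial factors in $\xi$ compensate the degeneracies at $a=0$ and $a=\infty$; the negative and positive powers of $\xi$ in the hypothesis \eqref{eq:deriv_init_Intro} provide the necessary margin to start the bootstrap in both regimes. Moreover, because the past and future asymptotic velocities of a Kepler trajectory may differ drastically in direction, one must work with two different spanning families, an ``incoming'' one on the region $t\lesssim\vartheta\cdot\a/|\a|^2$ and an ``outgoing'' one where the asymptotic action property \eqref{AsymptoticActions} has already taken hold; in each region the corresponding weights cancel the dominant growing kinematic factor.

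Granting this, the off-diagonal entries of $\widetilde{\mathfrak{m}}_{fg}$ satisfy $|\widetilde{\mathfrak{m}}_{fg}|\lesssim \eps_0^2/\ip{t}^{1+\delta}$ for some $\delta>0$ (with a remaining $1/\ip{t}$ term on the diagonal, responsible for the $\ln^2$ loss when weights are removed), and Grönwall applied along the characteristics yields
\[
\mathcal{D}(t)\le C\mathcal{D}(0)+C\eps_0^2\int_0^t\frac{\ln(2+s)}{\ip{s}^{1+\delta}}\mathcal{D}(s)\, ds,
\]
which closes the bootstrap for $\eps_0$ small, producing \eqref{eq:deriv_concl_Intro}. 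Finally, to upgrade the decay of $\mathcal{E}$ from almost-optimal to optimal, we revisit the phase-space representation \eqref{PsiE} and integrate by parts in $\vartheta$: for large $t$ the kernel depends on $\vartheta$ only through $\XX(\vartheta,\a)={\bf X}(\vartheta+t\a,\a)$, whose Jacobian is invertible and can be rewritten through the spanning family, so that the tangential $\vartheta$-direction responsible for the logarithmic loss in the scale-localized analysis of Theorem \ref{thm:global_momentsIntro} is traded for one $\{f,\gamma\}$-derivative of $\gamma$, which is now controlled by \eqref{eq:deriv_concl_Intro}. This yields the clean decay $\norm{\mathcal{E}(t)}_{L^\infty}\lesssim \eps_0^2\ip{t}^{-2}$.
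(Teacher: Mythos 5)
Your overall strategy is the same as the paper's: a bootstrap on the weighted Poisson brackets $\sum_f\ww_f|\{f,\gamma\}|$ propagated along the linear characteristics, with two spanning families (past/future asymptotic actions) in incoming/outgoing regions and $\xi$-weights absorbing the ill-conditioning. However, there is a genuine gap in how you source the electric inputs. You claim that $\norm{\mathcal{F}(t)}_{L^\infty}\lesssim \eps_0^2\ln(2+t)\ip{t}^{-3}$ follows from the moment bounds of Theorem \ref{thm:global_momentsIntro} by ``differentiating the kernel once more and using an extra moment''. This fails: $\mathcal{F}=\nabla^2\psi$ is a Calder\'on--Zygmund-type quantity, and moments only give an $L^\infty$ bound on the spatial density (via the canonical change of variables), which is not enough to control $\nabla^2\psi$ pointwise -- in the scale decomposition the small-scale contribution $\int_0 \mathcal{F}_R\,dR/R$ is logarithmically divergent, and no amount of extra moments fixes the loss of summability or produces the $t^{-3}$ rate. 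The paper instead integrates by parts in phase space (see \eqref{FjkAndPB2}), trading the extra derivative on the Coulomb kernel for the bracket $\{\widetilde{\bf Z},\gamma\}$ with the asymptotic constant of motion ${\bf z}=\x-t\v$; hence the bound on $\mathcal{F}$ genuinely requires derivative control on $\gamma$ (this is why $N_2$ in Proposition \ref{PropControlEF} contains the Poisson-bracket norms, and why Proposition \ref{PropEeff}$(ii)$ assumes \eqref{eq:deriv_bstrap_assump2}). Your argument is therefore circular as written; it can be repaired, as the paper does, only by making the bootstrap self-improving: assume a weak bound $\ip{t}^\delta$ on the weighted brackets, deduce the optimal $\mathcal{E}$ and near-optimal $\mathcal{F}$ decay from that assumption, and then feed these back into the Gr\"onwall estimate.

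A second problem is that your Gr\"onwall step is internally inconsistent and misidentifies the source of the $\ln^2$ growth. If all conjugated coefficients satisfied $|\widetilde{\mathfrak m}_{fg}|\lesssim\eps_0^2\ip{t}^{-1-\delta}$ as you state, your displayed inequality would give a \emph{uniform} bound on $\mathcal{D}(t)$, not $\ln^2(2+t)$; and a diagonal term of size $\eps^2/t$ would instead give polynomial growth $t^{C\eps^2}$ through Gr\"onwall. The actual mechanism (Lemmas \ref{lem:farprop} and \ref{lem:clprop}) is a triangular block structure: the brackets with $\xi,\u$ satisfy an ODE with integrable coefficients and stay uniformly bounded, while the brackets with $\eta,\L$ are driven by a non-integrable coefficient of size $\eps^2\ln\ip{t}\,\ip{t}^{-1}$ multiplying only the \emph{bounded} block, so that integration yields the $\ln^2$ growth. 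Relatedly, the extra factors $\xi^5+\xi^{-8}$ in the hypothesis are not merely ``margin to start the bootstrap'': since the weights depend only on the conserved quantity $\xi$, these factors are consumed at the finitely many transitions per trajectory (periapsis, i.e.\ past-to-future actions, and close/far changes of formulation), which the paper tracks explicitly with the weight $\Upsilon$ in \eqref{eq:defUps}; your proposal does not account for these transition losses.
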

We comment on two points: $(i)$ Control of one derivative of $\gamma$ allows to resum the scale-localized effective electric unknowns, which implies optimal decay of the electric field (see Proposition \ref{PropEeff} below) and lays the foundation for a quantitative understanding of the asymptotic behavior. $(ii)$ There is a ``loss'' of weights in $\xi$ when we change coordinates between past and future asymptotic actions, which is reflected by the extra factors in $\xi$ we require on the initial data in \eqref{eq:deriv_init_Intro} as compared to the propagated derivatives in \eqref{eq:deriv_concl_Intro}.

\subsubsection{Asymptotic behavior}
As mentioned above, derivative control is tied to obtaining optimal decay for the electric field, through bounds on the \emph{effective} electric field. Roughly speaking, once we control a derivative of the particle density $\gamma$ we can sum the uniform bounds obtained on the scale-localized effective field and obtain convergence of the effective electrical functions, which allows us to deduce that
\begin{equation}\label{eq:effE_Intro}
\psi(\widetilde{\bf X},t)=\frac{1}{t}\Psi^\infty({\bf a})+O(t^{-1-}),\qquad \mathcal{E}_j(\widetilde{\bf X},t)=\frac{1}{t^2}\partial_j\Psi^\infty({\bf a})+O(t^{-2-}).
\end{equation}
The additional ingredient we use is that, for large scale, we can perform an integration by parts that replaces a derivative on the Coulomb kernel with a bound on the Poisson bracket with a \emph{constant of motion for the asymptotic flow}\footnote{Recall that a {\it constant of motion} denotes an expression of the form $F({\bf x},{\bf v},t)$ which is conserved along the flow, as opposed to an {\it integral of motion} of the form $F({\bf x},{\bf v})$ which is a function on phase space alone.} $\ddot{\bf x}=0$: for a function $\varphi(\x)$ of $\x$ only, there holds that
\begin{equation*}
\begin{split}
\partial_{\bf x}\varphi=t^{-1}\{{\bf z},\varphi\},\quad{\bf z}:={\bf x}-t{\bf v}.
\end{split}
\end{equation*}
Since ${\bf z}$ is constant along free streaming, we can show that it only increases logarithmically along the flow of \eqref{KeplerODE}.

From \eqref{eq:effE_Intro} we easily obtain the asymptotic behavior: the main evolution equation for the gas distribution function $\gamma$ reads
\begin{equation}
\partial_t\gamma+\frac{Q}{t}\{\Psi^\infty({\bf a}),\gamma\}=l.o.t.
\end{equation}
To leading order, $\gamma$ is thus transported by a \emph{shear} flow, which can be integrated directly to obtain convergence of the particle distribution along \emph{modified characteristics}:
\begin{equation*}
\gamma(\vartheta+\ln(t)Q\nabla\Psi^\infty({\bf a}),{\bf a},t)\to\gamma_\infty(\vartheta,{\bf a}),\qquad t\to+\infty.
\end{equation*}

This leads us to our final result (stated here in a version without point charge dynamics):
\begin{theorem}[informal version of Theorem \ref{MainThm}]\label{thm:global_asymptIntro}
 Let $\gamma$ be a global solution of \eqref{NLVP} as in Proposition \ref{prop:global_derivsIntro}. Then there exists an asymptotic electric field profile $\mathcal{E}^\infty\in L^\infty(\R^3)$ and $\gamma_\infty\in L^\infty_{\vartheta,\a}$ such that
 \begin{equation}
  \gamma(\vartheta+\ln(t)Q\mathcal{E}^\infty(\a),\a,t)\to\gamma_\infty(\vartheta,{\bf a}),\qquad t\to+\infty.
 \end{equation}
\end{theorem}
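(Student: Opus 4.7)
My plan has three stages: constructing the asymptotic electric field profile $\mathcal{E}^\infty$, identifying the leading-order dynamics of $\gamma$ as a logarithmic shear, and closing via a Cauchy criterion after bounding the remainder.

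\emph{Construction of $\mathcal{E}^\infty$.} Starting from the optimal decay $\norm{\mathcal{E}(t)}_{L^\infty_\x}\lesssim \eps_0^2\ip{t}^{-2}$ of Proposition \ref{prop:global_derivsIntro}, I would evaluate the electrical integrals \eqref{PsiE} at the kinematic asymptote $\widetilde{\X}\approx t\a$ and use the integration-by-parts identity $\partial_\x\varphi=t^{-1}\{\x-t\v,\varphi\}$ from the excerpt to convert each spatial derivative falling on the Coulomb kernel into a Poisson bracket with $\x-t\v$, which (being an almost-constant of the asymptotic flow) grows only logarithmically along trajectories of \eqref{KeplerODE}. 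Combined with the weighted derivative control \eqref{eq:deriv_concl_Intro} on the spanning family $\textnormal{SIC}$ and a resummation over dyadic scales, this yields expansions of the form
\begin{equation*}
 \psi(\widetilde{\X}(\vartheta,\a,t),t)=\tfrac{1}{t}\Psi^\infty(\a)+O(\eps_0^2 t^{-1-\delta}), \qquad \mathcal{E}(\widetilde{\X}(\vartheta,\a,t),t)=\tfrac{1}{t^2}\mathcal{E}^\infty(\a)+O(\eps_0^2 t^{-2-\delta})
\end{equation*}
for some $\delta>0$, with $\mathcal{E}^\infty=\nabla\Psi^\infty\in L^\infty(\R^3)$ depending only on the action variable~$\a$.

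\emph{Leading-order shear and cancellation.} I next set $\tilde\gamma(\vartheta,\a,t):=\gamma(\vartheta+\ln(t)Q\mathcal{E}^\infty(\a),\a,t)$. Expanding the Poisson bracket in \eqref{NLVP} gives
\begin{equation*}
 \partial_t\gamma=-Q\sum_j \mathcal{E}_j(\widetilde{\X},t)\,\{\widetilde{\X}_j,\gamma\},
\end{equation*}
and the asymptotic action property \eqref{AsymptoticActions} produces the kinematic expansion $\{\widetilde{\X}_j,\gamma\}=-t\,\partial_{\vartheta_j}\gamma+\text{(sub-leading)}$. Inserting the expansion of $\mathcal{E}(\widetilde{\X},t)$ from the previous stage isolates the leading contribution $\pm t^{-1}Q\mathcal{E}^\infty(\a)\cdot\nabla_\vartheta\gamma$, which is precisely cancelled by the chain-rule term in $\partial_t\tilde\gamma$ coming from the logarithmic shift (with the appropriate sign convention for the Poisson bracket in action-angle coordinates). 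The residual $\partial_t\tilde\gamma$ then consists of the $O(t^{-2-\delta})$ error in $\mathcal{E}$ paired with $-t\,\nabla_\vartheta\gamma$, contributing $O(t^{-1-\delta})$, together with the kinematic sub-leading part of $\{\widetilde{\X}_j,\gamma\}$ paired with $t^{-2}\mathcal{E}^\infty$, which involves $\nabla_\a\gamma$ and further symplectic gradients of $\gamma$.

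\emph{Closing.} To bound the remainder pointwise, I would re-express the Cartesian derivatives $\nabla_\vartheta\gamma$ and $\nabla_\a\gamma$ in the spanning basis $(\{f,\cdot\})_{f\in\textnormal{SIC}}$ and invoke \eqref{eq:deriv_concl_Intro} to obtain $\aabs{\partial_t\tilde\gamma}\lesssim \eps_0^2\ln^2(2+t)/\ip{t}^{1+\delta}$, which is $t$-integrable. The Cauchy criterion, applied uniformly in $(\vartheta,\a)$, then yields the claimed limit $\gamma_\infty\in L^\infty_{\vartheta,\a}$. The principal obstacle will be the honest treatment of the sub-leading kinematic correction in $\{\widetilde{\X}_j,\gamma\}$ across the whole of $\mathcal{P}_{\vartheta,\a}$: past and future asymptotic velocities along a trajectory of \eqref{KeplerODE} can differ drastically in direction, so one must patch two different spanning families (for ``incoming'' and ``outgoing'' regions) and absorb the resulting loss of $\xi$-weights (already visible in the gap between \eqref{eq:deriv_init_Intro} and \eqref{eq:deriv_concl_Intro}) against the extra decay afforded by the effective electric field expansion.
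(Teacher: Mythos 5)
Your overall skeleton --- build an asymptotic profile $\mathcal{E}^\infty$ from the effective-field analysis, pass to the logarithmically sheared unknown so that the $O(1/t)$ transport term cancels, and close by showing the residual time derivative is integrable --- is the same as the paper's (proof of Theorem \ref{MainThm}, via Propositions \ref{PropControlEF}, \ref{PropEeff} and the unknown $\sigma=\gamma\circ s_\infty^{-1}$). However, your Stage 1 does not actually produce the limit $\mathcal{E}^\infty$. Optimal decay of $\mathcal{E}$, the identity $\partial_{\bf x}\varphi=t^{-1}\{{\bf x}-t{\bf v},\varphi\}$, derivative control and dyadic resummation give uniform bounds and the approximation of $\mathcal{E}$ by the effective field, but the existence of $\lim_{t\to\infty}t^2\mathcal{E}^{eff}(t\a,t)$ is a separate Cauchy-in-time statement: by scaling, $t^{2}\mathcal{E}^{eff}_{j}(t\a,t)=\frac{1}{4\pi}\iint\frac{\a^{j}-\alpha^{j}}{\vert\a-\alpha\vert^{3}}\,\gamma^{2}(\theta,\alpha,t)\,d\theta d\alpha$, so one needs weak (scale-localized) convergence of the measure $\gamma^{2}d\theta d\alpha$, i.e.\ time-integrability of $\partial_{t}$ of averages like $M_R(\y,t;\phi)$. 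In the paper this is exactly Proposition \ref{PropEeff}: one differentiates $M_R$ in time, uses the vanishing of integrals of Poisson brackets \eqref{ZeroIntegralPB} together with the bounds of Corollary \ref{CorPBa} on $\{\XX,\a\}$, $\{\VV,\a\}$, and only then obtains \eqref{AsymptoticEFCCL2}. Your proposal contains no mechanism for this step, and it is the heart of "constructing $\mathcal{E}^\infty$."

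Second, your closing step claims a pointwise bound $\aabs{\partial_t\tilde\gamma}\lesssim \eps_0^2\ln^2(2+t)\ip{t}^{-1-\delta}$ uniformly on $\mathcal{P}_{\vartheta,\a}$, and this is not available. The replacement of $t^2\mathcal{E}(\XX,t)$ by $\mathcal{E}^\infty(\a)$ and the smallness of the kinematic corrections hold only in the bulk \eqref{eq:bulk}, where $\XX=t\a+O(\ln t)$; outside $\B$ the brackets $\{\widetilde{\bf Z},\gamma\}$, $\{\VV-\a,\gamma\}$ carry factors such as $t(\xi^{-1}+\xi^{-3})+\xi^4+\vert\eta\vert^2+\lambda^2$ (see \eqref{GeneralBoundZBracket}), which the $t^{-2}$ field decay cannot absorb, and there is no reason for $t^2\mathcal{E}(\XX,t)$ to be close to $\mathcal{E}^\infty(\a)$ when $\XX$ is nowhere near $t\a$. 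The paper's proof therefore establishes the integrable bound only for $\mathfrak{1}_{\B}\partial_t\sigma$ and disposes of $\B^{c}$ by a different device: the propagated moments combined with \eqref{eq:nonbulk-bds} make $\sigma\mathfrak{1}_{\B^{c}}$ uniformly $O(t^{-k})$, while every fixed $(\vartheta,\a)$ lies in $\B$ for large $t$. Your plan needs this (or an equivalent) to get convergence in $L^\infty_{\vartheta,\a}$. Relatedly, the obstacle you single out --- patching past/future spanning families and the attendant $\xi$-weight loss --- is already absorbed into the assumed conclusion \eqref{eq:deriv_concl_Intro} of Proposition \ref{prop:global_derivsIntro}, and on the bulk one is automatically in the outgoing region so only future asymptotic actions are needed there; the genuine remaining difficulty is the non-bulk region just described (and, more minor, the sign of the logarithmic shift, which you leave as "$\pm$", must be pinned down against the Poisson-bracket convention so that the chain-rule term really cancels the leading transport).
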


\subsection{Accounting for the motion of the point charge}\label{SecAddingPC}
In general, the point charge will not remain stationary, as $\ddot{\mathcal{X}}\ne 0$. However, accounting for its motion brings in significant complications. 

Using the macroscopic conservation laws (akin to the introduction of the reduced mass for the classical two-body problem), at the cost of the introduction of a non-Galilean frame we can reduce the analysis to a problem for the gas distribution alone, which is then transported by a self-consistent Hamiltonian flow (see Section \ref{sec:red_gas}). Hereby it is natural to center the phase space for the gas distribution around the point charge, so that the new position coordinates are given simply by the distance from the point charge $\mathcal{X}(t)$. For the velocity we have two natural options: 
\begin{enumerate}[label=(\alph*)]
\item Center around the \emph{asymptotic velocity} $\mathcal{V}_\infty$ of the point charge: This leads to an equation of the form
\begin{equation*}
\begin{split}
\partial_t\mu-\{\mathbb{H},\mu\}=0,\qquad \mathbb{ H}=\frac{1}{2}\mathbb{H}_2-\mathbb{H}_4,\qquad\mathbb{H}_4({\bf x},{\bf v},t)=Q\psi({\bf x},t)+(\mathcal{V}(t)-\mathcal{V}_\infty)\cdot {\bf v},
\end{split}
\end{equation*}
with same linearized Hamiltonian as in \eqref{LinearHamiltonianFlow}, but adjusted perturbative term $\mathbb{H}_4$ that incorporates the asymptotic velocity through $\mathcal{V}_\infty$ of the point charge. We note that this induces a (small) uncertainty on the velocity $\mathcal{W}(t)=\mathcal{V}(t)-\mathcal{V}_\infty$, but no acceleration. This is well adapted to describing the dynamics far away from the point charge ($\aabs{\x}\gg 1$), and we thus refer to it as the \emph{``far formulation''}.

\item Center around the \emph{instantaneous velocity} $\mathcal{V}(t)$ of the point charge: This is related to a Hamiltonian of the form
\begin{equation*}
 \mathbb{ H}'=\frac{1}{2}\mathbb{H}_2-\mathbb{H}'_4,\qquad\mathbb{H}'_4({\bf x},{\bf v},t)=Q\psi({\bf x},t)-\dot{\mathcal{V}}\cdot {\bf x}.
\end{equation*}
This formulation introduces the acceleration $\dot{\mathcal{V}}$, which (although small) does not decay and becomes too influential when paired with $\vert{\bf x}\vert\gg 1$. We thus refer to this as the \emph{``close formulation''}, and will invoke it for comparatively small $\aabs{\x}\lesssim 1$.
\end{enumerate}

In practice, we will thus have to work with both formulations (and their respective sets of asymptotic action-angle variables). In particular, we will split phase space into ``close'' and ``far'' regions $\Omega_t^{cl}$ and $\Omega_t^{far}$, in which we work with the corresponding close and far formulations and propagate separately moment and derivative control, with a transition between them when a given trajectory passes from one region to the other. We emphasize that by construction each type of transition (far to close resp.\ close to far) happens at most once per trajectory. Although each such transition introduces some losses in $\xi$ weights (similarly to the case of past vs.\ future asymptotic actions), this is simple to account for.

In conclusion, analogous results to Theorem \ref{thm:global_momentsIntro} and Proposition \ref{prop:global_derivsIntro} can be established also for any initial condition on the point charge position and velocity. In particular, one sees that its acceleration is given by an electric field that decays like $t^{-2}$ to leading order, resulting in a logarithmically corrected point charge trajectory in our final result Theorem \ref{MainThm} below.

\subsection{Further remarks and perspectives}\label{ssec:prespectives}
We comment on some more points of relevance:
\begin{enumerate}[wide]
 \item \emph{The role of super-integrability:} The Kepler problem is \emph{super integrable} and as such admits $5$ independent conserved quantities. When doing computations, and especially when computing derivatives, it is useful to isolate as much as possible the conserved coordinates, for which one can hope to obtain uniform bounds, from the dynamical quantities which leads to derivatives which are large. Thus we do many computations using {\it super integrable coordinates} which are derived from the asymptotic action-angle as follows:
\begin{equation}\label{SIC_Intro}
\begin{split}
\xi:=\frac{q}{a},\quad\eta:=\frac{a}{q}\vartheta\cdot{\bf a},\qquad{\bf u}:=\frac{{\bf a}}{a},\quad{\bf L}:=\vartheta\times{\bf a},
\end{split}
\end{equation}
and we note that the linear flow is simple in these coordinates: $(\xi(t),\eta(t),{\bf u}(t),{\bf L}(t))=(\xi_0,\eta_0+tq^2\xi^{-3},{\bf u}_0,{\bf L}_0)$ and only $1$ scalar coordinate changes over time (out of $7$).

\item \emph{Types of trajectories:} It is worth distinguishing several types of trajectories in the linearized problem with respect to the above close and far regions of phase space $\Omega_t^{cl}=\{\aabs{\x}\leq 10\ip{t}\}$ and $\Omega_t^{far}=\{\aabs{\x}\geq \ip{t}\}$. For relatively small actions $a$, particles remain far from the point charge and move slowly. In particular, depending on their initial location, they may start in the far or close region, but will end up in the close region. In contrast, for large velocities, trajectories may start far away with high velocity, come close to the point charge and then speed off again, passing from $\Omega_t^{far}$ to $\Omega_t^{cl}$ and back to $\Omega_t^{far}$. Together with the distinction between incoming/outgoing dynamics, this gives four dynamically relevant, distinct regions of phase space (see also Remark \ref{rem:regions}).

\item \emph{Possible simplifications:} We emphasize that as discussed, the analysis simplifies significantly if the charge has no dynamics, i.e.\ if $(\mathcal{X}(t),\mathcal{V}(t))\equiv (0,0)$. In a similar vein, if the initial gas distribution has compact support (and the support of velocities is thus bounded from below and above), it suffices to work with either close or far formulation. This shows a clear benefit and drastic simplification if an assumption of compact support is made.
\end{enumerate}

\paragraph{\emph{Future Perspectives}}\label{ssec:futpersp}
We hope that the methods introduced in this work provide a template for the analysis of a large class of (collisionless) kinetic problems, for which the linearized equation is given as transport by a completely integrable ODE whose trajectories are open.\footnote{In the case of more complicated (e.g.\ non integrable) ODEs, already the linear analysis may be very challenging, and even for systems relatively close to the $2$-body problem one would need to account for Arnold diffusion (see e.g.\ \cite{KL2008}).} 

To be concrete we highlight some examples to this effect and further related open problems, on which we hope the analysis developed here can shed some new light:
\begin{enumerate}[leftmargin=*]
\item The gravitational case of \emph{attractive} interactions between the point charge and gas is of great importance in astrophysics. Despite this, as briefly hinted at above its mathematical investigation is in its infancy: strong solutions are not even known to exist locally in time, only global weak solutions have been constructed \cite{CMMP2012,CZW2015}. A key mathematical challenge lies in the presence of trapped trajectories, which already arise in the linearized system and drastically hinder the stabilization effect of dispersion.
In this context, we expect Proposition \ref{PropAA} to extend in the region of positive energy, while the transition to zero and negative energies (with parabolic or elliptic orbits) introduces significant new challenges. This work should also inform on the modifications needed to account for the geometry of such trajectories.

\item Along similar lines, the case of several species would also be relevant in plasma physics and may pose related challenges. On the other hand, even for repulsive interactions it would be interesting to consider a perturbation of several point charges, i.e.\ a solution to the $N$-body problem to which a small, smooth gas distribution is added. The natural starting point here is the case of two charges surrounded by a gas, which already at the linear level brings the (restricted) $3$-body problem into play. We refer to \cite{CdPD2010,KMR2009} for works in this direction for Vlasov-Poisson and related equations.

\item We believe that the Vlasov-Poisson evolution of measures which are not absolutely continuous with respect to Liouville is an interesting general problem which merits further investigation. Here and in \cite{CM2010,CMMP2012,CLS2018,DMS2015,LZ2017,LZ2018,MMP2011,PW2020}, the case of a sum of pure point and smooth density is considered, but it would be interesting to have examples where the support of the measure has (say) intermediate Hausdorff dimensions (as suggested e.g.\ by some models of star formation, see e.g.\ \cite[Sec.\ 9.6.2]{MvdBW2010}).
\end{enumerate}

\subsection{Organization of the paper}
We conclude this introduction by showing in Section \ref{sec:red_gas} how to reduce the equations \eqref{VPPC} to a problem on the gas distribution alone. Section \ref{SecLin} then studies the dynamics in the linearized problem, starting with more explanations on the method of asymptotic action (Section \ref{GenComAA}). Following this we discuss the Kepler problem (Section \ref{ssec:Kepler}) and solve the related scalar scattering problem (Section \ref{ssec:planar}), which then allows us to introduce the asymptotic action-angle variables (Section \ref{ssec:AngleAction}). Building on this, further coordinates are introduced in Section \ref{ssec:furthercoords}.

Section \ref{ssec:kinematics} establishes quantitative bounds on some of the kinematically relevant quantities, including in particular their Poisson brackets with various coordinates. 

The electric functions are studied in Section \ref{sec:efield}. We first show that they are well approximated by simpler effective functions (Proposition \ref{PropControlEF}), and obtain convergence on effective fields (Proposition \ref{PropEeff}), building on the moment and derivative control available.

Section \ref{SecBootstrap} establishes the main bootstrap arguments for the propagation of moment and derivative control. We introduce the main nonlinear unknowns in Section \ref{Sec:NLUnknowns}, and first close a bootstrap involving only moment bounds in Section \ref{sec:moments_prop}. Building on this, a second (and much more involved) bootstrap then yields control of derivatives in Section \ref{sec:derivs_prop}.

Finally, in Section \ref{sec:main-asympt} we derive the asymptotic behavior of the gas distribution function and prove our main Theorem \ref{MainThm}.

In appendix \ref{sec:appdx_trans}, we collect some auxiliary results.

\medskip
\paragraph{\textbf{Notation}} We will use the notation $A\lesssim B$ to denote the existence of a constant $C>0$ such that $A\leq C B$, when $C>0$ is independent of quantities of relevance, and write $A\lesssim_p B$ to highlight a dependence of $C$ on a parameter $p$. Moreover, to simplify some expressions we shall use the slight modification of the standard Japanese bracket as $\ip{r}:=(4+r^2)^{1/2}$, $r\in\R$, so that in particular $1\lesssim\ln\ip{0}$.

\subsection{Reduction to a problem for the gas distribution alone}\label{sec:red_gas}

The system \eqref{VPPC} can be transformed into a system that better accounts for the linear dynamics and is more easily connected to the case of radial data already investigated in \cite{PW2020}. For this, it will be convenient to recenter the phase space at the point charge.

\subsubsection{Conservation laws}

In a similar way as for the standard $2$-body problem, one can simplify the system somewhat by using the conservation laws. In order to study conserved quantities, it is convenient to observe that, when $\mu$ solves \eqref{VPPC}, for any function $\omega({\bf x},{\bf v},t)$, there holds that
\begin{equation}\label{ConservationLawsComp}
\begin{split}
\frac{d}{dt}\iint\mu^2 \omega d{\bf x}d{\bf v}=\iint\mu^2\left(\partial_t+{\bf v}\cdot\nabla_{\bf x}+\left(\frac{q}{2}\frac{{\bf x}-\mathcal{X}}{\vert {\bf x}-\mathcal{X}\vert^3}+Q\nabla_{\bf x}\phi\right)\cdot\nabla_{\bf v}\right)\omega\,\, d{\bf x}d{\bf v}.
\end{split}
\end{equation}
By testing with various choices of $\omega$, one can obtain conservation laws. We will do this for the first three moments in ${\bf v}$.

The total charge is conserved and using $\omega=1$ in \eqref{ConservationLawsComp} leads to conservation of the $0$-moment:
\begin{equation*}
\begin{split}
\frac{d}{dt}M_g(\mu)=0,\qquad M_g(\mu):=m_g\iint \mu^2\, d{\bf x}d{\bf v}.
\end{split}
\end{equation*}
The total momentum is conserved and using $\omega={\bf v}$ in \eqref{ConservationLawsComp} leads to conservation of the $1$-moment:
\begin{equation*}
\begin{split}
\frac{d}{dt}P(\mu)=0,\qquad P(\mu)&:=m_g\iint \mu^2 {\bf v}\,d{\bf x}d{\bf v}+M_c\mathcal{V}.
\end{split}
\end{equation*}
Finally, the total energy is conserved and this leads to conservation of the $2$-moment:
\begin{equation*}
\begin{split}
\frac{d}{dt}E(\mu,\mathcal{X},\mathcal{V})=0,\qquad E(\mu,\mathcal{X},\mathcal{V})&:=m_g\iint \mu^2\left(\vert {\bf v}\vert^2-2Q\phi\right)\,d{\bf x}d{\bf v}+\left(M_c\vert \mathcal{V}\vert^2-2\frac{q_cq_g}{\epsilon_0}\phi(\mathcal{X},t)\right).
\end{split}
\end{equation*}

\subsubsection{Modulation and reduced equations}

Since the Vlasov-Poisson system is invariant by Galilean transformation, we can choose a frame where the total momentum vanishes: $P(\mu)=0$. This determines the motion of the point charge in terms of the motion of the gas:
\begin{equation*}
\begin{split}
\mathcal{V}(t)&:=-\frac{m_g}{M_c}\iint\mu^2 {\bf v}\, d{\bf x}d{\bf v}.
\end{split}
\end{equation*}
As explained in Section \ref{SecAddingPC}, we will need a ``close'' and a ``far'' chart, which we introduce next.

\subsubsection*{Far formulation}
Given a solution $\mu$ on some time interval $[0,T^\ast)$, we define
\begin{equation}\label{DefVinfty}
\mathcal{V}_\infty:=\lim_{t\to T^\ast}\mathcal{V}(t),\qquad\mathcal{W}(t):=\mathcal{V}(t)-\mathcal{V}_\infty,
\end{equation}
and we introduce the new unknowns
\begin{equation}\label{NewVariables}
\begin{split}
{\bf y}&:={\bf x}-\mathcal{X},\qquad {\bf w}:={\bf v}-\mathcal{V}_\infty,\\
\nu({\bf y},{\bf w},t)&:=\mu({\bf y}+\mathcal{X},{\bf w}+\mathcal{V}_\infty,t),\qquad\mu({\bf x},{\bf v},t)=\nu({\bf x}-\mathcal{X},{\bf v}-\mathcal{V}_\infty,t).
\end{split}
\end{equation}
The new equation for terms of $\nu({\bf y},{\bf w},t)$ becomes self-consistent with a parameter $\mathcal{V}_\infty$:
\begin{equation}\label{NewVP}
\begin{split}
&\left(\partial_t+{\bf w}\cdot\nabla_{\bf y}+\frac{q}{2}\frac{{\bf y}}{\vert {\bf y}\vert^3}\cdot\nabla_{\bf w}\right)\nu+Q\nabla_{\bf y}\psi\cdot\nabla_{\bf w}\nu=\mathcal{W}(t)\cdot\nabla_{\bf y}\nu,\\
&\psi({\bf y},t)=\phi({\bf y}+\mathcal{X},t)=-\frac{1}{4\pi}\iint \frac{1}{\vert {\bf y}-{\bf r}\vert}\nu^2({\bf r},{\boldsymbol\pi},t)d{\bf r} d{\boldsymbol\pi},\\
&\mathcal{W}(t)=-\frac{M_g+M_c}{M_c}\mathcal{V}_\infty-\frac{m_g}{M_c}\iint {\bf w}\nu^2({\bf y},{\bf w},t)\,\, d{\bf y}d{\bf w}.
\end{split}
\end{equation}
We introduce the Hamiltonians:
\begin{equation}\label{Hamiltonians}
\begin{split}
\mathbb{ H}&=\frac{1}{2}\mathbb{H}_2-\mathbb{H}_4,\qquad
\mathbb{H}_2({\bf y},{\bf w}):=\vert {\bf w}\vert^2+\frac{q}{\vert {\bf y}\vert},\qquad\mathbb{H}_4({\bf y},{\bf w},t)=Q\psi({\bf y},t)+\mathcal{W}(t)\cdot {\bf w}.
\end{split}
\end{equation}
Note that $\mathbb{H}_2$ is independent of the unknown $\nu$ and will give the linearized equation, while $\mathbb{H}_4=O(\nu^2)$ and decays in time.
The density $\nu$ is transported by the corresponding Hamiltonian vector field in \eqref{Hamiltonians} in the sense that  the first equation in \eqref{NewVP} is equivalent to
\begin{equation}
\partial_t\nu-\{\mathbb{H},\nu\}=0.
\end{equation}
\begin{lemma}
Given $C^1$ functions $(\mu,\nu,\mathcal{X},\mathcal{V})$ on a time interval $[0,T^\ast)$  and a constant $\mathcal{V}_\infty\in\mathbb{R}^3$, related by \eqref{DefVinfty}-\eqref{NewVariables}. The functions $(\mu,\mathcal{X},\mathcal{V})$ solve \eqref{VPPC} if and only if $(\nu,\mathcal{V}_\infty)$ solves
\begin{equation}\label{HamiltonianTransport}
\begin{split}
\partial_t\nu-\{\mathbb{H},\nu\}=0,\qquad\lim_{t\to T^\ast}\mathcal{W}(t)=0,
\end{split}
\end{equation}
with $\mathbb{H}$ defined in \eqref{Hamiltonians}.
\end{lemma}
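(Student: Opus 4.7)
The proof is a direct change-of-variables calculation in two steps: a chain rule computation transforming the transport equation in \eqref{VPPC}, together with a rewriting of the point-charge Newton law as an algebraic constraint on $\mathcal{W}$ via the zero-momentum frame choice.

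To begin, I would verify that $\partial_t\nu - \{\mathbb{H},\nu\} = 0$ with $\mathbb{H}$ as in \eqref{Hamiltonians} coincides with the first line of \eqref{NewVP}. Reading off
\begin{equation*}
 \nabla_\y\mathbb{H} = -\tfrac{q}{2}\tfrac{\y}{\aabs{\y}^3} - Q\nabla_\y\psi,\qquad \nabla_\w\mathbb{H} = \w - \mathcal{W}(t),
\end{equation*}
the Poisson bracket $\{\mathbb{H},\nu\} = \nabla_\y\mathbb{H}\cdot\nabla_\w\nu - \nabla_\w\mathbb{H}\cdot\nabla_\y\nu$ rearranges exactly into the Vlasov-type equation of \eqref{NewVP} with right-hand source $\mathcal{W}(t)\cdot\nabla_\y\nu$. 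Next, writing $\mu(\x,\v,t) = \nu(\x - \mathcal{X}(t), \v - \mathcal{V}_\infty, t)$, the chain rule yields $\partial_t\mu = \partial_t\nu - \mathcal{V}(t)\cdot\nabla_\y\nu$ while $\nabla_\x\mu = \nabla_\y\nu$ and $\nabla_\v\mu = \nabla_\w\nu$. Substituting into \eqref{VPPC} with $\v = \w + \mathcal{V}_\infty$ and noting that $\psi(\y,t) = \phi(\y + \mathcal{X},t)$ so $\nabla_\y\psi = \nabla_\x\phi$, the $\mathcal{V}$- and $\mathcal{V}_\infty$-terms combine into the single source $\mathcal{W}(t)\cdot\nabla_\y\nu$ on the right-hand side, producing exactly \eqref{NewVP}. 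Each step is algebraically reversible.

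The step requiring the most bookkeeping is the identification of the Newton law $\dot{\mathcal{V}} = \mathcal{Q}\nabla_\x\phi(\mathcal{X},t)$ with the algebraic formula for $\mathcal{W}$ in \eqref{NewVP}. In the zero-momentum frame $P(\mu) = 0$ imposes $\mathcal{V}(t) = -(m_g/M_c)\iint \v\mu^2\,d\x d\v$; substituting $\v = \w + \mathcal{V}_\infty$ and using mass conservation $\iint \nu^2\,d\y d\w = M_g/m_g$ rearranges this directly to the $\mathcal{W}$-formula. Conversely, differentiating that formula in $t$, computing $\partial_t(\nu^2) = 2\nu\partial_t\nu$ via the Vlasov equation, and integrating by parts in $\y$ and $\w$, one reduces the result to $-\tfrac{qm_g}{2M_c}\int \tfrac{\x-\mathcal{X}}{\aabs{\x-\mathcal{X}}^3}\varrho\,d\x$, since the self-interaction contribution from $\nabla_\y\psi\cdot\nabla_\w\nu$ integrates against $\w$ to produce $\int\nabla\phi\,\varrho\,d\x = \tfrac12\int\nabla\aabs{\nabla\phi}^2 = 0$. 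Recognizing the remaining integral as $-4\pi\nabla\phi(\mathcal{X})$ via the fundamental solution of $\Delta$ and combining with the constants $q = q_cq_g/(2\pi\epsilon_0 m_g)$ and $\mathcal{Q} = q_cq_g/(\epsilon_0 M_c)$ recovers $\dot{\mathcal{V}} = \mathcal{Q}\nabla_\x\phi(\mathcal{X},t)$. Finally, $\lim_{t\to T^\ast}\mathcal{W}(t) = 0$ is immediate from the definition \eqref{DefVinfty} of $\mathcal{V}_\infty$. The main care lies in tracking signs and normalization constants in this last identification; no analytic obstacle is anticipated, as the calculation is a standard macroscopic momentum balance.
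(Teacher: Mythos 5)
Your proposal is correct, and its core --- the chain-rule identities relating $\partial_t\nu,\nabla_{\bf y}\nu,\nabla_{\bf w}\nu$ to $\mu$ and the observation that $\nabla_{\bf y}\psi=\nabla_{\bf x}\phi$, so that the Hamiltonian transport equation for $\nu$ is literally the first equation of \eqref{NewVP} and hence of \eqref{VPPC} --- is exactly the paper's own (very short) proof. Where you go beyond the paper is in the point-charge part: the paper simply recalls $\dot{\mathcal{X}}=\mathcal{V}$ and leaves the identification of Newton's law $\dot{\mathcal{V}}=\mathcal{Q}\nabla_{\bf x}\phi(\mathcal{X},t)$ with the algebraic $\mathcal{W}$-formula in \eqref{NewVP} implicit in the preceding modulation discussion (zero-momentum frame plus the conservation laws of Section \ref{sec:red_gas}), whereas you verify it explicitly by differentiating the momentum formula along the flow, integrating by parts, using $\int\nabla\phi\,\varrho\,d{\bf x}=\frac12\int\nabla\vert\nabla\phi\vert^2\,d{\bf x}=0$ for the gas self-force, and matching the constants $q,\mathcal{Q}$ through the fundamental solution. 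That computation is correct (the signs and normalizations work out to $\dot{\mathcal{V}}=\mathcal{Q}\nabla_{\bf y}\psi(0,t)$, consistent with \eqref{DerW}), and like the paper you are implicitly taking the zero-momentum frame relation $\mathcal{V}(t)=-\frac{m_g}{M_c}\iint\mu^2{\bf v}\,d{\bf x}d{\bf v}$ as part of how the unknowns are related; this is the same convention the paper adopts just before the lemma, so there is no gap.
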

\begin{proof}
We recall that $\frac{d\mathcal{X}}{dt}=\mathcal{V}$.
With notations in \eqref{NewVariables}, we have
\begin{equation}
\begin{split}
\nabla_{\bf y}\nu({\bf y},{\bf w},t)=\nabla_{\bf x}\mu({\bf x},{\bf v},t),&\qquad\nabla_{\bf w}\nu({\bf y},{\bf w},t)=\nabla_{\bf v}\mu({\bf x},{\bf v},t),\\
\partial_t\nu({\bf y},{\bf w},t)=(\partial_t+\mathcal{V}\cdot\nabla_{\bf x})\mu({\bf x},{\bf v},t),&\qquad\nabla_{\bf y}\psi({\bf y},t)=\nabla_{\bf x}\phi({\bf x},t).
\end{split}
\end{equation}
Then in terms of $\mu, {\bf x},{\bf v}$, the first equation in \eqref{NewVP} becomes 
\begin{equation}
\left(\partial_t+\mathcal{V}(t)\cdot\nabla_{\bf x}+({\bf v}-\mathcal{V}_\infty)\cdot\nabla_{\bf x}+\frac{q}{2}\frac{{\bf x}-\mathcal{X}}{\vert {\bf x}-\mathcal{X}\vert^3}\cdot\nabla_{\bf v}\right)\mu+Q\nabla_{\bf x}\phi\cdot\nabla_{\bf v}\mu=(\mathcal{V}(t)-\mathcal{V}_\infty)\cdot\nabla_{\bf x}\mu,
\end{equation}
which is exactly the first equation in \eqref{VPPC}.
\end{proof}

\subsubsection*{Close formulation}\label{ssec:pinnedframe}
Close to the point charge / for large velocities, we will prefer to center our coordinate frame around the instantaneous velocity of the point charge, and thus let
\begin{equation}\label{CenteredCoordinates}
{\bf y}^\prime={\bf y}={\bf x}-\mathcal{X}(t),\qquad {\bf w}^\prime={\bf v}-\mathcal{V}(t)={\bf w}-\mathcal{W}(t).
\end{equation}
This can be obtained by means of the generating function $S({\bf y}^\prime,{\bf w},t)={\bf y}^\prime\cdot({\bf w}-\mathcal{W}(t))$ and leads to the new Hamiltonian
\begin{equation}\label{NewHamiltonian2}
\begin{split}
\mathbb{H}^{\prime}({\bf y'},{\bf w'})=\mathbb{H}({\bf y},{\bf w})-\partial_tS=\frac{1}{2}\mathbb{H}_2({\bf y}^\prime,{\bf w}^\prime)-Q\psi({\bf y}^\prime,t)+\dot{\mathcal{W}}\cdot {\bf y}^\prime-\frac{1}{2}\vert\mathcal{W}\vert^2.
\end{split}
\end{equation}
We also note for further use that
\begin{equation}\label{DerW}
\dot{\mathcal{W}}(t)=\mathcal{Q}\nabla_{\bf y}\psi(0,t).
\end{equation}
We let
\begin{equation}
\nu'({\bf y'},{\bf w'},t):=\mu({\bf y'}+\mathcal{X},{\bf w'}+\mathcal{V}(t),t).
\end{equation}
Then equation \eqref{VPPC} is equivalent to 
\begin{equation}\label{equivVPPCNu}
\partial_t\nu'-\{\mathbb{H}^{\prime},\nu'\}=0,
\end{equation}
or
\begin{equation}\label{NewVP2}
\begin{split}
\left(\partial_t+{\bf w}'\cdot\nabla_{\bf y'}+\frac{q}{2}\frac{{\bf y'}}{\vert {\bf y'}\vert^3}\cdot\nabla_{\bf w'}\right)\nu'+Q\nabla_{\bf y'}\psi\cdot\nabla_{\bf w'}\nu'
&=\mathcal{Q}\nabla_{\bf y'}\psi(0,t)\cdot\nabla_{\bf w'}\nu',\\
\psi({\bf y'},t)&=\phi({\bf y'}+\mathcal{X},t).
\end{split}
\end{equation}

\section{Analysis of the linearized flow}\label{SecLin}

In this section, we solve the linearized equation \eqref{LinearHamiltonianFlow} using asymptotic action angle variables and introduce various other adapted coordinates.

\subsection{General comments on the asymptotic action-angle method}\label{GenComAA}

\subsubsection{Overview of the method of asymptotic actions}

There is no general way to find ``good choices'' of action-angle variable $(\Theta,\mathcal{A})$ besides trial and error; however, a few guidelines can be useful.
\begin{enumerate}

\item It is desirable that the change of variable $({\bf x},{\bf v})\mapsto(\vartheta,\a)$ be canonical, i.e. $d{\bf x}\wedge d{\bf v}=d\vartheta\wedge d\a$. This in particular ensures that the jacobian of the change of variable is $1$. A good way to enforce this is to use a {\it generating function} $S({\bf x},\a)$ such that ${\bf v}=\nabla_{\bf x} S$ and $\vartheta=\nabla_{\bf a} S$. In this case
\begin{equation*}
0=ddS=d\left(\frac{\partial S}{\partial {\bf x}^j}d{\bf x}^j+\frac{\partial S}{\partial \a^j}d\a^j\right)=d({\bf v}d{\bf x}+\vartheta d\a).
\end{equation*}
Such functions are, however often difficult to find explicitly.

\item Since we are concerned with longtime behavior, we choose $\a$ so that it captures the dispersive nature of the problem, i.e. that trajectories corresponding to different choices of $\a$ diverge as $t\delta a$. In scattering situations, one has a natural Hamiltonian at $\infty$, often $H_\infty=\vert {\bf v}_\infty\vert^2/2$ and a useful choice is $\a={\bf v}_\infty$. In this case, in the simplest situation, one needs to solve a scattering problem to define ${\bf v}=V({\bf x},{\bf v}_\infty)$ (i.e. compute the full trajectory knowing the incoming velocity and the position at one time), then integrate $V$ to recover the generating function
\begin{equation*}
\begin{split}
V({\bf x},{\bf v}_\infty)=(\nabla_{\bf x}S)({\bf x},{\bf v}_\infty),
\end{split}
\end{equation*}
and then deduce the angle $\vartheta$.

\end{enumerate}

Note that this leads to a cascade of Hamiltonians which describe various components of the dynamics: $\mathbb{H}\to \mathbb{H}_2$ to describe solutions to the perturbed problem as envelopes of solutions to the linearized problem, and $\mathbb{H}_2\mapsto H_\infty$ to describe solutions of the linearized problem via their asymptotic state.

\subsubsection{The case of radial data}

The strategy laid out above can be most easily carried out for the case of radial data, a problem already treated in \cite{PW2020}. Starting from the energy, we can express the outgoing velocity as a function of $a$ and $r$:
\begin{equation}\label{RadialV}
\begin{split}
V(r,a)=\sqrt{a^2-\frac{q}{r}}=a\sqrt{1-\frac{q}{ra^2}}
\end{split}
\end{equation}
and integrating in $r$, we find the generating function:
\begin{equation}\label{GeneratingFunctionRadial}
\begin{split}
S(r,a)=\frac{q}{a}K(\frac{ra^2}{q})=ar_{min}K(\frac{r}{r_{min}}),\qquad\frac{\partial S}{\partial r}=V,
\end{split}
\end{equation}
where $K$ is defined in \eqref{DefK} below. This gives a formula for the angle
\begin{equation*}
\begin{split}
\Theta&=\frac{\partial S}{\partial a}=-\frac{q}{a^2}K(\frac{ra^2}{q})+2r\sqrt{1-\frac{q}{ra^2}}=-\frac{q}{a^2}K(\frac{ra^2}{q})+2r\frac{V}{a}.
\end{split}
\end{equation*}
This simplifies to
\begin{equation}\label{NewDefinitionTheta}
\begin{split}
\Theta&=-r_{min}K(\frac{r}{r_{min}})+2\frac{rv}{a}=-r_{min}K(\frac{r}{r_{min}})+2\frac{rv}{\sqrt{v^2+\frac{q}{r}}},
\end{split}
\end{equation}
and since $r_{min},a$ are invariant along the trajectory, we can verify that
\begin{equation*}
\begin{split}
\dot{\Theta}&=-\dot{r}K^\prime(\frac{r}{r_{min}})+2\frac{\dot{r}v}{a}+2\frac{r\dot{v}}{a}=-\frac{v^2}{a}+2\frac{v^2}{a}+\frac{q}{ra}=a,
\end{split}
\end{equation*}
where we have used that
\begin{equation*}
\begin{split}
K^\prime(\frac{r}{r_{min}})=\sqrt{1-\frac{r_{min}}{r}}=\frac{v}{a},\qquad \dot{r}=v,\qquad\dot{v}=\frac{q}{2r^2}.
\end{split}
\end{equation*}

In the discussion above, we have used the function $K$ defined by
\begin{equation}\label{DefK}
\begin{split}
K^\prime(s)=\left[1-\frac{1}{s}\right]^\frac{1}{2},\qquad K(1)=0,\qquad K(s):=\sqrt{s(s-1)}-\ln(\sqrt{s}+\sqrt{s-1}).
\end{split}
\end{equation}
Since we can verify that
\begin{equation*}
\begin{split}
G(s)=2\sqrt{s(s-1)}-K(s),\qquad G^\prime(s)=\left[1-\frac{1}{s}\right]^{-\frac{1}{2}},\quad G(1)=0,
\end{split}
\end{equation*}
we see that, in the outgoing case $v>0$, this gives a similar choice of unknown than in \cite{PW2020}, but through a different approach. In the incoming case $v<0$, we need to change sign in \eqref{RadialV} and this leads to the new generating function
\begin{equation*}
\begin{split}
S_{in}(r,a):=-\frac{q}{a}K(\frac{ra^2}{q}),
\end{split}
\end{equation*}
and in turn we obtain
\begin{equation*}
\begin{split}
\Theta&=r_{min}K(\frac{r}{r_{min}})-2r\sqrt{1-\frac{q}{ra^2}}=\frac{v}{\vert v\vert}\left[-r_{min}K(\frac{r}{r_{min}})+2r\frac{\vert v\vert}{a}\right]=r_{min}\frac{v}{\vert v\vert}G(\frac{r}{r_{min}}).
\end{split}
\end{equation*}
We thus see that, in the radial case, the scattering problem is degenerate in the sense that there are, in general, {\it two} trajectories that pass through $r$ at time $t$ and have asymptotic velocity $a$ (one incoming one outgoing). This fold degeneracy can be resolved by introducing a function $\sigma=v/\vert v\vert=\theta/\vert\theta\vert$ and defining
\begin{equation*}
\begin{split}
S_{tot}=\sigma\frac{q}{a}K(\frac{ra^2}{q})
\end{split}
\end{equation*}
with a choice of sign that is not expressed in terms of $(r,a)$ alone.

\subsection{Angle-Action coordinates for the Kepler problem}\label{ssec:Kepler}

We want to study the Kepler system:
\begin{equation}\label{ODE}
\begin{split}
\frac{dX}{dt}=V,\qquad\frac{dV}{dt}=\frac{qX}{2\vert X\vert^3},
\end{split}
\end{equation}
in the repulsive case $q>0$. All the orbits are open and can be described using the conservation of energy $H$, angular momentum ${\bf L}$ and {\it Runge-Lenz} vector $\bf{R}$,
\begin{equation}\label{ConservedQuantitiesODE}
\begin{split}
H&:=\vert{\bf v}\vert^2+\frac{q}{\vert{\bf x}\vert},\qquad {\bf L}:={\bf x}\times{\bf v},\qquad{\bf R}:={\bf v}\times {\bf L}+\frac{q}{2}\frac{{\bf x}}{\vert{\bf x}\vert}.
\end{split}
\end{equation}
In the following formulas, it will be very useful to keep track of the homogeneity and we note that\footnote{Here $\dm{\x}$ denotes the dimension of spatial length, $\dm{t}$ the dimension of time,  $\dm{\v}=\dm{\x}\dm{t}^{-1}$ the  dimension of velocity and $\dm{q}=\dm{\x}^3\dm{t}^{-2}$ the dimension of an electric charge.}
\begin{equation}\label{eq:dims}
\begin{split}
\dm{H}=\frac{\dm{\x}^2}{\dm{t}^2}=\dm{\v}^2=\frac{\dm{q}}{\dm{\x}},\qquad \dm{q}=\dm{\mathbf{R}}=\dm{\x}\dm{\v}^2,\qquad \dm{\L}=\dm{\x}\dm{\v}.
\end{split}
\end{equation}
The conservation laws \eqref{ConservedQuantitiesODE} give $5$ functionally independent conservation laws, although of course only $3$ of these can be in involution. A classical choice is $\{H,\vert{\bf L}\vert,L_3\}$ which leads to the classical solution of the $2$-body problem by reducing it to a planar problem. Indeed, if one chooses the $z$ axis such that $L_1=L_2=0$, then they remain $0$ for all time and the motion remains in the plane $\{z=0\}$. This allows us to exhibit a convenient action-angle transformation satisfying the asymptotic action property \eqref{AsymptoticActions}.

We define the phase space to be
\begin{equation*}
\begin{split}
\mathcal{P}_{{\bf x},{\bf v}}:=\{({\bf x},{\bf v})\in\mathbb{R}^3\times\mathbb{R}^3:\,\, \vert{\bf x}\vert>0\},\qquad\mathcal{P}_{\vartheta,{\bf a}}:=\{(\vartheta,{\bf a})\in\mathbb{R}^3\times\mathbb{R}^3:\,\,\vert{\bf a}\vert>0\},
\end{split}
\end{equation*}
where the angles $\vartheta$ resp.\ actions $\a$ will have the dimension of length $\dm{\vartheta}=\dm{\x}$ resp.\ velocity $\dm{\a}=\dm{\v}$.

Our main result in this section is the construction of asymptotic action-angle variables in Proposition \ref{PropAA} which will provide adapted coordinates for the phase space.
%
%
%
This will be proved later in Section \ref{ssec:AngleAction} after we have solved a scattering problem. Here we highlight that by construction as a \emph{canonical transformation}, the change of variables $\mathcal{T}$ preserves the Liouville measure
 \begin{equation}
  \det\frac{\partial(\Theta,\mathcal{A})}{\partial(\x,\v)}=\det\frac{\partial(\X,\V)}{\partial(\vartheta,\a)}=1.
 \end{equation}

\begin{remark}\label{rem:virials}

Using that \eqref{ODE} is super-integrable, one can express $5$ of the 6 coordinates of a trajectory in terms of conserved quantities. The last one corresponds to the ``trace'' of time and cannot be deduced by the conservation laws alone. In our case, a good proxy for the ``trace'' of time are the quantities ${\bf x}\cdot {\bf v}$ and $\theta\cdot{\bf a}$. That this measures time lapsed is obvious in action angle coordinates from \eqref{LinearizationFlow}. For the physical variables, this follows from Virial-type computations
\begin{equation}\label{eq:virials}
\begin{split}
\frac{d}{dt}\frac{\vert {\bf x}\vert^2}{2}={\bf x}\cdot{\bf v},\qquad \frac{d}{dt}({\bf x}\cdot{\bf v})=\vert{\bf v}\vert^2+\frac{q}{2\vert{\bf x}\vert}=\frac{\vert{\bf v}\vert^2}{2}+\frac{1}{2}H.
\end{split}
\end{equation}

\end{remark}

Although not very illuminating, we can obtain explicit expressions using \eqref{ExplicitA1} and \eqref{DefTheta}:
\begin{equation}\label{eq:explicitATheta}
\begin{split}
\mathcal{A}({\bf x},{\bf v})&=\frac{2q\sqrt{H}}{4HL^2+q^2}{\bf R}+\frac{4H}{4HL^2+q^2}{\bf L}\times {\bf R},\\
\Theta({\bf x},{\bf v})&=\frac{\iota}{2}\vert{\bf x}\vert K^\prime(\rho)\left(\frac{\bf x}{\vert{\bf x}\vert}+\frac{\bf a}{\vert{\bf a}\vert}\right)+\frac{\vert{\bf x}\vert}{2}\left(\frac{\bf x}{\vert{\bf x}\vert}-\frac{\bf a}{\vert{\bf a}\vert}\right)-\sigma(\rho)\frac{\bf a}{\vert{\bf a}\vert^3},\qquad{\bf a}=\mathcal{A}({\bf x},{\bf v}),
\end{split}
\end{equation}
where $K$ is defined in \eqref{DefK}, and
\begin{equation*}
\begin{split}
\iota&=\hbox{sign}({\bf x}\cdot{\bf v}+\sqrt{H}L^2/q),\qquad\sigma=-\iota\ln(\sqrt{\rho}+\sqrt{\rho-1}),\\
\rho&=\frac{\sqrt{H}}{2q}\left(\vert{\bf x}\vert\sqrt{H}+\frac{1}{4HL^2+q^2}(4HL^2{\bf x}\cdot{\bf v}+q^2\sqrt{H}\vert{\bf x}\vert+2qL^2\sqrt{H})\right).
\end{split}
\end{equation*}
This follows combining \eqref{DefGamma1} and \eqref{DefTheta} for $\iota$, \eqref{def:sigma} for $\sigma$ and \eqref{eq:def_rho2} and \eqref{AxAXperp} for $\rho$.

In the radial case, ${\bf L}=0$, $\rho=r/r_{min}$ and we recover the formulas from the paper \cite{PW2020}. Explicit expressions for $\X(\vartheta,\a),\V(\vartheta,\a)$ will be most useful and are given in \eqref{ExpressionsX} and \eqref{ExpressionsV} below.

\subsubsection{Planar dynamics}\label{ssec:planar}
To study the geometry of trajectories in the Kepler problem \eqref{ODE}, we note that by the conservation laws \eqref{ConservedQuantitiesODE} we may choose coordinates such that the motion takes place in the $xy$-plane with angular momentum $(0,0,L)$ for some $L>0$. Switching to polar coordinates $(r,\phi)$\footnote{Note that ${\bf x}=r(\cos\phi,\sin{\phi},0)$. We recall vectors ${\bf e}_r=(\cos\phi,\sin\phi,0)$ and ${\bf e}_\phi=(-\sin\phi,\cos\phi,0)$ in the basis.}, we find that
\begin{equation}\label{ConservationLaws2dplanar}
\begin{split}
H=\dot{r}^2+r^2\dot{\phi}^2+\frac{q}{r},\qquad L=r^2\dot{\phi},\qquad{\bf R}=\left(\frac{L^2}{r}+\frac{q}{2}\right){\bf e}_r-\dot{r}L{\bf e}_\phi,
\end{split}
\end{equation}
and these can be used to integrate the equation. 
\begin{figure}[h]
 \centering
 \includegraphics[width=0.75\textwidth]{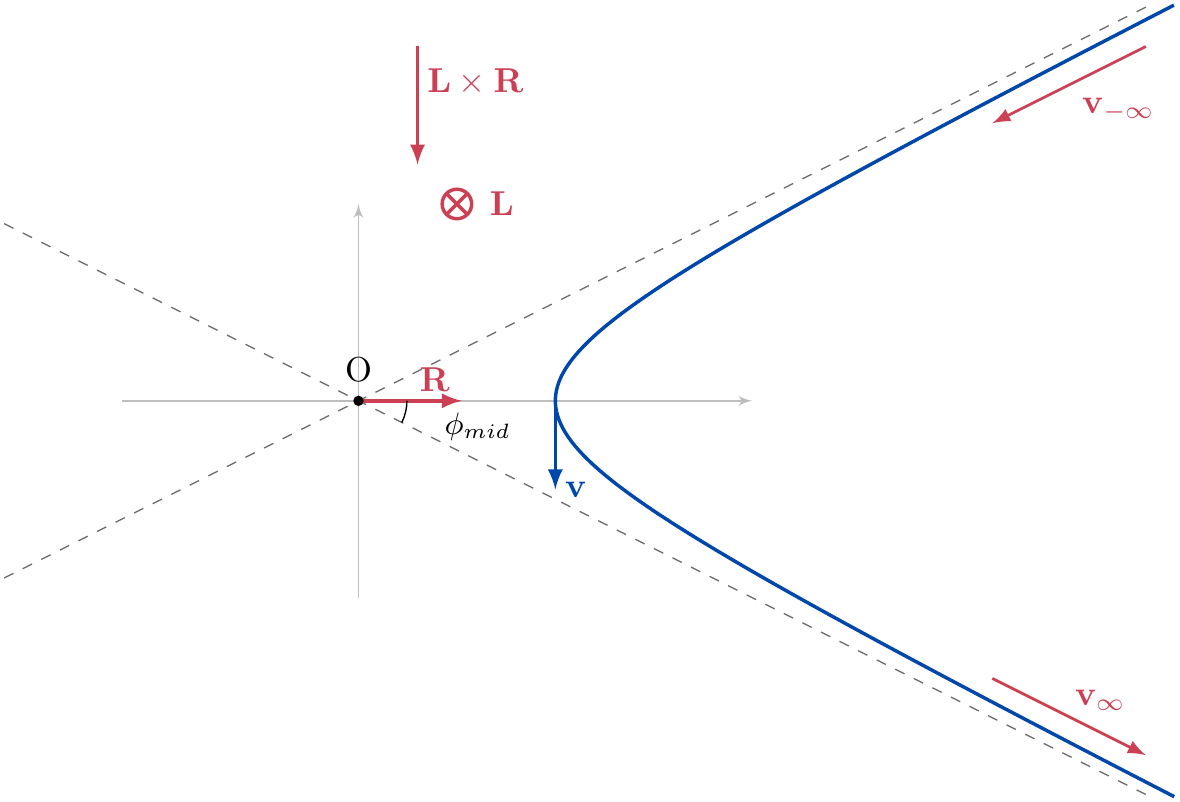}
 \caption{A sample trajectory of the Kepler problem (in blue), with conserved quantities (in red).}
 \label{fig:cons-laws}
\end{figure}
More precisely, we have that
\begin{equation}
 \frac{d\bf{v}}{dt}=\frac{q}{2r^2}(\cos\phi,\sin\phi,0), \quad L=r^2\frac{d\phi}{dt}\qquad\Rightarrow\qquad \frac{d\bf{v}}{d\phi}=\frac{q}{2L}(\cos\phi,\sin\phi,0),
\end{equation}
and hence
\begin{equation}
 {\bf v}(\phi)=\frac{q}{2L}(\sin\phi,-\cos\phi,0)+(c_1,c_2,0),
\end{equation}
where ${\bf c}=(c_1,c_2,0)$ is the constant of integration. Thus ${\bf v}$ moves along a circle (the so-called \emph{``velocity circle''} -- see e.g.\ \cite{Mil1983} and Figure \ref{fig:velocity-circle}) with center ${\bf c}$ and satisfies ${\bf v-c\perp x}$, with periapsis in direction $-{\bf c}^\perp$.\footnote{For a vector ${\bf c}=(c_1,c_2,0)$, we denote ${\bf c}^\perp=(-c_2,c_1,0)$.} As is well known, in the repulsive case ($q>0,H>0$) the trajectories are hyperbolas, and asymptotic velocities ${\bf v_{\pm\infty}}$ are thus well-defined.\footnote{In the present formulation, this can be seen for example by assuming that the velocity circle is centered on the positive $y$-axis, i.e.\ $c_1=0,c_2=\epsilon\cdot\frac{q}{2L}\geq 0$, so that from the equation $(0,0,L)={\bf x}\times{\bf v}$ it follows that $r=\frac{2L^2}{q}\frac{1}{\epsilon\cos\phi-1}$, where $\epsilon>1$ is the eccentricity of the hyperbola.}

\begin{figure}[h]
 \centering
 \includegraphics[width=0.6\textwidth]{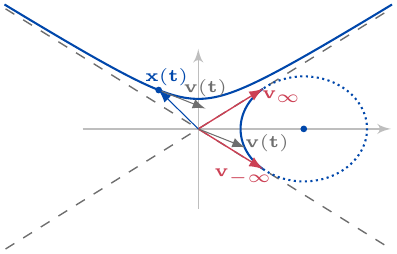}
 \caption{An illustration of a sample trajectory of the Kepler problem with associated velocity circle (in blue) and asymptotic velocities $\v_{-\infty}$ and $\v_\infty$ (in red).}
 \label{fig:velocity-circle}
\end{figure}

\subsubsection*{Scattering problem}
In order to obtain our asymptotic action, we need to understand to which extent knowledge of ${\bf x}$ and ${\bf v}_\infty$ allows to determine the full trajectory, i.e. to solve the following problem: ``find the trajectories passing through ${\bf x}$ at time $t$ and whose (forward) asymptotic velocity is ${\bf v}_\infty$''. 

Inspecting the behavior of the conservation laws at $\infty$ and at periapsis, we find that\footnote{Here and in the following, given a nonzero vector ${\bf v}$, we denote by $\hat{\bf v}={\bf v}/\vert{\bf v}\vert$ its direction vector.}
\begin{equation}\label{ConsLaws2dPlanar2}
\begin{split}
H=\frac{L^2}{r_{min}^2}+\frac{q}{r_{min}}=\vert{\bf v}_\infty\vert^2,\qquad {\bf R}=\left(\frac{L^2}{r_{min}}+\frac{q}{2}\right)\hat{\bf x}_p=\frac{q}{2}{\hat{\bf v}_\infty}-\vert {\bf v}_\infty\vert\cdot  L\hat{{\bf v}}_\infty^\perp,
\end{split}
\end{equation}
and in particular
\begin{equation*}
2R=\sqrt{4HL^2+q^2},\qquad\frac{1}{r_{min}}=\frac{\sqrt{q^2+4HL^2}-q}{2L^2},\quad r_{min}=\frac{q+\sqrt{q^2+4HL^2}}{2H},
\end{equation*}
so that only the direction of ${\bf R}$ will be important (and we recover $r_{min}=q/H$ in the radial case).

For the sake of definiteness, for a given ${\bf v_\infty}$ let us further rotate our coordinates such that ${\bf v_\infty}=(\sqrt{H},0,0)$ is parallel to the (positive) $x$-axis (see also Figure \ref{fig:scatter}). Then the possible trajectories in this setup all lie in the lower half-plane $\{y\leq 0\}$, and the center ${\bf c}$ of the velocity circle is determined by the requirement that ${\bf v}(2\pi)=(\sqrt{H},0,0)$, i.e.
\begin{equation}\label{eq:v-theta}
 {\bf v}(\phi)=\frac{q}{2L}(\sin\phi,-\cos\phi,0)+(\sqrt{H},\frac{q}{2L},0),
\end{equation}
and periapsis lies in direction $-{\bf c}^\perp=(\frac{q}{2L},-\sqrt{H},0)$.


The quantity 
\begin{equation}\label{eq:def_rho}
 \rho(r,\phi):=\frac{rH}{2q}(1+\cos\phi)
\end{equation}
plays a key role for the dynamics, as illustrated by the following lemma:
\begin{lemma}\label{lem:trajectories}
 Let ${\bf x}_0=r_0(\cos\phi_0,\sin\phi_0,0)$ be given with $r_0>0$ and $\phi_0\in(\pi,2\pi)$.
\begin{enumerate}
 \item If $\rho(r_0,\phi_0)<1$, there does not exist a trajectory through ${\bf x}_0$ with asymptotic velocity ${\bf v}_\infty$.
 \item If $\rho(r_0,\phi_0)=1$, there exists exactly one trajectory through ${\bf x}_0$ with asymptotic velocity ${\bf v}_\infty$. If ${\bf v}_0$ denotes the instantaneous velocity, there holds that ${\bf x}_0\cdot{\bf v}_0=-\sqrt{H}L^2/q<0$.
 \item If $\rho(r_0,\phi_0)>1$ there are two trajectories $h_\pm$ through ${\bf x}_0$ with asymptotic velocity ${\bf v}_\infty$, corresponding to values $0<L_-< L_+$ for the angular momentum. 
\end{enumerate}

Besides, in case $(3)$, we have that, locally around $({\bf x}_0,{\bf v}_0)$, $\rho$ decreases on $h_-$ and increases on $h_+$.
\end{lemma}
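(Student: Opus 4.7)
\medskip\noindent\textbf{Proof proposal.} The plan is to parameterize candidate trajectories with outgoing asymptotic velocity $\v_\infty = (\sqrt{H},0,0)$ by their angular momentum $L>0$ via the velocity-circle formula \eqref{eq:v-theta}, and then impose that such a trajectory passes through $\x_0 = r_0(\cos\phi_0, \sin\phi_0, 0)$. Using that $L = (\x_0\times\v(\phi_0))_z$, a direct substitution and simplification (via $\sin^2+\cos^2=1$) yields the quadratic
\begin{equation}
P(L) := 2L^2 + 2L r_0\sqrt{H}\sin\phi_0 + q r_0(1-\cos\phi_0) = 0.
\end{equation}

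Its discriminant equals $4r_0^2H\sin^2\phi_0 - 8qr_0(1-\cos\phi_0) = 8qr_0(1-\cos\phi_0)(\rho-1)$, after using $\sin^2\phi_0 = (1-\cos\phi_0)(1+\cos\phi_0)$ and the definition \eqref{eq:def_rho} of $\rho$. The three cases of the lemma then follow at once from the sign of $\rho-1$; whenever the roots are real, both are positive by Vieta's formulas, since $\sin\phi_0<0$ makes the sum $-r_0\sqrt{H}\sin\phi_0$ and the product $qr_0(1-\cos\phi_0)/2$ both positive. For the additional assertion in case $(2)$, plugging the double root $L=-r_0\sqrt{H}\sin\phi_0/2$ into $\x_0\cdot\v(\phi_0)$ computed from \eqref{eq:v-theta}, combined with $\rho=1$, i.e.\ $r_0H(1+\cos\phi_0)=2q$, elementary algebra yields $\x_0\cdot\v_0 = -\sqrt{H}L^2/q < 0$.

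For the monotonicity claim in case $(3)$, the plan is to compute $\dot\rho$ along a trajectory with given $L$, evaluated at $\x_0$. Starting from $\rho = \frac{H}{2q}r(1+\cos\phi)$, applying $\dot r = \x\cdot\v/r$, $r^2\dot\phi = L$ together with \eqref{eq:v-theta}, and simplifying via the constraint $P(L)=0$ to eliminate the $L/r_0$ contribution, one should obtain the compact expression
\begin{equation}
\dot\rho(\x_0) = \frac{H}{2L}\Big[\sin\phi_0 + \frac{L\sqrt{H}}{q}(1+\cos\phi_0)\Big].
\end{equation}
The bracket is a strictly increasing linear function of $L$ (since $1+\cos\phi_0>0$, as $\phi_0\ne\pi$) that is negative at $L=0$ and vanishes at $L^*:= -q\tan(\phi_0/2)/\sqrt{H} > 0$. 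It then suffices to show $L_- < L^* < L_+$, equivalently $P(L^*)<0$. A direct evaluation gives the clean factorization $P(L^*) = qr_0(1-\cos\phi_0)(1-\rho)/\rho$, which is strictly negative exactly when $\rho>1$. Since $P$ has positive leading coefficient, $L^*$ lies strictly between the two roots, giving $\dot\rho<0$ at $L_-$ and $\dot\rho>0$ at $L_+$, as desired.

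The main obstacle should be the monotonicity step: both the derivation of the expression for $\dot\rho$ and the factorization of $P(L^*)$ through $\rho$ are elementary but require some careful bookkeeping. Once these are in hand, all three cases and the monotonicity statement follow cleanly from the quadratic structure of $P$.
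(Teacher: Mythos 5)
Your proposal is correct: all the claimed identities check out (the discriminant factorization, the value $\x_0\cdot\v_0=(q-r_0H)/\sqrt{H}=-\sqrt{H}L^2/q$ at the double root, the formula $\dot\rho=\frac{H}{2L}\bigl[\sin\phi_0+\frac{L\sqrt{H}}{q}(1+\cos\phi_0)\bigr]$ after eliminating $L/r_0$ via $P(L)=0$, and $P(L^*)=qr_0(1-\cos\phi_0)(1-\rho)/\rho$). For the trichotomy your route coincides with the paper's: both impose $L=(\x_0\times\v(\phi_0))_z$ on the velocity-circle parameterization \eqref{eq:v-theta} and read off existence and positivity of the roots of the resulting quadratic in $L$ (your $P(L)$ is twice the paper's \eqref{eq:L}). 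Where you genuinely diverge is the final monotonicity claim: the paper instead derives the relation $-\tan(\phi/2)=\frac{\sqrt{H}L}{q}\bigl[1\mp\sqrt{1-1/\rho}\bigr]$ along each branch and differentiates it in $\phi$, so the sign of $d\rho/d\phi$ is forced by the $\mp$ sign (using $\dot\phi=L/r^2>0$); you compute $\dot\rho$ at $\x_0$ directly and locate the critical angular momentum $L^*=-\frac{q}{\sqrt H}\tan(\phi_0/2)$ strictly between $L_-$ and $L_+$ via the sign of $P(L^*)$. Your version is more computational but self-contained and makes the mechanism (a single threshold $L^*$ separating the two branches) very explicit, whereas the paper's implicit differentiation is shorter and reuses the fold relation \eqref{AngleFold} that it needs later anyway. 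You also verify the case $(2)$ identity $\x_0\cdot\v_0=-\sqrt{H}L^2/q$ by direct substitution, a point the paper's proof leaves essentially implicit. The only soft spots are presentational: several steps are phrased as ``one should obtain,'' though the computations indeed work, and, like the paper, you implicitly use that every planar trajectory through $\x_0$ with forward asymptotic velocity $\v_\infty$ is captured by the $L>0$ velocity-circle family (your quadratic having no nonpositive roots is consistent with this).
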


\begin{proof}
 On the one hand, since $(0,0,L)={\bf x}\times {\bf v}$ we obtain directly that
\begin{equation}\label{eq:L}
 L=r\left(-\frac{q}{2L}+\frac{q}{2L}\cos\phi-\sqrt{H}\sin\phi\right)\quad\Leftrightarrow\quad L^2+Lr\sqrt{H}\sin\phi+\frac{rq}{2}(1-\cos\phi)=0.
\end{equation}
This has real solutions if and only if
\begin{equation}
 rH(1+\cos\phi)\geq 2q\quad\Leftrightarrow\quad \rho(r,\phi)=\frac{rH}{2q}(1+\cos\phi)\geq 1,
\end{equation}
and they are given by
\begin{equation}\label{eq:L-sol}
 L=-\sin\phi\frac{r\sqrt{H}}{2}\left(1\pm\sqrt{1-\frac{1}{\rho(r,\phi)}}\right).
\end{equation}
Since each choice of $L$ leads to a trajectory by \eqref{eq:v-theta}, this yields the claimed trichotomy. Along a trajectory, we see from \eqref{eq:L} that
\begin{equation}\label{eq:r-shape}
 r=-\frac{L^2}{L\sqrt{H}\sin\phi+\frac{q}{2}(1-\cos\phi)},
\end{equation}
and therefore,
\begin{equation}
 \partial_\phi r(\phi)=\frac{qL^2}{2[L\sqrt{H}\sin\phi+\frac{q}{2}(1-\cos\phi)]^2}\left(\frac{2L\sqrt{H}}{q}\cos\phi+\sin\phi\right)=\frac{r}{L}{\bf c\cdot x}=\frac{r}{L}{\bf v\cdot x}.
\end{equation}
Using \eqref{eq:def_rho} and \eqref{eq:L-sol}, we see that
\begin{equation}\label{AngleFold}
-\tan(\phi/2)=\frac{-\sin\phi}{1+\cos\phi}=\frac{L\sqrt{H}}{q}\frac{1}{\rho\pm\sqrt{\rho(\rho-1)}}=\frac{\sqrt{H}L}{q}\left[1\mp\sqrt{1-1/\rho}\right],
\end{equation}
and therefore, when $\rho=1$ we can plug in the equation above and obtain $\partial_\phi r<0$. Deriving both side of \eqref{AngleFold}, we obtain
\begin{equation*}
\begin{split}
-\frac{1}{2}(1+\tan^2(\phi/2))=\mp\frac{L\sqrt{H}}{q}\frac{1}{2\rho\sqrt{\rho(\rho-1)}}\frac{d}{d\phi}[\rho(r(\phi),\phi)],
\end{split}
\end{equation*}
which is enough since we know that, along a trajectory, $\dot{\phi}=r^{-2}L>0$.
\end{proof}

\begin{figure}[h]
 \centering
 \includegraphics[width=\textwidth]{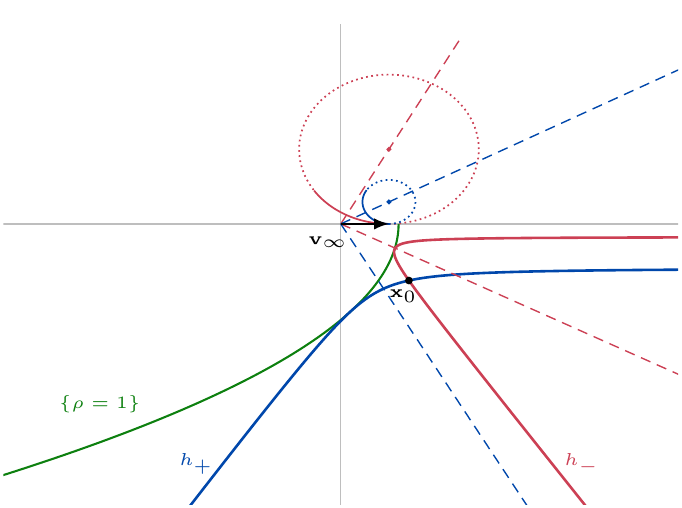}
 \caption{An illustration of Lemma \ref{lem:trajectories} with two trajectories $h_1$ (red) and $h_2$ (blue) with asymptotic velocity $\v_\infty$ through a given point $\x_0$, and their corresponding velocity circles. The green line is the level set $\{\rho=1\}$.}
 \label{fig:trajectories}
\end{figure}

From the proof we observe that, via the velocity circle, we can directly compute the angle from periapsis to asymptotic velocity as $\phi_{p}=\frac{\pi}{2}-\tilde\phi$, where $\tan\tilde\phi=\frac{q}{2L\sqrt{H}}$ (see also Figure \ref{fig:vel-circle}).
\begin{figure}[h]
 \centering
 \includegraphics[width=0.5\textwidth]{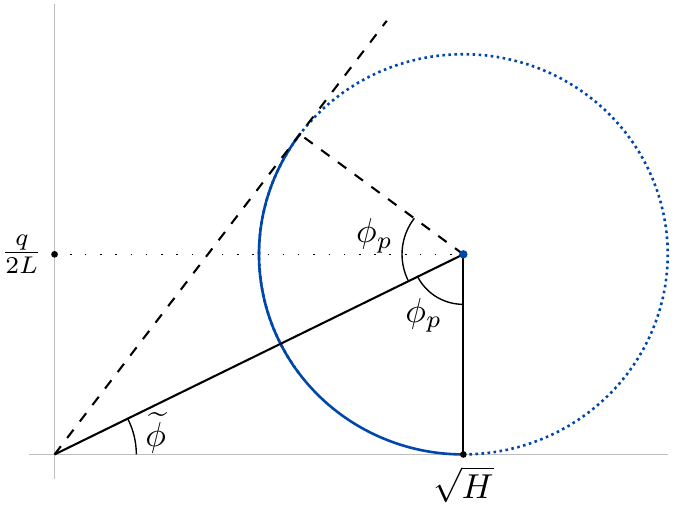}
 \caption{An illustration of the utility of the velocity circle.}
 \label{fig:vel-circle}
\end{figure}

In addition, by \eqref{eq:L} and \eqref{eq:v-theta} there holds that 
\begin{equation}\label{ExpressionVPlanarCase}
 {\bf v}(\phi)=-\frac{q}{2L}{\bf e}_\phi+(\sqrt{H},\frac{q}{2L},0)=-\left(\sqrt{H}+\frac{\sin\phi}{1-\cos\phi}\frac{L}{r}\right){\bf e}_r+\frac{L}{r}{\bf e}_\phi.
\end{equation}
This can be integrated using that, with $K$ as defined in \eqref{DefK}, and \eqref{eq:L-sol}, there holds that
\begin{equation}
\begin{aligned}
 \frac{L}{r}&=-\frac{\sqrt{H}}{2}\sin\phi\left[1\pm K'(\rho(r,\phi))\right]=\frac{1}{r}\frac{\partial}{\partial\phi}\left[\pm \frac{q}{\sqrt{H}}K(\rho(r,\phi))+\frac{r\sqrt{H}}{2}\cos\phi\right],\\
 -\frac{\sin\phi}{1-\cos\phi}\frac{L}{r}&=\frac{\sqrt{H}}{2}(1+\cos\phi)\left[1\pm K'(\rho(r,\phi))\right]=\frac{\partial}{\partial r}\left[\pm \frac{q}{\sqrt{H}}K(\rho(r,\phi))+\frac{r\sqrt{H}}{2}(1+\cos\phi)\right],
\end{aligned} 
\end{equation}
so that, using \eqref{ExpressionVPlanarCase}, we can write
\begin{equation}
 {\bf v}=\nabla_{\bf x} S(r,\phi),\qquad S(r,\phi)=\pm \frac{q}{\sqrt{H}}K(\rho(r,\phi))-\frac{r\sqrt{H}}{2}(1-\cos\phi).
\end{equation}

\subsubsection{Towards Angle-Action variables}\label{ssec:AngleAction}
In general geometry, for a given ``action'' $\a=\v_\infty$ (with $\abs{\a}^2=H$) the dimensionless (see \eqref{eq:dims}) function $\rho$ generalizes as
\begin{equation}\label{eq:def_rho2}
 \rho(\x,\a)=\frac{\abs{\a}}{2q}(\abs{\x}\abs{\a}+\a\cdot\x),
\end{equation}
and by the above we have established the following lemma:
\begin{lemma}\label{lem:gen_functions}
The functions
\begin{equation}\label{GeneratingFunction}
\begin{split}
\mathcal{S_\pm}({\bf x},{\bf a})&=\pm\frac{q}{\vert{\bf a}\vert}K(\frac{\vert{\bf a}\vert}{2q}({\bf x}\cdot{\bf a}+\vert{\bf x}\vert\vert{\bf a}\vert))-\frac{\vert{\bf x}\vert\vert{\bf a}\vert-{\bf x}\cdot{\bf a}}{2}
\end{split}
\end{equation}
with $K$ defined in \eqref{DefK} are generating functions in the sense that if $\x,\a\in\R^3$ are given with $\rho(\x,\a)\geq 1$, then
\begin{equation}
 {\bf v}_\pm=\nabla_{\bf x}\mathcal{S}_\pm
\end{equation}
define velocities corresponding to trajectories of the ODE \eqref{ODE} passing through $\x$ with asymptotic velocity $\a$. Moreover, these are the only such velocities.

In addition, the generating functions preserve the angular momentum in the sense that for the aforementioned trajectories their angular momenta are given by
\begin{equation}\label{ConservationMomentumPlane}
{\bf L}_\pm={\bf x}\times\nabla_\x\mathcal{S}_\pm=\nabla_\a\mathcal{S}_\pm\times{\bf a},
\end{equation}
and we have that $\abs{{\bf L}_+}\geq\abs{{\bf L}_-}$.
\end{lemma}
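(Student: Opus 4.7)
The plan is to reduce the spatial statement to the planar analysis of Section \ref{ssec:planar} via rotational invariance. Any trajectory of \eqref{ODE} with nonzero angular momentum $\L=\x\times\v$ lies in the plane perpendicular to $\L$, which contains $\x$ at every instant and also $\a$ as the asymptotic limit of $\v$. When $\x$ and $\a$ are linearly independent, this plane is forced to be $\Pi=\mathrm{span}(\x,\a)$; the collinear case reduces to the radial scattering already handled in Section \ref{GenComAA}. After a rotation sending $\Pi$ to the $xy$-plane and $\a$ to $|\a|$ times the first coordinate direction, one verifies directly that $\rho$ of \eqref{eq:def_rho2} reduces to the planar quantity \eqref{eq:def_rho} and that $\mathcal{S}_\pm$ reduces to the planar generating function $\pm\tfrac{q}{\sqrt{H}}K(\rho)-\tfrac{r\sqrt{H}}{2}(1-\cos\phi)$ identified at the end of Section \ref{ssec:planar}.

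With this reduction in hand, Lemma \ref{lem:trajectories} immediately supplies existence and uniqueness: for $\rho(\x,\a)>1$ exactly two trajectories pass through $\x$ with asymptotic velocity $\a$, with angular momenta $L_-<L_+$ given by \eqref{eq:L-sol}, and for $\rho(\x,\a)=1$ exactly one. I would then obtain $\v_\pm=\nabla_\x\mathcal{S}_\pm$ by pulling back to $\R^3$ the planar computation $\v=\nabla_\x S(r,\phi)$ carried out just before \eqref{eq:def_rho} in Section \ref{ssec:planar}. The inequality $|\L_+|\ge|\L_-|$ follows at once from \eqref{eq:L-sol}, since for $\phi\in(\pi,2\pi)$ one has $-\sin\phi>0$.

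For the remaining identity $\L_\pm=\nabla_\a\mathcal{S}_\pm\times\a$, rather than computing $\nabla_\a\mathcal{S}_\pm$ component by component, I would use a clean Noether-type argument: the function $\mathcal{S}_\pm(\x,\a)$ depends on $(\x,\a)$ only through the rotation-invariants $|\x|$, $|\a|$ and $\x\cdot\a$, so $\mathcal{S}_\pm(R\x,R\a)=\mathcal{S}_\pm(\x,\a)$ for every rotation $R\in SO(3)$. Differentiating at $R=\mathrm{Id}$ along a one-parameter group of rotations with generator $\omega\in\R^3$ yields $(\omega\times\x)\cdot\nabla_\x\mathcal{S}_\pm+(\omega\times\a)\cdot\nabla_\a\mathcal{S}_\pm=0$ for all $\omega$, which is equivalent to $\x\times\nabla_\x\mathcal{S}_\pm=\nabla_\a\mathcal{S}_\pm\times\a$, and combining with $\v_\pm=\nabla_\x\mathcal{S}_\pm$ gives the claim.

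The main obstacle I expect is bookkeeping rather than conceptual: one must consistently match the $\pm$ sign in $\mathcal{S}_\pm$ with the $\pm$ branch in \eqref{eq:L-sol}, and verify that $\nabla_\x\mathcal{S}_\pm$ returns the correct incoming versus outgoing velocity at $\x$ (the two choices of $\pm$ correspond to trajectories on opposite sides of the pericenter fold visible in \eqref{AngleFold}). Near $\rho=1$ the derivative $K'(\rho)$ carries a square-root singularity and the two branches merge, so smoothness of $\x\mapsto\v_\pm$ is only available on the open set $\{\rho>1\}$; this is consistent with the fold structure of the scattering map described in the overview.
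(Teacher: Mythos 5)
Your proposal is correct and follows essentially the same route as the paper: reduce to the planar frame where $\rho$ and $\mathcal{S}_\pm$ coincide with the quantities of Section \ref{ssec:planar}, invoke Lemma \ref{lem:trajectories} for the existence/uniqueness trichotomy and the ordering $L_-\le L_+$ from \eqref{eq:L-sol}, and read off $\v_\pm=\nabla_\x\mathcal{S}_\pm$ from the planar computation ${\bf v}=\nabla_\x S(r,\phi)$. Your rotational (Noether-type) argument for $\x\times\nabla_\x\mathcal{S}_\pm=\nabla_\a\mathcal{S}_\pm\times\a$ is a clean way to supply the identity \eqref{ConservationMomentumPlane}, which the paper asserts without writing out the computation, so this is a welcome (but not essentially different) addition.
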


We are now ready to prove Proposition \ref{PropAA}.
\begin{proof}[Proof of Proposition \ref{PropAA}]
We give first the explicit definition of the change of variables, with some additional explicit formulas.

\subsubsection*{The fold}

Lemma \ref{lem:gen_functions} gives us two local diffeomorphisms defined through the scattering problem. We also see from Lemma \ref{lem:trajectories} that the mapping $({\bf x},{\bf a})\mapsto {\bf v}$ has a fold, but we can hope to define a nice change of variable on either side $\Omega_p$, $\Omega_f$ of the set $\Gamma$ defined by
\begin{equation}\label{DefGamma1}
\begin{split}
\Gamma:=\{{\bf x}\cdot{\bf v}=-\sqrt{H}L^2/q\},\qquad\Omega_p:=\{{\bf x}\cdot{\bf v}<-\sqrt{H}L^2/q\},\qquad\Omega_f:=\{{\bf x}\cdot{\bf v}>-\sqrt{H}L^2/q\},
\end{split}
\end{equation}
and we choose the generating function $\mathcal{S}_-$ in $\Omega_p$ and the generating function $\mathcal{S}_+$ in $\Omega_f$. From Lemma \ref{lem:trajectories}, we see that this corresponds to selecting the trajectory in the past of $\Gamma$ when $({\bf x},{\bf v})\in\Omega_p$, and the trajectory in the future of $\Gamma$ when $({\bf x},{\bf v})\in\Omega_f$. Note also that since, along trajectories, $d({\bf x}\cdot{\bf v})/dt>H/2$, each trajectory meets $\Omega_p$ and $\Omega_f$ exactly once.

\subsubsection*{Construction of $\mathcal{T}$}
To construct our angle-action variables, we note that for given $(\x,\v)\in\R^3\times\R^3$, we can use the conservation laws to find the corresponding asymptotic velocity and thus action $\mathcal{A}(\x,\v)={\bf v}_\infty$: Using \eqref{ConsLaws2dPlanar2} we see that $\vert\mathcal{A}\vert=\sqrt{H}$, that ${\bf L}\cdot\mathcal{A}=0$ and that ${\bf R}\cdot\mathcal{A}=q\sqrt{H}/2$, so that (since ${\bf L\cdot R}=0$)
\begin{equation}\label{ExplicitA1}
\begin{split}
\mathcal{A}({\bf x},{\bf v})&=\frac{2q\sqrt{H}}{4HL^2+q^2}{\bf R}+\frac{4H}{4HL^2+q^2}{\bf L}\times {\bf R}.
\end{split}
\end{equation}
By Lemma \ref{lem:trajectories}, $\rho({\bf x},\mathcal{A}({\bf x},{\bf v}))\ge 1$ and we can define the corresponding angle as 
\begin{equation}\label{DefTheta}
 \Theta(\x,\v):=\begin{cases}
 \nabla_\a\mathcal{S}_-(\x,\mathcal{A}(\x,\v))\,\,&\hbox{ for }({\bf x},{\bf v})\in\Omega_p,\\
 \nabla_\a\mathcal{S}_+(\x,\mathcal{A}(\x,\v))\,\,&\hbox{ for }({\bf x},{\bf v})\in\Omega_f.
  \end{cases}
\end{equation}
This is well-defined by Lemma \ref{lem:gen_functions} and extends by continuity to $\Gamma$ to give a mapping $(\x,\v)\mapsto(\vartheta,\a)$ continuous\footnote{In fact any choice of sign for $\mathcal{S}_\pm$ on $\Omega_f$ or $\Omega_p$ would give a continuous mapping, smooth away from the fold $\Gamma$, but our choice will give a {\it smooth gluing} at the fold.} on $\mathcal{P}_{{\bf x},{\bf v}}$.

\subsubsection*{Rescaling of the generating function}

Looking at the action of scaling on the generating function,
\begin{equation}
 \mathcal{S}_\iota(\lambda{\bf x},\lambda^{-1}{\bf a})=\iota\,\lambda\frac{q}{\vert{\bf a}\vert}K(\lambda^{-1}\frac{\vert{\bf a}\vert}{2q}({\bf x}\cdot{\bf a}+\vert{\bf x}\vert\vert{\bf a}\vert))-\frac{{\bf x}\cdot{\bf a}-\vert{\bf x}\vert\vert{\bf a}\vert}{2},\qquad \iota\in\{+,-\},
\end{equation}
and differentiating at $\lambda=1$ yields that
\begin{equation}\label{eq:xv-vs-thetaa}
{\bf x}\cdot{\bf v}-{\bf a}\cdot\vartheta=\iota\frac{q}{\vert{\bf a}\vert}\left[K(\rho)-\rho K^\prime(\rho)\right].
\end{equation}
Noting that 
\begin{equation}
 \frac{d}{dx}\left[K(x)-xK^\prime(x)\right]=-xK^{\prime\prime}(x)=-\frac{1}{2\sqrt{x(x-1)}},\quad K(1)-K'(1)=0,
\end{equation}
it follows that if $\rho(\x,\mathcal{A}(\x,\v))>1$, then
\begin{equation}\label{SignSigma}
 \begin{cases}{\bf x}\cdot{\bf v}-\vartheta\cdot{\bf a}< 0,& \iota=+,\\ {\bf x}\cdot{\bf v}-\vartheta\cdot{\bf a}> 0,& \iota=-,\end{cases}
\end{equation}
whereas $\x\cdot\v=\vartheta\cdot\a$ if and only if $\rho=1$, in which case $\mathcal{S}_+=\mathcal{S}_-$. The function $\sigma=(\vert{\bf a}\vert/q)({\bf x}\cdot{\bf v}-{\bf a}\cdot\vartheta)$ will be used to resolve the fold degeneracy corresponding to the choice of $\iota\in\{+,-\}$.

\subsubsection*{Inverse of $\mathcal{T}$}

We can now define $\mathcal{T}^{-1}$ on either side of the (image of the) fold, consistent with \eqref{DefGamma1} and \eqref{SignSigma}:
\begin{equation}\label{DefGamma}
\begin{split}
 \Gamma:=\left\{\vartheta\cdot {\bf a}=-aL^2/q\right\},\qquad  \Omega_{-}:=\left\{{\bf a}\cdot\vartheta<-aL^2/q\right\},\quad\Omega_{+}:=\left\{{\bf a}\cdot\vartheta>-aL^2/q\right\},
\end{split}
\end{equation}
so that (for points with $\rho(\x,\a)\geq 1$)
\begin{equation}\label{eq:vthetadef}
({\bf v},\vartheta)=
\begin{cases}
(\nabla_{\bf x}\mathcal{S}_-({\bf x},{\bf a}),\nabla_{\bf a}\mathcal{S}_-({\bf x},{\bf a})),&\hbox{ when }(\vartheta,{\bf a})\in \Omega_{-},\,\,({\bf x},{\bf v})\in\Omega_p,\\
(\nabla_{\bf x}\mathcal{S}_+({\bf x},{\bf a}),\nabla_{\bf a}\mathcal{S}_+({\bf x},{\bf a})),&\hbox{ when }(\vartheta,{\bf a})\in \Omega_{+},\,\,({\bf x},{\bf v})\in\Omega_f.
\end{cases}
\end{equation}

We now compute the inverse transformation $(\vartheta,\a)\mapsto (\X,\V)$. This is more challenging since both evolve over the trajectory, while the only trace of ``time'' in $(\vartheta,\a)$ variables follows from $\vartheta\cdot{\bf a}$. 
Using the conservation laws, we can already express trajectories in terms of the ``time-proxy'' ${\bf x}\cdot{\bf v}$.
We have that
\begin{equation}\label{AxAXperp}
\begin{split}
\mathcal{A}\cdot{\bf x}&=\frac{1}{4HL^2+q^2}\left[4HL^2{\bf x}\cdot{\bf v}+q^2\sqrt{H}\vert{\bf x}\vert+2qL^2\sqrt{H}\right],\\
\mathcal{A}\cdot({\bf L}\times{\bf x})&=\frac{L}{4HL^2+q^2}\left[-2q\sqrt{H}L{\bf x}\cdot{\bf v}+2qHL\vert{\bf x}\vert+4HL^3\right],
\end{split}
\end{equation}
and since
\begin{equation}\label{eq:cons_sizes}
 H=\vert{\bf v}\vert^2+\frac{q}{\vert{\bf x}\vert},\qquad L^2=\vert{\bf x}\vert^2\vert{\bf v}\vert^2-({\bf x}\cdot{\bf v})^2\quad\Rightarrow\quad L^2=\vert {\bf x}\vert^2H-q\vert{\bf x}\vert-({\bf x}\cdot{\bf v})^2
\end{equation}
we can express $\vert{\bf x}\vert$ in terms of $({\bf x}\cdot{\bf v})=\vartheta\cdot{\bf a}+\sigma$ as
\begin{equation}\label{Formula|x|}
\begin{split}
\vert{\bf x}\vert&=\frac{q}{2a^2}\left[1+\sqrt{\frac{4a^2L^2+q^2}{q^2}+4\frac{a^2}{q^2}\left({\bf x}\cdot{\bf v}\right)^2}\right],
\end{split}
\end{equation}
which gives a first formula for $\X$ as
\begin{equation}\label{ExpressionsX}
\begin{split}
{\bf X}&=\frac{q}{a^2}\left[\frac{1}{2}+\frac{q^2}{(2R)^2}\sqrt{\frac{a^2}{q^2}({\bf x}\cdot{\bf v})^2+\frac{(2R)^2}{4q^2}}+\frac{4a^2L^2}{(2R)^2}\frac{a}{q}({\bf x}\cdot{\bf v})\right]\frac{{\bf a}}{a}\\
&\quad-2\left[\frac{1}{2}+\frac{q^2}{(2R)^2}\left(\sqrt{\frac{a^2}{q^2}({\bf x}\cdot{\bf v})^2+\frac{(2R)^2}{4q^2}}-\frac{a}{q}({\bf x}\cdot{\bf v})\right)\right]\frac{{\bf L}\times{\bf a}}{a^2}.
\end{split}
\end{equation}
To give explicit formulas for $\V$, we use that by the conservation laws there holds
\begin{equation}\label{NewInnerProducts}
\begin{split}
\v\cdot\mathcal{A}&=\frac{q^2}{(2R)^2}\frac{a({\bf x}\cdot{\bf v})}{\vert {\bf x}\vert}+\frac{4a^2L^2}{(2R)^2}\left(a^2-\frac{q}{2\vert{\bf x}\vert}\right),\qquad
\v\cdot ({\bf L}\times\mathcal{A})=\frac{qaL^2}{(2R)^2}\left(2a^2-\frac{q}{\vert{\bf x}\vert}-\frac{2a({\bf x}\cdot{\bf v})}{\vert{\bf x}\vert}\right),
\end{split}
\end{equation}
so that since $\v\cdot{\bf L}=0$ we obtain
\begin{equation}
\begin{aligned}
{\bf V}&=\left[\frac{q^2}{(2R)^2}\frac{{\bf x}\cdot{\bf v}}{\vert{\bf x}\vert}+\frac{4a^2L^2}{(2R)^2}(a-\frac{q}{2a\vert{\bf x}\vert})\right]\frac{\bf a}{a}+\frac{2q^2}{(2R)^2}\left[a-\frac{q}{2a\vert{\bf x}\vert}-\frac{{\bf x}\cdot{\bf v}}{\vert{\bf x}\vert}\right]\frac{{\bf L}\times{\bf a}}{q}\\
&={\bf a}-\frac{q^2}{(2R)^2}\frac{a\vert{\bf x}\vert-{\bf x}\cdot{\bf v}}{\vert{\bf x}\vert}\left[\frac{\bf a}{a}-2\frac{{\bf L}\times{\bf a}}{q}\right]-\frac{q^2}{(2R)^2}\frac{q}{2a\vert{\bf x}\vert}\left[\frac{4a^2L^2}{q^2}\frac{\bf a}{a}+2\frac{{\bf L}\times{\bf a}}{q}\right],
\end{aligned}
\end{equation}
which together with \eqref{Formula|x|} yields
\begin{equation}\label{ExpressionsV}
\begin{split}
{\bf V}&={\bf a}-a\frac{q^2}{(2R)^2}\frac{\sqrt{\frac{a^2}{q^2}({\bf x}\cdot{\bf v})^2+\frac{a^2L^2}{q^2}+\frac{1}{4}}-\frac{a}{q}({\bf x}\cdot{\bf v})+\frac{1}{2}}{\sqrt{\frac{a^2}{q^2}({\bf x}\cdot{\bf v})^2+\frac{a^2L^2}{q^2}+\frac{1}{4}}+\frac{1}{2}}\left[\frac{\bf a}{a}-2\frac{{\bf L}\times{\bf a}}{q}\right]\\
&\quad-\frac{q^2}{(2R)^2}\frac{a}{2\sqrt{\frac{a^2}{q^2}({\bf x}\cdot{\bf v})^2+\frac{a^2L^2}{q^2}+\frac{1}{4}}+1}\left[\frac{4a^2L^2}{q^2}\frac{\bf a}{a}+2\frac{{\bf L}\times{\bf a}}{q}\right].
\end{split}
\end{equation}

\subsubsection*{Global functions}

We can now introduce the function
\begin{equation}\label{def:sigma}
\begin{split}
\sigma(\vartheta,{\bf a}):=\frac{a}{q}({\bf x}\cdot{\bf v}-\vartheta\cdot{\bf a})=\iota\left[K(\rho)-\rho K^\prime(\rho)\right]=-\iota \ln(\sqrt{\rho}+\sqrt{\rho-1}),\qquad (\vartheta,{\bf a})\in\Omega_\iota.
\end{split}
\end{equation}
In particular, $\vert\sigma\vert$ defines $\rho$ uniquely and vice-versa, which shows that $\rho$ can indeed be defined as a function of $(\vartheta,{\bf a})$ or as a function of $({\bf x},{\bf v})$.
We obtain the global function
\begin{equation}\label{ExpressionsX2}
\begin{split}
{\bf X}(\vartheta,{\bf a})&=\vartheta+\frac{q}{a^2}\left(\frac{1}{2}+\sigma\right)\frac{\bf a}{a}+ \frac{q}{a^2}\frac{q^2}{(2R)^2}D\left(\frac{a}{q}\vartheta\cdot{\bf a}+\sigma\right)\cdot \frac{2{\bf R}}{q},\\
D(y)&:=\sqrt{y^2+(2R)^2/4q^2}-y.
\end{split}
\end{equation}

\subsubsection*{Properties of $\mathcal{T}$}
Having given the detailed construction of the diffeomorphism $\mathcal{T}$, we can quickly deduce the properties listed in Proposition \ref{PropAA}: by construction via a generating function we directly have \eqref{eq:canonical} and the change of variables is canonical as in \eqref{it:canonical}. The compatibility with conservation laws \eqref{it:cons-comp} follows from the definition. For a trajectory $(\x(t),\v(t))$ of the Kepler problem \eqref{ODE} we note that by \eqref{ExplicitA1} $\mathcal{A}(\x(t),\v(t))$ is independent of time, whereas by \eqref{eq:vthetadef} we have
\begin{equation}
\begin{aligned}
 \partial_t\Theta_j(\x(t),\v(t))&=\partial_t\partial_{\a_j}S_\iota(\x(t),\mathcal{A}(\x(t),\v(t)))=\partial_{\x_k}\partial_{\a_j}S_\iota(\x(t),\mathcal{A}(\x(t),\v(t)))\partial_{\x_k}S_\iota(\x(t),\mathcal{A}(\x(t),\v(t)))\\
 &=\frac{1}{2}\partial_{\a_j}\left(\mathcal{A}(\x(t),\v(t))^2-\frac{q}{\abs{\x(t)}}\right)=\mathcal{A}_j(\x(t),\v(t)),
\end{aligned} 
\end{equation}
and \eqref{it:lin-diag} is established. Finally, the asymptotic action property \eqref{it:asymp-act} follows from \eqref{ExpressionsX} and \eqref{ExpressionsV} after some more quantitative bounds on $\sigma$ below in Lemma \ref{EstimRho}.

\end{proof}

\subsubsection{The functions $\rho,\sigma$ and ${\bf x}\cdot{\bf v}$}
As we saw in Section \ref{ssec:Kepler}, the function $\rho$ of \eqref{eq:def_rho2}
plays an important role. It is naturally defined in terms of the mixed variables $({\bf x},{\bf a})$, but we can estimate it in terms of $(\vartheta,{\bf a})$ through an implicit relation. We introduce the functions
\begin{equation}\label{eq:defGP}
\begin{split}
G(y)&:=\sqrt{y(y-1)}+\ln(\sqrt{y}+\sqrt{y-1}),\qquad P_\pm(y):=2y-1\pm2\sqrt{y(y-1)},\qquad y\geq 1,
\end{split}
\end{equation}
and note that $G$ appears naturally in the study of the radial case in \cite{PW2020}, where $\rho_{rad}=a^2r/q=r/r_{min}$ plays an important role.
\begin{lemma}\label{EqRho}
The functions
\begin{equation}\label{def:tau}
 \rho=\frac{a}{2q}({\bf x}\cdot{\bf a}+\vert{\bf x}\vert\vert{\bf a}\vert)\quad\textnormal{and}\quad \eta=\frac{a}{q}\vartheta\cdot\a
\end{equation}
are related by the equation
\begin{equation}\label{RelTauOut}
 \eta-\iota G(\rho)+\kappa^2 P_{-\iota}(\rho)=0\quad\textnormal{ in }\Omega_{\iota},\qquad \kappa:=aL/q,\qquad\iota\in\{+,-\}.
\end{equation}

\end{lemma}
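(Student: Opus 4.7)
The plan is to compute $\eta = (a/q)\,\vartheta\cdot\a$ in two ways and identify the result, essentially by combining the differential structure of the generating function with the kinematic conservation laws. Throughout the argument I set $X := a^2|\x|/q$, so that by the definition \eqref{eq:def_rho2} one has $\rho = \tfrac{1}{2}(X + a(\x\cdot\a)/q)$, i.e.\ $X - \rho = \tfrac{a(|\x|a - \x\cdot\a)}{2q}$.

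Step 1 (differential side). I would exploit homogeneity in $\a$ of the generating function
\[
\mathcal{S}_\iota(\x,\a) = \iota\,\tfrac{q}{a}K(\rho) - \tfrac{1}{2}(|\x|a - \x\cdot\a),
\]
since under $\a\mapsto\lambda\a$ the factor $q/|\a|$ is of degree $-1$, the argument $\rho$ is of degree $2$, and the second summand is of degree $1$. Differentiating $\mathcal{S}_\iota(\x,\lambda\a)$ in $\lambda$ at $\lambda = 1$ and using $\vartheta = \nabla_\a\mathcal{S}_\iota$ gives
\[
\a\cdot\vartheta = \iota\,\tfrac{q}{a}\bigl[2\rho K'(\rho) - K(\rho)\bigr] - \tfrac{|\x|a - \x\cdot\a}{2}.
\]
Since $\rho K'(\rho) = \sqrt{\rho(\rho-1)}$ and $K(\rho) = \sqrt{\rho(\rho-1)} - \ln(\sqrt\rho+\sqrt{\rho-1})$, the bracket equals $G(\rho)$ from \eqref{eq:defGP}; combining with the identity for $X-\rho$ above yields
\[
\eta = \iota G(\rho) + \rho - X.
\]

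Step 2 (kinematic side). I would next express $X$ in terms of $\rho$ and $\kappa = aL/q$, using the standard identity $L^2 = |\x|^2|\v|^2 - (\x\cdot\v)^2$ combined with $|\v|^2 = a^2 - q/|\x|$ (from conservation of energy with $H = a^2$). This produces
\[
(\sigma+\eta)^2 \;=\; \tfrac{a^2}{q^2}(\x\cdot\v)^2 \;=\; X^2 - X - \kappa^2.
\]
On the other hand, from Step 1 and the explicit form $\sigma = -\iota\ln(\sqrt\rho+\sqrt{\rho-1})$ in \eqref{def:sigma} one gets the compact formula
\[
\sigma + \eta \;=\; \iota\sqrt{\rho(\rho-1)} + (\rho - X).
\]

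Step 3 (combine and identify $P_{-\iota}$). Equating the two expressions for $(\sigma+\eta)^2$, the quadratic terms $X^2$ on both sides cancel after expansion; collecting terms in $(\rho - X)$ produces
\[
(\rho - X)\bigl[\,2\rho - 1 + 2\iota\sqrt{\rho(\rho-1)}\,\bigr] + \kappa^2 \;=\; 0,
\]
i.e.\ $(\rho - X)\,P_\iota(\rho) = -\kappa^2$. Using the elementary algebraic identity $P_+(y)\,P_-(y) = (2y-1)^2 - 4y(y-1) = 1$, this inverts to $X - \rho = \kappa^2 P_{-\iota}(\rho)$. Substituting back into Step 1 yields $\eta = \iota G(\rho) - \kappa^2 P_{-\iota}(\rho)$, which is \eqref{RelTauOut}.

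The only subtle bookkeeping is tracking the signs $\iota\in\{+,-\}$ consistently (they enter in both $\sigma$ and in the cross-term $2\iota(\rho-X)\sqrt{\rho(\rho-1)}$ that arises when squaring), and recognizing at the end that the inverse $1/P_\iota$ is precisely $P_{-\iota}$ thanks to the relation $P_+P_-=1$; these are the only places where a sign error could spoil the identification. All other manipulations are direct computations from formulas already established in the excerpt.
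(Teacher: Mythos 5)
Your proposal is correct, and it reaches \eqref{RelTauOut} by a genuinely different route than the paper. The paper first expresses $\rho$ as an explicit function of $\tfrac{a}{q}(\x\cdot\v)$ (combining \eqref{AxAXperp} with the formula \eqref{Formula|x|} for $\vert\x\vert$), then inverts this relation through a hyperbolic substitution ($\beta=\ln(2R/q)$, $\sinh(-\beta+k)=\tfrac{2q}{2R}\tfrac{a}{q}\x\cdot\v$, $\rho-1=\sinh^2(k/2)$), combines with \eqref{eq:xv-vs-thetaa}, and is left with two undetermined signs $\iota_1,\iota_2$ that it fixes at the end by reduction to the radial case $\beta=0$. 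You instead work directly from the generating function: the Euler-type identities $\a\cdot\nabla_\a\mathcal{S}_\iota$ and $\x\cdot\nabla_\x\mathcal{S}_\iota$ (exploiting the homogeneities $\rho\mapsto\lambda^2\rho$ under $\a\mapsto\lambda\a$, etc.) give the exact formulas $\eta=\iota G(\rho)+\rho-X$ and $\sigma+\eta=\iota\sqrt{\rho(\rho-1)}+\rho-X$ with $X=a^2\vert\x\vert/q$, while the conservation laws $H=a^2$ and $L^2=\vert\x\vert^2\vert\v\vert^2-(\x\cdot\v)^2$ give $(\sigma+\eta)^2=X^2-X-\kappa^2$; equating, the $X^2$ terms cancel and the identity $P_+P_-=1$ yields $X-\rho=\kappa^2P_{-\iota}(\rho)$ and hence the lemma. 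Your sign bookkeeping is sound: since the squared quantity is an identity already carrying the definite sign $\iota$ of the region $\Omega_\iota$, no spurious ambiguity is introduced, so you avoid the paper's final sign-fixing step. What the paper's computation buys is closer contact with the hyperbolic parametrization and the explicit formula \eqref{Formula|x|} used elsewhere; what yours buys is a shorter, sign-exact derivation, plus the useful byproduct $a^2\vert\x\vert/q=\rho+\kappa^2P_{-\iota}(\rho)$, which is consistent with \eqref{Formula|x|}.
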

All three quantities $\rho, \eta,\kappa$ are dimensionless, as can be seen from \eqref{eq:dims}.

\begin{proof}[Proof of Lemma \ref{EqRho}]

We start with an expression for $\rho$ which follows from its definition in \eqref{eq:def_rho2} via the first line in \eqref{AxAXperp} and \eqref{Formula|x|}:
\begin{equation}\label{EqForRho1}
\begin{split}
\rho
&=\frac{1}{2}+\frac{2a^2L^2}{4a^2L^2+q^2}\cdot \frac{a}{q}({\bf x}\cdot{\bf v})+\frac{2a^2L^2+q^2}{4a^2L^2+q^2}\sqrt{\frac{a^2}{q^2}({\bf x}\cdot{\bf v})^2+\frac{4a^2L^2+q^2}{4q^2}}.
\end{split}
\end{equation}
In order to simplify the computations, we introduce the notations $\beta$, $k$ such that
\begin{equation*}
\begin{split}
\beta=\ln(2R/q),\qquad \sinh(-\beta+k)=\frac{2q}{2R}\frac{a}{q}({\bf x}\cdot{\bf v}),
\end{split}
\end{equation*}
so that
\begin{equation*}
\cosh(\beta)=\frac{2a^2L^2+q^2}{q\sqrt{4a^2L^2+q^2}},\qquad\sinh(\beta)=\frac{2a^2L^2}{q\sqrt{4a^2L^2+q^2}},
\end{equation*}
and \eqref{EqForRho1} gives that
\begin{equation}\label{EqForRho2}
\begin{split}
\rho-1=\frac{\cosh(k)-1}{2}=\sinh^2(k/2).
\end{split}
\end{equation}
Finally, we can rewrite the equation defining $k$ to get
\begin{equation*}
\begin{split}
\frac{a}{q}({\bf x}\cdot{\bf v})&=\frac{1}{2}e^\beta\sinh(-\beta+k)=\frac{e^{2\beta}+1}{2}\sinh(k/2)\cosh(k/2)-\frac{e^{2\beta}-1}{4}(2\cosh^2(k/2)-1).
\end{split}
\end{equation*}
Plugging in \eqref{EqForRho2} and using that $\cosh^2(k/2)=\rho$, we get that
\begin{equation*}
\begin{split}
\frac{a}{q}({\bf x}\cdot{\bf v})&=\pm\frac{e^{2\beta}+1}{2}\sqrt{\rho(\rho-1)}-\frac{e^{2\beta}-1}{4}(2\rho-1).
\end{split}
\end{equation*}
Furthermore, we recall from \eqref{eq:xv-vs-thetaa} that there holds
\begin{equation*}
\begin{split}
\frac{a}{q}\left({\bf x}\cdot{\bf v}-\vartheta\cdot{\bf a}\right)=-\iota\ln(\sqrt{\rho}+\sqrt{\rho-1})\qquad\hbox{ in }\Omega_{\iota},
\end{split}
\end{equation*}
so that combining the two equations above, we finally find that for two choices of signs $\iota_1,\iota_2\in\{+,-\}$ (both of which can a priori change depending on the region $\Omega_{-}$, $\Omega_{+}$)
\begin{equation*}
\begin{split}
\eta&=\iota_1\ln\left(\sqrt{\rho}+\sqrt{\rho-1}\right)+\iota_2\sqrt{\rho(\rho-1)}-\frac{e^{2\beta}-1}{4}\left(2\rho-1-\iota_22\sqrt{\rho(\rho-1)}\right).
\end{split}
\end{equation*}
Now we observe that when $\beta=0$, we are in the radial case and, in this case, we know that we must have $\iota_1=\iota_2$.
\end{proof}

We can now verify that $\rho$ and $\sigma$ are well-defined functions on phase space.
\begin{lemma}\label{EstimRho}
With $\kappa=aL/q$, the relation \eqref{RelTauOut} defines a $C^1$ map $\rho(\eta,\kappa)$, with $\rho\ge \rho(-\kappa^2,\kappa)=1$. For fixed $\kappa\ge0$, $\eta\mapsto\rho(\eta,\kappa)$, $\mathbb{R}\to[1,\infty)$ is $2$ to $1$, decreasing for $-\infty<\eta<-\kappa^2$ and increasing for $\eta\ge-\kappa^2$. 

Moreover, we have the following estimates: On $\Omega_{+}=\{\eta>-\kappa^2\}$ there holds
\begin{equation}\label{DerRhoOmegaOut}
\frac{1}{4}(\kappa+\eta)\le\rho\le1+\kappa^2+\eta, \quad(\rho\vert\eta\vert\le \kappa^2,\; 1\le\rho\le 1+\kappa,\text{ when } -\kappa^2\le\eta\le 0)
\end{equation}
while on $\Omega_{-}=\{\eta<-\kappa^2\}$ we have
\begin{equation}\label{DerRhoOmegaIn}
\begin{split}
\frac{1}{4}\frac{\vert\eta\vert}{1+\kappa^2}\le\rho\le\frac{1+\vert\eta\vert}{1+\kappa^2}.
\end{split}
\end{equation}

As a consequence, the function $\sigma(\eta,\kappa)=-\iota\ln(\sqrt{\rho}+\sqrt{\rho-1})$ is well defined, and for fixed $\kappa\ge 0$,  $\eta\mapsto \sigma(\eta,\kappa)$ is a bijection on $\mathbb{R}$, and we have the bounds
\begin{equation}\label{BoundsOnSigma}
\begin{split}
\vert\sigma\vert\le \ln(1+2\sqrt{\vert\eta\vert}+2\kappa),\qquad(\rho+\kappa)\vert\partial_\eta\sigma\vert+(\kappa+\kappa^{-1})\vert\partial_\kappa\sigma\vert\lesssim 1,\\
(\rho+\kappa)^2\vert\partial_\eta^2\sigma\vert+\langle\kappa\rangle(\rho+\kappa)\vert\partial_\eta\partial_\kappa\sigma\vert^2+\langle\kappa\rangle^2\vert\partial_\kappa^2\sigma\vert\lesssim 1. 
\end{split}
\end{equation}
Besides, on $\Omega_+$, we obtain slightly improved bounds:
\begin{equation}\label{ImprovedBoundsOnSigma}
\begin{split}
\vert\partial_\kappa\sigma\vert\lesssim \frac{\kappa}{\rho^2+\kappa^2},\qquad\vert\partial_\kappa^2\sigma\vert\lesssim\frac{1}{\rho^2+\kappa^2}.
\end{split}
\end{equation}
\end{lemma}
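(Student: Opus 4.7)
My plan is to treat \eqref{RelTauOut} as an implicit equation for $\rho=\rho(\eta,\kappa)$ and exploit the monotonicity of $F_\iota(\rho,\kappa) := -\iota G(\rho) + \kappa^2 P_{-\iota}(\rho)$ in $\rho$. The elementary identities $G'(y) = \sqrt{y/(y-1)}$, $P_\pm'(y) = \pm P_\pm(y)/\sqrt{y(y-1)}$ and $P_+ P_- \equiv 1$ give
\begin{equation*}
\partial_\rho F_\iota(\rho,\kappa) = -\iota\,\frac{\rho + \kappa^2 P_{-\iota}(\rho)}{\sqrt{\rho(\rho-1)}}, \qquad F_\iota(1,\kappa) = \kappa^2,
\end{equation*}
which is strictly signed on $\Omega_\iota$. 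Thus for each fixed $\kappa \geq 0$ the map $\rho \mapsto -F_\iota(\rho,\kappa)$ is a smooth bijection $[1,\infty) \to [-\kappa^2,\infty)$ for $\iota = +$ and $[1,\infty) \to (-\infty,-\kappa^2]$ for $\iota = -$; gluing the two branches along the fold $\rho = 1$ yields the desired 2-to-1 inverse, smooth away from $\eta = -\kappa^2$, and the monotonicity claims follow. At the fold, Taylor expansion $G(\rho) = 2\sqrt{\rho-1} + O((\rho-1)^{3/2})$ and $P_\pm(\rho) = 1 \pm 2\sqrt{\rho-1} + O(\rho-1)$ gives $\eta + \kappa^2 = 2\iota(1+\kappa^2)\sqrt{\rho-1} + O(\rho-1)$, so $\rho-1 \sim (\eta+\kappa^2)^2/(4(1+\kappa^2)^2)$; in particular $\partial_\eta\rho$ extends continuously by $0$ through $\eta = -\kappa^2$, giving $C^1$ regularity on both sides.

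For the size bounds on $\rho$, I would invoke the pointwise estimates $\rho - 1 \leq G(\rho) \leq 2\rho$, $\rho \leq P_+(\rho) \leq 4\rho$, and $1/(4\rho) \leq P_-(\rho) = 1/P_+(\rho) \leq 1/\rho$, all immediate from the definitions. In $\Omega_-$, $|\eta| = G(\rho) + \kappa^2 P_+(\rho)$ is then sandwiched between $\rho(1+\kappa^2) - 1$ and $4\rho(1+\kappa^2)$, yielding \eqref{DerRhoOmegaIn}. In $\Omega_+$, $\eta = G(\rho) - \kappa^2 P_-(\rho)$ with $G(\rho) \geq \rho - 1$ and $P_-(\rho) \leq 1$ gives the upper bound $\rho \leq 1 + \kappa^2 + \eta$; the subcase $-\kappa^2 \leq \eta \leq 0$ uses $\rho P_-(\rho) \leq 1$ to deduce $\rho|\eta| \leq \kappa^2$, and then $\rho \leq 1+\kappa$ is forced by $G(\rho) = \kappa^2 P_-(\rho) - |\eta| \leq \kappa^2/\rho$ combined with the estimate $G(1+\kappa) \geq \sqrt{\kappa(1+\kappa)} \geq \kappa^2/(1+\kappa)$. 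The lower bound $\rho \gtrsim \kappa + \eta$ in $\Omega_+$ comes by a short case split according to whether $\eta$ or $\kappa$ dominates, using $G(\rho) \leq 2\rho$ and $\kappa^2 P_-(\rho) \leq \kappa^2/\rho$.

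Finally for $\sigma$, the crucial observation is that the singular factor in $\partial_\rho\sigma = -\iota/(2\sqrt{\rho(\rho-1)})$ cancels against the one in $\partial_\eta\rho = \iota\sqrt{\rho(\rho-1)}/(\rho + \kappa^2 P_{-\iota}(\rho))$ obtained by implicit differentiation, yielding the singularity-free formulas
\begin{equation*}
\partial_\eta\sigma = -\frac{1}{2(\rho + \kappa^2 P_{-\iota}(\rho))},\qquad \partial_\kappa\sigma = -\frac{\kappa P_{-\iota}(\rho)}{\rho + \kappa^2 P_{-\iota}(\rho)}.
\end{equation*}
The bound $(\rho+\kappa)|\partial_\eta\sigma|\lesssim 1$ reduces to $\rho + \kappa^2 P_{-\iota}(\rho) \gtrsim \kappa$, which holds in $\Omega_-$ from $P_+(\rho) \geq \rho$ and in $\Omega_+$ from AM--GM combined with $P_-(\rho) \geq 1/(4\rho)$. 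The bound $(\kappa+\kappa^{-1})|\partial_\kappa\sigma|\lesssim 1$ is immediate from the trivial inequalities $\kappa^2 P_{-\iota}/(\rho + \kappa^2 P_{-\iota}) \leq 1$ and $P_{-\iota}/\rho \lesssim 1$; the improved $|\partial_\kappa\sigma|\lesssim \kappa/(\rho^2+\kappa^2)$ in $\Omega_+$ reduces algebraically to $\rho P_-(\rho) \leq 1$, i.e., $P_+(\rho) \geq \rho$. The logarithmic bound on $|\sigma|$ is read off directly from $|\sigma| = \ln(\sqrt\rho+\sqrt{\rho-1}) \leq \ln(1+2\sqrt\rho)$ by plugging in the $\rho$-bounds, and second-order estimates come from differentiating the above formulas once more and tracking the resulting rational expressions with the same ingredients.

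The principal obstacle is a uniform interpolation across regimes, namely the fold $\rho \approx 1$ (where individual factors become singular) and the various asymptotic regimes in $\eta$ and $\kappa$; however, the observation that all the required inequalities reduce algebraically to the elementary bounds $\rho \leq P_+(\rho) \leq 4\rho$, $G(\rho) \leq 2\rho$, and the identity $P_+ P_- \equiv 1$ makes the case analysis tractable.
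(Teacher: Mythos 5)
Your proposal follows essentially the same route as the paper: the same implicit-function analysis of $F_\iota(\eta,\kappa,y)=\eta-\iota G(y)+\kappa^2P_{-\iota}(y)$ with the identities $G'(y)=\sqrt{y/(y-1)}$, $P'_\pm=\pm P_\pm/\sqrt{y(y-1)}$, $P_+P_-\equiv1$ and the elementary bounds $y-1\le G\le 2y$, $(4y)^{-1}\le P_-\le y^{-1}$, $y\le P_+\le 4y$, leading to the identical singularity-free formulas $\partial_\eta\sigma=-\tfrac{1}{2(\rho+\kappa^2P_{-\iota})}$, $\partial_\kappa\sigma=-\tfrac{\kappa P_{-\iota}}{\rho+\kappa^2P_{-\iota}}$ and the same derivation of \eqref{DerRhoOmegaOut}--\eqref{DerRhoOmegaIn} and \eqref{ImprovedBoundsOnSigma}. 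The only micro-caveat is that in the regime $-\kappa^2\le\eta\le0$ the logarithmic bound on $\vert\sigma\vert$ should be read off from your bound $\rho\le1+\kappa^2+\eta$ (valid on all of $\Omega_+$, giving $\sqrt{\rho-1}\le\kappa$) rather than from $\rho\le1+\kappa$, which by itself is too weak for small $\kappa$; with that substitution the argument is complete.
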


\begin{proof}[Proof of Lemma \ref{EstimRho}]
With the functions $G,P_\iota$ of \eqref{eq:defGP} we define the implicit function $F_\iota:\R\times[0,\infty)\times[1,\infty)\to\R$ by
\begin{equation*}
\begin{split}
 F_{\iota}(\eta,\kappa,y):=\eta-\iota G(y)+\kappa^2 P_{-\iota}(y),
\end{split}
\end{equation*}
so that per \eqref{RelTauOut}, $\rho$ is defined by $F_\iota(\eta,\kappa^2,\rho)=0$ on $\Omega_\iota$, $\iota\in\{-,+\}$. We note that
\begin{equation}\label{eq:derivGP}
 G(1)=0,\quad G^\prime(y)=\sqrt{\frac{y}{y-1}},\quad P_\iota(1)=1,\quad P_-(y)P_+(y)=1,\quad P'_{\iota}(y)=\iota\frac{P_\iota(y)}{\sqrt{y(y-1)}},
\end{equation}
and we have the bounds (see \cite[Lemma 2.2]{PW2020}),
\begin{equation}\label{EstimGP+P-}
\begin{aligned}
 y-1\le G(y)&\le y+\ln(2\sqrt{y})\le 2y,\\
(4y)^{-1}\le P_-(y)\le y^{-1}\le P_-(1)&=1=P_+(1)\le y\le P_+(y)\le 4y.
\end{aligned}
\end{equation}
Hence we see that when $F_\iota=0$, we have $\eta=-\kappa^2$ if and only if $y=1$. For $\eta<-\kappa^2$, we see that
\begin{equation*}
\begin{split}
F_{-}(\eta,\kappa,1)=\eta+\kappa^2<0,\qquad\lim_{y\to+\infty}F_{-}(\eta,\kappa,y)=+\infty,
\end{split}
\end{equation*}
and similarly with reversed signs for $F_{+}$ when $\eta>-\kappa^2$, so $F_{-}=0$ has at least one solution for $\eta<-\kappa^2$ and $F_{+}=0$ has at least one solution for $\eta>-\kappa^2$. We now compute the derivatives
\begin{equation*}
\begin{split}
\partial_\eta F_{\iota}&=1,\qquad\partial_\kappa F_{\iota}=2\kappa P_{-\iota}(y),
\end{split}
\end{equation*}
and by \eqref{eq:derivGP}
\begin{equation*}
\begin{split}
\partial_y F_{\iota}&=-\iota\sqrt{\frac{y}{y-1}}\left[1+\kappa^2\frac{P_{-\iota}(y)}{y}\right],
\end{split}
\end{equation*}
so that by \eqref{EstimGP+P-}
\begin{equation*}
\begin{split}
-\partial_y F_{+}\ge\left[1-1/y\right]^{-\frac{1}{2}}\left[1+(\kappa/2y)^2\right],\qquad\partial_yF_{-}\ge\left[1-1/y\right]^{-\frac{1}{2}}\left[1+\kappa^2\right].
\end{split}
\end{equation*}
This shows that $F_{\iota}(\eta,\kappa,y)=0$ has a unique solution and gives the bounds on the first derivatives in \eqref{BoundsOnSigma}. Explicitly we have that
\begin{equation}\label{eq:deriv_rho}
\begin{split}
  \partial_\eta\rho&=\frac{1}{\iota G'(\rho)-\kappa^2 P'_{-\iota}(\rho)}=\iota\frac{\sqrt{\rho(\rho-1)}}{\rho+\kappa^2 P_{-\iota}(\rho)},\qquad \partial_\kappa\rho=2\kappa P_{-\iota}(\rho)\partial_\eta\rho,\\
  \partial_\eta\sigma&=-\frac{1}{2(\rho+\kappa^2 P_{-\iota}(\rho))},\qquad\partial_\kappa\sigma=-\frac{\kappa P_{-\iota}(\rho)}{\rho+\kappa^2 P_{-\iota}(\rho)},
  \end{split}
 \end{equation}
which shows that the gradient of $\rho$ vanishes at the ``curve of surgery'' $\Gamma=\{\rho=1\}$ and that the gradient of $\sigma$ is smooth there.

The estimates for $\rho$ follow from the bounds
\begin{equation*}
\eta+\kappa^2 y+y-1\le F_{-}(\eta,\kappa,y)\le\eta+4\kappa^2 y+2y,\qquad \eta+\frac{\kappa^2}{4y}-2y\le F_{+}(\eta,\kappa,y)\le\eta+\frac{\kappa^2}{y}+1-y,
\end{equation*}
which in turn follow from the definitions of $F_{\iota}$ and the bounds \eqref{EstimGP+P-}.

Finally, we compute the second order derivatives of $\sigma$ by deriving \eqref{eq:deriv_rho} one more time. This gives
\begin{equation}
\begin{aligned}
 \partial_{\eta}^2\sigma&=\frac{-\kappa^2 P_{-\iota}(\rho)+\iota\sqrt{\rho(\rho-1)}}{2(\rho+\kappa^2 P_{-\iota}(\rho))^3},\qquad
 \partial_{\eta}\partial_\kappa\sigma
 =\kappa \frac{1+P_{-\iota}(\rho)}{2(\rho+\kappa^2 P_{-\iota}(\rho))^3},\\
 \partial_{\kappa}^2\sigma
 &=-\frac{P_{-\iota}(\rho)}{(\rho+\kappa^2 P_{-\iota}(\rho))^3}(\rho^2-\kappa^2(1+P_{-\iota}(\rho))-\kappa^4 P_{-\iota}^2(\rho)).\\
\end{aligned}
\end{equation}
and direct computations give the bounds on the second derivatives in \eqref{BoundsOnSigma}. The bounds \eqref{ImprovedBoundsOnSigma} follow by direct inspection using that $\iota=+$.
\end{proof}

\begin{remark}
It follows from \eqref{eq:deriv_rho} that the only smooth matching of action-angle variable at $\Gamma$ are the one that change the sign of $\mathcal{S}_\pm$.
\end{remark}

\subsection{Further coordinates}\label{ssec:furthercoords}
In order to parametrize the trajectories of \eqref{ODE}, further choices of coordinates will be important.

\begin{figure}
\captionsetup[subfigure]{labelfont=rm}
    \centering
    \subfloat[\centering trajectories (in red) for fixed $\sqrt{H}=1$, $\v_\infty=(1,0,0)$, varying $L$]{{\includegraphics[width=.45\textwidth]{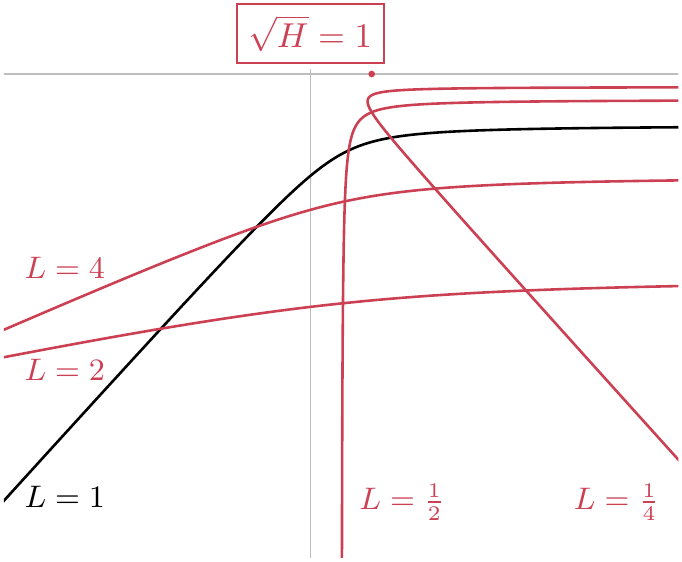} }}
    \qquad
    \subfloat[\centering trajectories (in blue) for fixed $L=1$, varying $\sqrt{H}$ (and thus $\v_\infty=(\sqrt{H},0,0)$)]{\includegraphics[width=.45\textwidth]{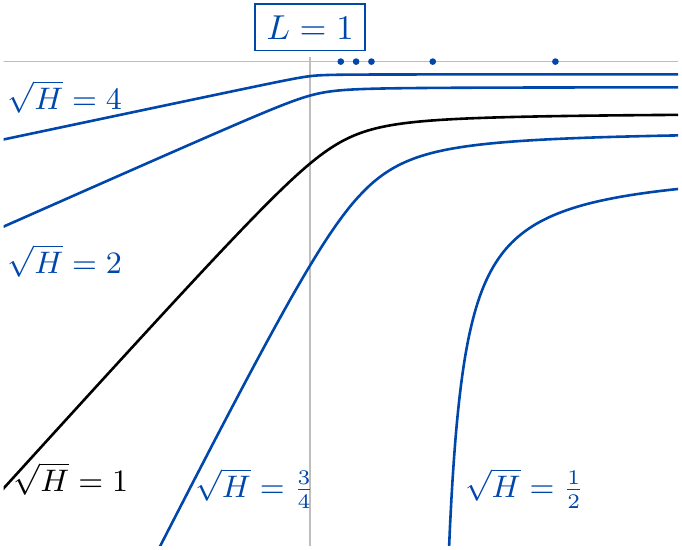}} 
    \caption{A foliation of planar dynamics of \eqref{ODE} near a given trajectory (in black) with $\sqrt{H}=1$, asymptotic velocity $\v_\infty=(1,0,0)$ and $L=1$.}
    \label{fig:sample-trajectories}
\end{figure}

\subsubsection{Super-integrable coordinates}\label{ssec:SIC}

The Kepler problem \eqref{ODE} is super integrable, and hence in an appropriate system of coordinates, only one scalar function evolves along a trajectory. When we consider derivatives, it will be crucial to use this simplification. Inspecting \eqref{LinearizationFlow}, we see that such a system can be obtained from our asymptotic action-angle coordinates by using $(\xi,\eta,\lambda,{\bf u},\L)$ where
\begin{equation}\label{SIC}
\begin{split}
\xi:=\frac{q}{a},\qquad\eta:=\frac{a}{q}\vartheta\cdot{\bf a},\qquad\lambda:=L,\qquad{\bf u}:=\frac{{\bf a}}{a},
\end{split}
\end{equation}
and $\L$ is the angular momentum, for the direction of which we write $\l=\frac{\L}{L}$. In particular, note that $\xi$, $\lambda$ and $\L$ have dimension\footnote{so that their Poisson brackets are dimensionless, e.g. $\dm{\{\xi,f\}}=\dm{f}$.} $\dm{\x}\dm{\v}$, while $\eta$ and $\u$ are dimensionless (compare \eqref{eq:dims}). Only $\eta$ evolves along a trajectory of \eqref{ODE}, namely as
\begin{equation}\label{eq:eta_evol}
\eta\mapsto\eta+t\frac{q^2}{\xi^3},
\end{equation}
and using that $\vartheta\perp\L$ (see e.g.\ \eqref{ConservationMomentumPlane}) we can recover our angle-action variables from the super-integrable coordinates via 
\begin{equation}\label{TransitionFromSIC}
\begin{split}
\vartheta=\frac{\xi^2}{q}\eta{\bf u}-\frac{\xi}{q}\L\times\u,\qquad{\bf a}=\frac{q}{\xi}{\bf u}.
\end{split}
\end{equation}
\begin{remark}
 Clearly the collection $(\xi,\eta,\lambda,{\bf u},\L)$ is not independent (since e.g.\ $\lambda=\abs{\L}$), and in a strict sense only gives coordinates modulo further conditions. However, the slight redundancy is convenient in that it provides relatively simple expressions for kinematic quantities, and satisfies favorable Poisson bracket properties -- see for example \eqref{PBSIC} below.
\end{remark}

Moreover, with \eqref{ConsLaws2dPlanar2} and \eqref{RelTauOut} we can write
\begin{equation}
  \kappa=\frac{\lambda}{\xi},\qquad \frac{2{\bf R}}{q}={\bf u}-2\kappa\l\times\u.
\end{equation}
With the notations
\begin{equation}\label{DefDN}
\begin{aligned}
 D(\kappa,y)&:=\sqrt{y^2+\kappa^2+\frac{1}{4}}-y, \qquad
 N(\kappa,y):=\sqrt{y^2+\kappa^2+\frac{1}{4}}+\frac{1}{2}=D(\kappa,y)+y+\frac{1}{2},
\end{aligned} 
\end{equation}
we can then use \eqref{ExpressionsX} resp.\ \eqref{ExpressionsX2} and \eqref{ExpressionsV} to see that with $\sigma=\sigma(\eta,\kappa)$ as in Lemma \ref{EstimRho} there holds
\begin{equation}\label{XVinSIC}
\begin{aligned}
 \X&=\frac{\xi^2}{q}\left(\eta+\sigma+\frac{1}{2}+\frac{D(\kappa,\eta+\sigma)}{1+4\kappa^2}\right)\u-\frac{\xi}{q}\left(1+\frac{2D(\kappa,\eta+\sigma)}{1+4\kappa^2}\right)\L\times\u,\\
 \V&=\frac{q}{\xi}\left(1-\frac{1}{2N(\kappa,\eta+\sigma)}-\frac{1}{1+4\kappa^2}\frac{D(\kappa,\eta+\sigma)}{N(\kappa,\eta+\sigma)}\right)\u+\frac{q}{\xi^2}\frac{2}{1+4\kappa^2}\frac{D(\kappa,\eta+\sigma)}{N(\kappa,\eta+\sigma)}\L\times\u.
\end{aligned} 
\end{equation}

\subsubsection{Past asymptotic action}\label{ssec:past_AA}
The angle-action variables of Section \ref{ssec:Kepler} are constructed such that the asymptotic action property \eqref{AsymptoticActions} holds for the ``future'' evolution, i.e.\ as $t\to+\infty$. However, we will also need to resolve the earlier ``past'' part of a trajectory, for which the direction can differ markedly from its evolution in the long run (see e.g.\ Figure \ref{fig:cons-laws}). For this, we will need the \emph{past} asymptotic action-angle coordinates $(\vartheta^{(-)},{\bf a}^{(-)})$, with inverse $({\bf X}^{(-)},{\bf V}^{(-)})$ defined in a similar way, except that we require instead that
\begin{equation*}
\lim_{t\to-\infty}{\bf V}^{(-)}(\vartheta^{(-)}+t{\bf a}^{(-)},{\bf a}^{(-)})={\bf a}^{(-)}.
\end{equation*}
Using that the trajectory is symmetric under reflexion from the plane spanned by ${\bf L}, {\bf R}$, we easily see that the past asymptotic velocity is given by
\begin{equation*}
{\bf a}^{(-)}=-\frac{2q\sqrt{H}}{4HL^2+q^2}{\bf R}+\frac{4H}{4HL^2+q^2}{\bf L}\times {\bf R}.
\end{equation*}
We can proceed as before using the solutions of the Hamilton-Jacobi equation $\mathcal{S}_\pm$. More precisely, we define the change of variable on $\Omega^{(-)}_-:=\{{\bf x}\cdot{\bf v}<L^2\sqrt{H}/q\}$ and $\Omega^{(-)}_+:=\{{\bf x}\cdot{\bf v}>L^2\sqrt{H}/q\}$ to be given by the generating functions $\mathcal{S}^{(-)}_{\iota}({\bf x},{\bf a}^{(-)})=-\mathcal{S}_{-\iota}({\bf x},-{\bf a}^{(-)})$, thus
\begin{equation}\label{DefPAA}
\begin{split}
\mathcal{A}^{(-)}({\bf x},{\bf v})&:=-\frac{2q\sqrt{H}}{4HL^2+q^2}{\bf R}+\frac{4H}{4HL^2+q^2}{\bf L}\times {\bf R},\\
\Theta^{(-)}({\bf x},{\bf v})&:=
\begin{cases}
\nabla_{{\bf a}}\mathcal{S}_+({\bf x},-\mathcal{A}^{(-)}({\bf x},{\bf v}))&\quad\hbox{ for }\,({\bf x},{\bf v})\in\Omega^{(-)}_-,\\
\nabla_{{\bf a}}\mathcal{S}_-({\bf x},-\mathcal{A}^{(-)}({\bf x},{\bf v}))&\quad\hbox{ for }\,({\bf x},{\bf v})\in\Omega^{(-)}_+,
\end{cases}
\end{split}
\end{equation}
and similarly define $({\bf X}^{(-)},{\bf V}^{(-)})$. We will need to understand the properties of the transition map $(\vartheta,{\bf a})\mapsto(\vartheta^{(-)},{\bf a}^{(-)})$. Using the conservation laws, we see that
\begin{equation*}
\begin{split}
a^{(-)}=a,\qquad{\bf L}^{(-)}=\vartheta^{(-)}\times{\bf a}^{(-)}={\bf L},\qquad\kappa^{(-)}=\kappa,\qquad \frac{2{\bf R}}{q}=-\frac{{\bf a}^{(-)}}{a}-\frac{2}{q}{\bf L}\times{\bf a}^{(-)},
\end{split}
\end{equation*}
and taking the dot product of the last equality with ${\bf a}$ and ${\bf L}\times{\bf a}$, we find that
\begin{equation*}
\begin{split}
\frac{{\bf a}^{(-)}\cdot{\bf a}}{a^2}=\frac{4\kappa^2-1}{4\kappa^2+1}
\end{split}
\end{equation*}
and therefore
\begin{equation*}
\begin{split}
\frac{{\bf a}^{(-)}}{a}&=\frac{4\kappa^2-1}{4\kappa^2+1}\frac{{\bf a}}{a}+\frac{4\kappa}{4\kappa^2+1}\frac{{\bf l}\times{\bf a}}{a}.
\end{split}
\end{equation*}
Once again, looking at the action of scaling, we observe that, in $\Omega^{(-)}_\iota$,
\begin{equation*}
\begin{split}
{\bf x}\cdot{\bf v}&=\vartheta^{(-)}\cdot{\bf a}^{(-)}-\iota\frac{q}{a}\ln(\sqrt{\rho^{(-)}}+\sqrt{\rho^{(-)}-1}),\\
\rho^{(-)}({\bf x},{\bf a}^{(-)})&=\rho({\bf x},-{\bf a}^{(-)})=\frac{a}{2q}(\vert{\bf x}\vert\vert{\bf a}^{(-)}\vert-{\bf a}^{(-)}\cdot{\bf x}).
\end{split}
\end{equation*}
Since we will transition between the past and future asymptotic actions at periapsis, we compute that
\begin{equation*}
\begin{split}
\frac{\bf a}{a}\cdot\frac{\bf x}{\vert{\bf x}\vert}&=-\frac{{\bf a}^{(-)}}{a}\cdot\frac{{\bf x}}{\vert{\bf x}\vert}=\cos\phi_{p}=\frac{1}{\sqrt{1+4\kappa^2}},\qquad\sin\phi_{p}=\frac{2\kappa}{\sqrt{1+4\kappa^2}}\\
\rho_p=\rho^{(-)}_p&=\frac{(r+1/r)^2}{4},\,\, r=(1+4\kappa^2)^\frac{1}{4},\qquad-\eta_p=\sigma_p=-\frac{1}{4}\ln(1+4\kappa^2)=\sigma^{(-)}_p=-\eta^{(-)}_p.
\end{split}
\end{equation*}
Together with their favorable Poisson bracket properties (see \eqref{eq:PB_SIC(-)} below), this motivates the following definition of the new coordinates as
\begin{equation}\label{eq:def_SIC(-)}
\begin{split}
\xi^{(-)}&:=-\xi,\qquad\eta^{(-)}=-\eta+\frac{1}{2}\ln(1+4\kappa^2),\qquad\lambda^{(-)}=\lambda,\\
{\bf u}^{(-)}&:=\frac{1-4\kappa^2}{4\kappa^2+1}\u-\frac{1}{\xi}\frac{4}{4\kappa^2+1}\L\times\u=\frac{-{\bf a}^{(-)}}{a},\qquad{\bf l}^{(-)}={\bf l},
\end{split}
\end{equation}
and thus
\begin{equation*}
 \L^{(-)}\times\u^{(-)}=\frac{1-4\kappa^2}{4\kappa^2+1}\L\times\u+\xi\frac{4\kappa^2}{4\kappa^2+1}\u.
\end{equation*}
Moreover, we see that the evolution of \eqref{ODE} in these coordinates reads
\begin{equation*}
\eta^{(-)}(t+s)=\eta^{(-)}(t)+sq^2/(\xi^{(-)})^3,
\end{equation*}
and using that
\begin{equation*}
\begin{split}
\xi(\eta+\sigma)={\bf x}\cdot{\bf v}=\xi^{(-)}(\eta^{(-)}+\sigma^{(-)})
\end{split}
\end{equation*}
we also verify that for the formulas in \eqref{XVinSIC} we have
\begin{equation}\label{XVinSIC(-)}
\begin{split}
{\bf X}(\xi,\eta,\lambda,{\bf u},\L)&={\bf X}(\xi^{(-)},\eta^{(-)},\lambda^{(-)},{\bf u}^{(-)},\L^{(-)}),\\
{\bf V}(\xi,\eta,\lambda,{\bf u},\L)&=-{\bf V}(\xi^{(-)},\eta^{(-)},\lambda^{(-)},{\bf u}^{(-)},\L^{(-)}).
\end{split}
\end{equation}

\subsubsection{Far and close formulations}\label{ssec:farclose}
For weights that are not conserved by the linear flow and for derivatives, we will have to also work with the alternative formulation of Section \ref{ssec:pinnedframe} in terms of $\nu'$. The associated nonlinear unknown, which replaces $\gamma$ as in \eqref{NewNLUnknown_Intro}, is then
 \begin{equation}
  \gamma^\prime:=\nu^\prime\circ\mathcal{T}^{-1}\circ\Phi_t^{-1},\quad \gamma^\prime(\vartheta,{\bf a},t)=\nu({\bf X}(\vartheta+t{\bf a},{\bf a}),{\bf V}(\vartheta+t{\bf a},{\bf a})+\mathcal{W}(t),t),
 \end{equation}
 with $\mathcal{T}$ defined in Proposition \ref{PropAA} and $\Phi_t$ defined in \eqref{DefPhiFreeStreaming}, which satisfies (compare \eqref{NewHamiltonian2})
 \begin{equation}\label{NewNLEq'}
 \begin{split}
 \partial_t\gamma'+\{\mathbb{H}_4',\gamma'\}=0,\qquad \gamma'(t=0)=\nu_0',\qquad\mathbb{H}_4'=Q\psi(\widetilde{\bf X})-\dot{\mathcal{W}}\cdot \XX,
 \end{split}
 \end{equation} 
 with, as in \eqref{NewVP},
\begin{equation}
 \begin{aligned}
  \psi({\bf y},t)&=-\frac{1}{4\pi}\iint \frac{1}{\vert {\bf y}-\widetilde{\bf X}(\theta,\alpha)\vert}(\gamma')^2(\theta,\alpha,t)\, d\theta d\alpha=-\frac{1}{4\pi}\iint \frac{1}{\vert {\bf y}-\widetilde{\bf X}(\theta,\alpha)\vert}\gamma^2(\theta,\alpha,t)\, d\theta d\alpha,\\
  \dot{\mathcal{W}}&=\mathcal{Q}\nabla_x\psi(0,t),\qquad\mathcal{W}(t)\to_{t\to T^\ast}0.
 \end{aligned} 
\end{equation}
With the notation
\begin{equation}
 \Sigma_u:\mathcal{P}_{\x,\v}\to\mathcal{P}_{\x,\v},\;(\x,\v)\mapsto(\x,\v+u),\qquad u\in\R^3,
\end{equation}
by construction in \eqref{NewNLEq'} we have that
\begin{equation}\label{eq:defM_t}
 \gamma'=\gamma\circ\mathcal{M}_t,\qquad \mathcal{M}_t:=\Phi_t\circ\mathcal{T}\circ\Sigma_{\mathcal{W}(t)}\circ\mathcal{T}^{-1}\circ\Phi_t^{-1},
\end{equation}
and $\mathcal{M}_t:\mathcal{P}_{\vartheta,\a}\to\mathcal{P}_{\vartheta,\a}$ is a canonical diffeomorphism.

This distinction is relevant when $\XX=\X\circ\Phi_t^{-1}$ is relatively small resp.\ large, i.e.\ in the ``close'' and ``far'' regions
\begin{equation}
 \Omega_t^{cl}:=\{(\vartheta,\a):\vert\XX(\vartheta,\a)\vert\leq 10\ip{t}\},\qquad \Omega_t^{far}:=\{(\vartheta,\a):\vert\XX(\vartheta,\a)\vert\geq \ip{t}\},
\end{equation}
which decompose phase space $\mathcal{P}_{\vartheta,\a}=\Omega_t^{cl}\cup\Omega_t^{far}$. We note that
\begin{equation}\label{eq:XMinv}
 \XX\circ\mathcal{M}_t=\XX,
\end{equation}
so $\Omega_t^\ast$, $\ast\in\{cl,far\}$, are invariant under $\mathcal{M}_t$, and in particular
\begin{equation}\label{eq:E_clfar}
 \iint_{\Omega_t^\ast} F(\XX(\vartheta,\a))\gamma^2(\vartheta,\a)d\vartheta d\a=\iint_{\Omega_t^\ast} F(\XX(\vartheta,\a))(\gamma')^2(\vartheta,\a)d\vartheta d\a.
\end{equation}

If $\ww:\mathcal{P}_{\vartheta,\a}\to\R$ is a weight function, then we have that
\begin{equation}\label{eq:weightprimes}
 \ww\gamma'=(\ww'\gamma)\circ\mathcal{M}_t,\qquad \ww':=\ww\circ\mathcal{M}_t^{-1},
\end{equation}
and we have the following bounds between primed and unprimed weights (in the super-integrable coordinates \eqref{SIC} of Section \ref{ssec:SIC}):
\begin{lemma}\label{lem:prime_bds}
 Let $(a',\xi',\lambda',\eta'):=(a,\xi,\lambda,\eta)\circ\mathcal{M}^{-1}_t$. Then
 \begin{equation}\label{prime_bds}
 \begin{aligned}
  &\abs{a-a'}+q\abs{\xi-\xi'}/(\xi\xi^\prime)\lesssim \abs{\mathcal{W}(t)}\lesssim 1,
 \end{aligned}
 \end{equation}
 and, on $\Omega_t^{cl}$, there holds that
 \begin{equation}\label{prime_bds2}
 \begin{aligned}
  &\abs{\lambda-\lambda'}\lesssim \ip{t}\abs{\mathcal{W}(t)},\\
  &\abs{\eta-\eta'}\lesssim \ip{t}\abs{\mathcal{W}(t)}(a^2+(a')^2+a+a')+\ln(1+2\sqrt{(a+a')\ip{t}\abs{\mathcal{W}(t)}}).
 \end{aligned}
 \end{equation}
 In particular, moments in $a$ on $\gamma$, $\gamma^\prime$ are comparable in the sense that
 \begin{equation}\label{ComparableMomentNorms}
 \begin{split}
 \Vert \langle a\rangle^p\gamma_1\Vert_{L^r}&\le2\Vert \langle a\rangle^p\gamma_2\Vert_{L^r}+C_p\vert\mathcal{W}(t)\vert^p\Vert\gamma_2\Vert_{L^r}
 \end{split}
 \end{equation}
 for $\gamma_1,\gamma_2\in\{\gamma,\gamma^\prime\}$, $p\ge 0$, $r\in\{2,\infty\}$.
 
\end{lemma}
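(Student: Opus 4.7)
My plan is to unwind $\mathcal{M}_t^{-1} = \Phi_t \circ \mathcal{T} \circ \Sigma_{-\mathcal{W}} \circ \mathcal{T}^{-1}\circ\Phi_t^{-1}$ and reduce every bound to conservation laws of the Kepler flow (Proposition \ref{PropAA}) together with the close-region cutoff $|\x|\leq 10\ip{t}$ on $\Omega_t^{cl}$. Given $(\vartheta,\a)$, setting $(\x,\v):=\mathcal{T}^{-1}(\vartheta+t\a,\a)$ and tracing through the composition shows that $(\vartheta',\a'):=\mathcal{M}_t^{-1}(\vartheta,\a)$ satisfies
\begin{equation*}
\a'=\mathcal{A}(\x,\v-\mathcal{W}(t)), \qquad \vartheta'+t\a'=\Theta(\x,\v-\mathcal{W}(t)).
\end{equation*}
The bound $|\mathcal{W}(t)|\lesssim 1$ follows from the total-momentum conservation law together with smallness of the gas.

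For $a$, part \eqref{it:cons-comp} of Proposition \ref{PropAA} gives $|\mathcal{A}(\x,\cdot)|^2=|\cdot|^2+q/|\x|$; since $\v\mapsto\sqrt{|\v|^2+q/|\x|}$ is $1$-Lipschitz (gradient of norm $|\v|/a\leq 1$), we get $|a-a'|\leq|\mathcal{W}|$. The $\xi$ bound follows algebraically from $q(\xi-\xi')/(\xi\xi')=a'-a$. For $\lambda$, the identity $\x\times\v=\Theta\times\mathcal{A}$ from the same part of Proposition \ref{PropAA} gives $\L=\x\times\v$ and $\L'=\x\times(\v-\mathcal{W})$, hence $|\lambda-\lambda'|\leq|\L-\L'|\leq|\x||\mathcal{W}|\leq 10\ip{t}|\mathcal{W}|$ on $\Omega_t^{cl}$.

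For the $\eta$ estimate, I apply the scaling identity \eqref{eq:xv-vs-thetaa} at the two phase points $(\x,\v)$ and $(\x,\v-\mathcal{W})$ (whose asymptotic action-angles are $(\vartheta+t\a,\a)$ and $(\vartheta'+t\a',\a')$), producing
\begin{equation*}
\eta=\tfrac{a}{q}\x\cdot\v-\tilde\sigma_1-\tfrac{t}{q}a^3,\qquad \eta'=\tfrac{a'}{q}\x\cdot(\v-\mathcal{W})-\tilde\sigma_2-\tfrac{t}{q}(a')^3,
\end{equation*}
where $\tilde\sigma_i$ are the $\sigma$-values at those points. Subtracting and using $a^3-(a')^3=(a-a')(a^2+aa'+(a')^2)$, $|a-a'|\leq|\mathcal{W}|$, $|\v|\leq a$, and $|\x|\leq 10\ip{t}$, the polynomial terms contribute $\ip{t}|\mathcal{W}|(a+a'+a^2+(a')^2)$. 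For $|\tilde\sigma_1-\tilde\sigma_2|$, I write $\tilde\sigma_i=-\iota_i\ln(\sqrt{\rho_i}+\sqrt{\rho_i-1})$ with $\rho_i=(a_i/(2q))(|\x|a_i+\x\cdot\a_i)$, and estimate $|\rho_1-\rho_2|\lesssim(a+a')\ip{t}|\mathcal{W}|/q$ on $\Omega_t^{cl}$ via a direct Lipschitz bound on $\mathcal{A}$ in $\v$ using \eqref{ExplicitA1}. A case analysis in $\rho_i$ close to $1$ (where $|\tilde\sigma_i|\lesssim\sqrt{\rho_i-1}$) versus $\rho_i\gg 1$ (where $|\ln A-\ln B|\lesssim|A-B|/\min(A,B)$ with $A,B=\sqrt{\rho_{1,2}}+\sqrt{\rho_{1,2}-1}$) then yields $|\tilde\sigma_1-\tilde\sigma_2|\lesssim\ln(1+2\sqrt{(a+a')\ip{t}|\mathcal{W}|})$.

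Finally, \eqref{ComparableMomentNorms} follows from $\mathcal{M}_t$ being canonical (hence measure-preserving): the change of variables gives $\|\ip{a}^p\gamma\|_{L^r}=\|\ip{a\circ\mathcal{M}_t}^p\gamma'\|_{L^r}$, and $|a\circ\mathcal{M}_t-a|\leq|\mathcal{W}|$ combined with $\ip{x+h}^p\lesssim_p\ip{x}^p+|h|^p$ closes the bound; the reverse direction is symmetric. The main obstacle is the logarithmic $\tilde\sigma$-difference: one must exploit cancellation between $\sqrt{\rho_i}+\sqrt{\rho_i-1}$ at the two phase points rather than bounding $|\tilde\sigma_1|,|\tilde\sigma_2|$ separately in order to extract the factor $|\mathcal{W}|$ inside the logarithm; the remaining estimates reduce cleanly to conservation laws and the cutoff $|\x|\leq 10\ip{t}$.
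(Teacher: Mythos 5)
Your overall scheme coincides with the paper's proof: you identify the primed quantities with the velocity-shifted physical point $(\x,\v-\mathcal{W}(t))$, obtain the $a$, $\xi$, $\lambda$ bounds from the conservation laws together with $\vert\x\vert\le 10\ip{t}$ on $\Omega_t^{cl}$, split $\eta-\eta'$ exactly as in the paper's identity \eqref{eq:diffeta} with the same treatment of the polynomial terms, and get \eqref{ComparableMomentNorms} from canonicity of $\mathcal{M}_t$ plus the $a$-bound. All of that is fine and matches the paper.

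The genuine gap is in the one step that carries the difficulty: your claim that $\vert\rho_1-\rho_2\vert\lesssim (a+a')\ip{t}\abs{\mathcal{W}}/q$ follows from a ``direct Lipschitz bound on $\mathcal{A}$ in $\v$ using \eqref{ExplicitA1}''. The map $\v\mapsto\mathcal{A}(\x,\v)$ is not Lipschitz with a usable constant: for a nearly radially incoming configuration, say $\x=(0,-r,0)$, $\v=(0,v,0)$, a transverse shift $\v\mapsto\v+(\epsilon,0,0)$ gives, from \eqref{ExplicitA1}, $\partial_\epsilon\mathcal{A}\vert_{\epsilon=0}\approx \frac{2ar(v+a)}{q}(1,0,0)$, i.e.\ a Lipschitz constant of size $a^2\vert\x\vert/q$, so this route cannot produce the stated bound. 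Worse, the asserted inequality for $\rho$ is itself false in this regime: with $q=1$, $r=1$, $v=10$, $\epsilon=10^{-2}$, $t=0$ one computes $\rho_1=101$ while $\rho_2\approx 97.1$, whereas $(a+a')\ip{t}\abs{\mathcal{W}}\approx 0.4$ (and $\vert a-a'\vert(a+a')\ip{t}\approx 2\cdot10^{-4}$); in general the discrepancy scales like $a^4 r^3\epsilon^2/q^3$. (The paper's own proof asserts essentially the same inequality ``directly from the definition'' of $\rho$, so you have reproduced, rather than repaired, the delicate point.) The lemma's conclusion survives because $\sigma$ depends on $\rho$ only logarithmically, but then the quantitative input must respect the size of $\rho$: working from the representation \eqref{EqForRho1} of $\rho$ in terms of $(a,\kappa,\x\cdot\v)$ — whose arguments change by $O(\abs{\mathcal{W}})$, $O((a+a')\ip{t}\abs{\mathcal{W}}/q)$ — one gets a difference bound that degrades with $\rho_1+\rho_2$, and one must then split: away from the fold control the \emph{relative} change, $\vert\sigma_1-\sigma_2\vert\lesssim\vert\rho_1-\rho_2\vert/\min(\rho_1,\rho_2)$, and near the fold (in particular whenever the sign $\iota$ flips) use $\vert\sigma_i\vert\lesssim\sqrt{\rho_i-1}$ together with \eqref{BoundsOnSigma}. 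With such a replacement your case analysis goes through, but as written the key estimate is unjustified and, in the form you state it, not true.
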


\begin{proof}
 We have that
 \begin{equation}\label{eq:diffa2}
  a^2-(a')^2=\abs{\V}^2+\frac{q}{\aabs{\X}}-\abs{\V-\mathcal{W}(t)}^2-\frac{q}{\aabs{\X}}=2\mathcal{W}(t)\cdot\V-\abs{\mathcal{W}(t)}^2,
 \end{equation}
 and thus (e.g.\ by dinstinguishing whether $a\leq \abs{\mathcal{W}(t)}$ or $a> \abs{\mathcal{W}(t)}$)
 \begin{equation}
  \abs{a-a'}\lesssim \abs{\mathcal{W}(t)}.
 \end{equation}
 The bound for $\xi=\frac{q}{a}$ follows directly, whereas we compute that
 \begin{equation}
  \abs{\lambda-\lambda'}\lesssim\vert\XX\times\VV-\XX\times(\VV-\mathcal{W}(t))\vert\lesssim\ip{t}\abs{\mathcal{W}(t)}.
 \end{equation}
 Finally we have by \eqref{def:sigma} that $\eta=\frac{a}{q}\vartheta\cdot\a=\frac{a}{q}(\X\cdot\V)-\sigma$, and thus
 \begin{equation}
 \begin{aligned}
  \eta+t\frac{a^3}{q}&=\eta\circ\Phi_t^{-1}=\frac{a}{q}(\XX\cdot\VV)-\sigma\circ\Phi_t^{-1},\\
  \eta'+t\frac{(a')^3}{q}&=\eta\circ\Phi_t^{-1}\circ\mathcal{M}^{-1}_t=\frac{a'}{q}(\XX\cdot(\VV-\mathcal{W}(t)))-\sigma\circ\Phi_t^{-1}\circ\mathcal{M}^{-1}_t,
 \end{aligned} 
 \end{equation}
 so that
 \begin{equation}\label{eq:diffeta}
  \eta-\eta^\prime=t\frac{(a')^3-a^3}{q}+\frac{a-a'}{q}(\XX\cdot\VV)+\frac{a'}{q}(\XX\cdot\mathcal{W}(t))+(\sigma\circ\Phi_t^{-1}\circ\mathcal{M}^{-1}_t-\sigma\circ\Phi_t^{-1}).
 \end{equation}
 The first three terms are bounded directly, whereas for the last one we observe that (as can be seen directly from the definition \eqref{eq:def_rho2} of $\rho$ in terms of $\x$ and $\a$) there holds that
 \begin{equation}
  \abs{\rho\circ\Phi_t^{-1}\circ\mathcal{M}^{-1}_t-\rho\circ\Phi_t^{-1}}\lesssim \abs{a-a'}(a+a')\ip{t},
 \end{equation}
 and thus from \eqref{def:sigma} we obtain
 \begin{equation}
 \begin{split}
  \abs{\sigma\circ\Phi_t^{-1}\circ\mathcal{M}^{-1}_t-\sigma\circ\Phi_t^{-1}}
   &=\left\vert\ln\left(\frac{\sqrt{\rho}+\sqrt{\rho-1}}{\sqrt{\rho^\prime}+\sqrt{\rho^\prime-1}}\right)\right\vert\lesssim\left\vert\ln(1+\frac{2(\rho-\rho^\prime)}{(\sqrt{\rho}+\sqrt{\rho^\prime})(\sqrt{\rho-1}+\sqrt{\rho^\prime-1})}\right\vert\\
  &\lesssim \ln(1+2\sqrt{\abs{a-a'}(a+a')\ip{t}}).
  \end{split}
 \end{equation}
Now the equivalence of norms \eqref{ComparableMomentNorms} follows from the bounds in \eqref{prime_bds} using \eqref{eq:weightprimes} as well.
\end{proof}

Furthermore, we need to detail how the linear trajectories interplay with the close/far regions. The following lemma shows that a trajectory of the linearized system \eqref{ODE} can enter or exit the close region $\Omega_t^{cl}$ at most once. 

\begin{lemma}\label{lem:traj_cl_far}
 Let $(\X(t),\V(t))$ be a trajectory of \eqref{ODE}. Assume that for $t_1<t_2$ we have that $\X(t_1)\in\Omega_{t_1}^{cl}\setminus \Omega_{t_1}^{far}$ and $\X(t_2)\in \Omega_{t_2}^{far}\setminus\Omega_{t_2}^{cl}$. Then for all $t\geq t_2$ there holds that $\X(t)\in\Omega_t^{far}$. 
\end{lemma}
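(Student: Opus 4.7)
The plan is to exploit the Kepler geometry and the convexity of $r(t) := |\X(t)|$ along the trajectory. The key ingredients are: (a) the virial identity in Remark \ref{rem:virials}, $\frac{d}{dt}(\X\cdot\V) = |\V|^2 + q/(2r) > 0$, which forces $\X\cdot\V$ to vanish at a unique periapsis time $t_p$, with $r$ strictly decreasing for $t<t_p$ and strictly increasing for $t>t_p$; (b) a direct computation yielding $\ddot r = L^2/r^3 + q/(2r^2) > 0$, so $r$ is globally strictly convex and $\dot r$ is monotone increasing; and (c) energy conservation $|\V|^2 = a^2 - q/r$ with $a:=\sqrt{H}$, giving $|\dot r|\leq |\V|\leq a$.

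First I argue that $t_2\geq t_p$: otherwise $r$ would be decreasing on $[t_1,t_2]$, contradicting $r(t_1)<\ip{t_1}\leq\ip{t_2}<r(t_2)/10$. Integrating (c) on $[t_1,t_2]$ and using $\ip{t_1}\leq\ip{t_2}$ (which holds in the paper's forward-in-time setting $0\leq t_1<t_2$) yields $9\ip{t_2}< a(t_2-t_1)\leq at_2$, whence $a\geq 9\ip{t_2}/t_2\geq 9$. Further, from $|\V(t_p)|^2\geq 0$ we have $r_{min}\geq q/a^2$, and since $r_{min}\leq r(t_1)<\ip{t_2}$, both $a^2\geq q/\ip{t_2}$ and $L\leq ar_{min}\leq a\ip{t_2}$ follow (the latter using $L = r_{min}|\V(t_p)|\leq ar_{min}$).

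The core estimate is the lower bound $\dot r(t_2)\geq 1$. From $\dot r(t_2)^2 = a^2 - q/r(t_2) - L^2/r(t_2)^2$, the hypothesis $r(t_2)>10\ip{t_2}$ combined with $L\leq a\ip{t_2}$ gives $L^2/r(t_2)^2 < a^2/100$, while $a^2\geq q/\ip{t_2}$ gives $q/r(t_2)\leq q/(10\ip{t_2})\leq a^2/10$. Thus $\dot r(t_2)^2 \geq a^2(1 - 1/10 - 1/100) \geq 89\cdot 81/100 > 1$. By (b), $\dot r(t)\geq \dot r(t_2)\geq 1$ for all $t\geq t_2$, so $r(t)\geq r(t_2)+(t-t_2) > \ip{t_2}+(t-t_2)\geq \ip{t}$, where the last bound follows from $\ip{t}-\ip{t_2} = (t-t_2)(t+t_2)/(\ip{t}+\ip{t_2})\leq t-t_2$. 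This is exactly the conclusion $(\vartheta,\a)\in\Omega_t^{far}$.

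The main obstacle is securing the uniform lower bound $\dot r(t_2)\geq 1$ across all admissible parameters. The key observation is that the factor-of-ten gap $r(t_2)>10\ip{t_2}$ simultaneously tames both the centrifugal term (through the Kepler bound $L\leq ar_{min}$ paired with $r_{min}<\ip{t_2}$) and the Coulomb term (through the energy bound $a^2 r_{min}\geq q$), allowing both corrections to be absorbed into a fixed fraction of $a^2$; once this is in hand the convexity argument closes the lemma essentially mechanically.
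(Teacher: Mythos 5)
Your proof is correct. It shares the paper's overall skeleton -- first force the periapsis to lie before $t_2$, then use the displacement from below $\ip{t_1}$ to above $10\ip{t_2}$ in time at most $t_2$ to force a large speed scale ($a>9$ in your version, $|\V(t_2)|>9$ in the paper's), and finally convert this into monotone growth of $|\X(t)|$ for $t\ge t_2$ -- but the closing step is implemented differently. The paper works with $|\X(t)|^2$, whose second derivative is exactly $|\V|^2+H$; after noting $\frac{d}{dt}|\X(t_2)|^2\ge 0$ and that $|\V|$ increases past periapsis, two integrations give the quadratic lower bound $|\X(t)|^2\ge |\X(t_2)|^2+(|\V(t_2)|^2+H)(t-t_2)^2/2\ge\ip{t}^2$, with no need to control the angular momentum. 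You instead work with $r=|\X|$, using $\ddot r=L^2/r^3+q/(2r^2)>0$ together with the effective-potential identity $\dot r^2=a^2-q/r-L^2/r^2$, and you must additionally import the Kepler-specific bounds $r_{min}\ge q/a^2$ and $L=r_{min}|\V(t_p)|\le a\,r_{min}\le a\ip{t_2}$ to show the centrifugal and Coulomb corrections are each a small fraction of $a^2$, yielding $\dot r(t_2)\ge 1$ and then linear growth $r(t)\ge r(t_2)+(t-t_2)\ge\ip{t}$. Both routes are sound (and both, like the paper, implicitly use $0\le t_1$ so that $\ip{t_1}\le\ip{t_2}$ and $t_2-t_1\le t_2$); the paper's choice of $|\X|^2$ buys a shorter argument free of angular-momentum bookkeeping, while yours gives the slightly more quantitative by-product of an explicit outgoing radial speed at $t_2$.
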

\begin{proof}
 We recall from Remark \ref{rem:virials} that
 \begin{equation}\label{SecondOrderDerXVX}
  \frac{d^2}{dt^2}\abs{\X(t)}^2=\abs{\V(t)}^2+H,\qquad\frac{d}{dt}\vert\X(t)\vert^2=2\X(t)\cdot\V(t),\qquad\frac{d}{dt}\vert\V(t)\vert^2=q\frac{\X(t)\cdot\V(t)}{\vert\X(t)\vert^3},
 \end{equation}
 so that $t\mapsto\vert\X(t)\vert^2$ is convex, and in particular has at most one minimum and $\vert\X\vert$ and $\vert\V\vert$ have same monotonicity.
 
 To prove the claim, we assume that $0\le t_1< t_2$ and
 \begin{equation}
  \abs{\X(t_1)}= \langle t_1\rangle,\qquad \abs{\X(t_2)}= 10\langle t_2\rangle.
 \end{equation}
Since $\abs{\X(t_2)}>\abs{\X(t_1)}$ we have that the periapsis occurs before $t_2$, and thus $\abs{\V(t)}$ is monotone increasing on $[t_2,\infty)$.

By convexity, we see that $\vert \X(s)\vert\le\vert\X(t_2)\vert$ for $t_1\le s\le t_2$, and by conservation of energy, this implies that $\vert\V(s)\vert\le\vert\V(t_2)\vert$. If $\vert\V(t_2)\vert\le 9$, we obtain that  
\begin{equation*}
\vert \X(t_2)\vert\le (t_2-t_1)\vert\V(t_2)\vert+\vert\X(t_1)\vert\le 9(t_2-t_1)+\langle t_1\rangle< 10\langle t_2\rangle,
\end{equation*}
which is impossible. Now, since $\vert \V(t)\vert$ is increasing on $(t_2,\infty)$, integrating twice the first equation in \eqref{SecondOrderDerXVX}, we find that, for $t\ge t_2$,
 \begin{equation*}
  \abs{\X(t)}^2\geq \abs{\X(t_2)}^2+(\abs{\V(t_2)}^2+H)(t-t_2)^2/2\geq 9^2(1+t_2^2 +(t-t_2)^2/2)\geq 3(1+ t^2),
 \end{equation*}
 and thus $\X(t)\in\Omega_t^{far}$.
\end{proof}

\section{Some kinematics and Poisson brackets}\label{ssec:kinematics}
We now develop quantitative estimates on some dynamically relevant quantities. We recall from \eqref{XVinSIC} the expressions of $\X$ and $\V$ in terms of the super-integrable coordinates as
\begin{equation}\label{eq:XVgen}
 \X=X_1(\xi,\eta,\lambda)\u+X_3(\xi,\eta,\lambda)\L\times\u,\qquad \V=V_1(\xi,\eta,\lambda)\u+V_3(\xi,\eta,\lambda)\L\times\u,
\end{equation}
with 
\begin{equation}\label{eq:XX1X3}
\begin{aligned}
 X_1(\xi,\eta,\lambda)&=\frac{\xi^2}{q}\left(\eta+\sigma+\frac{1}{2}+\frac{D}{1+4\kappa^2}\right),\quad X_3(\xi,\eta,\lambda)=-\frac{\xi}{q}\left(1+\frac{2D}{1+4\kappa^2}\right),\\
 V_1(\xi,\eta,\lambda)&=\frac{q}{\xi}\left(1-\frac{1}{2N}-\frac{1}{1+4\kappa^2}\frac{D}{N}\right),\quad V_3(\xi,\eta,\lambda)=\frac{q}{\xi^2}\frac{2}{1+4\kappa^2}\frac{D}{N},
\end{aligned} 
\end{equation}
where we abbreviated $D=D(\kappa,\eta+\sigma)$ and $N=N(\kappa,\eta+\sigma)$. Since only $\eta$ evolves along a trajectory of \eqref{ODE}, with \eqref{eq:eta_evol} we directly obtain the corresponding expressions
\begin{equation}
 \XX=\X(\xi,\eta+tq^2\xi^{-3},\lambda,\u,\L),\qquad \VV=\V(\xi,\eta+tq^2\xi^{-3},\lambda,\u,\L).
\end{equation}

In the following, it will be useful to recall some simple bounds on the functions involved in the formulas defined in \eqref{DefDN}:
\begin{equation}\label{BoundsDN}
\begin{split}
0\le D(\kappa,y)\le 2\sqrt{y^2+\kappa^2+1/4},\quad \mathfrak{1}_{\{y>0\}}D(\kappa,y)\le\frac{\kappa^2+1/4}{\sqrt{y^2+\kappa^2+1/4}},\\
\partial_yD(\kappa,y)=-\frac{D(\kappa,y)}{\sqrt{y^2+\kappa^2+1/4}},\qquad\partial_\kappa D(\kappa,y)=\frac{\kappa}{\sqrt{y^2+\kappa^2+1/4}}=\partial_\kappa N(\kappa,y),\\
\max\{1,\vert y\vert,\kappa\}\le N(\kappa,y)\le 1+\vert y\vert+\kappa,\qquad -1\le \partial_y N(\kappa,y),\partial_\kappa N(\kappa,y)\le 1,
\end{split}
\end{equation}
and
\begin{equation}\label{SecondOrderDerDNFormulas}
\begin{split}
\partial_y^2D(\kappa,y)=\partial_y^2N(\kappa,y)&=\frac{D(\kappa,y)}{y^2+\kappa^2+1/4}\frac{\sqrt{y^2+\kappa^2+1/4}+y}{\sqrt{y^2+\kappa^2+1/4}},\\
\partial_\kappa\partial_yD(\kappa,y)=\partial_\kappa\partial_yN(\kappa,y)&=-\frac{y\kappa}{(y^2+\kappa^2+1/4)^\frac{3}{2}},\\
\partial_\kappa^2D(\kappa,y)=\partial_\kappa^2N(\kappa,y)&=\frac{y^2+1/4}{(y^2+\kappa^2+1/4)^\frac{3}{2}}.
\end{split}
\end{equation}

\subsubsection{Estimates on ${\bf X}$.}

Looking at \eqref{Formula|x|} at periapsis (when ${\bf x}\cdot{\bf v}=0$), we find that
\begin{equation}\label{CrudeBoundX}
\begin{split}
\vert{\bf X}\vert\ge\frac{\xi^2}{2q}\langle\kappa\rangle
\end{split}
\end{equation}
but we need more precise bounds. 

\begin{corollary}\label{CorBdsOnX}
We have the uniform bounds
\begin{equation}\label{BoundXV}
\begin{split}
\frac{1}{10}\sqrt{\eta^2+\kappa^2+\frac{1}{4}}\le \sqrt{\frac{a^2}{q^2}({\bf x}\cdot{\bf v})^2+\kappa^2+\frac{1}{4}}\le 10\sqrt{\eta^2+\kappa^2+\frac{1}{4}}
\end{split}
\end{equation}
and
\begin{equation}\label{BoundsX}
\begin{split}
\frac{ta}{100}-100\cdot\left[1+\frac{q}{a^2}\left(\abs{\eta}+\kappa\right)\right]\le \vert {\bf X}(\vartheta+t{\bf a},{\bf a})\vert\le 100 ta+100\cdot\left[1+\frac{q}{a^2}\left(\abs{\eta}+\kappa\right)\right].
\end{split}
\end{equation}

\end{corollary}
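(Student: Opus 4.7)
My strategy is to use the direct formula for $\aabs{\X}$ from \eqref{Formula|x|}. Substituting the identity $\tfrac{a}{q}(\x\cdot\v) = \eta + \sigma$ from \eqref{def:sigma} together with $aL/q = \kappa$, that formula rewrites as
\begin{equation*}
 \aabs{\X(\vartheta,\a)} = \frac{\xi^2}{q}\,N(\kappa, \eta + \sigma),
\end{equation*}
with $N$ as in \eqref{DefDN}. Since the shift $\vartheta \mapsto \vartheta + t\a$ evolves only the coordinate $\eta \mapsto \eta + ta^3/q$ (see \eqref{eq:eta_evol}) among the super-integrable coordinates, we obtain
\begin{equation*}
 \aabs{\XX} = \frac{\xi^2}{q}\,N(\kappa,\, \eta + ta^3/q + \tilde\sigma), \qquad \tilde\sigma := \sigma(\eta + ta^3/q,\, \kappa).
\end{equation*}

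For \eqref{BoundXV}, the claim reduces to the two-sided comparison $(\eta+\sigma)^2 + \kappa^2 + \tfrac14 \asymp \eta^2 + \kappa^2 + \tfrac14$. The bound $\aabs{\sigma} \le \ln(1+2\sqrt{\aabs{\eta}}+2\kappa) \le 2\sqrt{\aabs{\eta}}+2\kappa$ from Lemma \ref{EstimRho} immediately yields $\sigma^2 \lesssim \aabs{\eta}+\kappa^2 \lesssim 1+\eta^2+\kappa^2$, and combined with $(\eta+\sigma)^2 \le 2\eta^2 + 2\sigma^2$ this gives the upper direction. For the reverse, I split into two cases: if $\aabs{\sigma}\le\aabs{\eta}/2$ then directly $(\eta+\sigma)^2 \ge \eta^2/4$; otherwise $\aabs{\eta} < 2\aabs{\sigma}$ combined with the logarithmic bound on $\sigma$ forces $\aabs{\eta} \lesssim 1+\kappa$, so $\eta^2\lesssim 1+\kappa^2$ is already dominated by the term $\kappa^2+\tfrac14$ appearing in the lower bound.

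For \eqref{BoundsX} I apply the two-sided estimate $\max(1,\aabs{y},\kappa)\le N(\kappa,y)\le 1+\aabs{y}+\kappa$ from \eqref{BoundsDN} to $y = \eta + ta^3/q + \tilde\sigma$. The leading contribution is $\tfrac{\xi^2}{q}\cdot\tfrac{ta^3}{q} = ta$, with residuals $\tfrac{\xi^2}{q}(1+\aabs{\eta}+\kappa)$ and the error $\tfrac{\xi^2}{q}\aabs{\tilde\sigma}$. The only subtle piece is the last one: using $\aabs{\tilde\sigma} \le 2\sqrt{\aabs{\eta}} + 2\sqrt{ta^3/q} + 2\kappa$ from Lemma \ref{EstimRho}, the critical algebraic observation is the identity
\begin{equation*}
 \left(\frac{\xi^2}{q}\sqrt{\frac{ta^3}{q}}\right)^2 = \frac{\xi^4\,t a^3}{q^3} = t\xi = (ta)\cdot\frac{\xi^2}{q},
\end{equation*}
which via AM-GM yields $\tfrac{\xi^2}{q}\sqrt{ta^3/q} \le \eps\, ta + \eps^{-1}\tfrac{\xi^2}{q}$ for any $\eps>0$. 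Choosing $\eps$ small, this term is absorbed into $ta$ and $\tfrac{\xi^2}{q}$. The upper bound then follows via the triangle inequality on $y$, and the lower bound via the reverse triangle inequality combined with $N(\kappa,y)\ge\aabs{y}$. The main technical obstacle is precisely this absorption: at face value, $\tilde\sigma$'s logarithmic growth $\sim\ln\sqrt{t}$ seems to introduce a new scale, but the identity above shows it is dominated by the co-existing terms $ta$ and $\xi^2/q = q/a^2$, so no loss occurs in the end.
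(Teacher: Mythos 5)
Your argument is correct, and it rests on the same ingredients as the paper's proof (the identity $\frac{a}{q}\,\x\cdot\v=\eta+\sigma$, the logarithmic bound \eqref{BoundsOnSigma}, and the explicit kinematic formulas), but the organization of \eqref{BoundsX} is genuinely different. For \eqref{BoundXV} your case split ($\aabs{\sigma}\le\aabs{\eta}/2$ versus $\aabs{\eta}\lesssim 1+\kappa$) is exactly what the paper's one-line remark ``follows from \eqref{BoundsOnSigma}'' leaves implicit. For \eqref{BoundsX}, the paper bounds $\aabs{\X}$ from below by projecting onto $\a/a$ and $(\L\times\a)/(aL)$ via \eqref{ExpressionsX}, with a case analysis on the sign of $\x\cdot\v$ and on whether $4\kappa^2\in[1/2,2]$; you instead use the scalar identity $\aabs{\X}=\frac{\xi^2}{q}N(\kappa,\eta+\sigma)$ (which indeed follows from \eqref{Formula|x|}, or from \eqref{XVinSIC}), the elementary bounds $\max\{1,\aabs{y},\kappa\}\le N\le 1+\aabs{y}+\kappa$ of \eqref{BoundsDN}, and the AM--GM absorption $(\xi^2/q)\sqrt{ta^3/q}=\sqrt{ta\cdot\xi^2/q}$. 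This is a clean, unified alternative; its only cost is that the explicit constants $10$, $100$ come out only after slightly more careful (but routine) bookkeeping in the second case of \eqref{BoundXV}, which is harmless since only a universal constant is ever used.

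One point of discrepancy is worth flagging, and it is in the statement rather than in your proof: you obtain the additive term in the form $100\,\frac{q}{a^2}(1+\aabs{\eta}+\kappa)$ instead of the literal $100\,[1+\frac{q}{a^2}(\aabs{\eta}+\kappa)]$. For the upper bound this is forced: at $\eta=\kappa=0$, $t=0$ one has $\aabs{\X}=q/a^2$ (consistently with \eqref{CrudeBoundX}), so the bound as literally written would fail whenever $a^2<q/100$; the statement must be read with the constant term carrying the factor $q/a^2$, which is also how \eqref{BoundsX} is used later (e.g.\ in \eqref{UpperBoundytEF}). For the lower bound, the literal version (which is slightly stronger when $q/a^2\ge 1$) is recoverable from your setup by additionally invoking $N\ge 1$, i.e.\ $\aabs{\XX}\ge q/a^2$, in the regime $ta^3/q\lesssim 1+\aabs{\eta}+\kappa$; with that small addition your proof yields everything the paper needs.
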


\begin{proof}[Proof of Corollary \ref{CorBdsOnX}]

The bounds \eqref{BoundXV} follow from \eqref{BoundsOnSigma} since ${\bf x}\cdot{\bf v}=\eta+\sigma$. The bound for ${\bf X}_{-1}$ follows from the estimates \eqref{BoundsDN}. The bound on $\vert{\bf X}\vert$ follows from the first formula in \eqref{ExpressionsX}. If ${\bf x}\cdot{\bf v}>0$, or if $4\kappa^2\notin [1/2,2]$,
\begin{equation*}
\begin{split}
\vert{\bf X}\vert\ge \vert {\bf X}\cdot\frac{\bf a}{a}\vert\ge \frac{q}{a^2}\left[\frac{1}{2}+\frac{q^2}{(2R)^2}\sqrt{\frac{a^2}{q^2}({\bf x}\cdot{\bf v})^2+\frac{(2R)^2}{4q^2}}+\frac{4a^2L^2}{(2R)^2}\frac{a}{q}({\bf x}\cdot{\bf v})\right]\geq\frac{q}{a^2}(1+\frac{a}{q}\abs{\x\cdot\v}),
\end{split}
\end{equation*}
while if ${\bf x}\cdot{\bf v}<0$ and $1/2\le 4\kappa^2\le 2$, we use the orthogonal direction to get
\begin{equation*}
\begin{split}
\vert{\bf X}\vert\ge \vert {\bf X}\cdot\frac{{\bf L}\times{\bf  a}}{aL}\vert\ge\frac{2aL}{a^2}\left[\frac{1}{2}+\frac{q^2}{(2R)^2}\left(\sqrt{\frac{a^2}{q^2}({\bf x}\cdot{\bf v})^2+\frac{(2R)^2}{4q^2}}-\frac{a}{q}({\bf x}\cdot{\bf v})\right)\right]\ge \frac{q}{2a^2}\vert\eta\vert-\frac{L}{a}\vert\sigma\vert.
\end{split}
\end{equation*}

\end{proof}

\subsection{Poisson brackets}\label{sec:PB_bounds}

We will make extensive use of the properties of the Poisson bracket
\begin{equation}\label{PB2}
 \{f,g\}=\nabla_\x f\cdot \nabla_\v g-\nabla_\v f\cdot \nabla_\x g.
\end{equation}
Recalling that by construction the change of variables $\mathcal{T}:(\x,\v)\mapsto(\A,\Theta)$ in Proposition \ref{PropAA} is canonical, we see that $\mathcal{T}$ leaves the Poisson bracket invariant: for any functions $f,g$ we have that 
$\{f,g\}=\{f\circ \mathcal{T},g\circ\mathcal{T}\}=\{f\circ \mathcal{T}^{-1},g\circ\mathcal{T}^{-1}\}$. In other words, we can compute Poisson brackets in either systems of physical $(\x,\v)$ or angle-action coordinates $(\vartheta,\a)$, and will slightly abuse the notation by simply writing this as
\begin{equation}
 \{f,g\}=\nabla_\x f\cdot \nabla_\v g-\nabla_\v f\cdot \nabla_\x g=\nabla_\vartheta f\cdot \nabla_\a g-\nabla_\a f\cdot \nabla_\vartheta g.
\end{equation}
Two further useful facts are its Leibniz rule and the Jacobi identity
\begin{equation}\label{Jacobi}
\begin{split}
0=\{f,\{g,h\}\}+\{g,\{h,f\}\}+\{h,\{f,g\}\},
\end{split}
\end{equation}
which can be verified by straightforward computations. Finally, the nonlinear analysis will exploit in important ways that the integral of a Poisson bracket vanishes
\begin{equation}\label{ZeroIntegralPB}
\begin{split}
\iint \{f,g\} d\x d\v=0,
\end{split}
\end{equation}
provided the derivatives of the functions have appropriate decay.

Moreover, the Poisson brackets define \emph{symplectic gradients} which we will use as vector fields to control regularity.

\begin{remark}
Not all vector fields are symplectic gradients (symplectic gradient are divergence-free), but the canonical basis is made of symplectic gradients. In addition, we have a few distinguished vector fields: 
\begin{equation*}
\begin{split}
\{\cdot,a^2\}&=\Big\{\cdot,\vert{\bf v}\vert^2+\frac{q}{\vert{\bf x}\vert}\Big\}=2{\bf v}\cdot\nabla_\x+2q\frac{{\bf x}}{\vert{\bf x}\vert^3}\cdot\nabla_\v,\\
\{\cdot,{\bf L}^j\}&=\in^{jkl}\vartheta^k\partial_{\vartheta^l}-\in^{jkl}{\bf a}^l\partial_{{\bf a}^k},\qquad
\{\cdot,\vartheta\cdot{\bf a}\}=\vartheta\cdot\nabla_{\vartheta}-{\bf a}\cdot\nabla_{\bf a},
\end{split}
\end{equation*}
and we recognize that the first vector field above is nothing but the Hamiltonian vector field associated to the linearized dynamics, while the next two are the Noether vector fields associated to the invariance of the equations under rotations $({\bf x},{\bf v})\mapsto (\mathcal{R}{\bf x},\mathcal{R}^{-1}{\bf v})$, $\mathcal{R}\in SO(3)$ and to the invariance of the equation under rescaling $({\bf x},{\bf v})\mapsto (\lambda{\bf x},\lambda^{-1}{\bf v})$, $\lambda\in\mathbb{R}_+$.
\end{remark}

A crucial first Poisson bracket identity is the one showing that the Kepler problem is integrable, i.e.\
\begin{equation}
\{a^2,{\bf L}\}=\{ H,{\bf L}\}=0.
\end{equation}
We have several nice properties in coordinates $({\bf x},{\bf v})$, in which the nonlinearity has a simple expression:
\begin{equation}\label{PBXV}
\begin{split}
\{{\bf x},a\}&=\frac{1}{2a}\Big\{{\bf x},\vert{\bf v}\vert^2+\frac{q}{\vert{\bf x}\vert}\Big\}=a^{-1}{\bf v},\qquad
\{{\bf x},L\}={\bf l}\times{\bf x},\\
\{{\bf v},a\}&=\frac{q}{2a}\frac{{\bf x}}{\vert{\bf x}\vert^3},\qquad\{{\bf v},L\}={\bf l}\times{\bf v}.
\end{split}
\end{equation}

Both classical coordinates $({\bf x},{\bf v})$ and angle-action coordinates $(\vartheta,{\bf a})$ satisfy the canonical Poisson bracket relations
\begin{equation*}
 \{\vartheta^j,\vartheta^k\}=\{{\bf a}^j,{\bf a}^k\}=0=\{\x^j,\x^k\}=\{\v^j,\v^k\},\qquad\{\vartheta^j,{\bf a}^k\}=\delta_{jk}=\{\x^j,\v^k\}.
\end{equation*}
While the super-integrable coordinates \eqref{SIC} are better adapted to the linear evolution, their Poisson bracket relations are not canonical anymore, but still relatively convenient: we have that
\begin{equation}\label{PBSIC}
\begin{split}
\{\xi,\eta\}&=1,\qquad 0=\{\xi,\lambda\}=\{\xi,{\bf u}\}=\{ \xi,\L\}=\{\eta,\lambda\}=\{\eta,{\bf u}\}=\{\eta,\L\}=\{\lambda,\L\},\\
\{\lambda,{\bf u}\}&=-\l\times\u,\quad\{{\bf u}^j,{\bf u}^k\}=0,\quad
\{\L^j,\u^k\}=\in^{jka}\u^a,\quad \{\L^j,\L^k\}=\in^{jka}\L^a,
\end{split}
\end{equation}
and $\{(\L\times\u)^a,\L^b\}=\L^a\u^b-\L^b\u^a$.

\begin{remark}\label{PBWithLambdaRmk}
 Note that Poisson brackets with $\lambda$ follow from those with $\L$, since for any scalar function $\zeta$ there holds that
 \begin{equation*}\label{PBWithLambda}
  \{\zeta,\lambda\}=\l^j\{\zeta,\L^j\}.
 \end{equation*}
 However, for some computations it is useful to keep treating the Poisson bracket with $\lambda$ separately.
\end{remark}

\subsubsection{Estimates on derivatives of ${\bf X}$ and  ${\bf V}$}
In treating the nonlinear terms, Poisson brackets of the kinematic quantities $\XX$ and $\VV$ with the super-integrable coordinates arise. With \eqref{PBXV} we can explicitly compute some of the relevant Poisson brackets: we have that
\begin{equation}\label{PBXV2}
\begin{aligned}
 \{a,\XX\}&=-\frac{\VV}{a},\qquad \{\xi,\XX\}=\frac{\xi^2}{q}\frac{\VV}{a},\qquad \{\lambda,\XX\}=-{\bf l}\times\XX,\\
 \{a,\VV\}&=-\frac{\xi}{2}\frac{\XX}{\vert\XX\vert^3},\qquad \{\xi,\VV\}=\frac{\xi^3}{2q}\frac{\XX}{\vert\XX\vert^3}, \qquad \{\lambda,\VV\}=-{\bf l}\times\VV.
\end{aligned} 
\end{equation}
As a consequence,
\begin{equation}\label{DoublePoissonBracketsWithXi}
\begin{aligned}
 \{\{\xi,\XX^j\},\zeta\}&=\frac{\xi^2}{q^2}\left[3\VV^j\{\xi,\zeta\}+\xi\{\VV^j,\zeta\}\right],\\
 \{\{\xi,\VV^j\},\zeta\}&=3\frac{\xi^2}{2q}\frac{\XX^j}{\vert\XX\vert^3}\{\xi,\zeta\}+\frac{\xi^3}{2q}\frac{1}{\vert\XX\vert^3}\left[\{\XX^j,\zeta\}-3\frac{\XX^j\XX^k}{\vert\XX\vert^2}\{\XX^k,\zeta\}\right].
\end{aligned}
\end{equation}

More generally, for a vector of the form $\bm{U}=U_1(\xi,\eta,\lambda)\u+U_3(\xi,\eta,\lambda)\L\times\u$ (such as $\X$ and $\V$ in \eqref{eq:XVgen} and \eqref{eq:XX1X3}), and $\alpha\in\{\xi,\eta,\lambda\}$, we slightly abuse notation by writing $\partial_\alpha\bm{U}=\partial_\alpha U_1\u+\partial_\alpha U_3\L\times\u$, and can then use the chain rule and the Poisson bracket relations \eqref{PBSIC} to resolve Poisson brackets of a scalar function $\zeta$ with $\bm{U}$ as
\begin{equation}\label{eq:U_resol}
 \{\bm{U}^j,\zeta\}=\{\bm{U}^j,\eta\}\{\xi,\zeta\}-\{\bm{U}^j,\xi\}\{\eta,\zeta\}+\partial_\lambda\bm{U}^j\{\lambda,\zeta\}+U_1\{\u^j,\zeta\}+U_3\in^{jcd}\L^c\{\u^d,\zeta\}+U_3\in^{jcd}\u^d\{\L^c,\zeta\},
\end{equation}
where we have used that $\partial_\xi\bm{U}=\{\bm{U},\eta\}$ and $\partial_\eta\bm{U}=-\{\bm{U},\xi\}$.

We now collect bounds on the Poisson brackets. From \eqref{PBXV2} it directly follows that
\begin{equation}\label{PBX1}
\begin{split}
\vert\{\xi,\widetilde{\bf X}\}\vert\le\xi^2/q,\qquad\vert\{\lambda,\widetilde{\bf X}\}\vert=\vert\widetilde{\bf X}\vert,\qquad
\vert\{\xi,\widetilde{\bf V}\}\vert\le\frac{\xi^3}{2q}\frac{1}{\vert\widetilde{\bf X}\vert^2},\qquad\vert\{\lambda,\widetilde{\bf V}\}\vert=\vert\widetilde{\bf V}\vert.
\end{split}
\end{equation}
Furthermore, we have:
\begin{lemma}\label{lem:moreXVbds}
We have the following bounds for first order Poisson brackets
\begin{equation}\label{PBX1'}
\begin{alignedat}{3}
 \vert\partial_\lambda\XX\vert&\lesssim \xi^{-1}\kappa\ip{\kappa}^{-2}\vert\XX\vert,\qquad &\vert\{\XX,\eta\}\vert &\lesssim \xi^{-1}\vert\XX\vert+tq\xi^{-2},\\
 \vert\partial_\lambda\VV\vert &\lesssim q\xi^{-2}\kappa\ip{\kappa}^{-3},\qquad &\vert\{\VV,\eta\}\vert &\lesssim q\xi^{-2}(1+t\xi \vert\XX\vert^{-2}),
\end{alignedat} 
\end{equation}
and for second order Poisson brackets for $\XX$:
\begin{equation}\label{PBX1primeprime}
\begin{alignedat}{3}
\vert\{\{\XX,\xi\},\xi\}\vert&\lesssim (\xi^2/q)^3\cdot\vert\XX\vert^{-2},\qquad&
 \vert\{\{\XX,\xi\},\eta\}\vert&\lesssim \xi/q\cdot (1+t\xi\vert\XX\vert^{-2}),\\
  \vert\{\{\XX,\eta\},\eta\}\vert &\lesssim \xi^{-2}\vert\XX\vert+tq\xi^{-3}+t^2q\xi^{-2}\vert\XX\vert^{-2},\qquad&
  \vert\partial_\lambda^2\XX\vert &\lesssim \xi^{-2}\ip{\kappa}^{-2}\vert\XX\vert,\\
  \vert\partial_\lambda\{\xi,\XX\}\vert&\lesssim\xi/q\cdot\kappa\langle\kappa\rangle^{-3},\qquad
  &\vert\partial_\lambda\{\XX,\eta\}\vert &\lesssim \xi^{-2}\ip{\kappa}^{-1}(\vert\XX\vert+tq\xi^{-1}),\\
  \end{alignedat}
  \end{equation}
and for second order Poisson brackets for $\VV$:
  \begin{equation}\label{PBV1primeprime}
  \begin{alignedat}{3}
  \vert\{\{\VV,\xi\},\xi\}\vert&\lesssim \xi^5/q^2\cdot\vert\XX\vert^{-3},\qquad&
   \vert\{\{\VV,\xi\},\eta\}\vert&\lesssim \xi^{2}/q\cdot \vert\XX\vert^{-2}(1+tq/(\xi\vert\XX\vert)),\\
 \vert\{\{\VV,\eta\},\eta\}\vert &\lesssim q\xi^{-3}+tq\xi^{-2}\vert\XX\vert^{-2}+t^2q^2\xi^{-3}\vert\XX\vert^{-3},\qquad&
  \vert\partial_\lambda^2 \VV\vert&\lesssim q\ip{\kappa}^{-3}\xi^{-3},\\
  \vert\partial_\lambda\{\xi,\VV\}\vert&\lesssim\xi^2/q\kappa\langle\kappa\rangle^{-2}\vert\XX\vert^{-2}, & \vert\partial_\lambda\{\VV,\eta\}\vert &\lesssim q\xi^{-3}\langle\kappa\rangle^{-1} (\langle\kappa\rangle^{-1}+t\xi\vert\XX\vert^{-2}).
 \end{alignedat} 
 \end{equation}
\end{lemma}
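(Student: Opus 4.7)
All bounds reduce to computing derivatives of the explicit scalar functions $X_1, X_3, V_1, V_3$ in \eqref{eq:XX1X3} and combining them with the size estimates of Corollary \ref{CorBdsOnX}.

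First, I would reduce Poisson brackets to ordinary partial derivatives. Since $\X$ has the form $U_1(\xi,\eta,\lambda)\u + U_3(\xi,\eta,\lambda)\L\times\u$, and the bracket relations \eqref{PBSIC} give $\{\eta,\xi\}=-1$, $\{\lambda,\xi\}=0$, $\{\u,\xi\}=0=\{\L,\xi\}$ (and analogously for $\eta$), there holds $\{\X,\xi\}=-\partial_\eta\X$ and $\{\X,\eta\}=\partial_\xi\X$. The tilded versions $\XX=\X(\xi,\eta+tq^2\xi^{-3},\lambda,\u,\L)$ then satisfy $\{\XX,\xi\}=-(\partial_\eta\X)|_{\eta\mapsto\eta+tq^2\xi^{-3}}$, while $\{\XX,\eta\}=\partial_\xi\XX$ picks up an extra contribution $-3tq^2\xi^{-4}(\partial_\eta\X)|_{\eta\mapsto\eta+tq^2\xi^{-3}}$ from the chain rule; this is the source of the $tq\xi^{-2}$ term in \eqref{PBX1'}. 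Iterating produces the second-order brackets in \eqref{PBX1primeprime}--\eqref{PBV1primeprime}, each $\partial_\xi$ generating an additional $O(tq\xi^{-4})$-factor times $\partial_\eta$; this accounts for all the $t$- and $t^2$-dependence, and for the cross-identities \eqref{DoublePoissonBracketsWithXi} involving $\VV$.

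Second, I would differentiate \eqref{eq:XX1X3}: each of $X_1,X_3,V_1,V_3$ is a prefactor $\xi^{\pm 1}$ or $q\xi^{\mp 1}$ times a rational expression in $\eta+\sigma$, $\kappa=\lambda/\xi$, $D=D(\kappa,\eta+\sigma)$ and $N=N(\kappa,\eta+\sigma)$. The derivatives of $D$ and $N$ in $(\eta,\kappa)$ are explicit from \eqref{BoundsDN} and \eqref{SecondOrderDerDNFormulas}, and the derivatives of $\sigma$ are controlled by Lemma \ref{EstimRho}, in particular \eqref{BoundsOnSigma} and \eqref{ImprovedBoundsOnSigma}. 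The $\partial_\lambda$ derivatives come from $\partial_\lambda=\xi^{-1}\partial_\kappa$ at fixed $\xi,\eta$, so the factor $\kappa\langle\kappa\rangle^{-2}$ in $\partial_\lambda\XX$ reflects $\partial_\kappa D\sim \kappa/N$ combined with the $(1+4\kappa^2)^{-1}$-weight in \eqref{eq:XX1X3}. The raw outputs are then converted to the stated form using that, up to universal constants, $N(\kappa,\eta+\sigma)\sim 1+\vert\eta\vert+\kappa$ by \eqref{BoundsOnSigma}, and by \eqref{BoundsX}--\eqref{BoundXV} this is in turn comparable to $(q/\xi^2)\vert\XX\vert$ along trajectories (up to the $ta$ term coming from the shift $\eta\mapsto\eta+tq^2\xi^{-3}$). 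Substituting $\vert\XX\vert$ for the implicit $N$-factors then delivers the right-hand sides in \eqref{PBX1'}--\eqref{PBV1primeprime}. For $\VV$, brackets are more naturally expressed in terms of $\vert\XX\vert^{-2}$ via \eqref{PBXV2} and \eqref{DoublePoissonBracketsWithXi}; e.g.\ $\{\{\VV,\xi\},\eta\}$ can be reduced to brackets involving $\XX/\vert\XX\vert^3$, from which the $\vert\XX\vert^{-2}(1+tq/(\xi\vert\XX\vert))$-decay is immediate.

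\textbf{Main obstacle.} The difficulty is not conceptual but combinatorial: the explicit formulas have several summands and the second-order brackets generate many subterms, and matching the stated bounds requires identifying the dominant contribution in each regime without loss of sharpness. The tightest bounds occur for $\{\{\XX,\eta\},\eta\}$ and $\{\{\VV,\eta\},\eta\}$, where two $\partial_\xi$ derivatives applied to $\XX(\eta+tq^2\xi^{-3})$ yield up to quadratic $t$-factors times $\partial_\eta^2\X$; one has to verify that $\partial_\eta^2\X$ carries enough decay in $\vert\XX\vert$ (essentially $\vert\XX\vert^{-2}$, through $\partial_\eta^2 N, \partial_\eta^2 D \lesssim N^{-1}$) to match the stated $t^2q\xi^{-2}\vert\XX\vert^{-2}$ bound. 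Analogous care is needed to ensure that the improved $\kappa$-decay in $\partial_\lambda$-bounds (e.g.\ the $\langle\kappa\rangle^{-3}$-factor in $\partial_\lambda\VV$) is not lost when differentiating the $\kappa^2/(1+4\kappa^2)$-factors; this relies on combining the natural smallness of $\partial_\kappa D$ and $\partial_\kappa N$ with the specific algebraic structure of \eqref{eq:XX1X3}, and on the improved bounds \eqref{ImprovedBoundsOnSigma} for $\sigma$ on $\Omega_+$.
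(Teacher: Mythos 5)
Your plan is correct and follows essentially the same route as the paper's proof: there, too, one differentiates the explicit formulas \eqref{eq:XX1X3} using the bounds on $D$, $N$, $\sigma$ from \eqref{BoundsDN}--\eqref{SecondOrderDerDNFormulas} and Lemma \ref{EstimRho}, handles all $t$-dependence through the shift $\eta\mapsto\eta+tq^2\xi^{-3}$ exactly as you describe (cf.\ \eqref{eq:PBXXeta}, \eqref{eq:PBVVeta}), treats the double brackets with $\xi$ via \eqref{DoublePoissonBracketsWithXi}, and converts the $N$-factors into $\vert\XX\vert$ (cf.\ \eqref{eq:Xprelimbd}). The only difference is organizational: the paper shortcuts part of the bookkeeping you flag as the main obstacle by using the homogeneity identities $\xi\partial_\xi X_1=2X_1-\lambda\partial_\lambda X_1$, $\xi\partial_\xi X_3=X_3-\lambda\partial_\lambda X_3$ (and their analogues for $\V$) together with the exact brackets $\{\xi,\XX\}=\tfrac{\xi^2}{q}\tfrac{\VV}{a}$ and $\{\xi,\VV\}=\tfrac{\xi^3}{2q}\tfrac{\XX}{\vert\XX\vert^3}$, so that the $\xi$- and $\eta$-derivatives are inherited from the $\lambda$-derivatives rather than computed by separate direct differentiation.
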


\begin{proof}

We begin by noting that we can rewrite \eqref{XVinSIC} as
\begin{equation*}
\begin{split}
\X&=\frac{\xi^2}{q}\left(\frac{N}{1+4\kappa^2}+\frac{4\kappa^2}{1+4\kappa^2}(\eta+\sigma+\frac{1}{2})\right){\bf u}-\frac{\xi^2}{q}\kappa\left(1+\frac{2D}{1+4\kappa^2}\right){\bf l}\times{\bf u}
\end{split}
\end{equation*}
and therefore (e.g.\ by distinguishing whether $\kappa>1/4$ or $\kappa\leq 1/4$ and using the bound on $N$ in \eqref{BoundsDN}),
\begin{equation}\label{eq:Xprelimbd}
 \frac{\xi^2}{q}\ip{\kappa}(1+\langle\kappa\rangle^{-2}D)\lesssim  \abs{\X},
\end{equation}
and in particular $\abs{X_3}\lesssim \xi^{-1}\ip{\kappa}^{-1}\abs{\X}$.
Moreover, since $\kappa=\lambda/\xi$ and $\sigma$ essentially only depends on $\rho(\eta,\kappa)$ (see \eqref{def:sigma}), we have that 
\begin{equation}
 -\xi\partial_\xi\kappa=\lambda\partial_\lambda\kappa=\kappa,\qquad -\xi\partial_\xi\sigma=\lambda\partial_\lambda\sigma=\kappa\partial_\kappa\sigma,
\end{equation}
and thus for $D=D(\kappa,\eta+\sigma)$ we have that $-\xi\partial_\xi D=\lambda\partial_\lambda D$ and $(\xi\partial_\xi)^2 D=(\lambda\partial_\lambda)^2 D$ and
\begin{equation}\label{SecondDerDN}
\begin{split}
\xi\partial_\lambda D&=\partial_\kappa D+\partial_\kappa\sigma\partial_yD,\\
(\xi\partial_\lambda)^2D&=\partial_\kappa^2D+2\partial_\kappa\sigma\partial_\kappa\partial_yD+(\partial_\kappa\sigma)^2\partial_y^2D+\partial_\kappa^2\sigma\partial_yD,\\
\xi\partial_\lambda N&=\partial_\kappa D+\partial_\kappa\sigma\partial_yN,\\
(\xi\partial_\lambda)^2N&=\partial_\kappa^2D+2\partial_\kappa\sigma\partial_\kappa\partial_yD+(\partial_\kappa\sigma)^2\partial_y^2N+\partial_\kappa^2\sigma\partial_yN.
\end{split}
\end{equation}
Using \eqref{BoundsOnSigma} and \eqref{BoundsDN}-\eqref{SecondOrderDerDNFormulas}, we obtain that
\begin{equation}\label{eq:dlD}
 \begin{split}
 \xi\abs{\partial_\lambda D}&\lesssim \frac{\kappa}{N}(1+\ip{\kappa}^{-2}D),\qquad\langle\kappa\rangle \vert (\xi\partial_\lambda)^2D\vert\lesssim 1,
 \end{split}
\end{equation}
and similarly
\begin{equation}\label{eq:dlN}
 \xi\abs{\partial_\lambda N}\lesssim \kappa (\ip{\kappa}^{-2}+\frac{1}{N})\lesssim\frac{\kappa}{\ip{\kappa}},\qquad\langle\kappa\rangle \vert (\xi\partial_\lambda)^2N\vert\lesssim 1.
\end{equation}

\medskip

{\bf Step 1}: first order derivatives. We are now ready to prove \eqref{PBX1'}. From \eqref{eq:XX1X3} we have that
\begin{equation}\label{eq:dlX}
\begin{aligned}
 \partial_\lambda X_1&=\frac{\xi}{q}\left(\partial_\kappa\sigma+\frac{\xi\partial_\lambda D}{1+4\kappa^2}-\frac{D}{1+4\kappa^2}\frac{8\kappa}{1+4\kappa^2}\right),\\
 \partial_\lambda X_3&=-\frac{2}{q}\left(\frac{\xi\partial_\lambda D}{1+4\kappa^2}-\frac{D}{1+4\kappa^2}\frac{8\kappa}{1+4\kappa^2}\right),
\end{aligned} 
\end{equation}
and thus
\begin{equation}
 \abs{\partial_\lambda X_1}+\xi\abs{\partial_\lambda X_3} \lesssim \xi^{-1}\kappa\ip{\kappa}^{-3}\abs{\X},
\end{equation}
so that
\begin{equation}
 \abs{\partial_\lambda\X}\lesssim \xi^{-1}\kappa\ip{\kappa}^{-2}\abs{\X}.
\end{equation}
Since
\begin{equation}\label{eq:xidxiX}
 \xi\partial_\xi X_1=2X_1-\lambda\partial_\lambda X_1,\qquad \xi\partial_\xi X_3=X_3-\lambda\partial_\lambda X_3,
\end{equation}
it directly follows that
\begin{equation}
 \abs{\xi\partial_\xi\X}\lesssim\abs{\X}.
\end{equation}
Moreover, we have that
\begin{equation}
 \{\XX,f\}=\{\X,f\circ\Phi_t\}\circ\Phi_t^{-1},
\end{equation}
and thus in particular, using \eqref{PBXV2},
\begin{equation}\label{eq:PBXXeta}
 \{\XX,\eta\}=\{\X,\eta-tq^2\xi^{-3}\}\circ\Phi_t^{-1}=\partial_\xi\X\circ\Phi_t^{-1}-3t\xi^{-1}\VV,
\end{equation}
which gives the bound
\begin{equation}
 \vert\{\XX,\eta\}\vert\lesssim \xi^{-1}\vert\XX\vert+tq\xi^{-2}.
\end{equation}

For $\V$ we compute that
\begin{equation}\label{eq:dlV}
\begin{aligned}
 \partial_\lambda V_1&=\frac{q}{\xi^2}\left(\frac{1}{2N}\frac{\xi\partial_\lambda N}{N}-\frac{1}{1+4\kappa^2}\frac{D}{N}\left(\frac{\xi\partial_\lambda D}{D}-\frac{\xi\partial_\lambda N}{N}-\frac{8\kappa}{1+4\kappa^2}\right)\right),\\
 \partial_\lambda V_3&=\frac{q}{\xi^3}\frac{2}{1+4\kappa^2}\frac{D}{N}\left(\frac{\xi\partial_\lambda D}{D}-\frac{\xi\partial_\lambda N}{N}-\frac{8\kappa}{1+4\kappa^2}\right),
\end{aligned}
\end{equation}
and thus, using \eqref{eq:dlD} and \eqref{eq:dlN},
\begin{equation}
 \abs{\partial_\lambda\V}\lesssim q\xi^{-2}\kappa\ip{\kappa}^{-3}.
\end{equation}
With
\begin{equation}\label{DVDXi}
 \xi\partial_\xi V_1=-V_1-\lambda\partial_\lambda V_1,\qquad \xi\partial_\xi V_3=-2V_3-\lambda\partial_\lambda V_3,
\end{equation}
we obtain the bound
\begin{equation}\label{AddedDerXiV}
 \abs{\xi\partial_\xi\V}\lesssim q\xi^{-1}.
\end{equation}
Using \eqref{PBXV2},
\begin{equation}\label{eq:PBVVeta}
 \{\VV,\eta\}=\{\V,\eta-tq^2\xi^{-3}\}\circ\Phi_t^{-1}=\partial_\xi\V\circ\Phi_t^{-1}-\frac{3}{2}tq\xi^{-1}\frac{\XX}{\vert\XX\vert^3},
\end{equation}
and we deduce that
\begin{equation}
 \vert\{\VV,\eta\}\vert\lesssim q\xi^{-2}(1+t\xi \vert\XX\vert^{-2}).
\end{equation}

{\bf Step 2}: second order derivatives. We now turn to \eqref{PBX1primeprime}. The double Poisson brackets involving $\xi$ follow from \eqref{DoublePoissonBracketsWithXi} and \eqref{PBX1}-\eqref{PBX1'}. From \eqref{eq:dlX} we have that
\begin{equation}\label{eq:dl2X}
\begin{aligned}
 \partial_\lambda^2 X_1&=\frac{1}{q}\left(\partial^2_\kappa\sigma+\frac{\xi^2\partial^2_\lambda D}{1+4\kappa^2}-2\frac{\xi\partial_\lambda D}{1+4\kappa^2}\frac{8\kappa}{1+4\kappa^2}-\frac{D}{1+4\kappa^2}\frac{8}{1+4\kappa^2}\frac{1-12\kappa^2}{1+4\kappa^2}\right),\\
 \partial^2_\lambda X_3&=-\frac{2}{q\xi}\left(\frac{\xi^2\partial^2_\lambda D}{1+4\kappa^2}-2\frac{\xi\partial_\lambda D}{1+4\kappa^2}\frac{8\kappa}{1+4\kappa^2}-\frac{D}{1+4\kappa^2}\frac{8}{1+4\kappa^2}\frac{1-12\kappa^2}{1+4\kappa^2}\right),
\end{aligned} 
\end{equation}
and using \eqref{BoundsOnSigma} and \eqref{eq:dlD}-\eqref{eq:dlN}, then \eqref{eq:Xprelimbd}, we obtain
\begin{equation}
 \abs{\partial_\lambda^2\X}\lesssim \xi^{-2}\ip{\kappa}^{-2}\abs{\X},
\end{equation}
which, deriving once more the equality in \eqref{eq:xidxiX} also implies that
\begin{equation}
 \ip{\kappa}\abs{\partial_\lambda\partial_\xi\X}+\abs{\partial_\xi^2\X}\lesssim \xi^{-2}\abs{\X}.
\end{equation}
By \eqref{eq:PBXXeta} we have that
\begin{equation}
\begin{aligned}
 \{\{\XX,\eta\},\eta\}&=\{\partial_\xi\X,\eta-tq^2\xi^{-3}\}\circ\Phi_t^{-1}-\{3t\xi^{-1}\VV,\eta\}\\
 &=\partial_\xi^2\X\circ\Phi_t^{-1}-3t\xi^{-2}(2\VV+\xi\partial_\xi\V\circ\Phi_t^{-1})-3t\xi^{-1}\{\VV,\eta\},
\end{aligned} 
\end{equation}
where we have used that $\{\partial_\xi \X,\xi\}=\partial_\xi\{\X,\xi\}=-\partial_\xi(\xi^3q^{-2}\V)$. Hence by \eqref{PBX1'} there holds that
\begin{equation}
 \vert\{\{\XX,\eta\},\eta\}\vert\lesssim \xi^{-2}\vert\XX\vert+t\xi^{-3}q+t^2\xi^{-2}q\vert\XX\vert^{-2}.
\end{equation}
From \eqref{eq:PBXXeta} it also follows that
\begin{equation}\label{eq:dlPBetaX}
 \partial_\lambda\{\XX,\eta\}=\partial_\lambda\partial_\xi\X\circ\Phi_t^{-1}-3t\xi^{-1}\partial_\lambda\VV,
\end{equation}
which gives the bound
\begin{equation}
 \vert\partial_\lambda\{\XX,\eta\}\vert\lesssim \xi^{-2}\ip{\kappa}^{-1}(\vert\XX\vert+tq\xi^{-1}).
\end{equation}
The bounds on $\VV$ in \eqref{PBV1primeprime} are obtained similarly. The Poisson brackets involving $\xi$ follow from \eqref{DoublePoissonBracketsWithXi} and \eqref{PBX1}-\eqref{PBX1'}. For the other bounds, we compute from \eqref{eq:dlV} that
\begin{equation}\label{eq:dl2V}
\begin{aligned}
 \partial_\lambda^2 V_1&=\frac{q}{\xi^3}\Bigg[\frac{1}{2N}\frac{\xi^2\partial_\lambda^2 N}{N}-\frac{1}{N}\left(\frac{\xi\partial_\lambda N}{N}\right)^2-\frac{1}{1+4\kappa^2}\frac{D}{N}\left(\frac{\xi\partial_\lambda D}{D}-\frac{\xi\partial_\lambda N}{N}-\frac{8\kappa}{1+4\kappa^2}\right)^2\\
 &\qquad -\frac{1}{1+4\kappa^2}\frac{D}{N}\left(-\left(\frac{\xi\partial_\lambda D}{D}\right)^2+\frac{\xi^2\partial^2_\lambda D}{D}+\left(\frac{\xi\partial_\lambda N}{N}\right)^2-\frac{\xi^2\partial^2_\lambda N}{N}+\frac{8}{1+4\kappa^2}\frac{1-4\kappa^2}{1+4\kappa^2}\right) \Bigg],\\
 \partial_\lambda^2 V_3&=\frac{q}{\xi^4}\frac{2}{1+4\kappa^2}\frac{D}{N}\Bigg[\left(\frac{\xi\partial_\lambda D}{D}-\frac{\xi\partial_\lambda N}{N}-\frac{8\kappa}{1+4\kappa^2}\right)^2\\
 &\qquad\qquad\qquad -\left(\frac{\xi\partial_\lambda D}{D}\right)^2+\frac{\xi^2\partial_\lambda^2 D}{D}+\left(\frac{\xi\partial_\lambda N}{N}\right)^2-\frac{\xi^2\partial^2_\lambda N}{N}+\frac{8}{1+4\kappa^2}\frac{1-4\kappa^2}{1+4\kappa^2}\Bigg],
\end{aligned}
\end{equation}
and thus, observing that the terms involving $(\xi\partial_\lambda D/D)^2$ cancel in each terms, and using \eqref{eq:dlD}-\eqref{eq:dlN},
\begin{equation}
 \abs{\partial_\lambda^2 \V}\lesssim q\ip{\kappa}^{-3}\xi^{-3},
\end{equation}
which, deriving \eqref{DVDXi} also implies that
\begin{equation}
 \ip{\kappa}^2\vert\partial_\lambda\partial_\xi \V\vert+\vert\partial^2_\xi \V\vert\lesssim q\xi^{-3}.
\end{equation}
By \eqref{eq:PBVVeta} we have that
\begin{equation}
\begin{aligned}
 \{\{\VV,\eta\},\eta\}&=\{\partial_\xi\V,\eta-tq^2\xi^{-3}\}\circ\Phi_t^{-1}-\frac{3}{2}tq\Big\{\xi^{-1}\frac{\XX}{\vert\XX\vert^3},\eta\Big\}\\
 &=\partial_\xi^2\V\circ\Phi_t^{-1}-\frac{3}{2}tq\xi^{-2}\left(2\frac{\XX}{\vert\XX\vert^3}+\xi\partial_\xi\Big(\frac{\X}{\vert\X\vert^3}\Big)\circ\Phi_t^{-1}\right)-\frac{3}{2}tq\xi^{-1}\Big\{\frac{\XX}{\vert\XX\vert^3},\eta\Big\},
\end{aligned} 
\end{equation}
and hence
\begin{equation}
 \vert\{\{\VV,\eta\},\eta\}\vert\lesssim q\xi^{-3}+tq\xi^{-2}\vert\XX\vert^{-2}+t^2q^2\xi^{-3}\vert\XX\vert^{-3}.
\end{equation}
Note that \eqref{eq:PBVVeta} also gives
\begin{equation}\label{eq:dlPBetaV}
 \partial_\lambda\{\VV,\eta\}=\partial_\lambda\partial_\xi\V\circ\Phi_t^{-1}-\frac{3}{2}tq\xi^{-1}\partial_\lambda\Big(\frac{\XX}{\vert\XX\vert^3}\Big),
\end{equation}
and thus
\begin{equation}
 \vert\partial_\lambda\{\VV,\eta\}\vert\lesssim q\ip{\kappa}^{-2}\xi^{-3} +tq\xi^{-2}\ip{\kappa}^{-1}\vert\XX\vert^{-2}.
\end{equation}

\end{proof}

\subsection{Improvements in the outgoing direction}

In addition, we can obtain an improvement in the ``bulk region'':
\begin{equation}\label{eq:bulk}
 \B:=\{(\vartheta,\a)\in\mathcal{P}_{\vartheta,\a}:\;  a+\xi \le (q/\langle q\rangle) t^\frac{1}{4}/10,\quad \xi\abs{\eta}+\lambda\le 10^{-3}ta^2\},
\end{equation}
 which will be important later on.
 
\begin{lemma}\label{ImprovedBoundsOnXBulk}

In the bulk region, there holds that $\widetilde{\bf Z}:=\widetilde{\bf X}-t\widetilde{\bf V}$ and $\widetilde{\bf V}$ satisfy better bounds:
\begin{equation}\label{PrecisedAsymptotBoundsZV}
\begin{split}
\vert\widetilde{\bf Z}\vert&\lesssim\frac{\xi^2}{q}\ln\langle t\rangle+\frac{\xi^2}{q}(1+\vert\eta\vert+\kappa),\qquad\vert\widetilde{\bf V}-{\bf a}\vert\lesssim\frac{\xi^2}{q}\frac{1}{\langle t\rangle},
\end{split}
\end{equation}
and in particular
\begin{equation}\label{ApproxXBulk}
\begin{split}
\vert\widetilde{\bf X}-t\a\vert&\lesssim \frac{\xi^2}{q}\ln\langle t\rangle+\frac{\xi^2}{q}(1+\vert\eta\vert+\kappa).
\end{split}
\end{equation}

\end{lemma}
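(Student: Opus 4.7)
The plan is to use the explicit formulas \eqref{eq:XVgen}--\eqref{eq:XX1X3} with $\eta$ replaced by the time-shifted angle $\tilde\eta := \eta + tq^2\xi^{-3} = \eta + ta^3/q$, and to exploit that in the bulk region $\B$ this shifted argument is large and positive. Concretely, the two bulk conditions force $|\eta| \leq 10^{-3}ta^3/q$ and $\kappa = \lambda a/q \leq 10^{-3}ta^3/q$, so
\begin{equation*}
\tilde\eta \geq (1-10^{-3})ta^3/q > 0 > -\kappa^2,
\end{equation*}
placing the time-$t$ configuration well inside $\Omega_+$. Since $a+\xi \leq (q/\ip q)t^{1/4}/10$ yields $a \gtrsim \ip q/t^{1/4}$, the quantity $ta^3/q$ dominates the logarithmic bound $|\tilde\sigma| \lesssim \ln(1+\sqrt{\tilde\eta}+\kappa)$ from \eqref{BoundsOnSigma} when $t$ is large. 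Therefore $\widetilde y := \tilde\eta+\tilde\sigma \gtrsim ta^3/q$, and \eqref{BoundsDN} at this argument yields
\begin{equation*}
\widetilde N \gtrsim ta^3/q, \qquad 0 \leq \widetilde D \lesssim (1+\kappa^2)q/(ta^3).
\end{equation*}
(For $t$ bounded, $\B$ forces $a$ and $\xi$ to be of order $1$ and the asserted bounds are trivial.)

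Next, $\widetilde{\bf V}-\a$ is read off directly from \eqref{eq:XX1X3} using $\a = a\u$: the $\u$-component is
\begin{equation*}
|\widetilde V_1 - a| = a\Bigl(\tfrac{1}{2\widetilde N}+\tfrac{\widetilde D}{(1+4\kappa^2)\widetilde N}\Bigr) \lesssim a/\widetilde N \lesssim q/(ta^2) = \xi^2/(qt),
\end{equation*}
while the $\L\times\u$-component has magnitude $\lambda|\widetilde V_3| \lesssim \frac{q\kappa}{\xi}\cdot\frac{\widetilde D}{(1+4\kappa^2)\widetilde N}$, which after inserting $\widetilde D/\widetilde N \lesssim (1+\kappa^2)/\widetilde N^2$ and the bulk inequality $\kappa \lesssim \tilde\eta$ is also $\lesssim \xi^2/(qt)$. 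For $\widetilde{\bf Z}$, noting the identity $\xi^2 \cdot ta^3/q^2 = ta$, one writes $\widetilde X_1 = ta+\frac{\xi^2}{q}(\eta+\tilde\sigma+\frac{1}{2}+\frac{\widetilde D}{1+4\kappa^2})$ and $t\widetilde V_1 = ta - at(\frac{1}{2\widetilde N}+\frac{\widetilde D}{(1+4\kappa^2)\widetilde N})$, producing the cancellation
\begin{equation*}
\widetilde X_1 - t\widetilde V_1 = \frac{\xi^2}{q}\Bigl(\eta+\tilde\sigma+\tfrac{1}{2}+\tfrac{\widetilde D}{1+4\kappa^2}\Bigr) + at\Bigl(\tfrac{1}{2\widetilde N}+\tfrac{\widetilde D}{(1+4\kappa^2)\widetilde N}\Bigr).
\end{equation*}
The first bracket is $\lesssim 1+|\eta|+\ln\ip t$ (using $\widetilde D/(1+4\kappa^2) \lesssim q/(ta^3) \lesssim 1$), and the second is $\lesssim at/\widetilde N \lesssim \xi^2/q$. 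In the $\L\times\u$ direction, $\lambda(\widetilde X_3-t\widetilde V_3)$ is dominated by the constant piece $-\lambda\xi/q = -\xi^2\kappa/q$, which is absorbed by the $\kappa$-term on the right of \eqref{PrecisedAsymptotBoundsZV}; the remaining $\widetilde D$-dependent corrections are small by the same argument. Finally, \eqref{ApproxXBulk} follows via the triangle inequality $|\widetilde{\bf X}-t\a| \leq |\widetilde{\bf Z}|+t|\widetilde{\bf V}-\a|$ combined with the two bounds just established.

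The main obstacle is the bookkeeping in the $\L\times\u$ direction, where the prefactor $\lambda = \xi\kappa$ grows with $\kappa$; only by combining the smallness of $\widetilde D/\widetilde N \lesssim (1+\kappa^2)/\widetilde N^2$ with the bulk-specific inequality $\kappa \lesssim \tilde\eta \lesssim \widetilde N$ does one avoid losing a power of $\kappa$ and recover the sharp $O(\xi^2/(q\ip t))$ decay. The $\u$-direction, by contrast, is essentially a Taylor expansion of the asymptotic action-angle formulas around $\widetilde{\bf V}\to\a$.
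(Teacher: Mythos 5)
Your proposal is correct and takes essentially the same route as the paper: it expresses $\widetilde{\bf Z}$ and $\widetilde{\bf V}-\a$ from the explicit formulas \eqref{XVinSIC}/\eqref{eq:XX1X3} at the time-shifted argument $\eta+tq^2\xi^{-3}$ and then uses the bulk bounds $\vert\widetilde\sigma\vert\lesssim\ln\langle t\rangle$, $\widetilde N\gtrsim ta^3/q$ and the smallness of $\widetilde D/(1+4\kappa^2)$, which is exactly the paper's argument via \eqref{eq:bulk-bds} and Corollary \ref{CorBdsOnX}. The only difference is that you spell out the ``direct inspection'' step (the cancellation of the $ta$ terms in the $\u$-component and the $\kappa$-weighted bookkeeping in the $\L\times\u$ direction), which is a faithful elaboration rather than a new approach.
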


\begin{proof}[Proof of Lemma \ref{ImprovedBoundsOnXBulk}]
Using \eqref{XVinSIC}, we directly express
\begin{equation*}
\begin{split}
\widetilde{\bf Z}&=\frac{\xi^2}{q}\left(\eta+\widetilde{\sigma}+\frac{1}{2}+\frac{ta^3/q}{2N}\right){\bf u}-\frac{\xi^2}{q}\kappa({\bf l}\times{\bf u})+ \frac{\xi^2}{q}\frac{D}{1+4\kappa^2}\cdot\left(1+\frac{ta^3/q}{N}\right)\cdot \frac{2{\bf R}}{q},\\
\widetilde{\bf V}-\a&=-\frac{q}{2N}\left({\bf u}+\frac{D}{1+4\kappa^2}\frac{2{\bf R}}{q}\right)=-\frac{q}{2N}\left(\left(1+\frac{D}{1+4\kappa^2}\right){\bf u}-2\frac{\kappa D}{1+4\kappa^2}{\bf l}\times{\bf u}\right),
\end{split}
\end{equation*}
and using Corollary \ref{CorBdsOnX}, we find that, in the bulk,
\begin{equation*}
\begin{split}
\vert\widetilde{\bf X}\vert\lesssim ta,\qquad \vert\widetilde{\sigma}\vert\lesssim\ln\langle t\rangle,\qquad N\gtrsim tq^2/\xi^3,\qquad D\lesssim 1.
\end{split}
\end{equation*}
Hence \eqref{PrecisedAsymptotBoundsZV} follows by direct inspection and directly implies \eqref{ApproxXBulk}.

\end{proof}

\begin{remark}\label{rem:XVexpansion}
In fact, using the equations above, one can push the asymptotic development further
\begin{equation*}
\begin{split}
\widetilde{\bf Z}&=\frac{\xi^2}{q}(-\frac{1}{2}\ln(ta^3/q)+\eta-\ln 2+1-\frac{1}{4}\frac{\ln(ta^3/q)}{ta^3/q}){\bf u}-\frac{\xi\lambda}{q}({\bf l}\times{\bf u})+O(\frac{\xi^2}{q}\frac{\kappa^2+\vert\eta\vert+1}{ta^3/q}),
\end{split}
\end{equation*}
and in particular, we have the asymptotics of trajectories:
\begin{equation*}
\begin{split}
\widetilde{\bf X}&=\left(at-\frac{1}{2}\frac{q}{a^2}\ln(ta^3/q)\right)\cdot \frac{q^2}{4a^2L^2+q^2}\left(\frac{2}{q}{\bf R}+\frac{4a}{q^2}{\bf L}\times{\bf R}\right)+O(1),\\
\widetilde{\bf V}&=a\left(1-\frac{q}{2ta^3}\right)\cdot \frac{q^2}{4a^2L^2+q^2}\left(\frac{2}{q}{\bf R}+\frac{4a}{q^2}{\bf L}\times{\bf R}\right)+O(t^{-2}).
\end{split}
\end{equation*}

\end{remark}

Along the ``future'' part of a trajectory, i.e.\ after periapsis (roughly speaking), some important improvements for the $\lambda$ derivatives in these bounds are possible:
\begin{lemma}\label{lem:betterdl}
 In the region $\{\X\cdot\V> -\xi\ip{\kappa}\}$ we have the bounds
\begin{equation}\label{eq:betterdl}
\begin{aligned}
\abs{X_3}+\abs{\partial_\lambda \X}&\lesssim \kappa\frac{\xi}{q}\frac{\xi^2}{q\abs{\X}}\lesssim\frac{\xi}{q}\frac{\kappa}{\ip{\kappa}},\qquad& \aabs{\partial_\lambda^2\X}&\lesssim\frac{1}{q}\frac{\xi^2}{q\abs{\X}},\\
\abs{V_3}+\abs{\partial_\lambda \V}&\lesssim \frac{\xi^2}{q}\frac{1}{\abs{\X}^2}\frac{\kappa}{\ip{\kappa}},\qquad&\aabs{\partial_\lambda^2\VV}&\lesssim \frac{\xi}{q}\langle\kappa\rangle^{-1}\frac{1}{\vert\XX\vert^2}\\
 \aabs{\partial_\lambda\{\X,\xi\}}&\lesssim \frac{\xi^5}{q^3}\frac{\kappa}{\langle\kappa\rangle}\vert\X\vert^{-2},\qquad&\abs{\partial_\lambda\{\V,\xi\}}&\lesssim\frac{\xi^4}{q^2}\frac{\kappa}{\langle\kappa\rangle}\vert\X\vert^{-3}.
\end{aligned} 
\end{equation}
Moreover, for $t\geq 0$ we also have that, in the region $\{\XX\cdot\VV> -\xi\ip{\kappa}\}$,
\begin{equation}\label{eq:betterdlPBetaX}
\begin{aligned}
 \aabs{\partial_\lambda\{\XX,\eta\}}&\lesssim \frac{\xi}{q\vert\XX\vert}\left(\frac{\xi\langle\kappa\rangle}{q}+\frac{t}{\aabs{\XX}}\right),\qquad
 \aabs{\partial_\lambda\{\VV,\eta\}}\lesssim \frac{1}{\aabs{\XX}^2}\left(\frac{\xi}{q}+\frac{t}{\aabs{\XX}}\right).
 \end{aligned}
\end{equation}
\end{lemma}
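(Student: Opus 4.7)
The hypothesis $\X\cdot\V > -\xi\ip{\kappa}$ amounts, via the relation $\X\cdot\V = \xi(\eta+\sigma)$, to $y := \eta+\sigma > -\ip{\kappa}$. On this region, the key starting point is an algebraic improvement on $D$: from the identity $D\cdot(\sqrt{y^2+\kappa^2+1/4}+y) = \kappa^2+1/4$ of \eqref{DefDN} combined with the elementary uniform lower bound $\sqrt{y^2+\kappa^2+1/4}+y \gtrsim \ip{\kappa}$ on $\{y>-\ip{\kappa}\}$ (which follows from monotonicity in $y$ and the boundary estimate $\sqrt{2\kappa^2+17/4}-\sqrt{4+\kappa^2} \gtrsim \ip{\kappa}$), I will deduce
\[
 D(\kappa,y) \lesssim \frac{\ip{\kappa}^2}{|y|+\ip{\kappa}},
\]
a significant sharpening of the generic bound in \eqref{BoundsDN}. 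Differentiation then yields analogous improvements for $\partial_y D$, $\partial_\kappa D$ and $\partial_\kappa N$ via the formulas in \eqref{SecondOrderDerDNFormulas}.

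With these improvements in hand, the first group of bounds \eqref{eq:betterdl} follows by direct substitution into the explicit expressions \eqref{eq:XX1X3} for $X_3, V_3$, \eqref{eq:dlX}--\eqref{eq:dlV} for $\partial_\lambda X_j, \partial_\lambda V_j$, and \eqref{eq:dl2X}--\eqref{eq:dl2V} for the second $\lambda$-derivatives, supplemented by the sharpened estimates \eqref{ImprovedBoundsOnSigma} on $\partial_\kappa\sigma, \partial_\kappa^2\sigma$ which are valid precisely on $\Omega_+$. (Note that the singular $(\xi\partial_\lambda D/D)^2$ contributions in the second-derivative formulas cancel between the squared and the $\partial_\lambda^2 D/D$ terms, leaving tractable expressions.) The Poisson bracket bounds involving $\xi$ in the last line of \eqref{eq:betterdl} are then read off from the first identity in \eqref{DoublePoissonBracketsWithXi} applied to the already-controlled $\partial_\lambda\V$. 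Conversion to the second form in each line of \eqref{eq:betterdl} uses the standard lower bound $\abs{\X} \gtrsim \xi^2\ip{\kappa}/q$ of \eqref{CrudeBoundX}.

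For the time-dependent bounds \eqref{eq:betterdlPBetaX}, the central observation is that $\XX\cdot\VV = (\X\cdot\V)\circ\Phi_t^{-1}$, while $\xi$ and $\kappa$ are invariant under the free streaming $\Phi_t$. Hence the hypothesis $\XX\cdot\VV > -\xi\ip{\kappa}$ is exactly the first-part hypothesis evaluated at $\Phi_t^{-1}(\vartheta,\a)$. I then invoke the relations \eqref{eq:dlPBetaX}--\eqref{eq:dlPBetaV}, which express $\partial_\lambda\{\XX,\eta\} = \partial_\lambda\partial_\xi\X\circ\Phi_t^{-1} - 3t\xi^{-1}\partial_\lambda\VV$ and $\partial_\lambda\{\VV,\eta\} = \partial_\lambda\partial_\xi\V\circ\Phi_t^{-1} - \tfrac{3}{2}tq\xi^{-1}\partial_\lambda(\XX/\abs{\XX}^3)$, and bound each factor using the first part of the Lemma (with the standard $\abs{\XX}\gtrsim \xi^2\ip{\kappa}/q$ for the Coulomb-type term).

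The main obstacle will be the bookkeeping through the second-order $\lambda$-derivative formulas \eqref{eq:dl2X}--\eqref{eq:dl2V}: multiple partial cancellations occur between contributions of $\partial_\lambda\sigma, \partial_\lambda D, \partial_\lambda N$ and their squares, and only a careful term-by-term comparison against the improved $D$-bound above and the refined $\sigma$-bounds \eqref{ImprovedBoundsOnSigma} produces the precise $\kappa/\ip{\kappa}$ and $\ip{\kappa}^{-1}$ weights demanded by the stated estimates. No new conceptual ingredient beyond the $D$-improvement and the symplectic identities already in hand is required.
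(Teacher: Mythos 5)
Your central algebraic observation is correct: on $\{y:=\eta+\sigma>-\ip{\kappa}\}$ one indeed has $\sqrt{y^2+\kappa^2+1/4}+y\gtrsim\ip{\kappa}$, hence $D\lesssim(\kappa^2+1/4)/(\abs{y}+\ip{\kappa})$, which for $y\ge 0$ is exactly the bound from \eqref{BoundsDN} that the paper uses in its outgoing region; and your reduction of \eqref{eq:betterdlPBetaX} to the first part via $\XX\cdot\VV=(\X\cdot\V)\circ\Phi_t^{-1}$ together with \eqref{eq:dlPBetaX}--\eqref{eq:dlPBetaV} matches the paper (modulo the fact that you also need the mixed derivatives $\partial_\lambda\partial_\xi\X$, $\partial_\lambda\partial_\xi\V$, which are not in the first part of the lemma but follow by differentiating the scaling identities \eqref{eq:xidxiX} and \eqref{DVDXi} -- a minor omission). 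The genuinely different element of your plan is to run a single substitution argument over the whole region, whereas the paper splits it: near periapsis, on $\{\abs{\X\cdot\V}\le\xi\ip{\kappa}\}$, it uses the Virial identity \eqref{eq:virials} to pin $\abs{\X}\simeq\frac{\xi^2}{q}\ip{\kappa}$ and then simply quotes the generic bounds of Lemma \ref{lem:moreXVbds}; only on $\{\X\cdot\V\ge\xi\ip{\kappa}\}$ does it perform the substitution you describe.

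The genuine gap in your version is the $\sigma$-derivative input. First, \eqref{ImprovedBoundsOnSigma} is proved only on $\Omega_+$ (i.e.\ $\eta>-\kappa^2$), while $\{\X\cdot\V>-\xi\ip{\kappa}\}$ contains a slab of $\Omega_-$ just before periapsis (for $\kappa$ of order one or smaller, $\{-\ip{\kappa}<y<-\kappa^2\}$ is nonempty), so you cannot invoke those bounds throughout as stated. Second, even on $\Omega_+$ the bound $\abs{\partial_\kappa\sigma}\lesssim\kappa/(\rho^2+\kappa^2)$ only produces the required factors $\kappa/N$ and $N^{-2}\simeq(\xi^2/(q\abs{\X}))^2$ once one knows $\rho\gtrsim N\simeq q\xi^{-2}\abs{\X}$; the paper records this as $\rho\gtrsim q\xi^{-2}\abs{\X}$ \emph{after periapsis} (i.e.\ for $\X\cdot\V\ge 0$), and it is not available in that form on the portion of your region with $\X\cdot\V<0$. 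So the step covering a neighborhood of the fold -- exactly the delicate part for which the paper devotes its separate dynamical argument -- is missing. The gap is repairable within your framework: from \eqref{Formula|x|} one has $\abs{\X}=\frac{\xi^2}{2q}\bigl(1+\sqrt{1+4\kappa^2+4y^2}\bigr)\simeq\frac{\xi^2}{q}N$, so on the slab $\abs{y}\lesssim\ip{\kappa}$ one gets $N\simeq\ip{\kappa}$ and the unimproved bounds \eqref{BoundsOnSigma}, \eqref{eq:dlD}--\eqref{eq:dlN} already reproduce the claimed estimates there, while for $y\ge0$ your argument with \eqref{ImprovedBoundsOnSigma} and $\rho\gtrsim N$ goes through; but some such case distinction must be made explicit, since the single uniform substitution you propose does not exist as written.
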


\begin{remark}
 Along the ``past'' part of a trajectory, i.e.\ in the region $\{\X\cdot\V< -\xi\ip{\kappa}\}$, we can use the past asymptotic actions of Section \ref{ssec:past_AA} to obtain the same, improved bounds: this follows from the fact that the expressions of $\X$ and $\V$ are identical (see \eqref{XVinSIC(-)}), and satisfy the same Poisson bracket relations (see \eqref{eq:PB_SIC(-)}).
\end{remark}

\begin{proof}[Proof of Lemma \ref{lem:betterdl}]
We treat separately the regions $\{\abs{\X\cdot\V}\leq \ip{\kappa}\xi\}$ and $\{\X\cdot\V> \ip{\kappa}\xi\}$.
\begin{enumerate}[wide]
\item The region $\{\abs{\X\cdot\V}\leq \ip{\kappa}\xi\}$.
Since by \eqref{eq:virials} along a trajectory of \eqref{ODE} there holds
\begin{equation}
 \frac{d}{dt}\X\cdot\V\geq \frac{1}{2}a^2,
\end{equation}
a trajectory passes through the region $\{\abs{\X\cdot\V}\leq \ip{\kappa}\xi\}$ in time at most $2\xi\ip{\kappa}\cdot 2a^{-2}=4q^{-2} \xi^3\ip{\kappa}$.
Letting $t_p$ denote the time of periapsis (where by \eqref{Formula|x|} there holds $\abs{\X(t_p)}\geq \frac{\xi^2}{q}\ip{\kappa}$), we thus have by \eqref{eq:virials} that in this region
\begin{equation}
 \frac{\xi^4}{q^2}\ip{\kappa}^2\leq \abs{\X}^2\leq \abs{\X(t_p)}^2+\int_{t_p}^t\vert\frac{d}{dt}\abs{\X(s)}^2\vert ds\lesssim\frac{\xi^4}{q^2}\ip{\kappa}^2.
\end{equation}
The claimed bounds then follow from those in Lemma \ref{lem:moreXVbds}.

\item The region $\{\X\cdot\V> \ip{\kappa}\xi\}$.
Here the claim follows from the bounds on $D,N$ and their derivatives in \eqref{BoundsDN}. (Here again, we write $D=D(\kappa,\eta+\sigma)$ and $N=N(\kappa,\eta+\sigma)$ if not specified otherwise). To begin, we observe that since $\eta+\sigma=\xi^{-1}\X\cdot\V>\ip{\kappa}>0$ we can use the bound from \eqref{BoundsDN},
 \begin{equation}
  \mathfrak{1}_{y\geq 0}D(\kappa,y)=\mathfrak{1}_{y\geq 0}\frac{\kappa^2+1/4}{\sqrt{y^2+\kappa^2+1/4}+y}\leq \frac{\kappa^2+1/4}{\sqrt{y^2+\kappa^2+1/4}},
 \end{equation}
to deduce from \eqref{eq:XX1X3} the bounds
\begin{equation}\label{eq:X12_bds}
 \frac{1}{2}\frac{\xi^2}{q}N\leq \abs{X_1}\leq 2\frac{\xi^2}{q}N,\qquad \frac{\xi}{q}\leq \abs{X_3}\leq 2\frac{\xi}{q}.
\end{equation}
Since moreover
\begin{equation}
 D\lesssim\frac{\ip{\kappa}^2}{N}\lesssim\ip{\kappa},
\end{equation}
we can strengthen \eqref{eq:dlD} and \eqref{eq:dlN} to
\begin{equation}\label{eq:X12_bds3}
 \xi\abs{\partial_\lambda D}\lesssim \frac{\kappa}{N},\qquad \xi\abs{\partial_\lambda N}\lesssim \frac{\kappa}{\ip{\kappa}}.
\end{equation}
Furthermore, we note from \eqref{def:tau} that, after periapsis, $\rho\gtrsim q\xi^{-2}\abs{\X}$ and using \eqref{ImprovedBoundsOnSigma}, we see that
\begin{equation}
 \abs{\partial_\kappa\sigma}\lesssim\kappa \rho^{-1}\lesssim \kappa\frac{\xi^2}{q\abs{\X}}.
\end{equation}
From \eqref{eq:dlX} and \eqref{eq:X12_bds}-\eqref{eq:X12_bds3} it thus follows that
\begin{equation}\label{eq:evenbetterdlX}
 \abs{\partial_\lambda X_1}\lesssim \frac{\xi}{q}\kappa\frac{\xi^2}{q\abs{\X}},\qquad \abs{\partial_\lambda X_3}\lesssim \frac{1}{q}\frac{\kappa}{\ip{\kappa}^2}\frac{1}{N}\lesssim \frac{1}{q}\frac{\kappa}{\ip{\kappa}^2}\frac{\xi^2}{q\abs{\X}},
\end{equation}
and this gives the first and second bound in \eqref{eq:betterdl}.
From \eqref{eq:XX1X3} we have that
 \begin{equation}
  \abs{V_1}\lesssim\frac{q}{\xi},\qquad \abs{V_3}\lesssim \frac{q}{\xi^2}\frac{1}{N^2},
 \end{equation}
and by \eqref{eq:dlV} we obtain
\begin{equation}\label{ImprovedVDerProof}
 \vert\partial_\lambda\V\vert\leq\abs{\partial_\lambda V_1}+\lambda\abs{\partial_\lambda V_3}\lesssim\frac{q}{\xi^2}\frac{1}{N^2} \frac{\kappa}{\ip{\kappa}}\lesssim \frac{\xi^2}{q}\frac{1}{\abs{\X}^2}\frac{\kappa}{\ip{\kappa}},
\end{equation}
where we used that $\abs{\X}\lesssim \frac{\xi^2}{q}N$ by \eqref{eq:X12_bds}.

Inspecting \eqref{SecondDerDN}, \eqref{BoundsDN}-\eqref{SecondOrderDerDNFormulas} with the improved bound in \eqref{ImprovedBoundsOnSigma}, we obtain that $\abs{(\xi\partial_\lambda)^2 D}\lesssim N^{-1}$, and  we obtain from \eqref{eq:dl2X} that
\begin{equation}
 \abs{\partial_\lambda^2 X_1}\lesssim \langle\kappa\rangle^{-1}\frac{1}{q}\frac{\xi^2}{q\abs{\X}},\qquad \abs{\partial_\lambda^2 X_3}\lesssim \frac{1}{q\xi}\frac{1}{\ip{\kappa}^2}\frac{1}{N}\lesssim \frac{1}{q\xi}\frac{1}{\ip{\kappa}^2}\frac{\xi^2}{q\abs{\X}},
\end{equation}
which gives $\abs{\partial_\lambda^2\X}\lesssim\frac{1}{q}\langle\kappa\rangle^{-1}\frac{\xi^2}{q\abs{\X}}$. Similarly, deriving \eqref{eq:xidxiX}, we obtain that
\begin{equation*}
 \aabs{\partial_\lambda\partial_\xi\X}\lesssim \frac{\langle\kappa\rangle}{q}\frac{\xi^2}{q\vert\X\vert},
\end{equation*}
and using \eqref{eq:dlPBetaX} and \eqref{ImprovedVDerProof}, we obtain \eqref{eq:betterdlPBetaX}. Similarly, starting from \eqref{eq:dl2V}, we obtain the improved bound
\begin{equation*}
\begin{split}
\vert\partial_\lambda^2{\bf V}\vert&\lesssim\frac{q}{\xi^3}\langle\kappa\rangle^{-1}\frac{1}{N^2}\lesssim\frac{\xi}{q}\langle\kappa\rangle^{-1}\frac{1}{\vert\X\vert^2}
\end{split}
\end{equation*}
and deriving \eqref{DVDXi} and using \eqref{ImprovedVDerProof}, we find that
\begin{equation}\label{eq:betterdlxiV}
 \abs{\partial_\xi\partial_\lambda \V}\lesssim \frac{\xi}{q}\frac{1}{\abs{\X}^2}\frac{\kappa}{\ip{\kappa}},
\end{equation}
and thus the second bound in \eqref{eq:betterdlPBetaX}
follows with \eqref{eq:dlPBetaV}.
\end{enumerate}
\end{proof}

We can now compute some bounds which will be important later. We have some simple Poisson brackets
\begin{corollary}\label{CorPBa}

We have the general bounds
\begin{equation*}
\begin{split}
\vert\{\widetilde{\bf X},{\bf a}\}\vert&\lesssim\frac{q}{\xi^2}\vert\widetilde{\bf X}\vert,\qquad\vert\{\widetilde{\bf V},{\bf a}\}\vert\lesssim\frac{q^2}{\xi^3\langle\kappa\rangle^2},
\end{split}
\end{equation*}
while on the bulk, we have the stronger bounds
\begin{equation*}
\begin{split}
\mathfrak{1}_{\mathcal{B}}\vert\{\widetilde{\bf X},{\bf a}\}\vert&\lesssim1,\qquad\mathfrak{1}_{\mathcal{B}}\vert\{\widetilde{\bf V},{\bf a}\}\vert\lesssim\frac{\xi^3}{q^2}\langle t\rangle^{-2},\qquad\mathfrak{1}_{\mathcal{B}}\vert\{\widetilde{\bf V},\eta\}\vert\lesssim\frac{q}{\xi^2},
\end{split}
\end{equation*}
and the precised formula
\begin{equation}\label{DerXiX}
\begin{split}
\mathfrak{1}_{\mathcal{B}}\vert\partial_\xi\widetilde{\bf X}-2\widetilde{\bf X}\vert&\le \frac{\xi^2}{q}\kappa\le\vert\widetilde{\bf X}\vert.
\end{split}
\end{equation}

\end{corollary}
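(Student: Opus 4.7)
My plan is to reduce each bracket to known kinematic bounds via the decomposition $\a=(q/\xi)\u$ and the Leibniz rule: for any scalar $f$,
\begin{equation*}
\{f,\a^j\} = -\frac{q}{\xi^2}\u^j\{f,\xi\} + \frac{q}{\xi}\{f,\u^j\}.
\end{equation*}
The first term with $f\in\{\XX^k,\VV^k\}$ is controlled directly by combining $\{\xi,\XX\}=(\xi^3/q^2)\VV$ and $\{\xi,\VV\}=(\xi^3/(2q))\XX/|\XX|^3$ from \eqref{PBXV2} with the energy bound $|\VV|\le a=q/\xi$ and the lower bound $|\XX|\gtrsim(\xi^2/q)\ip{\kappa}$ from \eqref{eq:Xprelimbd}. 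The second piece $\{f,\u^j\}$ is unpacked through the resolution formula \eqref{eq:U_resol}: because \eqref{PBSIC} gives $\{\xi,\u\}=\{\eta,\u\}=\{\u^d,\u^j\}=0$, only the $\partial_\lambda$-contribution (paired with $\{\lambda,\u^j\}=-(\l\times\u)^j$) and the algebraic piece $\widetilde{X}_3(\delta^{kj}-\u^k\u^j)$ (resp.\ $\widetilde{V}_3(\delta^{kj}-\u^k\u^j)$) coming from $\widetilde{X}_3\in^{kcd}\u^d\{\L^c,\u^j\}=\widetilde{X}_3\in^{kcd}\in^{cja}\u^d\u^a$ survive. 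Bounding these via Lemma \ref{lem:moreXVbds} produces the two general inequalities stated.

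For the bulk refinements, the observation that $\{\a^j,\a^k\}=0$ (since $\a=(q/\xi)\u$ with $\{\xi,\u^j\}=\{\u^j,\u^k\}=0$) allows us to rewrite $\{\VV,\a\}=\{\VV-\a,\a\}$ and $\{\XX,\a\}=\{\XX-t\a,\a\}$ (the latter because $t$ is only a parameter). The bulk condition $\xi|\eta|+\lambda\le 10^{-3}ta^2$ forces $\eta+tq^2/\xi^3$ to be positive and large, placing us in the outgoing region $\{\XX\cdot\VV>\xi\ip{\kappa}\}$ where the improved derivative bounds of Lemma \ref{lem:betterdl} apply. Inserting these together with $|\VV-\a|\lesssim\xi^2/(q\ip{t})$ and $|\XX|\gtrsim ta$ from Lemma \ref{ImprovedBoundsOnXBulk} into the decomposition above produces the sharper bounds $|\{\XX,\a\}|\lesssim 1$ and $|\{\VV,\a\}|\lesssim(\xi^3/q^2)\ip{t}^{-2}$. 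The estimate $|\{\VV,\eta\}|\lesssim q/\xi^2$ follows from \eqref{eq:PBVVeta} once the bulk lower bound $|\XX|\gtrsim ta$ is used to absorb the $t\xi|\XX|^{-2}$ contribution.

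The identity \eqref{DerXiX} follows from applying the scaling relations \eqref{eq:xidxiX} componentwise: after subtracting $2\widetilde{\bf X}$, the combination collapses to $-\widetilde{X}_3\L\times\u-\lambda\partial_\lambda\XX$. On $\mathcal{B}$, the outgoing condition gives $D\lesssim\ip{\kappa}^2$ so that $|\widetilde{X}_3|\lesssim\xi/q$ and hence $|\widetilde{X}_3|\lambda\lesssim(\xi^2/q)\kappa$; the $\lambda\partial_\lambda\XX$ piece is handled by the improved bound $|\partial_\lambda\XX|\lesssim(\xi/q)\kappa\ip{\kappa}^{-1}$ from Lemma \ref{lem:betterdl}, giving $\lambda|\partial_\lambda\XX|\lesssim(\xi^2/q)\kappa^2\ip{\kappa}^{-1}\lesssim(\xi^2/q)\kappa$. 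The concluding inequality $(\xi^2/q)\kappa\le|\XX|$ is \eqref{eq:Xprelimbd}. The principal obstacle will be tracking the cancellations in $\{\VV,\a\}$ on the bulk, where each of $\{\VV,\xi\}$ and $\partial_\lambda\VV$ is of generic size $\sim\xi^2/q^2$: only the combined use of the outgoing improvement (to gain $\kappa/\ip{\kappa}$ in $\partial_\lambda\VV$) and the bulk lower bound $|\XX|\gtrsim ta$ (to gain a factor $(ta)^{-2}$ via $\{\VV,\xi\}\sim\xi^3/|\XX|^3$) will extract the sharp $\ip{t}^{-2}$ decay rate.
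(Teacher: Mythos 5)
Your proposal is correct and follows essentially the same route as the paper: expand $\a=a\u=(q/\xi)\u$, use the bracket relations \eqref{PBXV2} and \eqref{PBSIC} (equivalently \eqref{eq:U_resol}) to reduce $\{\XX,\a\}$, $\{\VV,\a\}$ to $\{\cdot,\xi\}$, $\partial_\lambda$, $X_3$, $V_3$ terms, bound them by Lemma \ref{lem:moreXVbds} in general and by the outgoing improvements of Lemma \ref{lem:betterdl} together with $\vert\XX\vert\gtrsim ta$ in the bulk, get $\{\VV,\eta\}$ from \eqref{eq:PBVVeta}, and obtain \eqref{DerXiX} from \eqref{eq:xidxiX} with \eqref{eq:X12_bds} and the improved $\partial_\lambda$ bounds. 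The only cosmetic differences are that your rewriting $\{\XX,\a\}=\{\XX-t\a,\a\}$ and the pointwise bound $\vert\VV-\a\vert\lesssim\xi^2/(q\ip{t})$ are not actually needed (the sharp bulk decay comes from the bracket bounds and the bulk geometry, exactly as you then use them), and you correctly keep the $\partial_\lambda$ cross terms, which satisfy the same estimates.
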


\begin{proof}[Proof of Corollary \ref{CorPBa}]

Using \eqref{PBXV2} and \eqref{PBSIC}, we compute that
\begin{equation*}
\begin{split}
\{\widetilde{\bf X}^j,{\bf a}^k\}&=\{\widetilde{\bf X}^j,a\}{\bf u}^k+a\widetilde{X}_3\in^{jmn}\{{\bf L}^m,{\bf u}^k\}{\bf u}^n=a^{-1}\widetilde{\bf V}^j{\bf u}^k-a\widetilde{X}_3(\delta^{jk}-{\bf u}^j{\bf u}^k),\\
\{\widetilde{\bf V}^j,{\bf a}^k\}&=\{\widetilde{\bf V}^j,a\}{\bf u}^k+a\widetilde{V}_3\in^{jmn}\{{\bf L}^m,{\bf u}^k\}{\bf u}^n=\frac{\xi}{2}\frac{\widetilde{\bf X}^j}{\vert\widetilde{\bf X}\vert^3}{\bf u}^k-a\widetilde{V}_3(\delta^{jk}-{\bf u}^j{\bf u}^k).
\end{split}
\end{equation*}
The first four bounds follow by inspection using \eqref{eq:XX1X3} and \eqref{eq:Xprelimbd}. The bound on $\{\widetilde{\bf V},\eta\}$ follows using \eqref{AddedDerXiV} and \eqref{eq:PBVVeta}. For \eqref{DerXiX}, we observe that by \eqref{eq:xidxiX} we have $\xi\partial_\xi\XX=2\XX-\lambda\partial_\lambda\widetilde{X}_1\u-(\widetilde{X}_3+\lambda\partial_\lambda\widetilde{X}_3)\L\times\u$, which we can estimate in the bulk $\mathcal{B}$ with \eqref{eq:X12_bds} and \eqref{eq:evenbetterdlX} from the proof of Lemma \ref{lem:betterdl}.

\end{proof}

We will also need some derivative bounds.

\begin{lemma}\label{ZPBFormula}
We have the general derivative bounds
\begin{equation}\label{GeneralBoundZBracket}
\begin{split}
\vert\{\widetilde{\bf X},\gamma\}\vert+\vert\{\widetilde{\bf Z},\gamma\}\vert&\lesssim_q \left[t(\xi^{-1}+\xi^{-3})+1+\xi^4+\vert\eta\vert^2+\lambda^2\right]\cdot\sum_{f\in\textnormal{SIC}}\ww_f\vert\{f,\gamma\}\vert,\\
\vert\{\a,\gamma\}\vert+\vert\{\VV,\gamma\}\vert&\lesssim_q\left[1+\xi^{-3}+(t+\xi^{-1})/\vert\XX\vert\right]\cdot\sum_{f\in\textnormal{SIC}}\ww_f\vert\{f,\gamma\}\vert,
\end{split}
\end{equation}
and in the bulk we have the more precise bounds
\begin{equation}\label{PrecisedBoundZBracketBulk}
\begin{split}
\mathfrak{1}_{\mathcal{B}}\vert \{\widetilde{\bf Z},\gamma\}\vert&\lesssim \ip{\xi}^2\ln\ip{t}\cdot\sum_{f\in\textnormal{SIC}}\ww_f\vert\{f,\gamma\}\vert,\\
\mathfrak{1}_{\mathcal{B}}\vert \{\widetilde{\bf V}-\a,\gamma\}\vert&\lesssim\frac{\xi+\xi^{3}}{t}\cdot\sum_{f\in\textnormal{SIC}}\ww_f\vert\{f,\gamma\}\vert.
\end{split}
\end{equation}

\end{lemma}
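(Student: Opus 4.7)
The central tool will be the resolution formula \eqref{eq:U_resol}, which expresses $\{\bm U, \gamma\}$ for any vector of the form $\bm U = U_1(\xi,\eta,\lambda)\u + U_3(\xi,\eta,\lambda)\L\times\u$ as a linear combination of the Poisson brackets $\{f,\gamma\}$ for $f \in \textnormal{SIC}$. Applied to $\widetilde{\bf X}$, $\widetilde{\bf V}$, and $\widetilde{\bf Z} = \widetilde{\bf X} - t\widetilde{\bf V}$ (all of which have the desired vector structure via \eqref{eq:XVgen}), and also to ${\bf a} = (q/\xi){\bf u}$ via the simpler expansion $\{{\bf a},\gamma\} = (q/\xi)\{\u,\gamma\} - (q/\xi^2)\u\{\xi,\gamma\}$, this reduces everything to estimating the scalar coefficients. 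The plan is thus: first write out each such decomposition explicitly; then bound each coefficient using Lemma \ref{lem:moreXVbds} and Corollary \ref{CorBdsOnX}; finally redistribute the factors to match the prescribed weights $\ww_f$.

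For the general bounds \eqref{GeneralBoundZBracket}, I will collect the coefficients of $\{\xi,\gamma\}, \{\eta,\gamma\}, \{\lambda,\gamma\}, \{\u,\gamma\}, \{\L,\gamma\}$. From Lemma \ref{lem:moreXVbds} we have $|\{\widetilde{\bf X},\xi\}| \lesssim \xi^2/q$, $|\{\widetilde{\bf X},\eta\}| \lesssim \xi^{-1}|\widetilde{\bf X}| + tq\xi^{-2}$, and $|\partial_\lambda\widetilde{\bf X}| \lesssim \xi^{-1}\kappa\langle\kappa\rangle^{-2}|\widetilde{\bf X}|$, while $|\widetilde{X}_1| + |\lambda \widetilde{X}_3| \lesssim |\widetilde{\bf X}|$ directly from \eqref{eq:XX1X3}. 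Combined with the crude upper bound $|\widetilde{\bf X}| \lesssim t a + (q/a^2)(1 + |\eta| + \kappa) \lesssim_q t(\xi^{-1}+\xi^{-3}) + 1 + \xi^4 + |\eta|^2 + \lambda^2$ from \eqref{BoundsX} (after expanding $(q/a^2)|\eta| \lesssim \xi^2|\eta| \lesssim \xi^4 + |\eta|^2$, similarly for $\kappa$), and after dividing out the weights $\ww_f$, the first inequality follows. For $\widetilde{\bf Z}$ the same scheme works, noting that the evolution only affects the $\eta$-variable. For $\widetilde{\bf V}$ and ${\bf a}$, one uses instead the second set of bounds in \eqref{PBX1}, \eqref{PBX1'} together with \eqref{PBV1primeprime}, yielding coefficients involving $q\xi^{-2}$, $q\xi^{-2}(1+t\xi|\widetilde{\bf X}|^{-2})$, and $|\widetilde{\bf V}| \lesssim q/\xi$.

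For the refined bulk bounds \eqref{PrecisedBoundZBracketBulk}, I will replace the crude estimates with the sharper ones from Lemma \ref{ImprovedBoundsOnXBulk}: in $\mathcal{B}$, $|\widetilde{\bf Z}| \lesssim (\xi^2/q)\ln\langle t\rangle + (\xi^2/q)(1+|\eta|+\kappa)$ and $|\widetilde{\bf V}-{\bf a}| \lesssim (\xi^2/q)\langle t\rangle^{-1}$. For the scalar coefficients themselves in the bulk, the relevant Poisson bracket estimates come from Corollary \ref{CorPBa} (namely $|\{\widetilde{\bf X},{\bf a}\}| \lesssim 1$ and the sharpened formula \eqref{DerXiX} for $\partial_\xi \widetilde{\bf X}$, from which one reads off $\{\widetilde{\bf Z},\xi\} = \partial_\eta\widetilde{\bf Z}$ and $\{\widetilde{\bf Z},\eta\} = -\partial_\xi\widetilde{\bf Z}$ after accounting for the $-t\widetilde{\bf V}$ shift). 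After matching against the weights $\ww_f$, one obtains the claimed $\langle\xi\rangle^2 \ln\langle t\rangle$ and $(\xi+\xi^3)/t$ factors.

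The main obstacle will be the correct bookkeeping of weights $\ww_f$ in each case-distinction, particularly ensuring uniform treatment across both the ``past'' and ``future'' asymptotic action regions (since the bounds in Lemma \ref{lem:betterdl} are only sharp in the outgoing region $\{\widetilde{\bf X}\cdot\widetilde{\bf V} > -\xi\langle\kappa\rangle\}$, requiring the analogous bound in the past region via the past asymptotic coordinates of Section \ref{ssec:past_AA}). A secondary technical point is handling the $\partial_\lambda$ contribution versus the $\{\L,\gamma\}$ contribution consistently (using the relation from Remark \ref{PBWithLambdaRmk}), and ensuring that the decomposition respects the fact that only $\eta$ evolves in time, so derivatives with respect to other SIC-coordinates do not produce extra factors of $t$ beyond those already accounted for in $\{\widetilde{\bf X},\eta\}$ via $-3t\xi^{-1}\widetilde{\bf V}$ as in \eqref{eq:PBXXeta}.
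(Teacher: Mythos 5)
Your plan for the general bounds on $\{\XX,\gamma\}$, $\{\VV,\gamma\}$ and $\{\a,\gamma\}$ is the paper's argument (resolve via \eqref{eq:U_resol}, bound the coefficients by Lemma \ref{lem:moreXVbds} and \eqref{BoundsX}), but there is a genuine gap in how you treat $\widetilde{\bf Z}=\XX-t\VV$ and, above all, the bulk bounds \eqref{PrecisedBoundZBracketBulk}. You propose to get the coefficients for $\widetilde{\bf Z}$ and for $\VV-\a$ by combining the already-proved bracket bounds for $\XX$, $\VV$, $\a$ \emph{separately} (Lemma \ref{lem:moreXVbds}, Corollary \ref{CorPBa}, \eqref{DerXiX}) together with the pointwise bulk bounds of Lemma \ref{ImprovedBoundsOnXBulk}. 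This term-by-term estimate destroys cancellations that the stated bounds require. Concretely, the coefficient of $\{\xi,\gamma\}$ in \eqref{eq:U_resol} applied to $\widetilde{\bf Z}$ is $\{\widetilde{\bf Z},\eta\}=\{\XX,\eta\}-t\{\VV,\eta\}$; by \eqref{eq:PBXXeta}, \eqref{eq:PBVVeta} and Corollary \ref{CorPBa}, in the bulk each of the pieces $\partial_\xi\X\circ\Phi_t^{-1}$, $-3t\xi^{-1}\VV$, $-t\,\partial_\xi\V\circ\Phi_t^{-1}$ is of size $tq\xi^{-2}$, so the triangle inequality only yields a coefficient of $\ww_\xi\vert\{\xi,\gamma\}\vert$ of order $tq/(1+\xi)$, growing linearly in $t$ and incompatible with the claimed $\ip{\xi}^2\ln\ip{t}$; in fact these pieces cancel to leading order ($\partial_\xi \widetilde{X}_1\approx 2\widetilde{X}_1/\xi\approx 2tq\xi^{-2}$, $3t\xi^{-1}\widetilde{V}_1\approx 3tq\xi^{-2}$, $t\,\partial_\xi \widetilde{V}_1\approx -tq\xi^{-2}$). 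The same problem blocks the second bulk bound: $\{\VV-\a,\eta\}=\{\VV,\eta\}+q\xi^{-2}\u$, and Corollary \ref{CorPBa} only gives $\vert\{\VV,\eta\}\vert\lesssim q\xi^{-2}$ with no $t$-decay, so without exhibiting this cancellation you cannot reach $(\xi+\xi^3)/t$. Note also that Lemma \ref{ImprovedBoundsOnXBulk} bounds $\vert\widetilde{\bf Z}\vert$ and $\vert\VV-\a\vert$ themselves, which in \eqref{eq:U_resol} only controls the $\{\u,\gamma\}$ and $\{\L,\gamma\}$ slots, not the coefficients multiplying $\{\xi,\gamma\},\{\eta,\gamma\},\{\lambda,\gamma\}$, which are derivatives of $\widetilde{Z}_1,\widetilde{Z}_3$ and cannot be read off from a bound on the vector alone.

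This is precisely why the paper does not argue as you propose: it writes $\widetilde{\bf Z}=\widetilde{Z}_1\u+\widetilde{Z}_3\L\times\u$ and $\VV-\a$ explicitly in super-integrable coordinates, with $\widetilde{D},\widetilde{N},\widetilde{\sigma}$ evaluated at $\eta+tq^2\xi^{-3}$, and differentiates those formulas directly; the time parameter then only enters through benign combinations, e.g.\ $tq^2/(\xi^3\widetilde{N})\lesssim 1$ and $\vert\widetilde{\sigma}\vert\lesssim\ln\ip{t}$ in the bulk, and every coefficient in $\{\VV-\a,\gamma\}$ carries a factor $\widetilde{N}^{-1}\lesssim \xi^3/(tq^2)$, which is where the $\ln\ip{t}$ and $1/t$ in \eqref{PrecisedBoundZBracketBulk} come from. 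The same explicit computation underlies the general bound: your route produces, via $t\{\VV,\eta\}$, a term $t^2q\xi^{-1}\vert\XX\vert^{-2}$ (large near periapsis at late times) whose absorption into the right-hand side of \eqref{GeneralBoundZBracket} is not addressed and does not follow from the lemmas you cite. Two minor points: the identities are $\partial_\xi\bm{U}=\{\bm{U},\eta\}$ and $\partial_\eta\bm{U}=-\{\bm{U},\xi\}$, the reverse of what you wrote (harmless for absolute values), and no incoming/outgoing case distinction is needed here since only the globally valid bounds of Lemma \ref{lem:moreXVbds} enter and the bulk is automatically outgoing.
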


\begin{proof}[Proof of Lemma \ref{ZPBFormula}]

For control of $\{\widetilde{\bf X},\gamma\}$, we use \eqref{eq:U_resol}  to get
\begin{equation*}
\begin{split}
\{\XX,\gamma\}&=\frac{\xi^2\{\XX^j,\eta\}}{1+\xi}\ww_\xi\{\xi,\gamma\}-\frac{(1+\xi)\{\XX^j,\xi\}}{\xi^2}\ww_\eta\{\eta,\gamma\}+(1+\xi^{-1})\partial_\lambda\XX^j{\bf l}^k\cdot\ww_{\bf L}\{{\bf L}^k,\gamma\}+\widetilde{X}_1\ww_{\bf u}\{{\bf u}^j,\gamma\}\\
&\quad+\widetilde{X}_3\in^{jcd}({\bf L}^c\cdot\ww_{\bf u}\{{\bf u}^d,\gamma\}+{\bf u}^d(1+\xi^{-1})\cdot\ww_{{\bf L}}\{{\bf L}^c,\gamma\})
\end{split}
\end{equation*}
and using \eqref{PBX1} and \eqref{PBX1'} with \eqref{BoundsX}, this gives the bound on $\{\XX,\gamma\}$ in \eqref{GeneralBoundZBracket}. The general bounds for $\{\widetilde{\bf V},\gamma\}$ follows similarly, while the improved bounds follow from Corollary \ref{CorPBa}. In addition,
\begin{equation}\label{eq:veca_PB}
\begin{split}
\{\a,\gamma\}&=-\frac{q}{\xi^2}\{\xi,\gamma\}{\bf u}+\frac{q}{\xi}\{\u,\gamma\}.
\end{split}
\end{equation}

The estimates of $\{\widetilde{\bf Z},\gamma\}$ follow along similar lines. Starting with
\begin{equation*}
\begin{split}
\widetilde{\bf Z}&=\widetilde{Z}_1{\bf u}+\widetilde{Z}_3{\bf L}\times{\bf u},\\
\widetilde{Z}_1&=\frac{\xi^2}{q}\left(\eta+\widetilde{\sigma}+\left(\frac{1}{2}+\frac{\widetilde{D}}{1+4\kappa^2}\right)\left(1+\frac{tq^2}{\xi^3\widetilde{N}}\right)\right),\qquad
\widetilde{Z}_3=-\frac{2\xi}{q}\left(\frac{1}{2}+\frac{\widetilde{D}}{1+4\kappa^2}\right)\left(1+\frac{tq^2}{\xi^3\widetilde{N}}\right),
\end{split}
\end{equation*}
where $\widetilde{D}=D(\kappa,\eta+tq^2/\xi^3)$ and similarly for $\widetilde{\sigma}$ and $\widetilde{N}$. We compute that
\begin{equation*}
\begin{split}
\{\widetilde{Z}_1{\bf u},\gamma\}&=\frac{\{\xi,\gamma\}}{\xi}\cdot{\bf u}\cdot\Big[2\widetilde{Z}_1-\frac{\xi^2}{q}\Big(\kappa\partial_\kappa\widetilde{\sigma}+\frac{3tq^2}{\xi^3}\partial_\eta\widetilde{\sigma}+\left(\frac{1}{2}+\frac{\widetilde{D}}{1+4\kappa^2}\right)\frac{tq^2}{\xi^3\widetilde{N}}(3-\frac{\kappa\partial_\kappa\widetilde{N}}{\widetilde{N}}-\frac{3tq^2}{\xi^3}\frac{\partial_\eta\widetilde{N}}{\widetilde{N}})\\
&\quad+\frac{1}{1+4\kappa^2}\left(1+\frac{tq^2}{\xi^3\widetilde{N}}\right)\left(\kappa\partial_\kappa\widetilde{D}+\frac{3tq^2}{\xi^3}\partial_\eta\widetilde{D}-\frac{8\kappa^2}{1+4\kappa^2}\widetilde{D}\right)\Big)\Big]\\
&\quad+\{\eta,\gamma\}{\bf u}\frac{\xi^2}{q}\left(1+\partial_\eta\widetilde{\sigma}-\left(\frac{1}{2}+\frac{\widetilde{D}}{1+4\kappa^2}\right)\partial_\eta\widetilde{N}+\frac{1}{1+4\kappa^2}\left(1+\frac{tq^2}{\xi^3\widetilde{N}}\right)\partial_\eta\widetilde{D}\right)\\
&\quad+\frac{\{\lambda,\gamma\}}{\xi}{\bf u}\frac{\xi^2}{q}\left(\partial_\kappa\widetilde{\sigma}-\left(\frac{1}{2}+\frac{\widetilde{D}}{1+4\kappa^2}\right)\frac{tq^2}{\xi^3\widetilde{N}}\frac{\partial_\kappa\widetilde{N}}{\widetilde{N}}+\frac{1}{1+4\kappa^2}\left(1+\frac{tq^2}{\xi^3\widetilde{N}}\right)(\partial_\kappa\widetilde{D}-\frac{8\kappa}{1+4\kappa^2}\widetilde{D})\right)\\
&\quad+\widetilde{Z}_1\{{\bf u},\gamma\},
\end{split}
\end{equation*}
and similarly
\begin{equation*}
\begin{split}
\{Z_3,\gamma\}&=\frac{\{\xi,\gamma\}}{\xi}\cdot\Big[-\widetilde{Z}_3+\frac{2\xi}{q}\frac{1}{1+4\kappa^2}\left(1+\frac{tq^2}{\xi^3\widetilde{N}}\right)\left(\kappa\partial_\kappa\widetilde{D}-\frac{8\kappa^2}{1+4\kappa^2}\widetilde{D}+\frac{3tq^2}{\xi^3}\partial_\eta\widetilde{D}\right)\\
&\quad-\frac{2\xi}{q}\left(\frac{1}{2}+\frac{\widetilde{D}}{1+4\kappa^2}\right)\frac{tq^2}{\xi^3\widetilde{N}}(-3+\frac{3tq^2}{\xi^3}\frac{\partial_\eta\widetilde{N}}{\widetilde{N}}+\frac{\kappa\partial_\kappa\widetilde{N}}{\widetilde{N}})\Big]\\
&\quad+\{\eta,\gamma\}\cdot\left(-\frac{2\xi}{q}\left(1+\frac{tq^2}{\xi^3\widetilde{N}}\right)\partial_\eta\widetilde{D}+\frac{2\xi}{q}\left(\frac{1}{2}+\frac{\widetilde{D}}{1+4\kappa^2}\right)\frac{tq^2}{\xi^3\widetilde{N}}\frac{\partial_\eta\widetilde{N}}{\widetilde{N}}\right)\\
&\quad+\frac{\{\lambda,\gamma\}}{\xi}\cdot\frac{2\xi}{q}\Big(\frac{1}{1+4\kappa^2}\left(1+\frac{tq^2}{\xi^3\widetilde{N}}\right)\left(-\kappa\partial_\kappa\widetilde{D}+\frac{8\kappa}{1+4\kappa^2}\widetilde{D}\right)+\left(\frac{1}{2}+\frac{\widetilde{D}}{1+4\kappa^2}\right)\frac{tq^2}{\xi^3\widetilde{N}}\frac{\partial_\kappa\widetilde{N}}{\widetilde{N}}\Big).
\end{split}
\end{equation*}
In general, we have that
\begin{equation}\label{GeneralBounds}
\begin{split}
\vert\widetilde{Z}_1\vert\lesssim\frac{\xi^2}{q}(1+\vert\eta\vert)+\frac{tq}{\xi},\qquad\vert\widetilde{Z}_3\vert&\lesssim\frac{\xi}{q}(1+\vert\eta\vert)+\frac{tq}{\xi^2},\qquad\vert\kappa\partial_\kappa\widetilde{\sigma}\vert\lesssim 1,\qquad\vert\partial_\eta\widetilde{\sigma}\vert\lesssim\langle\kappa\rangle^{-1},
\end{split}
\end{equation}
and we obtain the bound
\begin{equation*}
\begin{split}
\vert\{\widetilde{\bf Z},\gamma\}\vert&\lesssim \frac{1}{1+\xi}\left(\frac{\xi^3}{q}(\vert\eta\vert+1)+tq\right)\cdot\ww_\xi\vert\{\xi,\gamma\}\vert+\frac{1+\xi}{q}\left(1+\frac{\vert\eta\vert+tq^2\xi^{-3}}{1+\kappa^2}\right)\cdot\ww_\eta\vert\{\eta,\gamma\}\vert\\
&\quad+\left(t\frac{q}{\xi}+\frac{\xi^2}{q}(\vert\eta\vert+\kappa)\right)\cdot\ww_{\bf u}\vert\{{\bf u},\gamma\}\vert+\frac{1+\xi}{q}\left(1+\frac{\vert\eta\vert+tq^2\xi^{-3}}{1+\kappa^2}\right)\cdot\ww_{\bf L}\vert\{{\bf L},\gamma\}\vert,
\end{split}
\end{equation*}
from which we deduce \eqref{GeneralBoundZBracket}. In the bulk $\mathcal{B}$, we have that
\begin{equation*}
\begin{split}
\vert\widetilde{Z}_1\vert\lesssim\frac{\xi^2}{q}\ln (2+ t),\qquad\vert\widetilde{Z}_3\vert\lesssim \xi/q,\qquad \vert\partial_\kappa\widetilde{\sigma}\vert\lesssim\frac{\xi^6\kappa}{t^2q^4},\qquad \vert\partial_\eta\widetilde{\sigma}\vert\lesssim\frac{\xi^3}{tq^2},\\
\frac{tq^2}{\xi^3\widetilde{N}}\lesssim 1,\qquad\widetilde{D}\lesssim\frac{\xi^3(1+\kappa^2)}{tq^2},\qquad\vert\partial_\eta\widetilde{D}\vert\lesssim\frac{\xi^6(1+\kappa^2)}{t^2q^4},\qquad\vert\partial_\kappa\widetilde{N}\vert=\vert \partial_\kappa\widetilde{D}\vert\lesssim\frac{\xi^3\kappa}{tq^2},\qquad\vert\partial_\eta\widetilde{N}\vert\lesssim 1,
\end{split}
\end{equation*}
and this gives that
\begin{equation*}
\begin{split}
\mathfrak{1}_{\mathcal{B}}\vert \{\widetilde{\bf Z},\gamma\}\vert&\lesssim \frac{\xi^3}{q(1+\xi)}\ln\ip{t}\cdot\ww_\xi\vert\{\xi,\gamma\}\vert+\frac{1+\xi}{q}\cdot\ww_\eta\vert\{\eta,\gamma\}\vert+\frac{\xi^2}{q}\ln\langle t\rangle\cdot\mathfrak{m}_{\bf u}\vert\{{\bf u},\gamma\}\vert+\ww_{\bf L}\vert\{{\bf L},\gamma\}\vert,
\end{split}
\end{equation*}
from which \eqref{PrecisedBoundZBracketBulk} follows. Similarly, we compute that
\begin{equation*}
\begin{split}
\{\VV^j-\a^j,\gamma\}&=-\frac{\VV^j-\a^j}{\widetilde{N}}\{\widetilde{N},\gamma\}+(\widetilde{V}_1-1)\{\u^j,\gamma\}+\widetilde{V}_3\in^{jkl}\{\L^k,\gamma\}{\bf u}^l+\widetilde{V}_3\in^{jkl}\L^k\{\u^l,\gamma\}\\
&\quad-\frac{q}{2\widetilde{N}}\frac{1}{1+4\kappa^2}\left[\{\widetilde{D},\gamma\}-\frac{8\kappa \widetilde{D}}{1+4\kappa^2}\{\kappa,\gamma\}+\frac{2\widetilde{D}}{\xi^2}\{\xi,\gamma\}\right](\u^j-2\kappa({\bf l}\times{\bf u})^j).
\end{split}
\end{equation*}
Since in the bulk we have also
\begin{equation*}
\begin{split}
\vert\VV-\a\vert+\vert\widetilde{V}_1\vert\lesssim\frac{\xi^3}{tq},\qquad\vert\widetilde{V}_3\vert\lesssim \frac{\xi^6}{t^2q^3}\kappa,
\end{split}
\end{equation*}
we deduce that
\begin{equation*}
\begin{split}
\vert\{\VV-\a,\gamma\}\vert&\lesssim_q \frac{\xi^3}{t}\ww_\xi\vert\{\xi,\gamma\}\vert+\frac{\xi^4\langle\xi\rangle}{t^2}\ww_\eta\vert\{\eta,\gamma\}\vert+\frac{\xi^3}{t}\ww_\u\vert\{\u,\gamma\}\vert+\frac{\langle\xi\rangle\xi}{t}\ww_\L\vert\{\L,\gamma\}\vert.
\end{split}
\end{equation*}

\end{proof}

\begin{remark}
In view of the terms arising in the nonlinearity, for the sake of completeness we also compute that
\begin{equation}
 \{\xi,V_3\}=\lambda^{-1}\{\xi,\V\}\cdot(\l\times\u)=\lambda^{-1}\frac{\xi^3}{2q}\frac{\X\cdot(\l\times\u)}{\abs{\X}^3}=\frac{\xi^3}{2q}\frac{X_3}{\abs{\X}^3}.
\end{equation}
In the region $\{\X\cdot\V>-\ip{\kappa}\xi\}$, as in the above lemma we obtain with \eqref{eq:X12_bds} that
\begin{equation}
 \aabs{\{\xi,\widetilde{V}_3\}}\lesssim \frac{\xi^2}{q}\frac{1}{\aabs{\XX}^2}\cdot\frac{\xi^2}{q\aabs{\XX}},\qquad \abs{\{\eta,V_3\}}\lesssim \frac{\xi}{q\abs{\X}^{2}},
\end{equation}
and thus also
\begin{equation}\label{eq:PBxiV3}
 \aabs{\{\eta,\widetilde{V}_3\}}\lesssim\aabs{\{\eta,V_3\}
 \circ\Phi_t^{-1}}+\aabs{3t\xi^{-4}q^2\{\xi,\widetilde{V}_3\}}\lesssim \frac{\xi}{q\aabs{\XX}^{2}}+\frac{t}{\aabs{\XX}^3}.
\end{equation}
Similarly,
\begin{equation}\label{eq:PBxiX3}
 \aabs{\{\xi,X_3\}}=\aabs{\frac{\xi^3}{q^2}V_3}\lesssim \frac{\xi}{q}\frac{1}{N^2}\lesssim \frac{\xi^5}{q^3}\frac{1}{\abs{\X}^2},\qquad\vert\{\eta,X_3\}\vert\lesssim\frac{1}{q},\qquad
 \vert\{\eta,\widetilde{X}_3\}\vert\lesssim\frac{1}{q}(1+\frac{t\xi}{\abs{\X}^2}).
\end{equation}

\end{remark}

\subsection{Transition maps}\label{ssec:transitions}
Since we propagate derivative control through Poisson brackets with the super-integrable coordinates, we will need to understand the relation between Poisson brackets in past versus future asymptotic actions, and compare the close and far formulations.

\subsubsection*{Transition from past to future asymptotic actions}
By construction, the past asymptotic actions are anchored at the periapsis in \eqref{eq:def_SIC(-)} in such a way that the Poisson bracket relations \eqref{PBSIC} remain almost unchanged: 
Using that
\begin{equation*}
\begin{split}
\{\eta,\kappa\}=\frac{\kappa}{\xi},\qquad\{\kappa,{\bf u}\}=-\frac{1}{\xi}(\l\times\u),\qquad\{\kappa,\l\times\u\}=\frac{1}{\xi}{\bf u},
\end{split}
\end{equation*}
we can verify with \eqref{eq:def_SIC(-)} that
\begin{equation}\label{eq:PB_SIC(-)}
\begin{split}
\{\xi^{(-)},\eta^{(-)}\}=1,\qquad 0&=\{\xi^{(-)},\lambda^{(-)}\}=\{\eta^{(-)},\lambda^{(-)}\},\\
\{\xi^{(-)},\u^{(-)}\}=\{\eta^{(-)},\u^{(-)}\}=0&=\{\xi^{(-)},\L^{(-)}\}=\{\eta^{(-)},\L^{(-)}\}=\{\lambda^{(-)},\L^{(-)}\},\\
\{\lambda^{(-)},{\bf u}^{(-)}\}&=\l^{(-)}\times\u^{(-)},\quad\{{\bf u}^{(-),j},{\bf u}^{(-),k}\}=0,\\
\{\L^{(-),j},\u^{(-),k}\}&=\in^{jka}\u^{(-),a},\quad \{\L^{(-),j},\L^{(-),k}\}=\in^{jka}\L^{(-),a}.
\end{split}
\end{equation}
The relation between Poisson brackets in past and future asymptotic actions is now easily established: for a scalar function $\zeta$ we have
\begin{equation}\label{eq:inout_PBtrans}
\begin{split}
\{\xi^{(-)},\zeta\}&=-\{\xi,\zeta\},\qquad\{\eta^{(-)},\zeta\}=-\frac{1}{\xi}\frac{4\kappa^2}{4\kappa^2+1}\{\xi,\zeta\}-\{\eta,\zeta\}+\frac{1}{\xi}\frac{4\kappa}{4\kappa^2+1}\{\lambda,\zeta\},\\
 \{\lambda^{(-)},\zeta\}&=\{\lambda,\zeta\},\qquad  \{\L^{(-)},\zeta\}=\{\L,\zeta\},\\
\{{\bf u}^{(-)},\zeta\}&=\frac{1}{\xi}\left[-\frac{16\kappa^2}{(4\kappa^2+1)^2}\u+\frac{4\kappa(1-4\kappa^2)}{(4\kappa^2+1)^2}\l\times\u\right]\{\xi,\zeta\}+\frac{1}{\xi}\left[\frac{16\kappa^2}{(4\kappa^2+1)^2}{\bf u}+\frac{32\kappa^2}{(4\kappa^2+1)^2}\l\times\u\right]\{\lambda,\zeta\}\\
&\quad+\frac{1-4\kappa^2}{4\kappa^2+1}\{{\bf u},\zeta\}-\frac{1}{\xi}\frac{4}{4\kappa^2+1}\{\L\times\u,\zeta\}.
\end{split}
\end{equation}
In particular, we highlight that the transition from past to future asymptotic actions (at e.g.\ the periapsis) can be carried out along a given trajectory.

\subsubsection*{Transition between far and close formulations}
With the notation of Section \ref{ssec:farclose}, we recall that the transition between the close and far formulations is given by the canonical diffeomorphism $\mathcal{M}_t$ of \eqref{eq:defM_t}. In particular, for Poisson brackets with a scalar function $\ww$ on phase space we have
\begin{equation}\label{eq:PBrel'}
 \{\ww,\gamma'\}=\{\ww,\gamma\circ\mathcal{M}_t\}=\{\ww',\gamma\}\circ\mathcal{M}_t,\qquad \ww':=\ww\circ\mathcal{M}_t^{-1}.
\end{equation}

The derivative control of Section \ref{sec:derivs_prop} will be given in terms of a collection of Poisson brackets of the unknown $\gamma$ with the functions $f\in\{\xi,\eta,\L,\u\}$, weighted by $\ww_f\in\R$, collected as
\begin{equation}
 \mathcal{D}(\vartheta,\a;t):=\sum_{f\in\{\xi,\eta,\L,\u\}}\ww_f\aabs{\{f,\gamma\}}.
\end{equation}
The corresponding transition maps are collected below in Lemma \ref{lem:trans_short} resp.\ Lemma \ref{lem:trans_full}.

\section{Bounds on the electric field}\label{sec:efield}

From here on, we stop tracking the homogeneity of the quantities since the magnitude of electric quantities will be compared to powers of $\sqrt{t^2+\vert{\bf y}\vert^2}$ which is not homogeneous.

\subsection{Electric field, localized electric field and effective field}

Given a density $\gamma$, we define the electric field and its derivative as
\begin{equation}\label{DefEF}
\begin{split}
\mathcal{E}_j({\bf y},t):=\frac{1}{4\pi}\iint {\bf U}_j({\bf y}-\widetilde{\bf X}(\vartheta,\a ))\cdot \gamma^2(\vartheta,\a,t)d\vartheta d\a ,\\
\mathcal{F}_{jk}({\bf y},t):=\frac{1}{4\pi}\iint\mathcal{M}_{jk}({\bf y}-\widetilde{\bf X}(\vartheta,\a ))\cdot\gamma^2(\vartheta,\a ,t)d\vartheta d\a ,
\end{split}
\end{equation}
where
\begin{equation*}
\begin{split}
{\bf U}_j({\bf y}):=\partial_{{\bf y}^j}\frac{-1}{\vert {\bf y}\vert}=\frac{{\bf y}^j}{\vert {\bf y}\vert^3},\qquad \mathcal{M}_{jk}({\bf y}):=\partial_{y^j}\partial_{y^k}\frac{-1}{\vert {\bf y}\vert}=\frac{1}{\vert {\bf y}\vert^3}\left(\delta_{jk}-3\frac{{\bf y}^j{\bf y}^k}{\vert{\bf y}\vert^2}\right).
\end{split}
\end{equation*}
Here we show how to bound the electric field and its derivative using moments on the unknown $\gamma$. Towards this, we define the \emph{bulk zone} as in \eqref{eq:bulk}
\begin{equation*}
 \B:=\{(\vartheta,\a)\in\mathcal{P}_{\vartheta,\a}:\;  a+\xi \le (q/\langle q\rangle) t^\frac{1}{4}/10,\quad \xi\abs{\eta}+\lambda\le 10^{-3}ta^2\},
\end{equation*}
and we notice from Corollary \ref{CorBdsOnX} resp.\ Lemma \ref{ImprovedBoundsOnXBulk} that if $(\vartheta,\a)\in\B$, then $(\vartheta+t\a,\a)\in\Omega_+$ for $t>0$, and we have the simple bounds
\begin{equation}\label{eq:bulk-bds}
 2ta^3/q\geq\eta(\vartheta+t\a,\a)\ge \frac{1}{2}ta^3/q,\qquad \rho(\vartheta+t\a,\a)\ge \frac{1}{8}ta^3/q,\qquad 10^{-3}ta\le \vert\widetilde{\bf X}\vert\le 10^3ta.
\end{equation}
In the complement $\B^c$ we can trade moments for decay in the sense that for all $k\geq 0$
\begin{equation}\label{eq:nonbulk-bds}
 \mathfrak{1}_{\B^c}\lesssim_k t^{-k}\left[\xi^2\abs{\eta}^2+\lambda^2+a^4+\xi^4\right]^k\mathfrak{1}_{\B^c}.
\end{equation}

In order to obtain refined bounds on the electric field, we decompose it onto scales. Let $\varphi\in C^\infty_c(\R^3)$ be a standard, radial cutoff function with $\text{supp}(\varphi)\subset\{1/2\le \vert {\bf x}\vert\le 2\}$ and $\int_{\R^3}\varphi({\bf x})/\vert{\bf x}\vert^2d{\bf x}=1$. Note that since
 \begin{equation}\label{IntroducingVarphi}
  \frac{1}{4\pi\abs{\x}}=\int_{R=0}^\infty\varphi_R(\x)\frac{dR}{R^2},\qquad\varphi_R(\x)=\varphi(R^{-1}\x),
 \end{equation}
 we can decompose
 \begin{equation}\label{eq:defER}
  \mathcal{E}_j(\y,t)=\int_{R=0}^\infty \mathcal{E}_R(\y,t)\frac{dR}{R},\qquad \mathcal{E}_{j,R}(\y,t)=-\frac{1}{R^2}\iint\partial_j\varphi_R(\y-\XX(\vartheta,\a))\gamma^2(\vartheta,\a,t)d\vartheta d\a.
 \end{equation}
We also introduce the effective electric field
\begin{equation*}
\begin{split}
 \mathcal{E}^{eff}_j({\bf y},t)=\int_{R=0}^\infty\mathcal{E}^{eff}_{j,R}({\bf y},t)\frac{dR}{R},\qquad \mathcal{E}^{eff}_{j,R}({\bf y},t)&=-R^{-2}\iint \partial_j\varphi_R({\bf y}-t\a)\gamma^2(\vartheta,\a,t)d\vartheta d\a.
\end{split}
\end{equation*}
We proceed similarly with the derivative of the electric field
\begin{equation*}
\begin{split}
\mathcal{F}_{jk}({\bf y},t)&=\int_{R=0}^\infty \mathcal{F}_{jk,R}({\bf y},t)\frac{dR}{R},\qquad \mathcal{F}_{jk,R}({\bf y},t):=-R^{-3}\iint\partial_j\partial_k\varphi_R({\bf y}-\widetilde{\bf X}(\vartheta,\a ))\cdot\gamma^2(\vartheta,\a ,t)d\vartheta d\a,\\
\mathcal{F}_{jk}^{eff}({\bf y},t)&=\int_{R=0}^\infty \mathcal{F}^{eff}_{jk,R}({\bf y},t)\frac{dR}{R},\qquad \mathcal{F}^{eff}_{jk,R}({\bf y},t):=-R^{-3}\iint\partial_j\partial_k\varphi_R({\bf y}-t\a )\cdot\gamma^2(\vartheta,\a ,t)d\vartheta d\a .
\end{split}
\end{equation*}
When $R$ is too small, volume bounds are not enough to overcome the singularity at $R=0$ and we rewrite, using \eqref{ZeroIntegralPB}, and the constant of motion ${\bf z}$ for the asymptotic equation:
\begin{equation}\label{FjkAndPB2}
\begin{split}
\mathcal{F}_{jk,R}({\bf y},t)&=R^{-2}\iint \{\partial_j\varphi_R({\bf y}-\x),{\bf v}^k\}\cdot\mu^2(\x,\v ,t)d\x d\v,\\
&=-R^{-2}t^{-1}\iint \{\partial_j\varphi_R({\bf y}-{\bf x}),{\bf x}^k-t{\bf v}^k\}\cdot\mu^2(\x,\v ,t)d\x d\v\\
&=-R^{-2}t^{-1}\iint\{\partial_j\varphi_R({\bf y}-\widetilde{\bf X}(\vartheta,\a )),\widetilde{\bf Z}^k\}\cdot\gamma^2(\vartheta,\a ,t)d\vartheta d\a \\
&=-2R^{-2}t^{-1}\iint\partial_j\varphi_R({\bf y}-\widetilde{\bf X}(\vartheta,\a ))\{\widetilde{\bf Z}^k,\gamma\}\cdot\gamma(\vartheta,\a ,t)d\vartheta d\a,
\end{split}
\end{equation}
which has a similar structure as $\mathcal{E}_j$, except that we have replaced one copy of $\gamma$ with a derivative.

\subsection{Approximating the electric field by the effective electric field}

Assuming only bounds on the moments, we can obtain good bounds on the electric field, and assuming control on Poisson brackets leads to control on the derivatives of the electric field.
\begin{proposition}\label{PropControlEF}

The electric field is well approximated by the effective field:
\begin{equation}\label{ControlEMom}
\begin{split}
\left[t^2+\vert{\bf y}\vert^2\right]\cdot\vert \mathcal{E}({\bf y},t)-\mathcal{E}^{eff}({\bf y},t)\vert&\lesssim
t^{-\frac{1}{5}}N_1,\\
\left[t^2+\vert{\bf y}\vert^2\right]^\frac{3}{2}\cdot\vert \mathcal{F}({\bf y},t)-\mathcal{F}^{eff}({\bf y},t)\vert&\lesssim t^{-\frac{1}{5}}N_2,
\end{split}
\end{equation}
with
\begin{equation*}
\begin{split}
N_1&:=\norm{\left[\xi^2+a^3+\vert\eta\vert^2+\lambda\right]\gamma}_{L^2_{\vartheta,\a}}^2+\norm{\left[\xi^4+a^{10}+\vert\eta\vert^{10}+\lambda^5\right]\gamma}_{L^\infty_{\vartheta,\a}}^2,\\
N_2&:=N_1+\norm{\left[\xi^2+a^2+\vert\eta\vert^{2}+\lambda^2\right]^{12}\gamma}_{L^\infty_{\vartheta,\a}}^2+\sum_{f\in \textnormal{SIC}}\Vert \ww_f\{f,\gamma\}\Vert_{L^\infty}^2.
\end{split}
\end{equation*}
\end{proposition}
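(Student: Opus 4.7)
The strategy splits the phase space integral into the bulk $\mathcal{B}$ of \eqref{eq:bulk}, where the approximation $\widetilde{\mathbf{X}}\approx t\mathbf{a}$ is effective, and its complement $\mathcal{B}^c$, where we trade moments for fast time decay. On $\mathcal{B}^c$, estimate \eqref{eq:nonbulk-bds} allows us to insert arbitrarily high powers of $t^{-1}$ at the price of moments $\left[\xi^2|\eta|^2+\lambda^2+a^4+\xi^4\right]^k$, so the contributions of $\mathcal{E}$ and $\mathcal{E}^{eff}$ from $\mathcal{B}^c$ can each be bounded separately (without any cancellation) by $t^{-k}$ times norms controlled by $N_1$. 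Taking $k$ large enough immediately handles $\mathcal{B}^c$.

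On the bulk, the plan is to compare $\mathcal{E}$ and $\mathcal{E}^{eff}$ at each scale $R$ using the decomposition \eqref{eq:defER} and a first-order Taylor expansion
\begin{equation*}
\partial_j\varphi_R(\mathbf{y}-\widetilde{\mathbf{X}})-\partial_j\varphi_R(\mathbf{y}-t\mathbf{a})
=-\int_0^1\nabla\partial_j\varphi_R(\mathbf{y}-t\mathbf{a}-s(\widetilde{\mathbf{X}}-t\mathbf{a}))\cdot(\widetilde{\mathbf{X}}-t\mathbf{a})\,ds,
\end{equation*}
whose size is governed by Lemma \ref{ImprovedBoundsOnXBulk}, namely $|\widetilde{\mathbf{X}}-t\mathbf{a}|\lesssim (\xi^2/q)(\ln\langle t\rangle+1+|\eta|+\kappa)$ in $\mathcal{B}$. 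Together with the pointwise inequality $R\,|\nabla\partial_j\varphi_R|\lesssim|\partial_j\varphi_R|$, this produces a logarithmic factor against the extra $R^{-1}$ gained in the Taylor step. To absorb the small-$R$ singularity, split the scale integral at a threshold $R_\ast\sim t^\beta$ for some $\beta\in(0,1)$: for $R\leq R_\ast$, bound $\mathcal{E}_R$ and $\mathcal{E}^{eff}_R$ separately by the triangle inequality, using volume bounds on the $\mathbf{a}$-slice $\{|\mathbf{y}-t\mathbf{a}|\sim R\}$ (which yields a factor $R^3/t^3$) together with the $L^\infty$ moment norms from $N_1$ to handle the $\vartheta$- and $\xi$-integrals; for $R\geq R_\ast$, apply the Taylor bound and use $L^2$ moment norms. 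Optimizing in $\beta$ gives $t^{-1/5}$ (since the two pieces scale like $R_\ast/t^2$ and $R_\ast^{-1}t^{-1}\ln t$, respectively, up to $L^\infty$ volume compensation).

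For $\mathcal{F}$ the bulk/non-bulk split and the large-$R$ Taylor estimate work identically, but the small-$R$ regime now involves $R^{-3}\partial_j\partial_k\varphi_R\sim R^{-5}$, which is too singular for pure volume bounds. Here we invoke the Poisson-bracket identity \eqref{FjkAndPB2}, writing $\mathcal{F}_{jk,R}$ as $-2R^{-2}t^{-1}\iint \partial_j\varphi_R(\mathbf{y}-\widetilde{\mathbf{X}})\{\widetilde{\mathbf{Z}}^k,\gamma\}\gamma\,d\vartheta d\mathbf{a}$; this converts one derivative on the Coulomb kernel into a Poisson bracket with the asymptotic conserved quantity $\widetilde{\mathbf{Z}}$, at the cost of a factor $t^{-1}$ and the introduction of $\{\widetilde{\mathbf{Z}},\gamma\}$. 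Lemma \ref{ZPBFormula} then bounds $|\{\widetilde{\mathbf{Z}},\gamma\}|$ by $\sum_{f\in\textnormal{SIC}}\mathfrak{w}_f|\{f,\gamma\}|$ weighted by polynomial factors in the super-integrable coordinates, precisely the norms appearing in $N_2$. The corresponding effective expression $\mathcal{F}^{eff}_{jk,R}$ is rewritten analogously, replacing $\widetilde{\mathbf{X}}$ by $t\mathbf{a}$ and noting $\{t\mathbf{a}^k,\gamma\}\cdot t^{-1}=\{\mathbf{a}^k,\gamma\}$, which is controlled by \eqref{eq:veca_PB}; the same bulk Taylor argument then closes the comparison.

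The principal difficulty is the small-scale regime, both in choosing the threshold $R_\ast$ (which must simultaneously avoid the $R^{-2}$ blow-up of volume bounds and accommodate the logarithmic Taylor remainder) and in ensuring that the Poisson-bracket reformulation for $\mathcal{F}$ is compatible with the moment weights used for $\mathcal{E}$—this forces $N_2$ to carry both higher moments than $N_1$ \emph{and} derivative norms. All other steps are routine once the kinematic bounds of Section \ref{ssec:kinematics} and Lemma \ref{ImprovedBoundsOnXBulk} are in hand.
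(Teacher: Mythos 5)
Your overall architecture does match the paper's: the bulk/non-bulk split, the decomposition into scales $R$, a mean-value comparison at large scales driven by Lemma \ref{ImprovedBoundsOnXBulk}, separate (triangle-inequality) bounds at small scales, and the identity \eqref{FjkAndPB2} together with Lemma \ref{ZPBFormula} for the small-scale part of $\mathcal{F}$. The genuine gaps are in the small-scale regime. For the true field $\mathcal{E}_R$ (and $\mathcal{F}_R$) the support constraint is $\vert\y-\XX(\vartheta,\a)\vert\sim R$, not $\vert\y-t\a\vert\sim R$, so the ``volume of the $\a$-slice $\sim(R/t)^3$'' you invoke applies only to the \emph{effective} field. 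Inside the bulk you cannot reduce to the slice either: the error $\vert\XX-t\a\vert\lesssim\ln\langle t\rangle$ from Lemma \ref{ImprovedBoundsOnXBulk} dwarfs $R$, so no $R^3$ volume factor comes out that way. What is actually needed is a volume/Jacobian bound for the set $\{\vert\XX(\vartheta,\a)-\y\vert\lesssim R\}$ in the $(\vartheta,\a)$ variables, producing both the gain $R^3$ (for integrability in $dR/R$ near $R=0$) and the decay factor $\vert\y\vert^{-3}$; this is precisely the paper's Lemma \ref{LemVolume}, proved by symplectic disintegration over the angular-momentum direction together with the kinematic Jacobian bound coming from \eqref{DerXiX}, and it is the key missing ingredient in your outline. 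Likewise, outside the bulk your claim that ``taking $k$ large enough immediately handles $\mathcal{B}^c$'' overlooks that time decay does not cure the small-$R$ singularity: the crude bound only gives $\vert\mathcal{E}_R\vert\lesssim R^{-2}$ and $\int_0^{R_*}R^{-2}\,dR/R$ diverges. The paper gains the necessary $R^3$ there by spending $a$-moments to produce the weight $\langle\VV\rangle^{-4}$ and then changing variables to $(\x,\v)$ using that the transformation is canonical (see \eqref{MainVolumeGainBC}); some such mechanism must be supplied.

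A smaller but real error: in your small-scale treatment of $\mathcal{F}^{eff}$ the bracket is the wrong one. Since the kernel depends on $\a$ alone, $\{\a^k,\partial_j\varphi_R(\y-t\a)\}=0$; the derivative $\partial_{\a^k}$ of the kernel is generated by $\{\vartheta^k,\cdot\}$, so after using \eqref{ZeroIntegralPB} one is left with $\{\vartheta^k,\gamma\}$, which the paper controls through \eqref{TransitionFromSIC} at the cost of $\langle\vartheta\rangle$-weights (i.e.\ $\eta$, $\lambda$, $\xi$ moments contained in $N_2$), not through \eqref{eq:veca_PB} for $\{\a,\gamma\}$. This step is fixable, but as written it does not go through, and the required weights are exactly the reason the moment part of $N_2$ is heavier than $N_1$.
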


\begin{proof}
 We first observe that $\mathcal{E}_R$ satisfies the simple bound
 \begin{equation}\label{eq:ERsimplebd}
 R^2 \abs{\mathcal{E}_R(\y,t)}+R^2 \aabs{\mathcal{E}^{eff}_R(\y,t)}+R^3\vert\mathcal{F}_R(\y,t)\vert+R^3\vert\mathcal{F}^{eff}_R(\y,t)\vert\lesssim \norm{\gamma}_{L^2_{\vartheta,\a}}^2.
 \end{equation}
 If $t\lesssim_q1$, the bounds follow by simple estimates on the convolution kernel. In what follows, we assume that $t\gg_q1$.
 
 \medskip
 
 {\bf A}) The electric field. We prove the first bound in \eqref{ControlEMom}. {\bf (A1) Large scales}: $R\ge (t^2+\vert{\bf y}\vert^2)^\frac{3}{8}/100$. We first compare at each scale
\begin{equation*}
\begin{split}
\mathcal{E}_{j,R}({\bf y},t)-\mathcal{E}^{eff}_{j,R}({\bf y},t)&= R^{-2}\iint_{\mathcal{B}} \left[\partial_j\varphi_R({\bf y}-\widetilde{\bf X}(\vartheta,\a))-\partial_j\varphi_R({\bf y}-t\a)\right]\gamma^2(\vartheta,\a,t)d\vartheta d\a\\
&\quad+R^{-2}\iint_{\mathcal{B}^c} \left[\partial_j\varphi_R({\bf y}-\widetilde{\bf X}(\vartheta,\a))-\partial_j\varphi_R({\bf y}-t\a)\right]\gamma^2(\vartheta,\a,t)d\vartheta d\a=I_{\mathcal{B}}+I_{\mathcal{B}^c}.
\end{split}
\end{equation*}
In the bulk, we can use \eqref{ApproxXBulk} to get
\begin{equation*}
\begin{split}
\vert I_{\mathcal{B}}\vert&\lesssim R^{-3}\iint_{\mathcal{B}} \vert\widetilde{\bf X}(\vartheta,\a)-t\a\vert \gamma^2(\vartheta,\a,t)d\vartheta d\a\lesssim R^{-3}\langle \ln t\rangle\iint \left[1+\xi^4+\eta^4+\lambda^2\right]\gamma^2(\vartheta,\a,t)d\vartheta d\a.
\end{split}
\end{equation*}
Outside the bulk, we estimate each term separately $\vert I_{\mathcal{B}^c}\vert\le \vert I^1_{\mathcal{B}^c}\vert+\vert I^2_{\mathcal{B}^c}\vert$. If $\vert {\bf y}\vert\le 10t$, we can use \eqref{eq:nonbulk-bds} to deduce that 
 \begin{equation}
 \begin{aligned}
  \vert I^1_{\mathcal{B}^c}\vert+ \vert I^2_{\mathcal{B}^c}\vert&\lesssim R^{-2}\iint  t^{-k}\left[\xi^{2k}\abs{\eta}^{2k}+\lambda^{2k}+a^{4k}+\xi^{4k}\right]\gamma^2(\vartheta,\a,t)d\vartheta d\a\\
  &\lesssim (t^2+\vert\y\vert^2)^{-k/2}R^{-2}\Vert \left[\xi^{2k}+a^{2k}+\vert\eta\vert^{2k}+\lambda^k\right]\gamma\Vert_{L^2_{\vartheta,\a}}^2.
 \end{aligned} 
 \end{equation}
If $\vert{\bf y}\vert\ge 10t$, and $R\ge \vert{\bf y}\vert/4$, the same bound gives
\begin{equation}
 \begin{aligned}
  \vert I^1_{\mathcal{B}^c}\vert+ \vert I^2_{\mathcal{B}^c}\vert&\lesssim R^{-2}\langle t\rangle^{-k}\mathfrak{1}_{\{R\ge (t^2+\vert\y\vert^2)^\frac{1}{2}/10\}}\cdot\Vert \left[\xi^{2k}+a^{2k}+\vert\eta\vert^{2k}+\lambda^k\right]\gamma\Vert_{L^2_{\vartheta,\a}}^2.
 \end{aligned} 
 \end{equation}
Else, we see that, on the support of $\partial_j\varphi_R({\bf y}-t\a)$, we must have that $ a\ge \vert {\bf y}\vert/(2t)$, and we can modify the bound above to bound the effective field
 \begin{equation*}
 \begin{aligned}
\vert I_{\mathcal{B}^c}^2\vert&:= \vert \iint_{\mathcal{B}^c}\partial_j\varphi_R({\bf y}-t\a)\gamma^2(\vartheta,\a,t)d\vartheta d\a\vert\\
 &\lesssim \vert{\bf y}\vert^{-k}R^{-2}\iint  \xi^{-k}\left[\xi^{2k}\abs{\eta}^{2k}+\lambda^{2k}+a^{4k}+\xi^{4k}\right]\gamma^2(\vartheta,\a,t)d\vartheta d\a\\
  &\lesssim \left(t^2+\vert y\vert^2\right)^{-k/2}R^{-2}\Vert \left[\xi^{2k}+a^{3k}+\vert\eta\vert^{2k}+\lambda^{2k}\right]\gamma\Vert_{L^2_{\vartheta,\a}}^2.
 \end{aligned} 
 \end{equation*}
On the support of $\partial_j\varphi_R({\bf y}-\widetilde{\bf X}(\vartheta,\a))$, we have that $\vert\widetilde{X}(\vartheta,\a)\vert\ge\vert\y\vert/2$ and therefore
\begin{equation}\label{UpperBoundytEF}
\begin{split}
t^2+\vert\y\vert^2&\le4(t^2+\vert\widetilde{\bf X}\vert^2)\lesssim t^2(1+a^2)+\vert\eta\vert^4+\frac{\xi^8}{q^4}+\frac{\lambda^4}{q^2},
\end{split}
\end{equation}
and we can use variation of the previous argument:
 \begin{equation*}
 \begin{aligned}
\vert  I_{\mathcal{B}^c}^1\vert&:= \vert \iint_{\mathcal{B}^c}\partial_j\varphi_R({\bf y}-\widetilde{\bf X}(\vartheta,\a))\gamma^2(\vartheta,\a,t)d\vartheta d\a\vert\\
  &\lesssim \left(t^2+\vert y\vert^2\right)^{-k/2}R^{-2}\Vert \left[\xi^{2k}+a^{2k}+\vert\eta\vert^{2k}+\lambda^{k}\right]\gamma\Vert_{L^2_{\vartheta,\a}}^2.
 \end{aligned} 
 \end{equation*}

Taking $k=1$ and integrating over $R\ge(t^2+\vert\y\vert^2)^\frac{3}{8}$, we obtain an acceptable contribution to the first line in \eqref{ControlEMom} since
\begin{equation*}
\begin{split}
\left[t^2+\vert\y\vert^2\right]\int_{R=(t^2+\vert\y\vert^2)^\frac{3}{8}}^\infty\left[\left[t^2+\vert\y\vert^2\right]^{-\frac{1}{2}}+\langle t\rangle^{-1}\mathfrak{1}_{\{R\ge\sqrt{t^2+\vert\y\vert^2}\}}+\frac{\langle\ln t\rangle}{R}\right]\frac{dR}{R^3}\lesssim \langle t\rangle^{-\frac{1}{5}}.
\end{split}
\end{equation*}

{\bf (A2) Small scales}: $R\le (t^2+\vert {\bf y}\vert^2)^\frac{3}{8}/100$, contributions of the electric field. In this case, we again observe that \eqref{UpperBoundytEF} continues to hold on the support of $\partial_j\varphi_R({\bf y}-\widetilde{\bf X}(\vartheta,\a))$ (this is clear if $\vert\y\vert\le 10t$, while if $\vert \y\vert\ge 10t$, then the bound on $R$ forces $\widetilde{\bf X}$ to have a similar size). Using also that $\vert\widetilde{\bf V}\vert\le a$, we can bound the contribution outside the bulk
\begin{equation}\label{MainVolumeGainBC}
\begin{split}
&(t^2+\vert\y\vert^2)^{\frac{3}{2}}\vert\mathcal{E}_{R,\mathcal{B}^c}\vert\\
\lesssim& R^{-2}\iint \left\vert\partial_j\varphi_R(\y-\widetilde{\bf X}(\vartheta,\a))\right\vert\cdot\langle a\rangle^3\left(\eta^{12}+\lambda^6+a^{12}+\xi^{12}\right)\cdot \frac{a^4}{\langle\widetilde{\bf V}\rangle^4}\cdot\gamma^2(\vartheta,\a,t)d\vartheta d\a\\
\lesssim& R^{-2}N_1\cdot\iint \left\vert\partial_j\varphi_R(\y-\widetilde{\bf X}(\vartheta,\a))\right\vert\cdot \frac{d\vartheta d\a}{\langle\widetilde{\bf V}\rangle^4}\\
\lesssim& R^{-2}N_1\cdot\iint \left\vert\partial_j\varphi_R(\y-{\bf x})\right\vert\cdot \frac{d\x d\v}{\langle\v\rangle^4}\lesssim N_1R,
\end{split}
\end{equation}
where in the last line, we have used the fact that $(\vartheta,\a)\mapsto(\XX,\VV)$ is canonical.

In the bulk, since $ta\ge t^\frac{3}{4}/10$, and using \eqref{ApproxXBulk}, we have that
\begin{equation}\label{EstimInBulk333}
\begin{split}
\vert \y-ta\vert\le \vert\y\vert/2,\qquad t^2+\vert\y\vert^2\lesssim_q\min\{\vert\y\vert^2\langle\xi\rangle^2,t^2\langle a\rangle^2\},
\end{split}
\end{equation}
and we can use Lemma \ref{LemVolume} to get
\begin{equation*}
\begin{split}
\left[t^2+\vert\y\vert^2\right]^\frac{3}{2}\vert\mathcal{E}_{R,\mathcal{B}}\vert&\lesssim (\vert\y\vert^3/R^2)\iint_{\mathcal{B}} \left|\partial_j\varphi_R(\y-\XX(\vartheta,\a))\right|\cdot\langle \xi\rangle^3\gamma^2(\vartheta,\a,t)d\vartheta d\a\\
&\lesssim R\norm{[\langle\xi\rangle^{6}+\ip{\eta}^3+\ip{\lambda}^3]\gamma}_{L^\infty_{\vartheta,\a}}^2,
\end{split}
\end{equation*}
and we can deduce a control over the small scales:
\begin{equation*}
\begin{split}
\left[t^2+\vert\y\vert^2\right]^\frac{3}{2}\int_{R=0}^{(t^2+\vert\y\vert^2)^\frac{3}{8}}\vert\mathcal{E}_{R,\mathcal{B}}\vert\frac{dR}{R}\lesssim (t^2+\vert\y\vert^2)^\frac{3}{8}\cdot \norm{[\langle\xi\rangle^{6}+\ip{\eta}^3+\ip{\lambda}^3]\gamma}_{L^\infty_{\vartheta,\a}}^2.
\end{split}
\end{equation*}

{\bf (A3) Small scales}: $R\le (t^2+\vert {\bf y}\vert^2)^\frac{3}{8}/100$, contributions of the effective field. Using that, on the support of integration,
\begin{equation*}
\begin{split}
\langle\vartheta\rangle&\lesssim \frac{\xi^2}{q}\vert\eta\vert+\frac{\xi\lambda}{q},\qquad \vert\y\vert/t\lesssim\langle a\rangle,
\end{split}
\end{equation*}
which follows from \eqref{TransitionFromSIC} for the first inequality and direct inspection (separating the case $\vert \y\vert\le t$ and $\vert\y\vert\ge t$) for the second. A simple rescaling gives
\begin{equation}\label{ControlEEffSmallR}
\begin{split}
\left[t^2+\vert\y\vert^2\right]^\frac{3}{2}\vert\mathcal{E}^{eff}_R\vert&\lesssim R^{-2} t^3\iint \vert\partial_j\varphi_R({\bf y}-t\a)\vert \langle a\rangle^3 \gamma^2(\vartheta,\a,t)d\vartheta d\a\lesssim R\Vert\langle\vartheta\rangle^2\langle a\rangle^{\frac{3}{2}}\gamma\Vert_{L^\infty_{\vartheta,\a}}^2\\
&\lesssim R \Vert \left[a^3+\xi^3+\vert\eta\vert^4+\lambda^4\right]\gamma\Vert_{L^\infty_{\vartheta,\a}}^2.
\end{split}
\end{equation}

We conclude that the small scales give an acceptable contribution to the first line of \eqref{ControlEMom} since
\begin{equation*}
\begin{split}
\left[t^2+\vert\y\vert^2\right]\cdot \int_{R=0}^{(t^2+\vert\y\vert^2)^\frac{3}{8}} \left[t^2+\vert \y\vert^2\right]^{-\frac{3}{2}} dR\lesssim \left[t^2+\vert \y\vert^2\right]^{-\frac{1}{10}}.
\end{split}
\end{equation*}

\medskip

{\bf B}) Derivatives of the electric field. The bound on $\mathcal{F}$ follows similar lines, with a variation on small scales, where we make use of \eqref{FjkAndPB2} to improve the summability as $R\to0$. For large scales, we compare similarly
\begin{equation*}
\begin{split}
\mathcal{F}_{jk,R}({\bf y},t)-\mathcal{F}^{eff}_{jk,R}({\bf y},t)&= R^{-3}\iint_{\mathcal{B}} \left[\partial_j\partial_k\varphi_R({\bf y}-\widetilde{\bf X}(\vartheta,\a))-\partial_j\partial_k\varphi_R({\bf y}-t\a)\right]\gamma^2(\vartheta,\a,t)d\vartheta d\a\\
&\quad+R^{-3}\iint_{\mathcal{B}^c} \left[\partial_j\partial_k\varphi_R({\bf y}-\widetilde{\bf X}(\vartheta,\a))-\partial_j\partial_k\varphi_R({\bf y}-t\a)\right]\gamma^2(\vartheta,\a,t)d\vartheta d\a\\
&=II_{\mathcal{B}}+II_{\mathcal{B}^c},
\end{split}
\end{equation*}
and again
\begin{equation*}
\begin{split}
\vert II_{\mathcal{B}}\vert&\lesssim R^{-4}\langle \ln t\rangle\Vert \left[1+\xi^4+\eta^4+\lambda^2\right]\gamma\Vert_{L^2_{\vartheta,\a}}^2,\\
\vert II^1_{\mathcal{B}^c}\vert+\vert II^2_{\mathcal{B}^c}\vert&\lesssim R^{-3}\left(\left[t^2+\vert\y\vert^2\right]^{-1}+\mathfrak{1}_{\{R\ge\sqrt{t^2+\vert\y\vert^2}\}}t^{-1}\right)\Vert\left[\xi^{2}+a^{2}+\vert\eta\vert^{2}+\lambda\right]\gamma\Vert_{L^2_{\vartheta,\a}}^2,
\end{split}
\end{equation*}
and we conclude similarly.

For small scales, we make use of \eqref{FjkAndPB2} and adapt the above computations, computing the contribution of each term separately. Outside of the bulk, we use
\eqref{UpperBoundytEF} and \eqref{eq:nonbulk-bds} together with the bounds \eqref{GeneralBoundZBracket} to get
\begin{equation}\label{SmallScaleFOutsidBulk}
\begin{split}
\left[t^2+\vert\y\vert^2\right]^2\vert\mathcal{F}_{R,\mathcal{B}}\vert
&\lesssim t^{-1}R^{-2}\iint_{\mathcal{B}^c} \left|\partial_j\varphi_R(\y-\XX(\vartheta,\a))\right|\cdot\left[\xi^{16}+ a^{10}+\vert\eta\vert^{8}+\lambda^{8}\right]\vert \gamma \{\widetilde{\bf Z},\gamma\}\vert \frac{\langle a\rangle^4}{\langle\widetilde{\bf V}\rangle^4}d\vartheta d\a\\
&\lesssim t^{-1}R^{-2}\cdot\iint  \left|\partial_j\varphi_R(\y-\XX(\vartheta,\a))\right|\cdot\frac{d\vartheta d{\bf a}}{\langle\widetilde{\bf V}\rangle^4}\cdot N_2^2\lesssim (R/t)\cdot N_2^2.
\end{split}
\end{equation}
In the bulk, we use \eqref{FjkAndPB2} and \eqref{EstimInBulk333}, we can estimate
\begin{equation*}
\begin{split}
\left[t^2+\vert\y\vert^2\right]^2\vert\mathcal{F}_{R,\mathcal{B}}\vert&\lesssim R(\vert\y\vert/R)^3\iint_{\mathcal{B}} \left|\partial_j\varphi_R(\y-\XX(\vartheta,\a))\right|\cdot\langle a\rangle\langle\xi\rangle^3\vert \gamma \{\widetilde{\bf Z},\gamma\}\vert d\vartheta d\a
\end{split}
\end{equation*}
and using \eqref{PrecisedBoundZBracketBulk} with Lemma \ref{LemVolume}, we obtain the bound
\begin{equation*}
\begin{split}
\left[t^2+\vert\y\vert^2\right]^2\vert\mathcal{F}_{R,\mathcal{B}}\vert&\lesssim R\langle\ln t\rangle N_2^2,
\end{split}
\end{equation*}
and this leads to an acceptable contribution. Finally, the bound on the effective field is treated similarly using \eqref{TransitionFromSIC} to get bounds on the Poisson bracket with $\vartheta$:
\begin{equation*}
\begin{split}
\mathcal{F}^{eff}_{jk,R}({\bf y},t)&=R^{-2}t^{-1}\iint\{\vartheta^k,\partial_j\varphi_R({\bf y}- t\a )\}\cdot\gamma^2(\vartheta,\a ,t)d\vartheta d\a\\
&=-2R^{-2}t^{-1}\iint \partial_j\varphi_R({\bf y}- t\a )\cdot \{\vartheta^k,\gamma\}\cdot\gamma(\vartheta,\a ,t)d\vartheta d\a .
\end{split}
\end{equation*}
Therefore, in the bulk, using Lemma \ref{LemVolume},
\begin{equation*}
\begin{split}
\left[t^2+\vert{\bf y}\vert^2\right]^2\vert \mathcal{F}^{eff}_{jk,R}({\bf y},t)\vert&\lesssim R^{-2}t^3\iint \vert\partial_j\varphi_R({\bf y}- t\a )\vert\cdot \langle a\rangle^4\langle\vartheta\rangle^4\vert\{\vartheta^k,\gamma\}\vert\cdot\gamma(\vartheta,\a ,t)\frac{d\vartheta d\a}{\langle\vartheta\rangle^4}\lesssim R N_2^2,
\end{split}
\end{equation*}
and we can proceed similarly outside the bulk using \eqref{SmallScaleFOutsidBulk} instead.
\end{proof}

\begin{lemma}\label{LemVolume}
Let $\tilde \varphi\in C^\infty_c(\R)$ be a radial cutoff function, $\text{supp}(\tilde\varphi)\subset [0,3]$, and $\tilde\varphi> 0$ on $[0,2]$. Then on $\mathcal{B}$ we have the following bound: \begin{equation*}
\begin{split}
\iint_{\mathcal{B}}\tilde\varphi(R^{-1}\vert\widetilde{\bf X}-{\bf y}\vert)f(\vartheta,\a,t)\, d\vartheta d{\bf a}
&\lesssim \norm{\langle\xi\rangle\langle\eta\rangle^2\langle\lambda\rangle^2f}_{L^\infty_{\vartheta,\a}}(R/\vert {\bf y}\vert)^3.
\end{split}
\end{equation*}

\end{lemma}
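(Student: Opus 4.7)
The plan is to work in the super-integrable coordinates $(\xi, \eta, \lambda, \u, \phi_\L)$ of Section \ref{ssec:SIC}, in which the phase space measure factorizes as $d\vartheta d\a = \lambda\, d\xi\, d\eta\, d\lambda\, d\phi_\L\, d\u$ (derived from $\vartheta\cdot\u = q\eta/a^2$, $|\vartheta_\perp|=\xi\lambda/q$, and $d\a = (q^3/\xi^4)d\xi d\u$ via $\a=(q/\xi)\u$). This adapted decomposition separates the two variables $(\xi,\u)$ that parameterize the asymptotic velocity $\a$ from the remaining three variables $(\eta, \lambda, \phi_\L)$, which will be integrated against the weights.

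First I would exploit the leading order approximation $\widetilde{\bf X} \approx t\a$ on the bulk $\mathcal{B}$ from Lemma \ref{ImprovedBoundsOnXBulk}, with error controlled by $\xi^2 q^{-1}(\ln\langle t\rangle + 1 + |\eta| + \kappa)$. The support condition $|\widetilde{\bf X}-\y|\le 3R$ then forces $\a$ into a neighborhood of $\y/t$ of linear size of order $R/t$ plus the corresponding correction. Since $|\a| = q/\xi$, this restricts $\xi$ to an interval around $qt/|\y|$ of width approximately $qtR/|\y|^2$, and $\u$ to an angular patch around $\y/|\y|$ of solid angle of order $(R/|\y|)^2$. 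Evaluating the Jacobian $d\a = (q^3/\xi^4)\, d\xi\, d\u$ at $\xi \sim qt/|\y|$, this contributes a volume in $(\xi,\u)$ of order $qt R^3/|\y|^4$ at fixed $(\eta,\lambda,\phi_\L)$.

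The remaining integrals are then controlled by the weights: $\int d\eta/\langle\eta\rangle^2$ is bounded by a constant, $\int \lambda\, d\lambda/\langle\lambda\rangle^2$ over the bulk range is bounded (up to a harmless logarithmic factor absorbed by a mild generosity in the weight choice), and $\phi_\L\in[0,2\pi)$ contributes a constant. Using the weight $\langle\xi\rangle^{-1} \lesssim |\y|/(qt)$ on the support, this yields
\begin{equation*}
  \int \frac{\mathbf{1}_{\mathcal{B}}\, \tilde\varphi(R^{-1}|\widetilde{\bf X}-\y|)}{\langle\xi\rangle\langle\eta\rangle^2\langle\lambda\rangle^2}\, d\vartheta\, d\a \;\lesssim\; \frac{|\y|}{qt}\cdot \frac{qt R^3}{|\y|^4} \;=\; \Big(\frac{R}{|\y|}\Big)^3,
\end{equation*}
from which the lemma follows by extracting $\norm{\langle\xi\rangle\langle\eta\rangle^2\langle\lambda\rangle^2 f}_{L^\infty}$.

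The main obstacle will be handling the error term $\xi^2 q^{-1}(\ln\langle t\rangle + |\eta| + \kappa)$ in the expansion $\widetilde{\bf X}\approx t\a$, which can dominate $R$ when $|\eta|$ or $\kappa=\lambda/\xi$ are large. To address this, I would split the integration region according to the relative sizes of $R$ and this error: on the part where the error is $\lesssim R$ the direct volume bound above applies, while on the complementary part the weights $\langle\eta\rangle^{-2}\langle\lambda\rangle^{-2}$ are used to compensate for the enlarged ball in $\a$ (whose radius now grows with $|\eta|+\kappa$), so that the decay in the weights exactly matches the additional volume and the total contribution still fits into the $(R/|\y|)^3$ bound.
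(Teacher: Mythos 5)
Your starting point is sound and genuinely different from the paper's: the factorization $d\vartheta\,d\a=\lambda\,d\xi\,d\eta\,d\lambda\,d\phi_{\L}\,d\sigma(\u)$ is correct (it follows from \eqref{TransitionFromSIC} by a block-triangular Jacobian computation), and localizing $(\xi,\u)$ through $\widetilde{\bf X}\approx t\a$ is a legitimate alternative to the paper's route, which instead disintegrates over the angular-momentum direction $\l$ and uses the planar embedding $(\xi,\varphi)\mapsto\widetilde{\bf X}$ with Jacobian $\simeq\vert\widetilde{\bf X}\vert^2/\xi$ coming from \eqref{PBXV} and \eqref{DerXiX}. However, there are two genuine gaps. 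The first is the leftover $\lambda$-integration: with the measure factor $\lambda\,d\lambda$ and the weight $\langle\lambda\rangle^{-2}$ you are left with $\int\lambda\langle\lambda\rangle^{-2}d\lambda$, which is logarithmically divergent and is cut off only by the bulk constraint $\lambda\lesssim ta^2$ (or by $\lambda\lesssim a(\vert\y\vert+R)$ from the support), so it produces a factor $\sim\ln\langle ta^2\rangle$ depending on $t$ and $\y$. This is not ``harmless'' and cannot be ``absorbed by a mild generosity in the weight choice'': the weight $\langle\lambda\rangle^2$ is part of the statement, and the right-hand side has no logarithm. Note that this factor $\lambda$ is exactly the density of the Liouville measure relative to the paper's disintegration $d\nu(\l)\,d\bar\vartheta\,d\bar\a$, so your (correct) measure identity is exposing the delicate point of the lemma rather than a side issue; to avoid the loss you must spend part of the $R$-localization on $\lambda$ (equivalently on the transverse component $\vert\widetilde{X}_3\vert\lambda$ of $\widetilde{\bf X}$) instead of integrating it freely against the weight, which your scheme as written does not do.

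The second gap is the treatment of the error $E\lesssim\frac{\xi^2}{q}(\ln\langle t\rangle+1+\vert\eta\vert+\kappa)$ in the last paragraph. Enlarging the ball in $\a$ to radius $(R+E)/t$ gives a $(\xi,\u)$-volume containing terms $\propto E^3\supset\frac{\xi^6}{q^3}(\vert\eta\vert^3+\kappa^3)$, and the claimed compensation fails: $\int\vert\eta\vert^3\langle\eta\rangle^{-2}d\eta$ diverges, and the $\kappa^3=\lambda^3/\xi^3$ term against $\lambda\langle\lambda\rangle^{-2}d\lambda$ is even worse, so the weights do \emph{not} match the additional volume and in the regime $E\gg R$ you would only get $(E/\vert\y\vert)^3$, not $(R/\vert\y\vert)^3$. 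The correct move — and essentially what the paper does — is not to enlarge the ball at all: the localization should be applied to the exact map, not to its approximation. At fixed $(\eta,\lambda,\phi_\L)$ the map $(\xi,\u)\mapsto\widetilde{\bf X}$ is, on $\mathcal{B}$ and on the support, a nondegenerate change of variables (radial direction of size $\gtrsim\vert\widetilde{\bf X}\vert/\xi$ by \eqref{DerXiX}, two rotational directions of size $\sim\vert\widetilde{\bf X}\vert$), so the preimage of $\{\vert\widetilde{\bf X}-\y\vert\le 3R\}$ has $(\xi,\u)$-measure $\lesssim\xi R^3/\vert\y\vert^3$ regardless of how far $\widetilde{\bf X}$ is from $t\a$, after which $\xi$ is absorbed by $\langle\xi\rangle^{-1}$; this removes the need for any case-splitting in $E$. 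With that replacement your argument reproduces the paper's count, but the $\lambda$-integration issue above still has to be addressed before it yields the lemma with the stated weight.
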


\begin{proof}
For any unit vector ${\bf e}$ with $\widetilde{\bf X}\cdot{\bf e}=0$, we observe that,
\begin{equation*}
\begin{split}
\vert\widetilde{\bf X}-{\bf y}\vert^2=({\bf e}\cdot{\bf y})^2+\vert \widetilde{\bf X}-{\bf y}_\perp\vert^2,\qquad{\bf y_\perp}=-{\bf e}\times({\bf e}\times{\bf y}),
\end{split}
\end{equation*}
so that
\begin{equation*}
\begin{split}
\tilde\varphi(R^{-1}\vert\widetilde{\bf X}-{\bf y}\vert)&\lesssim \tilde\varphi(R^{-1}|{\bf e}\cdot{\bf y}|)\tilde\varphi(R^{-1}\vert \widetilde{\bf X}-{\bf y}_\perp\vert).
\end{split}
\end{equation*}

We can consider the {\it symplectic disintegration} of the Liouville measure associated to the mapping $AMom:(\vartheta,{\bf a})\mapsto {\bf l}={\bf L}/L\in\mathbb{S}^2$ and we obtain accordingly
\begin{equation*}
\begin{split}
\iint d{\vartheta}d{\bf \a}=\int_{{\bf l}\in\mathbb{S}^2}d\nu({\bf l})\iint_{\mathcal{P}_{\bf l}} d\bar{\vartheta} d\bar{\a},\qquad\mathcal{P}_{{\bf l}}=\{\vartheta\cdot{\bf l}=0={\bf a}\cdot{\bf l}\}
\end{split}
\end{equation*}
Indeed, both sides are invariant under joint rotations $(\vartheta,\a)\mapsto (R\vartheta,R\a)$ and their restriction to a plane agrees\footnote{Alternatively, one may observe that
\begin{equation*}
\omega_{\mathcal{P}_{\vartheta,{\bf a}}}=AMom^\ast\omega_{\mathbb{S}^2}+\omega_{\mathcal{P}_{\bf l}},
\end{equation*}
where $\omega_Y$ stands for the natural symplectic form on $Y$.
We thank P.\ G\'erard for this observation.}.

Thus it suffices to consider the planar case. When ${\bf l}=(0,0,1)$, we choose coordinates $\xi,\eta,\lambda,\varphi$ such that
\begin{equation*}
{\bf a}=\frac{q}{\xi}(\cos\varphi,\sin\varphi,0),\qquad\varphi=\arctan({\bf a}^2/{\bf a}^1)
\end{equation*}
which are in involution:
\begin{equation*}
\begin{split}
\{\xi,\eta\}=1=\{\varphi,\lambda\},\qquad\{\xi,\lambda\}=\{\eta,\lambda\}=0=\{\xi,\varphi\}=\{\eta,\varphi\}.
\end{split}
\end{equation*}
On $\mathcal{P}_{\bf l}\cap\mathcal{B}$, the mapping ${\bf Y}:\,(\xi,\varphi)\mapsto\widetilde{\bf X}(\xi,\eta,\lambda,\varphi)$ is an embedding\footnote{This can be seen since changing $\varphi$ amounts to rotate about $0$, while $\partial_\xi\vert\widetilde{\bf X}\vert^2>0$.} $I\times\mathbb{S}^1\to\mathbb{R}^2\setminus\{0\}$ for some interval $I(\eta,\lambda,t)$, and, using \eqref{PBXV} and \eqref{DerXiX}, we see that, on the support of integration, we have the bound on the Jacobian
\begin{equation*}
\xi\vert\det\frac{\partial{\bf Y}}{\partial(\xi,\varphi)}\vert\simeq \vert \widetilde{\bf X}\vert^2\simeq \vert{\bf y}\vert^2,
\end{equation*}
and we deduce that
\begin{equation*}
\begin{split}
\iint_{\mathcal{B}}\tilde\varphi(R^{-1}\vert\widetilde{\bf X}-{\bf y}\vert)f(\vartheta,\a,t)\, d\vartheta d{\bf a}
&\lesssim \int_{\mathbb{S}^2}\tilde\varphi(R^{-1}|{\bf l}\cdot{\bf y}|)\left(\iint_{\mathcal{P}_{\bf l}\cap\mathcal{B}}\tilde\varphi(R^{-1}\vert \widetilde{\bf X}-{\bf y}_\perp\vert)f\,  d\bar{\vartheta} d\bar{\a}\right)d\nu({\bf l})\\
&\lesssim \norm{\langle\xi\rangle\ip{\eta}^2\ip{\lambda}^2f}_{L^\infty_{\vartheta,\a}}(R/\vert {\bf y}\vert)^3.
\end{split}
\end{equation*}

\end{proof}

\subsection{The effective fields}

In this section, we complement Proposition \ref{PropControlEF} by obtaining bounds on the effective fields for particle distributions following the evolution equation \eqref{NewNLEq}. These follow from variations on the continuity equation.

To control various terms, we introduce an appropriate weighted envelope function. For $\phi\in C^\infty_c(\mathbb{R}^3)$, define
\begin{equation*}
\begin{split}
M_R({\bf y},t;\phi)&:=\iint \phi(R^{-1}({\bf y}-{\bf a}))\gamma^2(\vartheta,{\bf a},t)\, d\vartheta d{\bf a}.
\end{split}
\end{equation*}
Bounds on the effective electric potential, field and derivative will be related to the convergence properties of $M_R$ in various norms (see \eqref{BesovBoundMF1} and \eqref{BesovBoundMF2}). In particular, they allow to obtain global bounds on the initial data. For conciseness, we introduce the near identity
\begin{equation}\label{NearID}
I_k({\bf y},R):=1+\left[1+\vert{\bf y}\vert^2\right]^\frac{k}{2}\cdot\mathfrak{1}_{\{\vert{\bf y}\vert\le 10R\}},
\end{equation}
and we can state our bounds in terms of $I_{k}({\bf y},R)$.
\begin{lemma}\label{BoundIEFLem}
We have bounds for the initial data
\begin{equation*}
\begin{split}
\left[1+\vert{\bf y}\vert^2\right]^\frac{k}{2}\cdot M_R({\bf y},0;\phi)&\lesssim I_k({\bf y},R)\cdot\min\{1,R^3\}\cdot\left[\Vert \langle a\rangle^{k/2}\gamma_0\Vert_{L^2_{\vartheta,{\bf a}}}^2+\Vert \langle\vartheta\rangle^2\langle a\rangle^{k/2}\gamma_0\Vert_{L^\infty_{\vartheta,{\bf a}}}^2\right],
\end{split}
\end{equation*}
and in particular
\begin{equation*}
\begin{split}
\left[1+\vert{\bf y}\vert^2\right]\cdot\vert\mathcal{E}^{eff}({\bf y},0)\vert&\lesssim \Vert \langle a\rangle\gamma_0\Vert_{L^2_{\vartheta,{\bf a}}}^2+\Vert \langle a\rangle\langle\vartheta\rangle^2\gamma_0\Vert_{L^\infty_{\vartheta,{\bf a}}}^2,\\
\left[1+\vert{\bf y}\vert^2\right]^\frac{3}{2}\cdot\vert\mathcal{F}^{eff}({\bf y},0)\vert&\lesssim \Vert \langle a\rangle^\frac{3}{2}\gamma_0\Vert_{L^2_{\vartheta,{\bf a}}}^2+\Vert \langle a\rangle^\frac{3}{2}\langle\vartheta\rangle^2\gamma_0\Vert_{L^\infty_{\vartheta,{\bf a}}}^2.
\end{split}
\end{equation*}

\end{lemma}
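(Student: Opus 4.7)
The proof decomposes the estimate on $M_R(\y, 0; \phi)$ into regimes based on the sizes of $R$ and $|\y|$, by interpolating between $L^\infty$- and $L^2$-based bounds on $\gamma_0$. Since $\phi \in C_c^\infty(\R^3)$, there exists $c > 0$ such that $\phi(R^{-1}(\y-\a))$ vanishes outside $\a \in B(\y, cR)$, effectively restricting the $\a$-integration to a ball of volume $\lesssim R^3$.

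First, I would derive two complementary pointwise bounds on $M_R$. Bounding $\gamma_0^2(\vartheta,\a) \leq \langle\vartheta\rangle^{-4}\langle a\rangle^{-k}\|\langle\vartheta\rangle^2\langle a\rangle^{k/2}\gamma_0\|_{L^\infty}^2$ pointwise, then integrating $\langle\vartheta\rangle^{-4}$ over $\R^3$ (which is finite) and $\a$ over $B(\y, cR)$ yields
\begin{equation*}
 M_R(\y, 0; \phi) \lesssim R^3 \sup_{\a \in B(\y, cR)} \langle a\rangle^{-k} \cdot \|\langle\vartheta\rangle^2\langle a\rangle^{k/2}\gamma_0\|_{L^\infty}^2,
\end{equation*}
which is advantageous for small $R$. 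Using instead the boundedness of $\phi$ and Fubini gives
\begin{equation*}
 M_R(\y, 0; \phi) \lesssim \sup_{\a \in B(\y, cR)} \langle a\rangle^{-k} \cdot \|\langle a\rangle^{k/2}\gamma_0\|_{L^2}^2,
\end{equation*}
which is more efficient for large $R$.

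Next, I would multiply by the weight $[1+|\y|^2]^{k/2}$ and distinguish two geometric regimes. When $|\y| \leq 10R$, the weight is retained, and multiplied by its indicator it contributes the second summand of $I_k(\y, R)$. When $|\y| > 10R$, the triangle inequality forces $|\a| \geq |\y| - cR \gtrsim |\y|$ on the support, so $\sup \langle a\rangle^{-k} \lesssim [1+|\y|^2]^{-k/2}$ absorbs the weight, leaving only the constant $1$ of $I_k$. Selecting the $L^\infty$-based bound for $R \leq 1$ (producing $R^3 = \min\{1, R^3\}$) and the $L^2$-based bound for $R > 1$ (producing $1 = \min\{1, R^3\}$), then summing the two contributions, yields the stated estimate.

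The corollaries on $\mathcal{E}^{eff}(\y, 0)$ and $\mathcal{F}^{eff}(\y, 0)$ follow by specializing $\phi$ to a derivative of $\varphi$ and exploiting the scale decompositions $\mathcal{E}^{eff}(\y,0) = \int_0^\infty \mathcal{E}^{eff}_R(\y,0)\, dR/R$, and similarly for $\mathcal{F}^{eff}$, together with the formulas from the definitions. The $\min\{1, R^3\}$ factor ensures that the small-$R$ tail is integrable against the extra powers of $R^{-1}$ arising in $\mathcal{E}^{eff}_R$ and $\mathcal{F}^{eff}_R$, while the geometric decay for $|\y| > 10R$ gives the correct polynomial decay in $|\y|$; plugging in $k = 2$ for $\mathcal{E}^{eff}$ and $k = 3$ for $\mathcal{F}^{eff}$ yields the claimed weighted bounds. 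The main subtlety is the interplay between the minimal $\vartheta$-weight $\langle\vartheta\rangle^2$, dictated by integrability of $\langle\vartheta\rangle^{-4}$ in $\R^3$, and the clean volume factor $R^3$ in the small-scale bound, which is what makes the scale integrations producing the electric field and its derivative converge at $R \to 0$.
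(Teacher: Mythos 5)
Your proof of the main display is correct and is essentially the paper's own argument: the paper likewise combines the trivial $L^2$ bound $M_R(\y,0;\phi)\lesssim\Vert\gamma_0\Vert_{L^2}^2$ (valid for all $R$, absorbing the weight via $\langle a\rangle\gtrsim\langle\y\rangle$ on the support when $\vert\y\vert\geq 10R$) with the small-scale bound $M_R\lesssim R^3\Vert\langle\vartheta\rangle^2\gamma_0\Vert_{L^\infty}^2$ for $R\lesssim 1$, keeping the weight inside $I_k$ when $\vert\y\vert\leq 10R$; your selection of the $L^\infty$ bound for $R\leq 1$ and the $L^2$ bound for $R\geq 1$ reproduces the factor $\min\{1,R^3\}$ exactly as in the paper. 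The derivation of the $\mathcal{E}^{eff}$ bound by integrating the $k=2$ estimate against $dR/R^3$ (splitting $\{\vert\y\vert\le 10R\}$, $\{\vert\y\vert\ge 10R\}$) also matches the paper's one-line conclusion and the power counting there is sound.

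The one place where your justification does not close as written is the $\mathcal{F}^{eff}$ bound. Since $\mathcal{F}^{eff}_{jk,R}$ carries a prefactor $R^{-3}$ and the scale measure is $dR/R$, you are integrating $M_R(\y,0;\partial_j\partial_k\varphi)$ against $dR/R^4$, and the size bound $M_R\lesssim\min\{1,R^3\}$ gives $\int_0^1 R^3\cdot R^{-4}\,dR=\int_0^1 dR/R$, a logarithmic divergence at $R\to 0$; your claim that $\min\{1,R^3\}$ makes the small-$R$ tail integrable is therefore false for this case (it is only true for $\mathcal{E}^{eff}$, where the measure is $dR/R^3$). No pure size estimate can do better than the volume factor $R^3$, so closing the $\mathcal{F}^{eff}$ estimate at small scales requires an extra ingredient — e.g.\ exploiting that $\partial_j\partial_k\varphi$ has vanishing mean (so that one can subtract $\gamma_0^2(\vartheta,\y)$ and gain a power of $R$ from regularity of the velocity marginal), or an integration by parts against a derivative of $\gamma_0$ as the paper does at positive times in \eqref{FjkAndPB2}, or simply a small-scale cutoff. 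To be fair, the paper's own proof dismisses this point with ``direct integrations'' and the quantitative use of the lemma elsewhere only involves the $M_R$ bound and the $\mathcal{E}$-type scale integrals, but as a self-contained argument for the stated $\mathcal{F}^{eff}$ inequality your step would fail and needs to be supplemented.
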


\begin{proof}

Indeed
\begin{equation*}
\begin{split}
M_R({\bf y},0;\phi)&\lesssim\Vert \gamma\Vert_{L^2_{\vartheta,{\bf a}}}^2,\qquad
\left[1+\vert{\bf y}\vert^2\right]^\frac{k}{2}\cdot M_R({\bf y},0;\phi)\mathfrak{1}_{\{\vert{\bf y}\vert\ge 10R\}}\lesssim \Vert \langle a\rangle^{k/2}\gamma\Vert_{L^2_{\vartheta,{\bf a}}}^2,
\end{split}
\end{equation*}
and when $R\lesssim 1$,
\begin{equation*}
\begin{split}
M_R({\bf y},0;\phi)&\lesssim R^3\Vert \langle\vartheta\rangle^2\gamma\Vert_{L^\infty_{\vartheta,{\bf a}}}^2,\qquad
\left[1+\vert{\bf y}\vert^2\right]^\frac{k}{2}\cdot M_R({\bf y},0;\phi)\mathfrak{1}_{\{\vert{\bf y}\vert\ge 10R\}}\lesssim R^3\Vert \langle\vartheta\rangle^2\langle a\rangle^{k/2}\gamma\Vert_{L^\infty_{\vartheta,{\bf a}}}^2.
\end{split}
\end{equation*}
The bounds on $\mathcal{E}^{eff}$ and $\mathcal{F}^{eff}$ follow by direct integrations decomposing into $\{\vert\y\vert\le 10R\}$ and $\{\vert\y\vert\ge 10R\}$.

\end{proof}

Assuming only moment bounds, we can obtain almost sharp decay for the effective electric field, and assuming moments and Poisson brackets, we can obtain sharp decay for the effective electric and almost sharp decay for its derivatives. 

\begin{proposition}\label{PropEeff}

$(i)$ Assume that $\gamma$ satisfies \eqref{NewNLEq} and the bounded moment bootstrap assumptions \eqref{eq:mom_btstrap_assump}. For any fixed $\phi\in C^\infty_c(\mathbb{R}^3)$ and any $0\le s\le t\le T$, there holds that, uniformly in $R,{\bf y}$ and $0\le k\le 3$,
\begin{equation}\label{BesovBoundMF1}
\begin{split}
\left[1+\vert{\bf y}\vert^2\right]^\frac{k}{2}\vert M_R({\bf y},t;\phi)-M_R({\bf y},s;\phi)\vert\lesssim_\phi\varepsilon_1^4I_k({\bf y},R)\cdot\min\{R^2,R^{-1}\}\int_s^t\langle u\rangle^{-\frac{3}{2}}du.
\end{split}
\end{equation}
In particular, $M_R({\bf y},t;\phi)$ converges uniformly to a limit $M_\infty({\bf y};\phi)$. This implies almost optimal decay on the effective electric field
\begin{equation}\label{BoundEff1}
\begin{split}
\left[t^2+\vert\y\vert^2\right]\cdot\vert\mathcal{E}^{eff}({\bf y},t)\vert&\lesssim \varepsilon_1^2 \ln\ip{t}.
\end{split}
\end{equation}

$(ii)$ Assume in addition that $\gamma$ satisfies the stronger bootstrap assumption \eqref{eq:deriv_bstrap_assump2}, then we can strengthen \eqref{BesovBoundMF1} to
\begin{equation}\label{BesovBoundMF2}
\begin{split}
\left[1+\vert{\bf y}\vert^2\right]^\frac{k}{2}\vert M_R({\bf y},t;\phi)-M_R({\bf y},s;\phi)\vert\lesssim_\phi\varepsilon_1^4I_k({\bf y},R)\cdot\min\{R^3,R^{-1}\}\int_s^t\langle u\rangle^{-\frac{6}{5}}du,
\end{split}
\end{equation}
so that
\begin{equation}\label{BoundEeff2}
\begin{split}
\left[t^2+\vert\y\vert^2\right]\cdot\vert\mathcal{E}^{eff}({\bf y},t)\vert&\lesssim \varepsilon_1^2,\\
\left[t^2+\vert\y\vert^2\right]^\frac{3}{2}\cdot\vert\mathcal{F}^{eff}({\bf y},t)\vert&\lesssim \varepsilon_1^2\ln\ip{t}.
\end{split}
\end{equation}

$(iii)$ In addition, under the hypothesis of $(ii)$, there exists $\Psi^\infty$ and $\mathcal{E}_j^\infty=\partial_j\Psi^\infty$ such that
\begin{equation}\label{AsymptoticEFCCL}
\begin{split}
\Vert \langle{\bf a}\rangle(t\Psi^{eff}(t{\bf a},t)-\Psi^\infty({\bf a}))\Vert_{L^\infty}+\Vert \langle {\bf a}\rangle^2(t^2\mathcal{E}^{eff}_j(t{\bf a},t)-\mathcal{E}^\infty_j({\bf a}))\Vert_{L^\infty}&\lesssim\varepsilon_1^2\langle t\rangle^{-\frac{1}{10}},
\end{split}
\end{equation}
and consequently
\begin{equation}\label{AsymptoticEFCCL2}
\begin{split}
\Vert t\Psi(\widetilde{\bf X}(\vartheta,\a),t)-\Psi^\infty(\a)\Vert_{L^\infty(\mathcal{B})}+\Vert t^2\mathcal{E}_j(\widetilde{\bf X}(\vartheta,\a),t)-\mathcal{E}_j^\infty(\a)\Vert_{L^\infty(\mathcal{B})}\lesssim \varepsilon_1^2 t^{-\frac{1}{10}}.
\end{split}
\end{equation}

\end{proposition}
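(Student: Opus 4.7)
The core tool is a continuity equation for $M_R$. Multiplying the equation \eqref{NewNLEq} by $2\gamma$ yields $\partial_t\gamma^2+\{\mathbb{H}_4,\gamma^2\}=0$, and since $\phi_R(\y-\a)=\phi(R^{-1}(\y-\a))$ depends on $\a$ alone, integration by parts via \eqref{ZeroIntegralPB} gives
\begin{equation*}
 \partial_t M_R(\y,t;\phi) \;=\; \iint\{\phi_R,\mathbb{H}_4\}\gamma^2\,d\vartheta d\a \;=\; \frac{Q}{R}\iint(\nabla\phi)(R^{-1}(\y-\a))_k\,\{\XX^j,\a^k\}\,\mathcal{E}_j(\XX)\,\gamma^2\,d\vartheta d\a,
\end{equation*}
plus an analogous term from the $\W\cdot\v$ contribution in the far formulation (harmless, since $|\W(t)|\lesssim\eps_1^2\langle t\rangle^{-1}$). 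Bounding the right-hand side reduces to three ingredients: pointwise decay of $\mathcal{E}$, the kinematic bounds on $\{\XX,\a\}$ from Corollary \ref{CorPBa}, and moment/volume estimates for the integral against $\gamma^2$.

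For part (i), the only input besides moments is $\|\mathcal{E}(t)\|_{L^\infty}\lesssim\eps_1^2\ln\langle t\rangle/\langle t\rangle^{2}$ from Theorem \ref{thm:global_momentsIntro}. The plan is to split phase space into the bulk $\B$ and its complement. On $\B$, Corollary \ref{CorPBa} gives $|\{\XX,\a\}|\lesssim 1$; combining with the volume bound of Lemma \ref{LemVolume} (suitably applied to the support of $\nabla\phi_R$) produces the small-scale factor $\min\{R^2,R^{-1}\}$ together with the weight factor $I_k(\y,R)$. Outside $\B$, the bounds \eqref{eq:nonbulk-bds} convert polynomial weights to arbitrarily high negative powers of $\langle t\rangle$ and render the contribution negligible. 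The $\ln$ loss in $\mathcal{E}$ is absorbed into a fractional power of the overall $\langle t\rangle^{-2}$ decay to leave $\langle t\rangle^{-3/2}$, yielding \eqref{BesovBoundMF1}. The pointwise bound \eqref{BoundEff1} then follows by the rescaling $\tilde R=R/t$, $\bar\y=\y/t$, which turns $\mathcal{E}^{eff}$ into $t^{-2}\int_0^\infty\tilde R^{-2}M_{\tilde R}(\bar\y,t;\partial_j\varphi)d\tilde R/\tilde R$; the scale summation is controlled by combining the initial-data bound of Lemma \ref{BoundIEFLem} at $\tilde R<1$ with \eqref{BesovBoundMF1} at $\tilde R>1$.

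For part (ii), the derivative bootstrap brings two improvements. First, Proposition \ref{prop:global_derivsIntro} upgrades the decay to $\|\mathcal{E}(t)\|_{L^\infty}\lesssim\eps_1^2\langle t\rangle^{-2}$, removing the $\ln$. Second, and crucial to strengthen $\min\{R^2,R^{-1}\}$ to $\min\{R^3,R^{-1}\}$, one performs one further integration by parts: writing $R^{-1}(\nabla\phi)(R^{-1}(\y-\a))=\nabla_{\a}\phi_R=\{\vartheta,\phi_R\}$ as a symplectic gradient and transferring it onto $\gamma$ via \eqref{ZeroIntegralPB} produces Poisson brackets $\{\vartheta^k,\gamma\}$ that are resolved into the spanning family $\{f,\gamma\}_{f\in\textnormal{SIC}}$ through \eqref{TransitionFromSIC} and bounded by the derivative bootstrap. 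The additional integration by parts absorbs one $R^{-1}$ factor while leaving bounded coefficients on $\B$, which gives \eqref{BesovBoundMF2}; scale resummation then yields \eqref{BoundEeff2}. The $\mathcal{F}^{eff}$ bound proceeds identically, except we invoke the constant-of-motion identity $\partial_\y=t^{-1}\{\z,\cdot\}$ (as already used in \eqref{FjkAndPB2}) to trade one derivative at cost $t^{-1}$ instead of $R^{-1}$. For part (iii), the Cauchy property \eqref{BesovBoundMF2} combined with $\int^\infty u^{-6/5}du<\infty$ produces a limit $M_\infty(\y;\phi)$, defining $\Psi^\infty,\mathcal{E}^\infty$ by the same scale resummation applied to $M_\infty$. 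The explicit $\langle t\rangle^{-1/10}$ rate in \eqref{AsymptoticEFCCL} is obtained by splitting the scale integral at a cutoff $\tilde R_0=\langle t\rangle^{-\delta}$, balancing the small-scale contribution (controlled by the initial data factor $\tilde R_0$) against the large-scale one (controlled by the Cauchy rate $\langle t\rangle^{-1/5}\tilde R_0^{-1}$), with optimum at $\delta=1/10$. Finally, \eqref{AsymptoticEFCCL2} follows from \eqref{AsymptoticEFCCL}, combined with Lemma \ref{ImprovedBoundsOnXBulk} (which gives $|\XX-t\a|\lesssim\xi^2q^{-1}(1+\ln\langle t\rangle+|\eta|+\kappa)$ on $\B$) and Proposition \ref{PropControlEF} to pass from $\mathcal{E}^{eff}$ at $t\a$ to $\mathcal{E}$ at $\XX$.

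The main technical obstacle is the sharpening from $R^2$ to $R^3$ in part (ii): the additional integration by parts must produce no uncontrolled losses in the bulk kinematic weights or in the transition between close/far (and past/future) asymptotic coordinates, and the resolution of $\{\vartheta,\gamma\}$ in terms of $\textnormal{SIC}$ must be balanced carefully enough to avoid re-introducing the $\ln\langle t\rangle$ losses removed at the level of $\mathcal{E}$. Tracking these cancellations, together with the somewhat delicate absorption of the small- and large-$\tilde R$ tails in the scale-resummation formula for $\mathcal{F}^{eff}$, will constitute the bulk of the bookkeeping.
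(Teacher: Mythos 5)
Your overall strategy coincides with the paper's: the continuity-equation identity $\partial_tM_R=-\iint\gamma^2\{\mathbb{H}_4,\phi_R({\bf y}-\a)\}\,d\vartheta d\a$, the bulk/non-bulk split using Corollary \ref{CorPBa} and \eqref{eq:nonbulk-bds}, scale resummation for the effective fields, and the Cauchy-in-time argument for part $(iii)$ (where, incidentally, no balancing at a cutoff $\tilde R_0$ is needed, since $\int_0^\infty\min\{r^3,r^{-1}\}r^{-3}dr<\infty$ and the rate comes directly from $\int_s^\infty\langle u\rangle^{-6/5}du$). In part $(ii)$ you implement the key $R^2\to R^3$ improvement differently: you integrate by parts a second time via $\partial_{\a^k}=\{\vartheta^k,\cdot\}$, whereas the paper simply does \emph{not} integrate by parts at all for $R\le t^{-1/2}$, writing $\partial_tM_R=2\iint\phi_R\,[\mathcal{E}_j\{\XX^j,\gamma\}+\W_j\{\VV^j,\gamma\}]\gamma$ and bounding $\{\XX,\gamma\},\{\VV,\gamma\}$ by Lemma \ref{ZPBFormula}. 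Your route generates extra terms where the bracket falls on $\mathcal{E}_j(\XX)$ (a Hessian term) and on $\{\XX^j,\a^k\}$ (second-order kinematic brackets); these are controllable with Lemma \ref{lem:moreXVbds} and Proposition \ref{PropControlEF}, but the paper's formulation avoids them entirely. Note also that for $M_R$ the $R^3$ gain at small scales is a volume count in the $\a$-variable (as in \eqref{SimpleBoundsMR} and Lemma \ref{BoundIEFLem}), not Lemma \ref{LemVolume}, which localizes in $\XX$ and is used for the non-effective fields.

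The one step that, as written, does not deliver the stated bound is the scale resummation for \eqref{BoundEff1}. You cannot treat the small scales $\tilde R<1$ by ``the initial-data bound of Lemma \ref{BoundIEFLem} plus \eqref{BesovBoundMF1}'': the Cauchy estimate scales like $\tilde R^2$, and $\int_0^1\tilde R^{2}\,d\tilde R/\tilde R^{3}$ diverges, so the small scales must be estimated directly at time $t$ via the volume/moment bound \eqref{SimpleBoundsMR} (the time-$t$ analogue of Lemma \ref{BoundIEFLem}, valid because the moments are propagated). Moreover that direct bound carries the moment norms, which under \eqref{eq:mom_btstrap_assump} grow like $(\ln\langle t\rangle)^{Cm}$; with a fixed split at $\tilde R=1$ you therefore only get $[t^2+\vert{\bf y}\vert^2]\vert\mathcal{E}^{eff}\vert\lesssim\eps_1^2(\ln\langle t\rangle)^{C}$ with a large $C$, not the single logarithm claimed in \eqref{BoundEff1}. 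The paper splits instead at the time-dependent scale $r_1=\min\{t,(t^2+\vert{\bf y}\vert^2)^{3/8}\}/t$, so the low-scale contribution comes with the polynomially small factor $r_1$ that absorbs all logarithms from $N_1$, and only the high scales contribute the single $\langle\ln r_1\rangle\lesssim\ln\langle t\rangle$. The same care is needed in the resummation giving the $\mathcal{F}^{eff}$ estimate in \eqref{BoundEeff2}, where the $R^{-4}$ scale weight again makes the small scales logarithmically critical.
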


\begin{proof}[Proof of Proposition \ref{PropEeff}]

Let $\phi_R({\bf x}):=\phi(R^{-1}{\bf x})$. Using \eqref{NewNLEq}, we compute that
\begin{equation*}
\begin{split}
\partial_tM_R({\bf y},t;\phi)&=\iint \phi_R({\bf y}-{\bf a})\{\mathbb{H}_4,\gamma^2\}\, d\vartheta d{\bf a}=-\iint \gamma^2\{\mathbb{H}_4,\phi_R({\bf y}-{\bf a})\}\, d\vartheta d{\bf a}\\
&=R^{-1}\iint_{\mathcal{B}} \gamma^2\left[\mathcal{E}_j\{\widetilde{\bf X}^j,\a^l\}+\mathcal{W}_j\{\widetilde{\bf V}^j,{\bf a}^l\}\right](\partial_l\phi_R)({\bf y}-{\bf a})\, d\vartheta d{\bf a}\\
&\quad+R^{-1}\iint_{\mathcal{B}^c} \gamma^2\left[\mathcal{E}_j\{\widetilde{\bf X}^j,\a^l\}+\mathcal{W}_j\{\widetilde{\bf V}^j,{\bf a}^l\}\right](\partial_l\phi_R)({\bf y}-{\bf a})\, d\vartheta d{\bf a},
\end{split}
\end{equation*}
and, using Corollary \ref{CorPBa}, \eqref{TransitionFromSIC} and a crude estimate, we can get a good bound in the bulk:
\begin{equation*}
\begin{split}
\vert\partial_tM_{R,\mathcal{B}}({\bf y},t)\vert&\lesssim R^{-1}\left\vert \iint_{\mathcal{B}} \gamma^2\left[\mathcal{E}_j\{\widetilde{\bf X}^j,\a^l\}+\mathcal{W}_j\{\widetilde{\bf V}^j,{\bf a}^l\}\right](\partial_l\phi_R)({\bf y}-{\bf a})\, d\vartheta d{\bf a}\right\vert\\
&\lesssim \left[\Vert\mathcal{E}\Vert_{L^\infty}+\langle t\rangle^{-1}\vert\mathcal{W}\vert\right]\cdot\min\{R^2,R^{-1}\}\cdot\left[\Vert\gamma\Vert_{L^2_{\vartheta,\a}}^2+\Vert \langle\vartheta\rangle^2\gamma\Vert_{L^\infty_{\vartheta,\a}}^2\right],
\end{split}
\end{equation*}
and assuming that $\vert{\bf y}\vert\ge 10R$,
\begin{equation*}
\begin{split}
\left[1+\vert{\bf y}\vert^2\right]^\frac{k}{2}\vert\partial_tM_{R,\mathcal{B}}({\bf y},t)\vert \mathfrak{1}_{\{\vert{\bf y}\vert\ge 10R\}}&\lesssim R^{-1}\langle a\rangle^\frac{k}{2}\left\vert \iint_{\mathcal{B}} \gamma^2\left[\mathcal{E}_j\{\widetilde{\bf X}^j,\a^l\}+\mathcal{W}_j\{\widetilde{\bf V}^j,{\bf a}^l\}\right](\partial_l\phi_R)({\bf y}-{\bf a})\, d\vartheta d{\bf a}\right\vert\\
&\lesssim \left[\Vert\mathcal{E}\Vert_{L^\infty}+\langle t\rangle^{-1}\vert\mathcal{W}\vert\right]\cdot\min\{R^2,R^{-1}\}\cdot\left[\Vert\langle a\rangle^\frac{k}{2}\gamma\Vert_{L^2_{\vartheta,\a}}^2+\Vert\langle a\rangle^\frac{k}{2} \langle\vartheta\rangle^2\gamma\Vert_{L^\infty_{\vartheta,\a}}^2\right].
\end{split}
\end{equation*}
Outside the bulk we also use \eqref{eq:nonbulk-bds} to get
\begin{equation*}
\begin{split}
\vert\partial_tM_{R,\mathcal{B}^c}({\bf y},t)\vert&\lesssim R^{-1}\left\vert \iint_{\mathcal{B}^c} \gamma^2\left[\mathcal{E}_j\{\widetilde{\bf X}^j,\a^l\}+\mathcal{W}_j\{\widetilde{\bf V}^j,{\bf a}^l\}\right](\partial_l\phi_R)({\bf y}-{\bf a})\, d\vartheta d{\bf a}\right\vert\\
&\lesssim R^{-1}\left[\Vert\vert{\bf x}\vert\mathcal{E}\Vert_{L^\infty}+\vert\mathcal{W}\vert\right]\cdot\langle t\rangle^{-1}\iint_{\mathcal{B}^c}\frac{q}{\xi^2}\langle a\rangle\left[a^4+\xi^4+\eta^4+\lambda^2\right]\gamma^2\cdot\vert (\partial_l\phi_R)({\bf y}-{\bf a})\vert d\vartheta d\a\\
&\lesssim \varepsilon_1^2\langle t\rangle^{-\frac{7}{4}}\min\{R^2,R^{-1}\}N_1^2,
\end{split}
\end{equation*}
and similarly
\begin{equation*}
\begin{split}
&\left[1+\vert{\bf y}\vert^2\right]^\frac{k}{2}\vert\partial_tM_{R,\mathcal{B}^c}({\bf y},t)\vert\mathfrak{1}_{\{\vert{\bf y}\vert\ge 10R\}}\\
\lesssim &\,R^{-1}\left\vert \iint_{\mathcal{B}^c} \gamma^2\left[\mathcal{E}_j\{\widetilde{\bf X}^j,\a^l\}+\mathcal{W}_j\{\widetilde{\bf V}^j,{\bf a}^l\}\right](\partial_l\phi_R)({\bf y}-{\bf a})\, d\vartheta d{\bf a}\right\vert\\
\lesssim &\,R^{-1}\left[\Vert\vert{\bf x}\vert\mathcal{E}\Vert_{L^\infty}+\vert\mathcal{W}\vert\right]\cdot\langle t\rangle^{-1}\iint_{\mathcal{B}^c}\frac{q}{\xi^2}\langle a\rangle^{k+1}\left[a^4+\xi^4+\eta^4+\lambda^2\right]\gamma^2\cdot\vert (\partial_l\phi_R)({\bf y}-{\bf a})\vert d\vartheta d\a\\
\lesssim &\,\varepsilon_1^2\langle t\rangle^{-\frac{7}{4}}\min\{R^2,R^{-1}\}N_1^2.
\end{split}
\end{equation*}
Adding the two lines above, we get that, for $k\le 3$,
\begin{equation}\label{BounddMdt1}
\begin{split}
\vert \partial_tM_R({\bf y},t;\phi)\vert&\lesssim \varepsilon_1^4\langle t\rangle^{-\frac{7}{4}}N_1^2\cdot\min\{R^2,R^{-1}\},\\
\left[1+\vert{\bf y}\vert^2\right]^\frac{k}{2}\cdot \vert \partial_tM_R({\bf y},t;\phi)\vert\mathfrak{1}_{\{\vert{\bf y}\vert\ge10R\}}&\lesssim \varepsilon_1^4\langle t\rangle^{-\frac{7}{4}}N_1^2\cdot\min\{R^2,R^{-1}\},
\end{split}
\end{equation}
which gives \eqref{BesovBoundMF1}.

We can now fix $\phi\ge 0$ such that $\phi\varphi=\varphi$ (with $\varphi$ from \eqref{IntroducingVarphi}) and let $M_R^{(2)}({\bf y},t):=M_R({\bf y},t;\phi)$. Letting $R_1=\min\{t,(t^2+\vert\y\vert^2)^\frac{3}{8}\}$, we see that
\begin{equation*}
\begin{split}
\left[t^2+\vert{\bf y}\vert^2\right]\cdot\mathcal{E}^{eff}_{j}({\bf y},t)&=\left[t^2+\vert{\bf y}\vert^2\right]\cdot\int_{R=0}^{\infty}\iint\partial_j\varphi_R({\bf y}-t{\bf a})\gamma^2(\vartheta,{\bf a},t)\,d\vartheta d{\bf a}\, \frac{dR}{R^3}\\
&=\left[1+\vert\y/t\vert^2\right]\cdot \int_{r=0}^{\infty} \frac{1}{r^2}\iint(\partial_j\varphi)(r^{-1}({\bf y}/t-{\bf a}))\gamma^2(\vartheta,{\bf a},t)\,d\vartheta d{\bf a}\, \frac{dr}{r},
\end{split}
\end{equation*}
where $r=R/t$. To go further, we observe the simple bounds as in Lemma \ref{BoundIEFLem},
\begin{equation}\label{SimpleBoundsMR}
\begin{split}
\langle{\bf y}\rangle^k\cdot M_R({\bf y},t;\phi)&\lesssim I_k({\bf y},R)\cdot\min\{1,R^3\}\cdot\left[\Vert \langle a\rangle^{k/2}\gamma(t)\Vert_{L^2_{\vartheta,{\bf a}}}^2+\Vert \langle\vartheta\rangle^2\langle a\rangle^{k/2}\gamma(t)\Vert_{L^\infty_{\vartheta,{\bf a}}}^2\right],
\end{split}
\end{equation}
and therefore we can estimate the case of low scales $0\le r\le r_1:=R_1/t\le 1$:
\begin{equation*}
\begin{split}
I_{low}&:=\left[1+\vert\y/t\vert^2\right]\cdot \left\vert \int_{r=0}^{r_1} \frac{1}{r^2}\iint(\partial_j\varphi)(r^{-1}({\bf y}/t-{\bf a}))\gamma^2(\vartheta,{\bf a},t)\,d\vartheta d{\bf a}\, \frac{dr}{r}\right\vert\\
&\lesssim\langle \y/t\rangle^2  \int_{r=0}^{r_1} M^{(2)}_r({\bf y}/t,t)\, \frac{dr}{r^3}\lesssim\varepsilon_1^2 N_1^2r_1,
\end{split}
\end{equation*}
while for the higher scale, we integrate in time using \eqref{SimpleBoundsMR} at time $t=0$ and \eqref{BounddMdt1}:
\begin{equation*}
\begin{split}
I_{high}&:=\left[1+\vert\y/t\vert^2\right]\cdot \left\vert \int_{r=r_1}^{\infty} \frac{1}{r^2}\iint(\partial_j\varphi)(r^{-1}({\bf y}/t-{\bf a}))\gamma^2(\vartheta,{\bf a},t)\,d\vartheta d{\bf a}\, \frac{dr}{r}\right\vert\\
&\lesssim\langle \y/t\rangle^2  \int_{r=r_1}^\infty\vert M^{(2)}_r({\bf y}/t,t)-M^{(2)}_r({\bf y}/t,0)\vert \, \frac{dr}{r^3}+\langle \y/t\rangle^2  \int_{r=r_1}^\infty M^{(2)}_r({\bf y}/t,0)\, \frac{dr}{r^3}\\
&\lesssim \varepsilon_0^2 N_1^2+\varepsilon_1^4N_1^2\int_{r=r_1}^\infty\frac{dr}{r\langle r\rangle^\frac{1}{2}}\lesssim\varepsilon_0^2+\varepsilon_1^4\langle \ln r_1\rangle,
\end{split}
\end{equation*}
which gives \eqref{BoundEff1}.

\medskip

In case we also have control of some derivative, we can alter the first step to get that, for $k\le 3$,
\begin{equation}\label{BounddMdt1New}
\begin{split}
\left[1+\vert{\bf y}\vert^2\right]^\frac{k}{2}\cdot R^{-3}\vert \partial_tM_R({\bf y},t;\phi)\vert&\lesssim \varepsilon_1^4I_k({\bf y},R)\cdot\langle t\rangle^{-\frac{5}{4}}N_2^2.
\end{split}
\end{equation}
For $R\ge t^{-\frac{1}{2}}$, we can still use \eqref{BounddMdt1}. In case $R\le t^{-\frac{1}{2}}$, we compute that
\begin{equation*}
\begin{split}
\partial_tM_R({\bf y},t;\phi)&=\iint \phi_R({\bf y}-{\bf a})\{\mathbb{H}_4,\gamma^2\}\, d\vartheta d\a=2\iint \phi_R({\bf y}-{\bf a})\left[\mathcal{E}_j\{\widetilde{\bf X},\gamma\}+\mathcal{W}_j\{\widetilde{\bf V},\gamma\}\right]\gamma\, d\vartheta d\a,
\end{split}
\end{equation*}
and therefore, using Lemma \ref{ZPBFormula} and Lemma \ref{LemVolume} inside the bulk, and Lemma \ref{ZPBFormula}, \eqref{eq:nonbulk-bds} and \eqref{MainVolumeGainBC} outside the bulk, we obtain that
\begin{equation*}
\begin{split}
R^{-3}\left[1+\vert{\bf y}\vert^2\right]^\frac{k}{2}\vert \partial_tM_R\vert&\lesssim I_k({\bf y},R)\cdot\left[t\Vert \mathcal{E}\Vert_{L^\infty}+t\vert\mathcal{W}\vert\right]\cdot N_2^2,
\end{split}
\end{equation*}
which is enough to get \eqref{BesovBoundMF2}. Proceeding as above, but using \eqref{BesovBoundMF2} instead of \eqref{BesovBoundMF1}, we also get \eqref{BoundEeff2}. This also implies \eqref{AsymptoticEFCCL} since
\begin{equation*}
\begin{split}
\langle a\rangle^2\vert t^2\mathcal{E}^{eff}_j(t{\bf a},t)-s^2\mathcal{E}^{eff}_j(s{\bf a},s)\vert&\lesssim \langle a\rangle^2\int_{r=0}^{\infty}\left\vert\iint\partial_j\varphi_{r}({\bf a}-\alpha)\left[\gamma^2(\theta,\alpha,t)-\gamma^2(\theta,\alpha,s)\right]\, d\theta d\alpha\right\vert \frac{dr}{r^3}\\
&\lesssim \int_{r=0}^\infty \langle a\rangle^2\vert M^{(2)}_r({\bf a},t)-M^{(2)}_r({\bf a},s)\vert \frac{dr}{r^3},
\end{split}
\end{equation*}
which gives \eqref{AsymptoticEFCCL}. Finally \eqref{AsymptoticEFCCL2} follows from \eqref{BoundEeff2}, \eqref{AsymptoticEFCCL}, Proposition \ref{PropControlEF} and \eqref{ApproxXBulk} once we observe that, for $(\vartheta,{\bf a})\in\mathcal{B}$,
\begin{equation*}
\begin{split}
\vert \mathcal{E}_j(\widetilde{\bf X}(\vartheta,{\bf a}),t)-\mathcal{E}_j(t{\bf a},t)\vert&\lesssim \Vert\mathcal{F}\Vert_{L^\infty}\cdot\vert\widetilde{\bf X}(\vartheta,{\bf a})-t{\bf a}\vert\lesssim\varepsilon_1^2\langle t\rangle^{-5/2}.
\end{split}
\end{equation*}

\end{proof}

\section{Nonlinear analysis: bootstrap propagation}\label{SecBootstrap}
In this section we will establish moment and derivative control in the nonlinear dynamics. In our terminology, ``linear'' will henceforth refer to features of the linearized equations. In particular, the linear characteristics are the solutions of the linearized equations, i.e.\ of the Kepler problem \eqref{ODE}, and thus by no means straight lines.

\subsection{Nonlinear unknowns}\label{Sec:NLUnknowns}
We now switch to a new unknown adapted to the study of nonlinear asymptotic dynamics. We fix the (forward) asymptotic action map $\mathcal{T}:({\bf x},{\bf v})\mapsto (\vartheta,{\bf a})$ of Proposition \ref{PropAA} and we define
\begin{equation}
 \gamma:=\nu\circ\mathcal{T}^{-1}\circ\Phi_t^{-1},
\end{equation}
where $\Phi_t$ is the flow of the linear characteristics of $\nu$, i.e.\ of the Kepler problem \eqref{ODE}. More explicitly, we have 
\begin{equation}\label{NewNLUnknown}
\begin{split}
\gamma(\vartheta,{\bf a},t)&=\nu({\bf X}(\vartheta+t{\bf a},{\bf a}),{\bf V}(\vartheta+t{\bf a},{\bf a}),t),\\
\nu(q,p,t)&=\gamma(\Theta(q,p)-t\mathcal{A}(q,p),\mathcal{A}(q,p),t),
\end{split}
\end{equation}
and since $\Phi_t^{-1}\circ\mathcal{T}^{-1}$ is a canonical transformation (Proposition \ref{PropAA}) that filters out the linear flow, we observe that $\gamma$ evolves under the purely nonlinear Hamiltonian $\mathbb{H}_4\circ\Phi_t^{-1}\circ\mathcal{T}^{-1}$ of \eqref{Hamiltonians}: with a slight abuse of notation we have
\begin{equation}\label{NewNLEq}
\begin{split}
\partial_t\gamma+\{\mathbb{H}_4,\gamma\}=0,\qquad \gamma(t=0)=\nu_0,\qquad\mathbb{H}_4=Q\psi(\widetilde{\bf X})+\mathcal{W}\cdot \widetilde{\bf V},\\
\psi({\bf y},t)=-\frac{1}{4\pi}\iint \frac{1}{\vert {\bf y}-\widetilde{\bf X}(\theta,\alpha)\vert}\gamma^2(\theta,\alpha,t)\, d\theta d\alpha,\qquad\dot{\mathcal{W}}=\mathcal{Q}\nabla_x\psi(0,t),\qquad\mathcal{W}(t)\to_{t\to T^\ast}0.
\end{split}
\end{equation}
This is the equation we will focus on.

\subsection{Moment propagation}\label{sec:moments_prop}
In this section we will show that control of moments and (almost sharp) decay of the electric field can be obtained independently of derivative bounds.
\begin{theorem}\label{thm:global_moments}
Let $m\ge 30$, and assume that the initial density $\nu_0$ satisfies
\begin{equation}\label{eq:mom_id}
\begin{split}
\Vert \langle a\rangle^{2m}\nu_0\Vert_{L^r}+\Vert \langle \xi\rangle^{2m}\nu_0\Vert_{L^r}+\Vert \langle \lambda\rangle^{2m}\nu_0\Vert_{L^r}+\Vert \langle\eta\rangle^m\nu_0\Vert_{L^\infty}&\le \varepsilon_0,\qquad r\in\{2,\infty\}.
\end{split}
\end{equation}
Then there exists a global solution $\gamma$ to \eqref{NewNLEq} that satisfies the bounds for $r\in\{2,\infty\}$
\begin{equation}\label{eq:gl_mom_bds}
\begin{split}
\Vert \langle a\rangle^{2m}\gamma(t)\Vert_{L^r}+\Vert \langle \xi\rangle^{2m}\gamma(t)\Vert_{L^r}&\le 2\varepsilon_0,\\
\Vert \langle \lambda\rangle^{2m}\gamma(t)\Vert_{L^r}&\le 2\varepsilon_0\ln^{2m}\ip{t},\\
\Vert \langle\eta\rangle^m\gamma(t)\Vert_{L^\infty}&\le 2\varepsilon_0 \ln^{2m}\ip{t}.
\end{split}
\end{equation}
In particular, $T^\ast=\infty$, $\mathcal{W}(t)\to 0$ as $t\to\infty$, and the electric field decays as
\begin{equation*}
\begin{split}
\Vert \mathcal{E}(t)\Vert_{L^\infty_x}\lesssim \varepsilon_0^2\ln\ip{t}/\langle t\rangle^2.
\end{split}
\end{equation*}
\end{theorem}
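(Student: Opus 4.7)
The plan is a bootstrap argument for the reformulated equation \eqref{NewNLEq} for the nonlinear unknown $\gamma$. First, standard local well-posedness produces a solution on a short interval, the assumptions \eqref{eq:mom_id} on $\nu_0$ transferring directly to $\gamma$ via $\gamma(t=0)=\nu_0$. It then suffices to show that on any interval $[0,T]\subset[0,T^\ast)$ on which \eqref{eq:gl_mom_bds} holds with the constant $2$ replaced by a slightly larger $C_\star>2$, it actually holds with $2$; continuity then forces $T=T^\ast$, and the standard blow-up criterion gives $T^\ast=\infty$. In parallel, $\dot{\mathcal{W}}(t)=\mathcal{Q}\nabla\psi(0,t)$ is controlled pointwise by $|\mathcal{E}(0,t)|$, which under the bootstrap decays like $\eps_0^2\ln\ip{t}/\ip{t}^2$, so integration from $T^\ast$ yields $|\mathcal{W}(t)|\lesssim \eps_0^2\ln\ip{t}/\ip{t}$ and $\mathcal{W}(t)\to 0$ as $t\to\infty$.

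The moment propagation rests on the symplectic identity
\[
\partial_t(\ww\gamma)+\{\mathbb{H}_4,\ww\gamma\}=\{\mathbb{H}_4,\ww\}\gamma = Q\mathcal{E}_j(\widetilde{\bf X})\{\widetilde{\bf X}^j,\ww\}\gamma + \mathcal{W}_j\{\widetilde{\bf V}^j,\ww\}\gamma,
\]
which, since the Hamiltonian flow of $\mathbb{H}_4$ preserves $L^r$ norms, implies $\tfrac{d}{dt}\Vert\ww\gamma\Vert_{L^r}\le\Vert\{\mathbb{H}_4,\ww\}\gamma\Vert_{L^r}$ for $r\in\{2,\infty\}$. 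Under the bootstrap, Propositions \ref{PropControlEF} and \ref{PropEeff}(i) deliver the almost-sharp decay $\ip{t+|{\bf y}|}^2|\mathcal{E}({\bf y},t)|\lesssim \eps_0^2\ln\ip{t}$, which feeds back into the right-hand side. The task is then to estimate the Poisson brackets appearing on the right for each chosen weight, drawing on the identities of Section \ref{sec:PB_bounds}.

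For the weights $\langle a\rangle^{2m}$ and $\langle\xi\rangle^{2m}$ conserved by the linear Kepler flow, the identities \eqref{PBXV2} yield $|\{\widetilde{\bf X},a\}|=|\widetilde{\bf V}|/a\lesssim 1$ and $|\{\widetilde{\bf X},\xi\}|\lesssim \xi^2/q$. Combined with the electric field decay and the analogous bounds for the $\{\widetilde{\bf V},\cdot\}$ brackets (which are controlled via $|\widetilde{\bf X}|^{-2}$ and contribute $\mathcal{W}$-terms that are integrable in $t$), the forcing is time-integrable of order $\eps_0^3 \ln\ip{t}/\ip{t}^2$, closing the bootstrap for these weights without any logarithmic loss provided $\eps_0$ is small. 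For $\langle\lambda\rangle^{2m}$, since $\{\widetilde{\bf X},\lambda\}=-\l\times\widetilde{\bf X}$ grows like $|\widetilde{\bf X}|\lesssim\ip{t}$, the forcing is of order $\eps_0^2 \ln\ip{t}/\ip{t}\cdot\Vert\langle\lambda\rangle^{2m-1}\gamma\Vert_{L^r}$, which by interpolation against $\Vert\gamma\Vert$ and the bootstrap ansatz leads (via Gronwall) to the claimed logarithmic growth $\ln^{2m}\ip{t}$. The weight $\langle\eta\rangle^m$ is treated analogously, invoking \eqref{eq:PBXXeta}: $|\{\widetilde{\bf X},\eta\}|\lesssim \xi^{-1}|\widetilde{\bf X}|+tq\xi^{-2}$ yields a $\ln\ip{t}/\ip{t}$-type forcing, this time coupling to the $\langle\xi\rangle^{2m}$ estimate through the factor $\xi^{-1}$.

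The main obstacle will be the careful orchestration of the various logarithmic and $\xi$-weight losses so that the bootstrap closes with constant $2$. In particular the $\eta$- and $\xi$-moment propagations are intertwined, and the $\lambda$-moment is forced by $|\widetilde{\bf X}|$, which is itself only polynomially bounded in the weights, so the lower-moment control must be leveraged through interpolation at every step. Once this bookkeeping is resolved, the bootstrap closes for $\eps_0$ sufficiently small, yielding all the bounds in \eqref{eq:gl_mom_bds}, hence $T^\ast=\infty$ with $\mathcal{W}\to 0$ as $t\to\infty$, and the stated electric field decay as a direct consequence of Propositions \ref{PropControlEF} and \ref{PropEeff}(i) applied to the propagated moments.
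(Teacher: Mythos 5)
Your overall scheme — local existence plus bootstrap, the weight identity \eqref{eq:mom-prop}, electric-field decay from Propositions \ref{PropControlEF} and \ref{PropEeff}$(i)$, the decay of $\mathcal{W}$ by integrating $\dot{\mathcal{W}}=\mathcal{Q}\nabla\psi(0,t)$ from $T^\ast$, and Gr\"onwall/interpolation for the $\lambda$- and $\eta$-weights — is indeed the paper's skeleton. But there is a genuine gap in how you treat the point-charge terms. In the far formulation the perturbative Hamiltonian contains $\mathcal{W}(t)\cdot\widetilde{\bf V}$, so the forcing for the $a$-moment involves $\mathcal{W}_j\{\widetilde{\bf V}^j,a\}=\mathcal{W}_j\,\frac{q}{2a}\widetilde{\bf X}^j\vert\widetilde{\bf X}\vert^{-3}$ by \eqref{PBXV}. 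You assert these $\mathcal{W}$-terms are ``controlled via $\vert\widetilde{\bf X}\vert^{-2}$ and integrable in $t$'', but $\frac{q}{2a}\vert\widetilde{\bf X}\vert^{-2}$ is \emph{not} bounded on phase space at fixed time: near periapsis $\vert\widetilde{\bf X}\vert\sim \xi^2\langle\kappa\rangle/q$, so this factor is of size $a^3/q$, a loss of three powers of $a$ against the top moment $\langle a\rangle^{2m}$. The decay $\vert\mathcal{W}(t)\vert\lesssim\eps^2\ln\ip{t}/\ip{t}$ is not time-integrable, and the non-bulk gain \eqref{eq:nonbulk-bds} only trades $t$-decay for \emph{more} moments, so the Gr\"onwall inequality for $\Vert\langle a\rangle^{2m}\gamma\Vert_{L^r}$ does not close as you state it, in either $L^2$ or $L^\infty$. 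The same obstruction appears for the $\eta$-moment through $\mathcal{W}_j\{\widetilde{\bf V}^j,\eta\}\sim tq/(\xi\vert\widetilde{\bf X}\vert^2)$.

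The missing ingredient is the two-chart strategy, which is precisely what this part of the paper is built around: for the $a$-moments (and for $\eta$ on $\Omega_t^{cl}$) one switches to the close formulation of Section \ref{ssec:pinnedframe}, where $\mathbb{H}_4'$ contains $\dot{\mathcal{W}}\cdot\widetilde{\bf X}$ instead of $\mathcal{W}\cdot\widetilde{\bf V}$; the dangerous bracket becomes $\dot{\mathcal{W}}_j\{\widetilde{\bf X}^j,a\}$ with $\vert\{\widetilde{\bf X},a\}\vert\le1$ and $\vert\dot{\mathcal{W}}\vert\lesssim\eps^2\ln\ip{t}\,\ip{t}^{-2}$, which is integrable, and the bounds are transferred back to $\gamma$ via Lemma \ref{lem:prime_bds}. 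For $\eta$ one must additionally split phase space into $\Omega_t^{cl}$ and $\Omega_t^{far}$ (working with $\gamma'$ on the former, $\gamma$ on the latter, and invoking Lemma \ref{lem:traj_cl_far} so each trajectory transitions at most once); this is why only $L^\infty$ bounds and only $m$ (not $2m$) $\eta$-moments are propagated, a structural feature your proposal does not reproduce. Finally, for the $\xi$-moment the term $\mathcal{W}_j\{\widetilde{\bf V}^j,\xi\}$ is handled by a bulk/non-bulk splitting that trades a factor $\ip{t}^{-1/4n}$ against the (logarithmically growing) $\eta$- and $\lambda$-moments; your suggested coupling ``through the factor $\xi^{-1}$'' would not by itself supply this gain.
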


By standard local existence theory, in order to establish Theorem \ref{thm:global_moments}, it suffices to prove the following result concerning the propagation of moments:
\begin{proposition}\label{prop:moments_prop}
 Let $m\ge 30$, and assume that the initial density $\nu_0$ satisfies \eqref{eq:mom_id}. If $(\gamma,\mathcal{W})$ is a solution of \eqref{NewNLEq} on $[0,T^\ast]$ satisfying
 \begin{equation}\label{eq:mom_btstrap_assump}
\begin{split}
\Vert \langle a\rangle^{2m}\gamma(t)\Vert_{L^r}+\Vert \langle \xi\rangle^{2m}\gamma(t)\Vert_{L^r}+\Vert \langle \lambda\rangle^{2m}\gamma(t)\Vert_{L^r}+\Vert \langle\eta\rangle^m\gamma(t)\Vert_{L^\infty}&\leq \eps_1\langle \ln(2+t)\rangle^{4m},\quad r\in\{2,\infty\},\\
\sqrt{1+t^2+\vert\y\vert^2}\vert\mathcal{E}(\y,t)\vert&\leq\varepsilon_1^2\langle 2+t\rangle^{-\frac{1}{2}},
\end{split}
\end{equation}
then we have the almost optimal bound for $0\le t\le T^\ast$:
\begin{equation}\label{AlmostSharpDecayEF}
\left[1+t^2+\vert\y\vert^2\right]\vert \mathcal{E}(\y,t)\vert\lesssim \eps_1^2\ip{\ln(2+t)},
\end{equation}
and in fact we have the improved moment bounds for $r\in\{2,\infty\}$: there exists $C>0$ such that, for $0\le t\le T^\ast$:
\begin{align}
 \Vert \ip{a}^{2m}\gamma(t)\Vert_{L^r}+\Vert \ip{\xi}^{2m}\gamma(t)\Vert_{L^r}&\le \varepsilon_0 +C\varepsilon_1^3,\label{eq:mom_btstrap_claim1}\\
 \norm{\lambda^n\gamma(t)}_{L^r}&\le \varepsilon_0+C\varepsilon_1^3\langle \ln(2+t)\rangle^{2n},\qquad 1\leq n\leq 2m,\label{eq:mom_btstrap_claim2}\\
 \norm{\eta^k\gamma(t)}_{L^\infty}&\le \varepsilon_0+C\varepsilon_1^3\langle \ln(2+t)\rangle^{2k},\qquad 1\leq k\leq m.\label{eq:mom_btstrap_claim3}
\end{align}

\end{proposition}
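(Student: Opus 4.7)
\textbf{Step 1 (almost optimal decay of $\E$, and the consequent bound on $\mathcal{W}$).}
The plan is to first upgrade the decay of the electric field, then propagate each moment weight by weight via a Poisson bracket identity. The bootstrap \eqref{eq:mom_btstrap_assump} provides polylogarithmic control of the quantity $N_1$ from Proposition~\ref{PropControlEF}, so that $|\E-\E^{\mathrm{eff}}|$ already decays faster than $\ip{t}^{-2}$ up to log losses. Simultaneously, the same bootstrap hypotheses are precisely those of Proposition~\ref{PropEeff}(i), which yields $(t^2+|\y|^2)|\E^{\mathrm{eff}}|\lesssim \eps_1^2\ln\ip{t}$. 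Adding these establishes \eqref{AlmostSharpDecayEF}. Since $\dot{\mathcal{W}}(t)=\mathcal{Q}\E(0,t)$ and $\mathcal{W}(T^\ast)=0$, integration yields the integrable bound $|\mathcal{W}(t)|\lesssim \eps_1^2\ln\ip{t}/\ip{t}$, which will be indispensable in Step~2.

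\textbf{Step 2 (moment propagation via Poisson brackets; the easy weights $\ip{a}^{2m},\ip{\xi}^{2m}$).}
For any time-independent weight $\ww(\vartheta,\a)$, \eqref{NewNLEq} and the Leibniz rule give
\[
\partial_t(\ww\gamma)+\{\mathbb{H}_4,\ww\gamma\}=\{\mathbb{H}_4,\ww\}\gamma,
\]
and since $\{\mathbb{H}_4,\cdot\}$ preserves $L^\infty$ pointwise and $L^2$ after integration (via \eqref{ZeroIntegralPB}), Duhamel yields
\[
\|\ww\gamma(t)\|_{L^r}\leq \|\ww\gamma(0)\|_{L^r}+\int_0^t \|\{\mathbb{H}_4,\ww\}\gamma\|_{L^r}\,ds,\qquad r\in\{2,\infty\}.
\]
Expanding $\{\mathbb{H}_4,\ww\}=Q\E_j\{\XX^j,\ww\}+\mathcal{W}_j\{\VV^j,\ww\}$, I will estimate each bracket using \eqref{PBXV2}, Corollary~\ref{CorPBa}, Lemma~\ref{lem:moreXVbds}, the improved decay from Step~1 and the kinematic lower bound \eqref{CrudeBoundX}. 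For $\ww\in\{\ip{a}^{2m},\ip{\xi}^{2m}\}$ the weight is an integral of the linearized flow, so the brackets $\{\XX,a\},\{\VV,a\}$ are of order $1$ and $\xi|\XX|^{-2}$ respectively; the resulting integrand is pointwise integrable in time with no logarithmic loss, producing \eqref{eq:mom_btstrap_claim1}.

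\textbf{Step 3 (the $\lambda^n$ and $\eta^k$ moments).}
For $\ww=\lambda^n$, I use $|\{\XX,\lambda\}|\leq|\XX|$ and $|\{\VV,\lambda\}|\leq a$, together with the rewriting $|\E||\XX|\lesssim \eps_1^2\ln\ip{s}/\ip{s}$ of \eqref{AlmostSharpDecayEF}, whose time integral is $O(\ln^2\ip{s})$; an induction on $n$ then propagates exactly two additional log factors per step, and the $\mathcal{W}$ contribution is handled by the AM-GM splitting $a\lambda^{n-1}\leq n^{-1}\ip{a}^n+(n-1)n^{-1}\ip{\lambda}^n$ together with the integrability of $|\mathcal{W}|$, yielding \eqref{eq:mom_btstrap_claim2}. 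For $\ww=\eta^k$, Lemma~\ref{lem:moreXVbds} produces brackets with explicit linear growth in $t$,
\[
|\{\XX,\eta\}|\lesssim\xi^{-1}|\XX|+tq\xi^{-2},\qquad |\{\VV,\eta\}|\lesssim q\xi^{-2}\bigl(1+t\xi|\XX|^{-2}\bigr),
\]
and these $t$-factors must be absorbed by the spatial decay of $\E$: \eqref{AlmostSharpDecayEF} rewrites as $t|\E|\lesssim \eps_1^2\ln\ip{t}/\ip{t}$ in the region $t\lesssim|\XX|$, while in the complementary region Corollary~\ref{CorBdsOnX} together with \eqref{CrudeBoundX} supplies $|\XX|\gtrsim \xi^2\ip{\kappa}/q$, converting spatial decay of $\E$ into decay in the other phase space weights. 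The powers of $\xi^{-1}$ produced by the brackets are absorbed by the (now established) $\ip{\xi}^{2m}$ bound from Step~2, and induction on $k$ delivers \eqref{eq:mom_btstrap_claim3}.

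\textbf{Main obstacle.}
The principal difficulty is the $\eta^k$ estimate, since $\eta$ is the unique super-integrable coordinate that is \emph{not} preserved by the linearized flow: the multi-region analysis needed to trade the $t$-factors in $\{\XX,\eta\},\{\VV,\eta\}$ against the spatial decay of $\E$ and the negative $\xi$-weights produced by the brackets is what ultimately forces the relatively large value $m\geq 30$ and the moments on $\ip{\xi}^{2m}$ in \eqref{eq:mom_id}. A closely related, subtler issue arises already in the $\lambda^n$ case: absorbing $\ip{\lambda}^n$ on the right-hand side of the Duhamel inequality through the $\mathcal{W}\cdot\VV$ term requires the \emph{improved} bound $|\mathcal{W}|\lesssim \eps_1^2\ln\ip{t}/\ip{t}$ from Step~1 rather than the weaker bootstrap decay, since only then is the Gr\"onwall factor polylogarithmic (rather than of super-polynomial size $t^{O(\eps_1^2\ln t)}$) and the induction closes at the claimed $\ln^{2n}\ip{t}$ rate.
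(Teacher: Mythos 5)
Your Step 1 matches the paper (combine Proposition \ref{PropControlEF} with Proposition \ref{PropEeff}(i), then integrate $\dot{\mathcal{W}}=\mathcal{Q}\E(0,t)$ backwards from $T^\ast$), but Steps 2--3 have a genuine gap: you work throughout with the far formulation, i.e.\ with $\mathbb{H}_4=Q\psi(\XX)+\W\cdot\VV$, and the terms generated by $\W\cdot\VV$ cannot be closed there. For $\ww=\ip{a}^{2m}$ the bracket contribution is $|\W(t)|\,a\ip{a}^{2m-2}\,\tfrac{\xi}{2}|\XX|^{-2}$; at a \emph{fixed} time the only available lower bound is $|\XX|\gtrsim \xi^2\ip{\kappa}/q$ (\eqref{CrudeBoundX}), so the coefficient is of size $|\W(t)|\,q^2\xi^{-3}$, i.e.\ unbounded over phase space and absorbable only by $\ip{a}^{2m+2}$-moments which are not assumed. "Pointwise integrable in time" is a statement along trajectories (via Lemma \ref{lem:Xintegral}-type bounds), not about the Duhamel scheme $\int_0^t\|\gamma\{\mathbb{H}_4,\ww\}\|_{L^r}\,ds$ you set up, which needs the coefficient to be small uniformly in $(\vartheta,\a)$. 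The paper's resolution is precisely what your proposal omits: for the $a$-moments it switches to the close formulation \eqref{NewNLEq'}, where the dangerous term is $\dot{\W}\cdot\XX$ and the bracket is $\{\XX,a\}=-\VV/a$ of size $\le1$, so the coefficient is $(|\E|+|\dot\W|)\lesssim\eps_1^2\ip{t}^{-2}\ln\ip{t}$ uniformly; this requires the norm-comparison Lemma \ref{lem:prime_bds} (\eqref{ComparableMomentNorms}) to transfer the conclusion back to $\gamma$.

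The same omission breaks your $\eta^k$ argument. The problematic term in the far formulation is $|\W(t)|\,|\{\VV,\eta\}|\lesssim|\W(t)|\bigl(q\xi^{-2}+tq/(\xi|\XX|^2)\bigr)$; it does not contain $\E(\XX)$, so "converting spatial decay of $\E$ into phase-space weights" is irrelevant to it. In the region $|\XX|\ll t$, bounding $|\XX|$ from below by phase-space quantities leaves $|\W(t)|\,t\,q/(\xi|\XX|^2)\gtrsim\eps_1^2\ln\ip{t}$ times powers of $a$, with no time decay at all, and its time integral diverges linearly. The paper handles this by using the far formulation only on $\Omega_t^{far}=\{|\XX|\ge\ip{t}\}$ (where $tq/(\xi|\XX|^2)\le q/(\xi\ip{t})$) and the close formulation on $\Omega_t^{cl}$ (where $\{\mathbb{H}'_4,\eta\}$ involves only $\{\XX,\eta\}$ and $|\XX|\lesssim\ip{t}$), gluing the two via Lemma \ref{lem:prime_bds} and the once-per-trajectory transition Lemma \ref{lem:traj_cl_far} — this is also why \eqref{eq:mom_btstrap_claim3} is stated only in $L^\infty$. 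Finally, a smaller but real issue: your claim of a log-free bound for $\ip{\xi}^{2m}$-moments does not follow from integrability alone, since $\int_0^t|\W(s)|\,ds\sim\eps_1^2\ln^2\ip{t}$; the paper avoids the loss by splitting $\{\VV,\xi\}$ into bulk/non-bulk, using $|\{\VV,\xi\}|\mathfrak{1}_{\B}\lesssim\ip{t}^{-1/2}$ and trading a factor $\ip{t}^{-1/4n}$ against $\eta$- and $\lambda$-moments outside $\B$.
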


\begin{remark}\label{RemarkPropositionMoments}
\begin{enumerate}
 \item Moments in $a$ are relevant when $a$ is large, and thus when the trajectories of the linearized problem closely approach the point charge. In this setting, it is more favorable to work with the close formulation of Section \ref{ssec:pinnedframe}, where the microscopic velocities are centered around that of the point charge.
 \item Moments in $a$ and $\xi$ can be propagated by themselves, and we only make use of one moment in $L$ resp.\ $\eta$ to obtain a uniform in time (rather than logarithmically growing) bound for $\xi$ moments.
 \item Moments in $L$ and $\eta$ lead to a slow logarithmic growth. Here $\eta$ is not conserved under the linear flow, and we are only able to propagate fewer of the associated moments (and only in $L^\infty$) -- see \eqref{eq:mom_btstrap_claim3}.
\end{enumerate}
\end{remark}

The proof of this Proposition relies on the observation that for any weight function $\ww:\mathcal{P}_{\x,\v}\to\R$ or $\ww:\mathcal{P}_{\vartheta,\a}\to\R$ there holds that
\begin{equation}\label{eq:mom-prop}
\begin{split}
\partial_t(\ww\nu)-\{\mathbb{H},(\ww\nu)\}=-\nu\{\mathbb{H},\ww\},\qquad \partial_t(\ww\gamma)+\{\mathbb{H}_4,(\ww\gamma)\}=\gamma\{\mathbb{H}_4,\ww\}.
\end{split}
\end{equation}

\begin{proof}[Proof of Proposition \ref{prop:moments_prop}]

Since $m\ge 30$, we get from \eqref{eq:mom_btstrap_assump} that
\begin{equation*}
N_1\lesssim \varepsilon_1\langle\ln(2+t)\rangle^{100}.
\end{equation*}
The electric field decay \eqref{AlmostSharpDecayEF} follows by combining Proposition \ref{PropControlEF} and Proposition \ref{PropEeff}. From this and \eqref{NewNLEq} it directly follows that
\begin{equation}\label{eq:Wdecay}
 \abs{\mathcal{W}(t)}\lesssim \int_t^{T^\ast}\abs{\mathcal{E}(s)}ds\lesssim \eps_1^2 \ip{\ln(2+t)}\ip{t}^{-1}.
\end{equation}

\paragraph{\textbf{Proof of \eqref{eq:mom_btstrap_claim1}}}

\paragraph{Moments in $a$}
As explained in Remark \ref{RemarkPropositionMoments}, we use the close formulation. Using \eqref{ComparableMomentNorms}, it suffices to prove the bound for $\gamma^\prime$ for which we can use \eqref{NewNLEq'}. Using \eqref{eq:mom-prop}, we obtain that
\begin{equation*}
\begin{split}
\partial_t(\langle a\rangle^n\gamma^\prime)+\{\mathbb{H}_4^\prime,\langle a\rangle^n\gamma^\prime\}&=\gamma^\prime\{\mathbb{H}_4^\prime,\langle a\rangle^n\}=\gamma^\prime\cdot na\langle a\rangle^{n-2}\left[Q\mathcal{E}_j(\widetilde{\bf X})-\dot{\mathcal{W}}_j\right]\{\widetilde{\bf X}^j,a\},
\end{split}
\end{equation*}
and using \eqref{PBX1}, \eqref{AlmostSharpDecayEF} and \eqref{eq:Wdecay}, we deduce that
\begin{equation}
 \frac{d}{dt}\norm{\ip{a}^n\gamma'}_{L^r}\lesssim \eps_1^2 \ip{t}^{-2}\ip{\ln(2+t)}\cdot \norm{\ip{a}^{n-1}\gamma'}_{L^r},
\end{equation}
which gives the uniform bounds for moments in $a$ in \eqref{eq:mom_btstrap_claim1}.

\paragraph{Moments in $\xi$}

Letting $1\leq n\leq 2m$ we have that
\begin{equation}\label{PBMomXi1}
 \{\HH_4,\ip{\xi}^{n}\}=n\ip{\xi}^{n-2}\xi\left[\mathcal{E}_j(\widetilde{\bf X},t)\cdot\{\XX^j,\xi\}+\mathcal{W}_j\{\VV^j,\xi\}\right],
\end{equation}
and using \eqref{PBX1} followed by \eqref{CrudeBoundX} then \eqref{AlmostSharpDecayEF}, we get that
\begin{equation}\label{PBMomXi2}
\begin{split}
\langle\xi\rangle^{-2}\xi\vert\mathcal{E}_j(\widetilde{\bf X},t)\cdot\{\XX^j,\xi\}\vert\lesssim \xi\vert\mathcal{E}(\widetilde{\bf X},t)\vert\lesssim \vert\widetilde{\bf X}\vert^\frac{1}{2}\vert\mathcal{E}(\widetilde{\bf X},t)\vert\lesssim \varepsilon_1^2\langle t\rangle^{-9/8}.
\end{split}
\end{equation}
For the Poisson bracket with $\VV$ we split into bulk and non-bulk regions. In the bulk, using \eqref{eq:bulk-bds}, we get
\begin{equation*}
 \vert\{\VV,\xi\}\vert\mathfrak{1}_{\B}\lesssim \frac{\xi^3}{q\vert\XX\vert^2}\mathfrak{1}_{\B}\lesssim \frac{\xi^5}{t^2q^3}\mathfrak{1}_{\B}\lesssim\ip{t}^{-1/2},
\end{equation*}
while outside the bulk, we use \eqref{CrudeBoundX} to get
\begin{equation*}
 \vert\{\VV,\xi\}\vert\mathfrak{1}_{\B^c}\lesssim \frac{\xi^3}{q\vert\XX\vert^2}\mathfrak{1}_{\B^c}\lesssim \frac{q\xi}{\xi^2+\lambda^2}\cdot \ip{t}^{-1/4n}(\eta^4+\lambda^2+a^4+\xi^4 )^{1/4n},
\end{equation*}
and thus
\begin{equation}\label{PBMomXi3}
\langle\xi\rangle^{n-2}\xi\vert\mathcal{W}\cdot\{\VV,\xi\}\vert\lesssim \abs{\mathcal{W}(t)}\left[\ip{t}^{-1/2}\langle\xi\rangle^{n-1}+\ip{t}^{-1/4n}(\ip{\xi}^{n-1}+\eta+\lambda)\right].
\end{equation}
Using \eqref{eq:mom-prop}, \eqref{PBMomXi1}, \eqref{PBMomXi2} and \eqref{PBMomXi3}, we deduce that
\begin{equation*}
\begin{split}
\frac{d}{dt}\Vert\langle\xi\rangle^n\gamma\Vert_{L^r}&\lesssim\varepsilon_1^2\langle t\rangle^{-9/8}\Vert\langle\xi\rangle^n\gamma\Vert_{L^r}+\langle t\rangle^{-1/4n}\vert\mathcal{W}(t)\vert\cdot\left[\Vert\langle\xi\rangle^{n-1}\gamma\Vert_{L^r}+\Vert \eta\gamma\Vert_{L^r}+\Vert \lambda\gamma\Vert_{L^r}\right],
\end{split}
\end{equation*}
and using \eqref{eq:Wdecay} and the bootstrap hypothesis \eqref{eq:mom_btstrap_assump}, we obtain a uniform bound.

\paragraph{\textbf{Proof of \eqref{eq:mom_btstrap_claim2}}}
For $1\leq n\leq 2 m$ we compute with \eqref{PBXV} that
\begin{equation}
 \{\HH_4,\lambda^{n}\}=n\lambda^{n-1}\{Q\psi(\XX,t)+\mathcal{W}(t)\cdot\VV,\lambda\}=n\lambda^{n-1}\left(Q\mathcal{E}(\XX,t)\cdot ({\bf l}\times\XX)-\mathcal{W}(t)\cdot({\bf l}\times\VV)\right),
\end{equation}
and thus via \eqref{eq:mom-prop}, the decay estimates \eqref{eq:Wdecay} and \eqref{AlmostSharpDecayEF} as well as the bootstrap assumptions \eqref{eq:mom_btstrap_assump} and the uniform bounds \eqref{eq:mom_btstrap_claim1} for moments in $a$ just established, there holds that
\begin{equation}
\begin{aligned}
 \frac{d}{dt}\norm{\lambda\gamma}_{L^r}&\lesssim \nnorm{\vert\XX\vert\mathcal{E}(\widetilde{\bf X},t)}_{L^\infty}\norm{\gamma}_{L^r}+\abs{\mathcal{W}(t)}\norm{a\gamma}_{L^r}\lesssim\eps_1^3\ip{t}^{-1}\ip{\ln(2+t)},
\end{aligned} 
\end{equation}
and for $n\geq 2$
\begin{equation}
\begin{aligned}
 \frac{d}{dt}\norm{\lambda^n\gamma}_{L^r}&\lesssim \nnorm{\vert\XX\vert\mathcal{E}(\widetilde{\bf X},t)}_{L^\infty}\norm{\lambda^{n-1}\gamma}_{L^r}+\abs{\mathcal{W}(t)}\norm{\lambda^{n-1}a\gamma}_{L^r}\\
 &\lesssim \nnorm{\vert\XX\vert\mathcal{E}(\widetilde{\bf X},t)}_{L^\infty}\norm{\lambda^{n-1}\gamma}_{L^r}+\abs{\mathcal{W}(t)}\cdot\norm{\lambda^{n}\gamma}_{L^r}^\frac{n-1}{n}\Vert a^n\gamma\Vert_{L^r}^\frac{1}{n}\\
 &\lesssim \varepsilon_1^3\ip{t}^{-1}\ip{\ln(2+t)}\cdot\ip{\ln(2+t)}^{2n-2}.
\end{aligned} 
\end{equation}

\paragraph{\textbf{Proof of \eqref{eq:mom_btstrap_claim3}}}
Note that here we only propagate $L^\infty$ bounds. This is because the Poisson bracket bounds of \eqref{PBX1'} favor the formulation in terms of $\gamma$ on $\Omega_t^{far}$, while on $\Omega_t^{cl}$ the alternative formulation in terms of $\gamma'$ is advantageous.

To this end, we note that
\begin{equation}
\begin{aligned}
 \norm{\eta^k\gamma}_{L^\infty}&\leq \norm{\eta^k\gamma}_{L^\infty(\Omega_t^{far})}+\norm{\eta^k\gamma}_{L^\infty(\Omega_t^{cl})},
\end{aligned} 
\end{equation}
where by Lemma \ref{lem:prime_bds} and \eqref{eq:Wdecay}
\begin{equation}
 \abs{\eta^k-(\eta')^k}\lesssim \eps_1^2\cdot \left[\ip{\ln(2+t)}^k(1+a^2)^k+\min\{\eta,\eta'\}^k\right],
\end{equation}
and thus there holds that
\begin{equation}
\begin{aligned}
 \norm{\eta^k\gamma}_{L^\infty(\Omega_t^{cl})}&\leq \norm{[\eta^k-(\eta')^k]\gamma}_{L^\infty(\Omega_t^{cl})}+\norm{(\eta')^k\gamma}_{L^\infty(\Omega_t^{cl})}\\
 &\lesssim \eps_1^2\ip{\ln(2+t)}^k\norm{(1+a^2)^k\gamma}_{L^\infty(\Omega_t^{cl})}+2\norm{(\eta')^k\gamma}_{L^\infty(\Omega_t^{cl})}.
\end{aligned} 
\end{equation}
Furthermore, by invariance of $\Omega_t^{cl}$ under $\mathcal{M}_t$ (see \eqref{eq:XMinv}) and \eqref{eq:weightprimes} we have that
\begin{equation}\label{eq:eta'gamma'}
 \norm{(\eta')^k\gamma}_{L^\infty(\Omega_t^{cl})}=\norm{((\eta')^k\gamma)\circ\mathcal{M}_t}_{L^\infty(\Omega_t^{cl})}=\norm{\eta^k\gamma'}_{L^\infty(\Omega_t^{cl})}.
\end{equation}

Similarly, the bootstrap assumptions \eqref{eq:mom_btstrap_assump} imply $\eta$ moment bounds also on $\gamma'$, namely
\begin{equation}\label{eq:eta'gamma}
 \norm{(\eta')^k\gamma}_{L^\infty(\Omega_t^{cl})}\lesssim \eps_1^2\ip{\ln(2+t)}^k\norm{(1+a^2)^k\gamma}_{L^\infty(\Omega_t^{cl})}+2\norm{\eta^k\gamma}_{L^\infty(\Omega_t^{cl})}.
\end{equation}
In conclusion, since trajectories can enter/exit the close/far regions at most once (Lemma \ref{lem:traj_cl_far}) and $\Omega_t^{far}\cap\Omega_t^{cl}=\{\ip{t}\leq \vert\XX\vert\leq 10\ip{t}\}$, to establish the claim it suffices to propagate $\eta$ moments on $\gamma'$ for trajectories in $\Omega_t^{cl}$, while for trajectories in $\Omega_t^{far}$ we work with $\gamma$ directly.

\subsubsection*{On $\Omega_t^{far}$}
With the Poisson bracket bounds \eqref{PBX1'}, we have for $1\leq k\leq m$
\begin{equation*}
\begin{aligned}
 \abs{\{\HH_4,\eta^{k}\}}&=k\eta^{k-1}\vert\{Q\psi(\XX,t)+\mathcal{W}(t)\cdot\VV,\eta\}\vert\\
 &\lesssim \eta^{k-1}\big[\vert\mathcal{E}(\widetilde{\bf X},t)\vert(\xi^{-1}\vert\XX\vert+tq\xi^{-2})+\abs{\mathcal{W}(t)}(q\xi^{-2}+tq/(\xi\vert\XX\vert^2))\big]\\
 &\lesssim \eta^{k-1}[a+a^2]\cdot\left[(t+\vert\widetilde{\bf X}\vert)\vert\mathcal{E}(\widetilde{\bf X},t)\vert+\vert\mathcal{W}(t)\vert\right],
\end{aligned} 
\end{equation*}
and using \eqref{AlmostSharpDecayEF}, \eqref{eq:Wdecay} and \eqref{eq:mom_btstrap_claim1} with the bootstrap assumption \eqref{eq:mom_btstrap_assump}, we see that on $\Omega^{far}_t$
\begin{equation}\label{eq:eta-f2}
\begin{split}
\frac{d}{dt}\vert \eta^k\gamma(t)\vert\lesssim \varepsilon_1^2\langle t\rangle^{-1}\langle\ln(2+t)\rangle\cdot \Vert\eta^k\gamma(t)\Vert_{L^\infty}^\frac{k-1}{k}\cdot\Vert \langle a\rangle^{2k}\gamma(t)\Vert_{L^\infty}^\frac{1}{k}&\lesssim\varepsilon_1^3\langle t\rangle^{-1}\langle\ln(2+t)\rangle^{2k-1}.
\end{split}
\end{equation}

\subsubsection*{On $\Omega_t^{cl}$}
On the other hand, we have that
\begin{equation*}
\begin{aligned}
 \abs{\{\HH_4',\eta^{k}\}}&=k\eta^{k-1}\vert\{Q\psi(\widetilde{\bf X},t)-\dot{\mathcal{W}}(t)\cdot \XX,\eta\}\vert \lesssim \eta^{k-1}(\abs{\mathcal{E}(t)}+\vert\dot{\mathcal{W}}(t)\vert)(\xi^{-1}\vert\XX\vert+tq\xi^{-2}),
\end{aligned} 
\end{equation*}
which will be used when $\vert\XX\vert\lesssim \ip{t}$. This gives as before
\begin{equation}\label{eq:eta-c2}
\begin{aligned}
 \frac{d}{dt}\abs{\eta^k\gamma'}&\lesssim \ip{t}[\nnorm{\mathcal{E}(t)}_{L^\infty}+\vert\dot{\mathcal{W}}(t)\vert]\cdot\Vert \eta^k\gamma^\prime(t)\Vert_{L^\infty(\Omega^{cl}_t)}^\frac{k-1}{k}\norm{\langle a\rangle^{2k}\gamma'(t)}^\frac{1}{k}_{L^\infty(\Omega^{cl}_t)}\\
 &\lesssim \varepsilon_1^3\ip{t}^{-1}\ip{\ln(2+t)}^{2k-1},
\end{aligned} 
\end{equation}
where we have also used \eqref{ComparableMomentNorms} and \eqref{eq:eta'gamma} to bound the norms of $\gamma^\prime$ in terms of bounds on $\gamma$. Finally, combining \eqref{eq:eta-f2} and \eqref{eq:eta-c2} gives the claim.
\end{proof}

\subsection{Derivative Propagation}\label{sec:derivs_prop}
In this section we will propagate derivative control on our nonlinear unknowns. To exploit the symplectic structure, we do this by means of Poisson brackets with a suitable ``spanning'' set of functions $S$:
\begin{definition}
 Let $S\subset C^1(\mathcal{P}_{\vartheta,\a},\R)$ be a set of smooth, scalar functions on phase space. Then $S$ is a \emph{spanning} set, if at all points of $\mathcal{P}_{\vartheta,\a}$ the collection of associated symplectic gradients $\{f,\cdot\}$, $f\in S$, spans the cotangent space of $\mathcal{P}_{\vartheta,\a}$.
\end{definition}
We note that the functions in a spanning set $S$ need not be independent. By composition with the canonical diffeomorphism $\mathcal{T}^{-1}$ of Proposition \ref{PropAA}, we obtain an equivalent description in terms of the ``physical'' phase space $\mathcal{P}_{\x,\v}$.

Guided by the super-integrable coordinates (Section \ref{ssec:SIC}, see also equations \eqref{eq:deriv} and \eqref{eq:deriv'} below), we choose the collection $\textnormal{SIC}:=\{\xi,\eta,\u,\L\}$ as spanning set. For a suitable choice of weight functions $\ww_f>0$, $f\in \textnormal{SIC}$, defined in \eqref{eq:def_weights} below, we will then propagate derivative control through a functional of the form
\begin{equation}\label{eq:defP}
 \mathcal{D}[\zeta](\vartheta,\a;t):=\sum_{f\in \textnormal{SIC}}\ww_f\aabs{\{f,\zeta\}}(\vartheta,\a),\qquad \zeta\in\{\gamma,\gamma'\}.
\end{equation}
Due to the motion of the point charge, as in Section \ref{sec:moments_prop} we will work with both ``close'' and ``far'' formulations of our equations, and thus consider
\begin{equation}
 \mathcal{D}(\vartheta,\a;t):=
 \begin{cases}
  \mathcal{D}[\gamma](\vartheta,\a;t),&  (\vartheta,\a)\in\Omega_t^{far},\\
  \mathcal{D}[\gamma'](\vartheta,\a;t),&  (\vartheta,\a)\in\Omega_t^{cl}.
 \end{cases}
\end{equation}

The remainder of this section then establishes the following result:
\begin{proposition}\label{prop:global_derivs}
 There exists $\eps_0>0$ such that the following holds: Let $\gamma(0)=\nu_0$ be given, satisfying the hypothesis \eqref{eq:mom_id} of Theorem \ref{thm:global_moments}, and let
 \begin{equation}\label{eq:def_weights}
  \ww_\xi=\frac{1+\xi}{\xi^2},\qquad \ww_\eta=\frac{\xi^2}{1+\xi}, \qquad  \ww_\u=1,\qquad \ww_\L=\frac{\xi}{1+\xi}.
 \end{equation}
 Then there holds that if
 \begin{equation}\label{eq:deriv_btstrap_id}
  \norm{(\xi^{5}+\xi^{-8})\mathcal{D}(\vartheta,\a;0)}_{L^\infty_{\vartheta,\a}}\lesssim \eps_0,
 \end{equation}
 the global solution $\gamma$ of Theorem \ref{thm:global_moments} satisfies
 \begin{equation}\label{eq:global_derivs}
  \norm{\mathcal{D}(\vartheta,\a;t)}_{L^\infty_{\vartheta,\a}}\lesssim \eps_0\ln^2\ip{t}.
 \end{equation}
\end{proposition}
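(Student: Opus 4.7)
The plan is to close a bootstrap on $\mathcal{D}(t):=\norm{\mathcal{D}(\vartheta,\a;t)}_{L^\infty_{\vartheta,\a}}$: assume $\mathcal{D}(t)\leq K\eps_0\ln^2\ip{t}$ on a maximal interval $[0,T^\ast]$ with a sufficiently large constant $K$, and improve this bound. Under such an assumption, together with the moment bounds of Theorem \ref{thm:global_moments}, Proposition \ref{PropControlEF} combined with Proposition \ref{PropEeff}(ii) supplies the decay
\begin{equation*}
[t^2+\aabs{\y}^2]\aabs{\mathcal{E}(\y,t)}\lesssim\eps_1^2,\qquad [t^2+\aabs{\y}^2]^{3/2}\aabs{\mathcal{F}(\y,t)}\lesssim\eps_1^2\ln\ip{t},
\end{equation*}
which is the essential input for the derivative control.

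The starting point is the Jacobi identity
\begin{equation*}
\partial_t\{f,\zeta\}+\{\mathbb{H}_4,\{f,\zeta\}\}=-\{\{f,\mathbb{H}_4\},\zeta\},\qquad \zeta\in\{\gamma,\gamma'\},
\end{equation*}
with the analogous relation for $\mathbb{H}_4'$ when working with $\gamma'$. Expanding $\{f,\mathbb{H}_4\}$ by means of \eqref{PBXV2} and applying Leibniz, the right-hand side becomes a sum of terms of the form $\mathcal{F}_{jk}\{\XX^j,\zeta\}\{f,\XX^k\}$, $\mathcal{E}_j\{\{f,\XX^j\},\zeta\}$ and analogous expressions involving $\VV$. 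I would then resolve $\{\XX^j,\zeta\}$ and its higher Poisson brackets back into the spanning family $\textnormal{SIC}$ via the decomposition \eqref{eq:U_resol}, producing along the nonlinear characteristics the self-consistent system
\begin{equation*}
\frac{d}{dt}\bigl(\ww_f\aabs{\{f,\zeta\}}\bigr)\leq\sum_{g\in\textnormal{SIC}}\mathfrak{m}_{fg}(\vartheta,\a;t)\,\ww_g\aabs{\{g,\zeta\}}.
\end{equation*}
Using the kinematic bounds of Section \ref{ssec:kinematics} (Lemmas \ref{lem:moreXVbds}, \ref{ImprovedBoundsOnXBulk} and crucially the improvements of Lemma \ref{lem:betterdl} in the outgoing region $\{\X\cdot\V>-\xi\ip{\kappa}\}$), I would verify that the weights in \eqref{eq:def_weights} are calibrated so that each entry $\mathfrak{m}_{fg}$ is uniformly bounded on phase space and time-integrable up to the factor $\ln\ip{t}$ once multiplied by the field decay. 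The ratios such as $\ww_\eta/\ww_\xi=\xi^4/(1+\xi)^2$ and $\ww_\L/\ww_\xi=\xi^2/(1+\xi)^2$ are tuned precisely to absorb the factors of $\xi^{\pm 1}$, $\xi^{\pm 2}$ and $\kappa$ that appear when switching between kinematic derivatives. Time integration then gives the improved bound $\mathcal{D}(t)\leq (K/2)\eps_0\ln^2\ip{t}$, closing the bootstrap.

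Next, I would treat the two types of transitions. Along each trajectory the asymptotic action-angle coordinates must be swapped at periapsis from past to future via \eqref{eq:inout_PBtrans}, because only one of the two charts gives the improved bounds of Lemma \ref{lem:betterdl} at a given moment; this swap costs factors controlled by powers of $\xi$ and $\kappa$, accounting for the $\xi^{-8}$ weight in the initial data hypothesis \eqref{eq:deriv_btstrap_id}. In parallel, on $\Omega^{cl}_t$ I would propagate Poisson brackets of $\gamma'$ rather than $\gamma$, passing between the two via the relation \eqref{eq:PBrel'} and the comparison in Lemma \ref{lem:prime_bds}; Lemma \ref{lem:traj_cl_far} ensures that each trajectory crosses the close/far interface at most once per orientation, so the two transitions can be composed without compounding losses and the remaining $\xi^5$ factor in \eqref{eq:deriv_btstrap_id} is enough to absorb the cost.

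The main obstacle is the detailed book-keeping needed to verify that no entry of the matrix $(\mathfrak{m}_{fg})$ is ill-conditioned outside the reach of the weights \eqref{eq:def_weights}, particularly near periapsis where $\rho\to 1$ and the action-angle change of variable degenerates, and at the close/far interface where the two formulations must match. The improvements of Lemma \ref{lem:betterdl} in the outgoing region are indispensable: without them, the bounds $\aabs{\{\XX,\eta\}}\lesssim\xi^{-1}\aabs{\XX}+tq\xi^{-2}$ and $\aabs{\{\VV,\eta\}}\lesssim q\xi^{-2}(1+t\xi\aabs{\XX}^{-2})$ of Lemma \ref{lem:moreXVbds} contribute factors of $t$ which, paired with the $\ip{t}^{-2}$ decay of $\mathcal{E}$, would produce $\ip{t}^{-1}$ losses at the level of $\mathfrak{m}_{fg}$ and preclude closing the bootstrap.
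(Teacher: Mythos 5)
Your overall strategy matches the paper's: Grönwall along trajectories for the weighted Poisson brackets with the spanning family $\textnormal{SIC}$, field decay from Propositions \ref{PropControlEF} and \ref{PropEeff}, the outgoing improvements of Lemma \ref{lem:betterdl}, and transitions at periapsis and across the close/far interface whose $\xi$-losses are absorbed by the weight $\xi^5+\xi^{-8}$. However, there is a genuine gap in the Grönwall step itself. Your claim that every entry $\mathfrak{m}_{fg}$ (after multiplying by the field decay) is ``time-integrable up to a factor $\ln\ip{t}$'' glosses over the fact that some entries are \emph{not} time-integrable at all: the coefficients coupling the $\eta$- and $\L$-brackets to the $\xi$- and $\u$-brackets are only bounded by $\aabs{\XX}^2\aabs{\F}+\aabs{\XX}\aabs{\E}\lesssim\eps^2\ln\ip{t}\,\ip{t}^{-1}$, whose time integral grows like $\ln^2\ip{t}$. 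With such entries, a single bootstrap at the level $\mathcal{D}(t)\le K\eps_0\ln^2\ip{t}$ does not self-improve: plugging the assumption into $\frac{d}{dt}\mathcal{D}\lesssim\eps^2\ln\ip{t}\,\ip{t}^{-1}\mathcal{D}$ and integrating produces $\ln^4\ip{t}$ (or, via the exponential Grönwall factor, $\exp(C\eps^2\ln^2\ip{t})$), not $(K/2)\eps_0\ln^2\ip{t}$. The paper closes this by splitting the functional into $\mathcal{G}=\ww_\xi\aabs{\{\xi,\gamma\}}+\ww_\u\aabs{\{\u,\gamma\}}$ and $\mathcal{U}=\ww_\eta\aabs{\{\eta,\gamma\}}+\ww_\L\aabs{\{\L,\gamma\}}$ and exploiting a \emph{triangular} structure (Lemmas \ref{lem:farprop}, \ref{lem:clprop}): the non-integrable coefficients only feed $\mathcal{G}$ into $\frac{d}{dt}\mathcal{U}$, while $\mathcal{G}$ itself sees only integrable coefficients ($\lesssim\eps^2\ip{t}^{-5/4}$), so $\mathcal{G}$ stays uniformly bounded and $\mathcal{U}$ picks up exactly the $\ln^2$ growth. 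Identifying and verifying this hierarchy is the missing idea; without it your scheme does not yield \eqref{eq:global_derivs}.

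Two secondary points. First, in the close region the coefficients contain terms like $\xi^2/(q\aabs{\XX}^2)$ that are not small pointwise in time near periapsis; their control relies on integrability \emph{along the trajectory} (Lemma \ref{lem:Xintegral}, $\int\aabs{\X(\tau)}^{-2}d\tau\lesssim\xi^{-1}$), which your ``uniformly bounded and time-integrable'' assertion does not cover. Second, the paper's bootstrap quantity is $\Upsilon\mathcal{D}$ with $\Upsilon$ a $\xi$-dependent weight that pre-assigns the transition losses region by region (the assumption is $\norm{\Upsilon\mathcal{D}}\lesssim\eps\ip{t}^{\delta}$, improved to $\eps+\eps^2\ln^2$); since the weights depend only on the conserved quantity $\xi$, the losses are incurred only at the finitely many transitions, which is what makes the accounting in \eqref{eq:deriv_btstrap_id} legitimate. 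Your description of absorbing the losses is consistent with this but should be phrased at the level of such a trajectory-wise weighted bootstrap rather than a bound on $\norm{\mathcal{D}}_{L^\infty}$ alone.
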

\begin{remark}
The proof proceeds through control of $\mathcal{D}(\vartheta,\a;t)$ along trajectories of \eqref{ODE}.
\begin{enumerate}
 \item We note that all weights in \eqref{eq:def_weights} are functions exclusively of $\xi$, which is conserved along such trajectories. Consequently, additional weights in $\xi$ on $\mathcal{D}(\vartheta,\a;t)$ can be propagated as well.
 \item The proof shows that a slightly weaker assumption than \eqref{eq:deriv_btstrap_id} (expressed in terms of another $\xi$-dependent weight function \eqref{eq:defUps}) suffices, and in fact gives a slightly stronger conclusion. 
\end{enumerate}
\end{remark}

\subsubsection{Proof setup}
To understand the evolution of symplectic gradients on our nonlinear unknowns, we begin by observing that by the chain rule one has that for any functions $g_j:\mathcal{P}\to\R$, $1\leq j\leq 3$ and $G:\R^2\to\R$ there holds that
\begin{equation}
 \{G(g_1,g_2),g_3\}=\partial_1 G\cdot\{g_1,g_3\}+\partial_2 G\cdot\{g_2,g_3\}.
\end{equation}
Using the Jacobi identity \eqref{Jacobi} and the equations \eqref{NewNLEq}, we thus find
\begin{equation}\label{eq:deriv}
\begin{split}
\partial_t\{f,\gamma\}+\{\mathbb{H}_4,\{f,\gamma\}\}&=-\{\{f,\mathbb{H}_4\},\gamma\}=-Q\{\{f,\psi(\XX)\},\gamma\}-\mathcal{W}_j\{\{f,\widetilde{\bf V}^j\},\gamma\}\\
&=-Q\mathcal{F}_{jk}\{\widetilde{\bf X}^j,\gamma\}\{f,\widetilde{\bf X}^k\}-Q\mathcal{E}_j\{\{f,\widetilde{\bf X}^j\},\gamma\}-\mathcal{W}_j\{\{f,\widetilde{\bf V}^j\},\gamma\}.
\end{split}
\end{equation}
Alternatively, if we choose the formulation \eqref{NewNLEq'} with velocities centered on the point charge, we obtain
\begin{equation}\label{eq:deriv'}
\begin{split}
\partial_t\{f,\gamma'\}+\{\mathbb{H}_4',\{f,\gamma'\}\}&=-\{\{f,\mathbb{H}_4'\},\gamma'\}=-Q\{\{f,\psi(\XX)\},\gamma'\}+\dot{\mathcal{W}}_j\{\{f,\widetilde{\bf X}^j\},\gamma'\}\\
&=-Q\mathcal{F}_{jk}\{f,\widetilde{\bf X}^k\}\{\widetilde{\bf X}^j,\gamma'\}-\left(Q\mathcal{E}_j-\dot{\mathcal{W}}_j\right)\{\{f,\widetilde{\bf X}^j\},\gamma'\}.
\end{split}
\end{equation}
Using \eqref{eq:U_resol}, we can then express \eqref{eq:deriv} as
\begin{equation}\label{eq:deriv2}
 \partial_t\{f,\gamma\}+\{\mathbb{H}_4,\{f,\gamma\}\}=\sum_{g\in \textnormal{SIC}}\m_{fg}\{g,\gamma\},
\end{equation}
for a suitable coefficient matrix $(\m_{fg})_{f,g\in \textnormal{SIC}}$, and similarly for \eqref{eq:deriv'}:
\begin{equation}\label{eq:deriv'2}
 \partial_t\{f,\gamma'\}+\{\mathbb{H}'_4,\{f,\gamma'\}\}=\sum_{g\in \textnormal{SIC}}\m'_{fg}\{g,\gamma'\}.
\end{equation}

\medskip

In addition to the distinction between ``far'' and ``close'' dynamics and the corresponding formulations \eqref{eq:deriv2} and \eqref{eq:deriv'2}, we will separate ``incoming'' from ``outgoing'' dynamics through the decomposition $\mathcal{P}_{\vartheta,\a}=\mathcal{I}_t\cup\mathcal{O}_t$ with
\begin{equation}\label{eq:inoutdomains}
 \mathcal{I}_t:=\{(\vartheta,\a):\XX\cdot\VV\le (L^2+\frac{q^2}{a^2})^{1/2}\},\qquad \mathcal{O}_t:=\{(\vartheta,\a):\XX\cdot\VV\ge -(L^2+\frac{q^2}{a^2})^{1/2}\}.
\end{equation}
This reflects the fact that along a trajectory of \eqref{ODE} the asymptotic velocities at $\pm\infty$ may differ drastically in direction. As a consequence, on $\mathcal{I}_t$ their location $\XX$ is better approximated by the \emph{past} asymptotic action, and there we will thus work with the \emph{past} angle-action coordinates of Section \ref{ssec:past_AA}. Moreover, the splitting is chosen such that periapsis occurs in $\mathcal{I}_t\cap\mathcal{O}_t$: recall that the time of periapsis $t_p(\vartheta_0,\a_0)\in\R$ of a trajectory of \eqref{ODE} starting at a given $(\vartheta_0,\a_0)$ is such that
\begin{equation}
 \XX\cdot\VV\,(\vartheta_0,\a_0)=0\quad\hbox{ when }t=t_p(\vartheta_0,\a_0).
\end{equation}

\medskip
We thus have four dynamically relevant regions of phase space $\mathcal{P}_{\vartheta,\a}=\mathcal{I}_t\cup\mathcal{O}_t=\Omega_t^{far}\cup\Omega_t^{cl}$:
\begin{equation}\label{eq:regions}
 \mathcal{I}_t^{far}:=\mathcal{I}_t\cap\Omega_t^{far},\quad \mathcal{O}_t^{far}:=\mathcal{O}_t\cap\Omega_t^{far},\quad \mathcal{I}_t^{cl}:=\mathcal{I}_t\cap\Omega_t^{cl},\quad \mathcal{O}_t^{cl}:=\mathcal{O}_t\cap\Omega_t^{cl}.
\end{equation}
\begin{remark}\label{rem:regions}
 \begin{enumerate}
 \item The four regions of \eqref{eq:regions} have overlap.

 \item General trajectories of \eqref{ODE} can visit all four dynamically distinct regions: starting far away from periapsis and incoming in $\mathcal{I}_t^{far}$, a trajectory with sufficiently large velocity $a\gg 1$ (i.e.\ $\xi\ll 1$) will proceed to $\mathcal{I}_t^{cl}$ and through $\mathcal{O}_t^{cl}$ to end up in $\mathcal{O}_t^{far}$.
 
 \item If at initial time our unknown has compact support in phase space, we can drastically simplify the proof (at the cost of making our assumptions dependent on the radius of the initial support) and work with only one formulation: For the forward in time evolution it then suffices to consider future asymptotic actions, and if we redefine the close resp.\ far regions to account for the largest and  smallest asymptotic actions of the initial data, we can reduce to the case where all trajectories lie in either $\mathcal{O}_t^{far}$ or $\mathcal{O}_t^{cl}$. 

\end{enumerate}
\end{remark}

\begin{proof}[Proof of Proposition \ref{prop:global_derivs}]
To establish Proposition \ref{prop:global_derivs}, we control $\mathcal{D}(\vartheta,\a;t)$ pointwise along trajectories of \eqref{ODE}. For a given trajectory, we do this separately in the four phase space regions defined in \eqref{eq:regions}, and combine with bounds for the transition between the coordinates/formulations:
\begin{itemize}
 \item Incoming to outgoing (from $\mathcal{I}_t$ to $\mathcal{O}_t$): We work with incoming asymptotic actions on $\mathcal{I}_t=\mathcal{I}_t^{cl}\cup\mathcal{I}_t^{far}$, and outgoing asymptotic actions on $\mathcal{O}_t=\mathcal{O}_t^{cl}\cup\mathcal{O}_t^{far}$. If a trajectory passes through periapsis (which it can do at most once), we transition between coordinates there. Hereby, we recall from \eqref{eq:def_SIC(-)} that $\xi^{(-)}=-\xi$ and $\lambda^{(-)}=\lambda$, so that in particular the splitting in \eqref{eq:inoutdomains} is well-defined. By Lemma \ref{lem:trans_inout} below we have that 
 \begin{equation}
  \mathcal{D}[\zeta](\vartheta,\a;t_p(\vartheta,\a))\lesssim (\xi^2+\xi^{-2}) \mathcal{D}[\zeta](\vartheta^{(-)},\a^{(-)};t_p(\vartheta,\a)),\qquad \zeta\in\{\gamma,\gamma'\}.
 \end{equation}

 \item Far to close, or close to far (between $\Omega_t^{far}$ and $\Omega_t^{cl}$): We recall that since $\gamma'=\gamma\circ\mathcal{M}_t$ are related by the canonical diffeomorphism $\mathcal{M}_t$ of \eqref{eq:defM_t}, their Poisson brackets are related by \eqref{eq:PBrel'}, and we have in particular that
\begin{equation}\label{eq:defP'}
 \mathcal{D}[\gamma']=\mathcal{D}'[\gamma]\circ\mathcal{M}_t,\qquad \mathcal{D}'[\gamma]:=\sum_{f\in \textnormal{SIC}}\ww_{f'}\aabs{\{f',\gamma\}},
\end{equation}
where we used the notation of \eqref{eq:weightprimes} to denote $f':=f\circ\mathcal{M}_t^{-1}$. We recall further that $\Omega_t^{cl}$, $\Omega_t^{far}$ and the transition region $\Omega_t^{cl}\cap\Omega_t^{far}$ are left invariant by $\mathcal{M}_t$, and by the below Lemma \ref{lem:trans_short} the weights $\xi$ and $\xi'$ are comparable on $\Omega_t^{cl}$, and there holds that
\begin{equation}
 \mathcal{D}[\gamma](t)\lesssim (\xi +\xi^{-3})\mathcal{D}'[\gamma](t)\lesssim (\xi +\xi^{-3})^2\mathcal{D}[\gamma](t) \quad \textnormal{on }\Omega_t^{cl}\cap\Omega_t^{far}.
\end{equation}
We recall also that by Lemma \ref{lem:traj_cl_far}, per trajectory at most two transitions between close and far regions are possible.
\end{itemize}

We highlight that the weight functions $\ww_f$, $f\in \textnormal{SIC}$ depend solely on $\xi$, which is constant along a trajectory of \eqref{ODE}. The above ``losses'' in $\xi$ are thus only incurred at the transitions, and propagated along the trajectories otherwise. To properly account for this, we introduce the following function that tracks the $\xi$ weights that will be used for transition:
\begin{equation}\label{eq:defUps}
 \Upsilon:\mathcal{P}_{\vartheta,\a}\to\R,\quad (\vartheta,\a)\mapsto \ip{\xi}\cdot
 \begin{cases}
 (\xi^2+\xi^{-2})(\xi+\xi^{-3})^2,\qquad &\textnormal{on }\mathcal{I}_t^{far},\\
 (\xi^2+\xi^{-2})(\xi+\xi^{-3}),\qquad &\textnormal{on }\mathcal{I}_t^{cl},\\
 (\xi+\xi^{-3}),\qquad &\textnormal{on }\mathcal{O}_t^{cl},\\
 1,\qquad &\textnormal{on }\mathcal{O}_t^{far}.
 \end{cases}
\end{equation}
Since \eqref{eq:deriv_btstrap_id} implies that $\nnorm{\Upsilon\mathcal{D}(0)}_{L^\infty}\lesssim\eps_0$, to establish Proposition \ref{prop:global_derivs} it then suffices to show the following bootstrap argument: If for $0\leq T_1<T_2$ we have that for some $0<\delta\ll 1$ there holds that
\begin{equation}\label{eq:deriv_btstrap_assump}
 \norm{\Upsilon\mathcal{D}(t)}_{L^\infty_{\vartheta,\a}}\lesssim \eps\ip{t}^\delta,
\end{equation}
then in fact
\begin{equation}\label{eq:deriv_btstrap_concl}
 \norm{\Upsilon\mathcal{D}(t)}_{L^\infty_{\vartheta,\a}}\lesssim \eps+\eps^2\ln(t/T_1).
\end{equation}
We note that while by construction \eqref{eq:deriv_btstrap_assump} only gives control of $\mathcal{D}[\gamma]$ on $\Omega_t^{far}$ resp.\ $\mathcal{D}[\gamma']$ on $\Omega_t^{cl}$, under the assumption \eqref{eq:deriv_btstrap_assump} we obtain from Lemma \ref{lem:trans_full} via \eqref{eq:PBcl_byfar} resp.\ \eqref{eq:PBcl_byfar_bulk} bounds for $\mathcal{D}[\gamma]$ across \emph{both} far and close regions:
\begin{equation}\label{eq:deriv_bstrap_assump2}
 \mathfrak{1}_{\mathcal{B}}\mathcal{D}[\gamma]\lesssim \eps\ip{t}^\delta,\qquad \mathfrak{1}_{\mathcal{B}^c}\mathcal{D}[\gamma]\lesssim \eps (1+\ip{t}q\xi^{-2})\ip{t}^\delta.
\end{equation}
In particular, by Proposition \ref{PropEeff} $(ii)$ it then holds that
\begin{equation}\label{eq:EFdecay0}
 \sqrt{t^2+\abs{\y}^2}\abs{\E(\y,t)}\lesssim \eps^2\ip{t}^{-1},\quad  \sup_{\y\in\R^3}\left[t^2+\abs{\y}^2\right]\abs{\F(\y,t)}\lesssim \eps^2\ln\ip{t}\cdot\ip{t}^{-1}.
\end{equation}

We note that the expressions for the kinematically relevant quantities $\XX,\VV$ (and thus also for their derivatives) agree up to signs in both incoming and outgoing asymptotic actions (see \eqref{XVinSIC(-)}), and can thus be treated analogously. We will henceforth focus on the case of outgoing asymptotic actions; the diligent reader can easily trace the necessary adaptations for the incoming asymptotic actions. To complete the proof it thus suffices to provide a bootstrap argument showing that if a trajectory stays in $\mathcal{O}_t^{far}$ resp.\ $\mathcal{O}_t^{cl}$ on a time interval $t\in[T_1,T_2]$, its weighted symplectic gradients $\mathcal{D}[\gamma]$ resp.\ $\mathcal{D}'[\gamma]$ grow by at most a factor of $1+\ln^2(t/T_1)$. This is done in Lemmas \ref{lem:farprop} resp.\ \ref{lem:clprop} below. Once these are established (see Sections \ref{sec:prop_far} and \ref{sec:prop_cl} below), the proof of Proposition \ref{prop:global_derivs} is thus complete.
\end{proof}

As a result of direct computation (see Section \ref{ssec:transitions}), we have that
\begin{lemma}[Transition from incoming to outgoing]\label{lem:trans_inout}
 Consider a trajectory of \eqref{ODE} starting at $(\vartheta,\a)$, with periapsis $t_p(\vartheta,\a)$. Then
 \begin{equation}
  \mathcal{D}[\zeta](\vartheta,\a;t_p(\vartheta,\a))\lesssim (\xi^2+\xi^{-2})\mathcal{D}[\zeta](\vartheta^{(-)},\a^{(-)};t_p(\vartheta,\a)) ,\qquad \zeta\in\{\gamma,\gamma'\}.
 \end{equation}
\end{lemma}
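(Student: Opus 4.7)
\medskip

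The plan is to establish this as a direct computation using the explicit relations \eqref{eq:def_SIC(-)} between past and future super-integrable coordinates, combined with the Poisson bracket transition formulas \eqref{eq:inout_PBtrans}. The key observation is that $|\xi^{(-)}|=|\xi|$, $\lambda^{(-)}=\lambda$, and $\L^{(-)}=\L$, so the weight functions $\ww_{f^{(-)}}$ and $\ww_f$ from \eqref{eq:def_weights} coincide in magnitude, and all loss factors will arise purely from the algebraic relations between the bracket families.

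First, I would dispose of the immediate identities. Since $\xi=-\xi^{(-)}$, $\lambda=\lambda^{(-)}$, $\L=\L^{(-)}$, one directly has $\{\xi,\zeta\}=-\{\xi^{(-)},\zeta\}$, $\{\lambda,\zeta\}=\{\lambda^{(-)},\zeta\}$, $\{\L,\zeta\}=\{\L^{(-)},\zeta\}$, contributing trivially (with constant 1) to $\mathcal{D}[\zeta](\vartheta^{(-)},\a^{(-)})$. Next, inverting the formula $\eta^{(-)}=-\eta+\frac{1}{2}\ln(1+4\kappa^2)$ with $\kappa=\lambda/\xi$ yields
\begin{equation*}
\{\eta,\zeta\}=-\{\eta^{(-)},\zeta\}+\frac{4\kappa}{\xi(1+4\kappa^2)}\{\lambda^{(-)},\zeta\}+\frac{4\kappa^{2}}{\xi(1+4\kappa^2)}\{\xi^{(-)},\zeta\}.
\end{equation*}
Multiplying by $\ww_\eta=\xi^2/(1+\xi)$ and using $\kappa/(1+4\kappa^2)\le 1/2$ and $|\{\lambda^{(-)},\zeta\}|\le|\{\L^{(-)},\zeta\}|$ (see Remark \ref{PBWithLambdaRmk}), every term is bounded by a constant multiple of $(1+\xi)\cdot\mathcal{D}[\zeta](\vartheta^{(-)},\a^{(-)})$, which fits comfortably inside the allowed loss $\xi^2+\xi^{-2}$.

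The main (and most delicate) step is handling $\{\u,\zeta\}$. I would exploit the fact, already noted in the paper, that the $2\times 2$ linear map sending $(\u,\L\times\u)\mapsto (\u^{(-)},\L^{(-)}\times\u^{(-)})$ has determinant one, with explicit entries $A=D=\tfrac{1-4\kappa^2}{1+4\kappa^2}$, $B=-\tfrac{4}{\xi(1+4\kappa^2)}$, $C=\xi\tfrac{4\kappa^2}{1+4\kappa^2}$. Inverting gives
\begin{equation*}
\u=\frac{1-4\kappa^2}{1+4\kappa^2}\u^{(-)}+\frac{4}{\xi(1+4\kappa^2)}\L^{(-)}\times\u^{(-)},
\end{equation*}
and applying the Leibniz rule to $\{\u,\zeta\}$ expresses it as a sum of terms involving $\{\kappa,\zeta\}$, $\{\xi,\zeta\}$, $\{\u^{(-)},\zeta\}$, $\{\L^{(-)},\zeta\}$. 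Using $\{\kappa,\zeta\}=\xi^{-1}(\{\lambda^{(-)},\zeta\}+\kappa\{\xi^{(-)},\zeta\})$ and $\{\L^{(-)}\times\u^{(-)},\zeta\}=\{\L^{(-)},\zeta\}\times\u^{(-)}+\L^{(-)}\times\{\u^{(-)},\zeta\}$, everything reduces to past-bracket terms. The dominant loss comes from $\frac{4}{\xi(1+4\kappa^2)}\{\L^{(-)},\zeta\}\times\u^{(-)}$: its contribution, after multiplication by $\ww_\u=1$ and division by $\ww_{\L^{(-)}}=\xi/(1+\xi)$, yields a factor $\tfrac{4(1+\xi)}{\xi^2(1+4\kappa^2)}\lesssim \xi^{-2}+\xi^{-1}$, which is absorbed by $\xi^2+\xi^{-2}$. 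All other contributions (bounded by $\kappa/(1+4\kappa^2)\lesssim 1$ or $\kappa^2/(1+4\kappa^2)\lesssim 1$) give constant or $\xi^{\pm 1}$ losses.

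Summing the contributions from the four spanning functions then yields the claimed bound with constant loss $\xi^2+\xi^{-2}$. The argument is purely algebraic and identical for $\zeta=\gamma$ and $\zeta=\gamma'$, since only the Poisson bracket structure enters. The main obstacle is simply the combinatorial bookkeeping for the $\u$ bracket, and ensuring that no term incurs a loss worse than $\xi^{-2}$ after the unit-determinant inversion.
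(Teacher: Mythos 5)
Your proof is correct and follows essentially the same route as the paper: the paper's own argument is a one-line appeal to ``collecting the terms'' in the transition formulas \eqref{eq:inout_PBtrans}, and your computation (trivial brackets for $\xi,\lambda,\L$, the $\eta$ relation via $\{\kappa,\zeta\}$, and inverting the unit-determinant $2\times2$ system for $(\u,\L\times\u)$ to handle $\{\u,\zeta\}$) is exactly the bookkeeping that one-liner leaves implicit, with the correct $\xi^{-2}$ worst-case loss against the weights \eqref{eq:def_weights}.
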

\begin{proof}
 This follows directly by collecting the terms in \eqref{eq:inout_PBtrans}.
\end{proof}

\begin{lemma}[Transition between close and far]\label{lem:trans_short}
Under the assumptions of Proposition \ref{prop:global_derivs}, consider outgoing (resp.\ incoming) asymptotic actions on $\mathcal{O}_t$ (resp.\ $\mathcal{I}_t$). Then on $\Omega_t^{cl}$ there holds that for some $C>0$
\begin{equation}\label{eq:xi-xi'-comp}
 C^{-1}\xi'\leq\xi\leq C\xi',
\end{equation}
and on $\Omega_t^{cl}\cap\Omega_t^{far}$ we have that
\begin{equation}\label{eq:deriv-deriv-PB}
 \mathcal{D}'[\gamma](t)\lesssim (\xi +\xi^{-3})\mathcal{D}[\gamma](t),\qquad \mathcal{D}[\gamma](t)\lesssim (\xi +\xi^{-3})\mathcal{D}'[\gamma](t).
\end{equation}

\end{lemma}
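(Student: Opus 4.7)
To establish \eqref{eq:xi-xi'-comp}, I use Lemma \ref{lem:prime_bds}, which gives $q\abs{\xi - \xi'}/(\xi\xi') \lesssim \abs{\mathcal{W}(t)}$, together with an upper bound on $\xi$ and $\xi'$ in the close region. The bound \eqref{CrudeBoundX} gives $\xi^2 \lesssim q\abs{\XX} \lesssim q\ip{t}$ on $\Omega_t^{cl}$; since $\XX \circ \mathcal{M}_t = \XX$ by \eqref{eq:XMinv}, the region $\Omega_t^{cl}$ is invariant under $\mathcal{M}_t$, so the same bound holds at $\mathcal{M}_t^{-1}(\vartheta, \a)$, giving $\xi' \lesssim \sqrt{q\ip{t}}$. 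The bootstrap assumption \eqref{eq:deriv_btstrap_assump} together with Proposition \ref{PropEeff}(ii) and the fact that $\dot{\mathcal{W}} = \mathcal{Q}\nabla\psi(0,t)$ (cf.\ \eqref{NewNLEq}) yield $\abs{\mathcal{W}(t)} \lesssim \eps^2/\ip{t}$ as in \eqref{eq:EFdecay0}, and thus $\abs{\xi - \xi'}/\xi \lesssim \xi'\abs{\mathcal{W}(t)}/q \lesssim \eps^2/\sqrt{q\ip{t}}$, which is at most $1/2$ for $\eps_0$ small enough.

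For \eqref{eq:deriv-deriv-PB}, by the symmetry between $\mathcal{W}(t)$ and $-\mathcal{W}(t)$ it suffices to prove one direction; I focus on $\mathcal{D}'[\gamma] \lesssim (\xi + \xi^{-3})\mathcal{D}[\gamma]$. The strategy is to express, at a given point $(\vartheta, \a) \in \Omega_t^{cl} \cap \Omega_t^{far}$, the Poisson bracket $\{f', \gamma\}$ for each $f \in \textnormal{SIC}$ as a linear combination of $\{g, \gamma\}$ for $g \in \textnormal{SIC}$. Since $f'(\vartheta, \a) = f(\mathcal{M}_t^{-1}(\vartheta, \a))$ and $\mathcal{M}_t$ corresponds to shifting the physical velocity by $\mathcal{W}(t)$ while preserving $\XX$, the primed coordinates are given by explicit closed-form expressions obtained from the conservation laws applied to the shifted trajectory: $(a')^2 - a^2 = -2\VV\cdot\mathcal{W}(t) + \abs{\mathcal{W}(t)}^2$ (determining $\xi' = q/a'$), $\L' = \L - \XX \times \mathcal{W}(t)$, and analogous formulas for $\u', \eta'$ derived from $\a' = \mathcal{A}(\XX,\VV - \mathcal{W}(t))$ via \eqref{ExplicitA1}. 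Writing $\{f', \gamma\} = \{f, \gamma\} + \{f' - f, \gamma\}$ and invoking the SIC spanning property, the correction admits a decomposition $\{f' - f, \gamma\} = \sum_{g \in \textnormal{SIC}} \alpha_g \{g, \gamma\}$ with coefficients $\alpha_g$ determined by the Poisson brackets $\{f' - f, g'\}$ for $g' \in \textnormal{SIC}$.

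The main obstacle is the careful bookkeeping of the $\xi$-powers in the $\alpha_g$. On $\Omega_t^{cl} \cap \Omega_t^{far}$ we have $\abs{\XX} \sim \ip{t}$ and $\abs{\mathcal{W}(t)} \lesssim \eps^2/\ip{t}$, so the primed/unprimed differences satisfy $\abs{a' - a}, \abs{\u' - \u} \lesssim \abs{\mathcal{W}(t)}$ with possible gains in $\xi$, $\abs{\L' - \L} \lesssim \ip{t}\abs{\mathcal{W}(t)} \lesssim \eps^2$, and $\abs{\eta' - \eta}$ bounded by the formula \eqref{eq:diffeta}. Using the kinematic bounds of Section \ref{ssec:kinematics} together with the Poisson bracket relations \eqref{PBSIC}, one finds after some computation that the worst case is of order $\xi + \xi^{-3}$: the $\xi^{-3}$ factor emerges from the shift of $\u'$ (weighted by $\ww_\u = 1$) compared against $\ww_\xi \{\xi,\gamma\} \sim \xi^{-2}\{\xi,\gamma\}$ through the factor $q/\xi$ in \eqref{TransitionFromSIC}, while the $\xi$ loss comes from shifts in $\eta'$ (weighted by $\ww_\eta \sim \xi^2$) against $\ww_\L \{\L,\gamma\}$. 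The weights \eqref{eq:def_weights} are tuned precisely so that all other transition terms are bounded by $O(\xi + \xi^{-3})$ as well, completing the proof.
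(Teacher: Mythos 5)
Your overall strategy is the same as the paper's: the statement is proved by deferring to a detailed transition estimate (Lemma \ref{lem:trans_full} in Appendix \ref{sec:appdx_trans}), whose proof does exactly what you outline -- express the primed super-integrable coordinates through the velocity shift $\mathcal{M}_t$ and the conservation laws ($a^2-(a')^2=2\mathcal{W}\cdot\VV-\abs{\mathcal{W}}^2$, $\L'=\L-\XX\times\mathcal{W}$, etc.), resolve the Poisson brackets of the corrections over the spanning set via the kinematic bounds of Section \ref{ssec:kinematics}, and track the $\xi$-weights; your argument for \eqref{eq:xi-xi'-comp} matches \eqref{eq:xiquot}. However, the lemma's entire content is precisely that bookkeeping, and your proposal leaves it at ``after some computation one finds the worst case is $\xi+\xi^{-3}$,'' which is the conclusion being asserted rather than proved. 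Moreover, where you do commit to specifics, they do not match how the losses actually arise: in the paper's computation the $\xi^{-3}$ loss comes from the $\u'$ transition and multiplies $\ww_\L\aabs{\{\L,\gamma\}}$ (it enters through the Runge--Lenz resolution of $\{\u',\zeta\}$ and the $\{\L',\zeta\}$ correction, see the third bound of \eqref{TransitionMapBoundsApp}), not from a comparison of $\{\u',\cdot\}$ against $\ww_\xi\{\xi,\gamma\}$; and the $O(\xi)$ loss comes from the $\eta'$ transition landing on $\ww_\xi\aabs{\{\xi,\gamma\}}$ (via the terms $t\,\partial_\eta\widetilde\sigma\, q^2\xi^{-4}\{\xi',\zeta\}$ and $\{\delta A,\zeta\}$, using $\ip{t}/\aabs{\XX}\sim1$ on $\Omega_t^{cl}\cap\Omega_t^{far}$), not on $\ww_\L\{\L,\gamma\}$.

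Two further points of substance. First, for $\{\eta',\gamma\}$ it is not enough to quote \eqref{eq:diffeta}: that controls the size of $\eta-\eta'$, whereas you need its symplectic gradient, and $\eta'$ is only determined through the implicit relation $\eta'+\sigma(\eta'+tq^2(\xi')^{-3},\kappa')=\eta+\sigma\circ\Phi_t^{-1}-\delta A$; the paper must solve this implicit equation for $\{\eta',\zeta\}$ using $1+\partial_\eta\sigma\geq 1/2$, an ingredient absent from your sketch. Similarly, differentiating $\u'=\mathcal{A}(\XX,\VV-\mathcal{W})/a'$ directly through \eqref{ExplicitA1} is not what the paper does (it goes through $\{{\bf R}',\zeta\}$ and the identity expressing $\{\u,\zeta\}$ in terms of $\{{\bf R},\zeta\}$, $\{\xi,\zeta\}$, $\{\L,\zeta\}$), and your heuristic ``$\abs{\u'-\u}\lesssim\abs{\mathcal{W}}$'' is about the functions, not their brackets. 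Second, a minor logical point: deriving $\abs{\mathcal{W}(t)}\lesssim\eps^2\ip{t}^{-1}$ from Proposition \ref{PropEeff}(ii) risks circularity, since the optimal field decay rests on the derivative bounds whose propagation uses this very transition lemma; the moment-only estimate \eqref{eq:Wdecay}, $\abs{\mathcal{W}(t)}\lesssim\eps^2\ln\ip{t}\,\ip{t}^{-1}$, is what should be invoked and suffices here.
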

\begin{proof}
 We note that by construction the expressions for $\gamma$ in terms of $\gamma'$ and vice versa are analogous (see \eqref{eq:defM_t} and observe also that $\gamma=\gamma'\circ\mathcal{M}_t^{-1}$, where $\mathcal{M}_t^{-1}=\mathcal{T}\circ\Phi_t\circ\Sigma_{-\mathcal{W}(t)}\circ\Phi_t^{-1}\circ\mathcal{T}^{-1}$) and thus once \eqref{eq:xi-xi'-comp} is established, by symmetry it suffices to prove the first estimate of \eqref{eq:deriv-deriv-PB}. This follows from Lemma \ref{lem:trans_full} of Appendix \ref{sec:appdx_trans} (see also $(1)$ of Remark \ref{rem:transition_bds}), where we prove more precise estimates for the relation between Poisson brackets in the close and far formulation.
\end{proof}

\bigskip
\subsubsection{The coefficients $\m_{fg}$}\label{ssec:mfg}
Since the kinematic quantities $\XX$ and $\VV$ are naturally expressed in terms of the slightly larger spanning set $\textnormal{SIC}\cup\{\lambda\}=:\textnormal{SIC}_+$, which we also used in the computation of Poisson bracket bounds in Section \ref{sec:PB_bounds}, it is convenient to carry on using $\lambda$ explicitly for computations. We note from Remark \ref{PBWithLambdaRmk} that Poisson brackets with $\lambda$ follow directly from those with $\L$.

Using \eqref{eq:U_resol} we express the right hand side of \eqref{eq:deriv} in the form \eqref{eq:deriv2}. This yields that for $f\in\textnormal{SIC}$ we have that
\begin{equation}
\begin{aligned}
 -\m_{f\xi}&=Q\F_{jk}\{\XX^j,\eta\}\{f,\XX^k\}+Q\E_j\{\{f,\XX^j\},\eta\}+\W_j\{\{f,\VV^j\},\eta\},\\
 \m_{f\eta}&=Q\F_{jk}\{\XX^j,\xi\}\{f,\XX^k\}+Q\E_j\{\{f,\XX^j\},\xi\}+\W_j\{\{f,\VV^j\},\xi\},\\
 -\m_{f\lambda}&=Q\F_{jk}(\partial_\lambda\XX^j)\{f,\XX^k\}+Q\E_j\partial_\lambda(\{f,\XX^j\})+\W_j\partial_\lambda(\{f,\VV^j\}),\\
 -\m_{f\u^a}&=Q\F_{jk}(\mathbb{P}_{\bf u}^a\widetilde{\bf X}^j)\{f,\XX^k\}+Q\E_j\mathbb{P}_{\u}^a\{f,\XX^j\}+\W_j\mathbb{P}_{\u}^a\{f,\VV^j\},\\
 -\m_{f\L^a}&=Q\F_{jk}(\mathbb{P}^a_{\bf L}\widetilde{\bf X}^j)\{f,\XX^k\}+Q\E_j\cdot\mathbb{P}_{\L}^a\{f,\XX^j\}+\W_j\mathbb{P}_{\L}^a\{f,\VV^j\},
\end{aligned}
\end{equation}
where we used the notation $\mathbb{P}_{\bm{U}}^a$ to denote the $a$-th component of the projection onto a vector\footnote{With the understanding that $\mathbb{P}_{\bf u}^a(X_3\in^{jcd}{\bf L}^c{\bf u}^d)=X_3\in^{jcd}{\bf L}^c\delta^{ad}$ and $\mathbb{P}_{\bf L}^a(X_3\in^{jcd}{\bf L}^c{\bf u}^d)=X_3\in^{jcd}{\bf u}^d\delta^{ac}$.} $\bm{U}\in\R^3$. We thus obtain the following expressions for the multipliers:
\begin{itemize}[wide]
 \item In $\xi$: 
 \begin{equation}\label{eq:mult_xi}
 \begin{aligned}
  -\m_{\xi\xi}&=Q\mathcal{F}_{jk}\{\XX^j,\eta\}\{\xi,\XX^k\}+Q\mathcal{E}_j\{\{\xi,\XX^j\},\eta\}+\mathcal{W}_j\{\{\xi,\VV^j\},\eta\},\\
 \m_{\xi\eta}&=Q\mathcal{F}_{jk}\{\XX^j,\xi\}\{\xi,\XX^k\}+Q\mathcal{E}_j\{\{\xi,\XX^j\},\xi\}+\mathcal{W}_j\{\{\xi,\VV^j\},\xi\},\\
 -\m_{\xi\lambda}&=Q\mathcal{F}_{jk}(\partial_\lambda\XX^j)\{\xi,\XX^k\}+Q\mathcal{E}_j\partial_\lambda(\{\xi,\XX^j\})+\mathcal{W}_j\partial_\lambda(\{\xi,\VV^j\}),\\
  -\m_{\xi\u^a}&=Q[\mathcal{F}_{ak}\widetilde{X}_1+\mathcal{F}_{jk}\widetilde{X}_3\in^{jca}\L^c]\{\xi,\XX^k\}+Q[\mathcal{E}_a\{\xi,\widetilde{X}_1\}+\mathcal{E}_j\{\xi,\widetilde{X}_3\}\in^{jca}\L^c]\\
  &\qquad+[\mathcal{W}_a\{\xi,\widetilde{V}_1\}+\mathcal{W}_j\{\xi,\widetilde{V}_3\}\in^{jca}\L^c],\\
 -\m_{\xi\L^a}&=Q\mathcal{F}_{jk}\widetilde{X}_3\in^{jad}\u^d\{\xi,\XX^k\}+Q\mathcal{E}_j\{\xi,\widetilde{X}_3\}\in^{jad}\u^d+\mathcal{W}_j\{\xi,\widetilde{V}_3\}\in^{jad}\u^d.
 \end{aligned}
 \end{equation}
 
 \item In $\eta$:
 \begin{equation}\label{eq:mult_eta}
\begin{aligned}
 - \m_{\eta\xi}&=Q\mathcal{F}_{jk}\{\XX^j,\eta\}\{\eta,\XX^k\}+Q\mathcal{E}_j\{\{\eta,\XX^j\},\eta\}+\mathcal{W}_j\{\{\eta,\VV^j\},\eta\},\\
 \m_{\eta\eta}&=Q\mathcal{F}_{jk}\{\XX^j,\xi\}\{\eta,\XX^k\}+Q\mathcal{E}_j\{\{\eta,\XX^j\},\xi\}+\mathcal{W}_j\{\{\eta,\VV^j\},\xi\}=-\m_{\xi\xi},\\
 -\m_{\eta\lambda}&=Q\mathcal{F}_{jk}(\partial_\lambda\XX^j)\{\eta,\XX^k\}+Q\mathcal{E}_j\partial_\lambda(\{\eta,\XX^j\})+\mathcal{W}_j\partial_\lambda(\{\eta,\VV^j\}),\\
  -\m_{\eta\u^a}&=Q[\mathcal{F}_{ak}\widetilde{X}_1+\mathcal{F}_{jk}\widetilde{X}_3\in^{jca}\L^c]\{\eta,\XX^k\}+Q[\mathcal{E}_a\{\eta,\widetilde{X}_1\}+\mathcal{E}_j\{\eta,\widetilde{X}_3\}\in^{jca}\L^c]\\
  &\qquad+[\mathcal{W}_a\{\eta,\widetilde{V}_1\}+\mathcal{W}_j\{\eta,\widetilde{V}_3\}\in^{jca}\L^c],\\
 -\m_{\eta\L^a}&=Q\mathcal{F}_{jk}\widetilde{X}_3\in^{jad}\u^d\{\eta,\XX^k\}+Q\mathcal{E}_j\{\eta,\widetilde{X}_3\}\in^{jad}\u^d+\mathcal{W}_j\{\eta,\widetilde{V}_3\}\in^{jad}\u^d.
\end{aligned}
\end{equation}

 \item In $\u$: Noting that by \eqref{eq:U_resol} and \eqref{PBSIC}, we have that 
\begin{equation}\label{eq:PBuX}
\begin{aligned}
 \{\u^k,\XX^j\}&=\partial_\lambda\XX^j\in^{krs}\l^r\u^s+\widetilde{X}_3(\delta^{jk}-\u^j\u^k)\\
 &=\partial_\lambda\widetilde{X}_1\u^j\in^{krs}\l^r\u^s+\lambda\partial_\lambda\widetilde{X}_3\in^{jpq}\in^{krs}\l^p\u^q\l^r\u^s+\widetilde{X}_3(\delta^{jk}-\u^j\u^k),
\end{aligned}
\end{equation}
there holds that
\begin{equation}\label{eq:mult_u}
\begin{aligned} 
 -\m_{\u^a\xi}&=Q\mathcal{F}_{jk}\{\XX^j,\eta\}\{\u^a,\XX^k\}+Q\mathcal{E}_j\{\{\u^a,\XX^j\},\eta\}+\mathcal{W}_j\{\{\u^a,\VV^j\},\eta\},\\
 \m_{\u^a\eta}&=Q\mathcal{F}_{jk}\{\XX^j,\xi\}\{\u^a,\XX^k\}+Q\mathcal{E}_j\{\{\u^a,\XX^j\},\xi\}+\mathcal{W}_j\{\{\u^a,\VV^j\},\xi\},\\
 -\m_{\u^a\lambda}&=Q\mathcal{F}_{jk}(\partial_\lambda\XX^j)\{\u^a,\XX^k\}+Q\mathcal{E}_j\partial_\lambda(\{\u^a,\XX^j\})+\mathcal{W}_j\partial_\lambda(\{\u^a,\VV^j\}),\\
 -\m_{\u^{a}\u^b}&=Q[\mathcal{F}_{bk}\widetilde{X}_1+\mathcal{F}_{jk}\widetilde{X}_3\in^{jcb}\L^c]\{\u^a,\XX^k\}\\
 &\qquad+Q[\mathcal{E}_b\partial_\lambda\widetilde{X}_1\in^{ars}\l^r\u^s+\mathcal{E}_j\partial_\lambda\widetilde{X}_1\u^j\in^{arb}\l^r]\\
 &\qquad+Q\mathcal{E}_j\lambda\partial_\lambda\widetilde{X}_3\l^p\l^r(\in^{jpb}\in^{ars}\u^s+\in^{jpq}\in^{arb}\u^q)-Q\widetilde{X}_3(\mathcal{E}_b\u^a+\mathcal{E}_j\u^j\delta^{ab})\\
 &\qquad+[\mathcal{W}_b\partial_\lambda\widetilde{V}_1\in^{ars}\l^r\u^s+\mathcal{W}_j\partial_\lambda\widetilde{V}_1\u^j\in^{arb}\l^r]\\
 &\qquad+\mathcal{W}_j\lambda\partial_\lambda\widetilde{V}_3\l^p\l^r(\in^{jpb}\in^{ars}\u^s+\in^{jpq}\in^{arb}\u^q)-\widetilde{V}_3(\mathcal{W}_b\u^a+\mathcal{W}_j\u^j\delta^{ab}),\\
 -\m_{\u^{a}\L^b}&=Q\mathcal{F}_{jk}\widetilde{X}_3\in^{jbd}\u^d\{\u^a,\XX^k\}+Q\mathcal{E}_j\lambda^{-1}\partial_\lambda\widetilde{X}_1\u^j\in^{abs}\u^s\\
&\qquad +Q\mathcal{E}_j\partial_\lambda\widetilde{X}_3\u^q\u^s(\in^{jbq}\in^{ars}\l^r+\in^{jpq}\in^{abs}\l^p)\\
 &\qquad +\mathcal{W}_j\lambda^{-1}\partial_\lambda\widetilde{V}_1\u^j\in^{abs}\u^s\\
&\qquad +\mathcal{W}_j\partial_\lambda\widetilde{V}_3\u^q\u^s(\in^{jbq}\in^{ars}\l^r+\in^{jpq}\in^{abs}\l^p).
\end{aligned}
\end{equation}

\item In $\L$: Using that $\{\L^k,\XX^j\}=\in^{kjp}\XX^p$ and $\{\L^k,\VV^j\}=\in^{kjp}\VV^p$ we have that
\begin{equation}\label{eq:mult_L}
\begin{aligned} 
- \m_{\L^a\xi}&=Q\mathcal{F}_{jk}\{\XX^j,\eta\}\in^{akp}\XX^p+Q\mathcal{E}_j\in^{ajp}\{\XX^p,\eta\}+\mathcal{W}_j\in^{ajp}\{\VV^p,\eta\}, \\
 \m_{\L^a\eta}&=Q\mathcal{F}_{jk}\{\XX^j,\xi\}\in^{akp}\XX^p+Q\mathcal{E}_j\in^{ajp}\{\XX^p,\xi\}+\mathcal{W}_j\in^{ajp}\{\VV^p,\xi\}, \\
- \m_{\L^a\lambda}&=Q\mathcal{F}_{jk}\partial_\lambda\XX^j\in^{akp}\XX^p+Q\mathcal{E}_j\in^{ajp}\partial_\lambda\XX^p+\mathcal{W}_j\in^{ajp}\partial_\lambda\VV^p,\\
 -\m_{\L^a\u^b}&=Q[\mathcal{F}_{bk}\widetilde{X}_1+\mathcal{F}_{jk}\widetilde{X}_3\in^{jcb}\L^c]\in^{akp}\XX^p\\
 &\qquad + Q\mathcal{E}_j[\in^{ajb}\widetilde{X}_1+\in^{ajp}\widetilde{X}_3\in^{pcb}\L^c]+ \mathcal{W}_j[\in^{ajb}\widetilde{V}_1+\in^{ajp}\widetilde{V}_3\in^{pcb}\L^c],\\
 -\m_{\L^a\L^b}&=Q\mathcal{F}_{jk}\widetilde{X}_3\in^{jbd}\u^d\in^{akp}\XX^p+Q\widetilde{X}_3(\delta^{ab}\mathcal{E}_j\u^j-\mathcal{E}_b\u^a)+\widetilde{V}_3(\delta^{ab}\mathcal{W}_j\u^j-\mathcal{W}_b\u^a).
\end{aligned}
\end{equation}
\end{itemize}

\medskip

\paragraph{The coefficients $\m'_{fg}$}
Inspecting \eqref{eq:deriv'} and comparing with \eqref{eq:deriv}, we see that the coefficients $\m'_{fg}$ can be read off from $\m_{fg}$ by ignoring the terms in $\W$ and replacing $Q\E_j$ by $Q\E_j-\dot{\W}_j$.

\subsubsection{Propagation in the far formulation}\label{sec:prop_far}
We now demonstrate how derivative control as in \eqref{eq:defP} can be propagated in the far region $\Omega_t^{far}=\{\aabs{\XX}\geq \ip{t}\}$. As it turns out, Poisson brackets with respect to $\xi, \u$ have more favorable bounds than those with respect to $\eta,\L$. We thus define
\begin{equation}\label{SplittingGUDer}
\begin{split}
 \mathcal{G}(\vartheta,\a;t)&:=\ww_\xi\aabs{\{\xi,\gamma\}}(\vartheta,\a;t)+\ww_\u\aabs{\{\u,\gamma\}}(\vartheta,\a;t),\\
  \mathcal{U}(\vartheta,\a;t)&:=\ww_\eta\aabs{\{\eta,\gamma\}}(\vartheta,\a;t)+\ww_\L\aabs{\{\L,\gamma\}}(\vartheta,\a;t),
  \end{split}
\end{equation}
for the weight functions $\ww_f$, $f\in \textnormal{SIC}$, defined in \eqref{eq:def_weights}. We then have:
\begin{lemma}\label{lem:farprop}
 Assume that $(\vartheta,\a)$ is such that for some $0\leq T_1< T_2$ we have $(\vartheta,\a)\in\mathcal{O}_t^{far}$ for $T_1\leq t\leq T_2$. Assume also that $\gamma$ is a solution of \eqref{eq:deriv2} defined on the time interval $[T_1,T_2]$ satisfying for some $0<\delta\ll 1$
 \begin{equation}
  \mathcal{D}[\gamma](\vartheta,\a;t)\lesssim\eps\ip{t}^\delta,
 \end{equation}
 and the decay estimate \eqref{eq:EFdecay0} holds, i.e.
 \begin{equation}\label{eq:EFdecay}
  \sqrt{t^2+\abs{\y}^2}\abs{\E(\y,t)}\lesssim \eps^2\ip{t}^{-1},\quad  \sup_{\y\in\R^3}\left[t^2+\abs{\y}^2\right]\abs{\F(\y,t)}\lesssim \eps^2\ln\ip{t}\cdot\ip{t}^{-1}.
 \end{equation}
 Then, with $\mathcal{G}$, $\mathcal{U}$ defined in \eqref{SplittingGUDer},
 \begin{equation}\label{eq:dtBG_far}
 \begin{aligned}
  &\frac{d}{dt} \mathcal{G}(\vartheta,\a;t)\lesssim \eps^2 \ip{t}^{-5/4}[\mathcal{G}(\vartheta,\a;t)+\mathcal{U}(\vartheta,\a;t)],\\
  &\frac{d}{dt} \mathcal{U}(\vartheta,\a;t)\lesssim \eps^2 \ln\ip{t}\cdot \ip{t}^{-1}\mathcal{G}(\vartheta,\a;t)+\eps^2 \ip{t}^{-5/4}\mathcal{U}(\vartheta,\a;t),
 \end{aligned} 
 \end{equation}
 and thus
 \begin{equation}
  \mathcal{G}(\vartheta,\a;t)\lesssim\eps,\qquad  \mathcal{U}(\vartheta,\a;t)\lesssim\eps+\eps^2 \ln^2(t/T_1).
 \end{equation}

\begin{proof}
 By construction, we have that
 \begin{equation}
  \frac{d}{dt}\ww_f\{f,\gamma\}=\sum_{f,g\in\textnormal{SIC}}\m_{fg}\frac{\ww_f}{\ww_g}\cdot\ww_g\{g,\gamma\},
 \end{equation}
 and to prove the claim it thus suffices to suitably bound the expressions $\m_{fg}\cdot (\ww_f/\ww_g)$. For this we will use the Poisson bracket bounds on $\XX$ and $\VV$ established in Section \ref{sec:PB_bounds}, in particular also the refinements in outgoing asymptotic actions (see Lemma \ref{lem:betterdl}), the decay estimates \eqref{eq:EFdecay} and the fact that $\aabs{\XX}\gtrsim\ip{t}$.
 
 From \eqref{eq:mult_xi}, \eqref{PBX1primeprime} and \eqref{eq:betterdl}, we have the bounds
 \begin{equation}
\begin{aligned}
 \abs{\m_{\xi\xi}}&\lesssim \abs{\F}\cdot\xi/q[\vert\XX\vert+tq\xi^{-1}]+ \abs{\E}\xi/q\cdot[1+t\xi\vert\XX\vert^{-2}]+\vert \W\vert\vert\XX\vert^{-2} \xi^2/q[1+tq\xi^{-1}\aabs{\XX}^{-1}]\\
 &\lesssim \left[\abs{\F}\vert\XX\vert+\abs{\E}\right]\left[\aabs{\XX}^{1/2}q^{-1/2}+t\aabs{\XX}^{-1}\right]+\abs{\W}\aabs{\XX}^{-1}\left[1+tq^{1/2}\aabs{\XX}^{-3/2}\right],\\
 \abs{\m_{\xi\eta}}&\lesssim \xi^4/q^2\left[\abs{\F}+\abs{\E}\xi^2/q\vert\XX\vert^{-2}\right]+\abs{\W}\xi^5/q^2\vert\XX\vert^{-3},\\
\abs{\m_{\xi\u^a}}&\lesssim \frac{\xi^2}{q}\left[\abs{\F}\vert\XX\vert+\abs{\E}+\xi\abs{\W}\vert\XX\vert^{-2}\right],\\
 \abs{\m_{\xi\L^a}}+\abs{\m_{\xi\lambda}}&\lesssim \frac{\xi^2}{q}\abs{\F}(\aabs{\widetilde{X}_3}+\aabs{\partial_\lambda\XX})+\abs{\E}\left(\frac{\xi^5}{q^3}\frac{1}{\aabs{\XX}^2}+\frac{\xi^3}{q^2}\aabs{\partial_\lambda\VV}\right)+\abs{\W}\left(\frac{\xi^4}{q^2}\frac{1}{\aabs{\XX}^3}+\frac{\xi^3}{q}\aabs{\partial_\lambda\XX}\aabs{\XX}^{-3}\right)\\\
 &\lesssim \frac{\xi^3}{q^2}\left(\abs{\F}+\abs{\E}\frac{\xi^2/q}{\aabs{\XX}^2}\right)+\abs{\W}\frac{\xi^4}{q^2}\aabs{\XX}^{-3},\\
\end{aligned}
\end{equation}
 so that by \eqref{eq:def_weights}, \eqref{eq:EFdecay} and $\aabs{\XX}\gtrsim\ip{t}$ there holds that
 \begin{equation}\label{eq:mxi_bds_far}
 \begin{alignedat}{2}
  \abs{\m_{\xi\xi}}&\lesssim \eps^2\ip{t}^{-\frac{5}{4}},\qquad &\abs{\m_{\xi\eta}}\frac{\ww_\xi}{\ww_\eta}&= \abs{\m_{\xi\eta}}\frac{(1+\xi)^2}{\xi^4}\lesssim\eps^2\ip{t}^{-\frac{5}{4}},\\
  \abs{\m_{\xi\L}}\frac{\ww_\xi}{\ww_\L}=\abs{\m_{\xi\L}}\frac{(1+\xi)^2}{\xi^3}&\lesssim\eps^2\ip{t}^{-\frac{5}{4}},\qquad &\abs{\m_{\xi\u}}\frac{\ww_\xi}{\ww_\u}&= \abs{\m_{\xi\u}}\frac{(1+\xi)}{\xi^2}\lesssim\eps^2\ip{t}^{-\frac54}.
 \end{alignedat} 
 \end{equation}
 
Similarly, from \eqref{eq:mult_eta} and using \eqref{eq:betterdlPBetaX} and \eqref{eq:PBxiX3}, we obtain that
\begin{equation}
\begin{aligned}
 \abs{\m_{\eta\xi}}
 &\lesssim \abs{\F}\xi^{-2}[\aabs{\XX}^2+t^2q^2\xi^{-2}]+\abs{\E}\xi^{-2}[\vert\XX\vert+tq\xi^{-1}+t^2q\vert\XX\vert^{-2}]\\
 &\qquad +\abs{\W}q\xi^{-3}[1+t\xi\vert\XX\vert^{-2}+t^2q\vert\XX\vert^{-3}],\\
 \abs{\m_{\eta\eta}}&=\abs{\m_{\xi\xi}},\\
 \abs{\m_{\eta\u^a}}&\lesssim [\abs{\F}\aabs{\XX}+\abs{\E}]\xi^{-1}[\aabs{\XX}+tq\xi^{-1}]+\abs{\W}q\xi^{-2} (1+t\xi\vert\XX\vert^{-2}),\\
 \abs{\m_{\eta\L^a}}+\abs{\m_{\eta\lambda}}&\lesssim \abs{\F}[\aabs{\widetilde{X}_3}+\aabs{\partial_\lambda\XX}]\xi^{-1}[\aabs{\XX}+tq\xi^{-1}]+\abs{\E}\frac{1}{q}[1+\frac{t\xi}{\aabs{\XX}^2}]\\
 &\qquad +\abs{\W}\frac{1}{\aabs{\XX}^2}[\frac{t}{\aabs{\XX}}+\frac{\xi}{q}],
\end{aligned}
\end{equation}
implying that
 \begin{equation}\label{eq:meta_bds_far}
 \begin{alignedat}{2}
  \abs{\m_{\eta\xi}}\frac{\ww_\eta}{\ww_\xi}=\abs{\m_{\eta\xi}}\frac{\xi^4}{(1+\xi)^2}&\lesssim [\aabs{\XX}^2\aabs{\F(\XX)}+\aabs{\XX}\aabs{\E(\XX)}+\eps^2\ip{t}^{-1}],\qquad &\abs{\m_{\eta\eta}}&= \abs{\m_{\xi\xi}},\\
  \abs{\m_{\eta\u}}\frac{\ww_\eta}{\ww_\u}=\abs{\m_{\eta\u}}\frac{\xi^2}{1+\xi}&\lesssim [\aabs{\XX}^2\aabs{\F(\XX)}+\aabs{\XX}\aabs{\E(\XX)}+\eps^2\ip{t}^{-1}],\qquad &\abs{\m_{\eta\L}}\frac{\ww_\eta}{\ww_\L}&=\abs{\m_{\eta\L}}\xi&\lesssim\eps^2\ip{t}^{-\frac32},
 \end{alignedat} 
 \end{equation}
which suffices by \eqref{eq:EFdecay}.

For Poisson brackets with $\u$, we note that by \eqref{eq:PBuX} and \eqref{eq:betterdl}, there holds that 
\begin{equation}
 \begin{split}
 \aabs{\{\u^k,\XX^j\}}\lesssim [\aabs{\partial_\lambda\XX}+\aabs{\widetilde{X}_3}]\lesssim \frac{\xi}{q}&,\qquad \aabs{\{\u^k,\VV^j\}}\lesssim \frac{\xi^2}{q}\frac{1}{\aabs{\XX}^2},\\
 \aabs{\partial_\lambda\{\u^k,\XX^j\}}\lesssim\frac{1}{q}\frac{\xi^2/q}{\vert\XX\vert}&,\qquad\aabs{\partial_\lambda\{\u^k,\VV^j\}}\lesssim\frac{\xi/q}{\vert\XX\vert^2},
 \end{split}
\end{equation}
and with \eqref{eq:betterdl} and \eqref{eq:PBxiX3} it follows that
\begin{equation}
 \aabs{\{\{\u^k,\XX^j\},\xi\}}\lesssim\aabs{\partial_\lambda\{\XX,\xi\}}+\aabs{\{\widetilde{X}_3,\xi\}}\lesssim\frac{\xi^5}{q^3}\frac{1}{\aabs{\XX}^2},\qquad \aabs{\{\{\u^k,\VV^j\},\xi\}}\lesssim \frac{\xi^2}{q}\frac{1}{\aabs{\XX}^2}.
\end{equation}
Furthermore, using \eqref{eq:PBuX}, \eqref{eq:betterdlPBetaX} and \eqref{eq:PBxiV3}-\eqref{eq:PBxiX3},
\begin{equation}
\begin{split}
 \aabs{\{\{\u^a,\XX^j\},\eta\}}\lesssim q^{-1}(1+\frac{t\xi}{\aabs{\XX}^2}),\qquad  \aabs{\{\{\u^a,\VV^j\},\eta\}}\lesssim \frac{1}{\vert\XX\vert^2}\left(\frac{\xi}{q}+\frac{t}{\vert\XX\vert}\right).
 \end{split}
\end{equation}
We deduce from \eqref{eq:mult_u} that
\begin{equation}
\begin{aligned}
 \abs{\m_{\u^a\xi}}&\lesssim q^{-1}\left[\abs{\F}\vert\XX\vert+\vert\mathcal{E}\vert\right][1+tq/(\xi\aabs{\XX})]+\abs{\W}\frac{1}{\aabs{\XX}^2}\left(\frac{\xi}{q}+\frac{t}{\aabs{\XX}}\right),\\
 \abs{\m_{\u^a\eta}}&\lesssim \frac{\xi^3}{q}\frac{1}{\vert\XX\vert}\left[\abs{\F}\vert\XX\vert+\abs{\E}\right]+\abs{\W}\frac{\xi^2}{q}\frac{1}{\aabs{\XX}^2},\\
 \abs{\m_{\u^a\u^b}}&\lesssim \frac{\xi}{q}[\abs{\F}\aabs{\XX}+\abs{\E}]+\abs{\W}\frac{\xi^2/q}{\aabs{\XX}^2},\\
 \abs{\m_{\u^a\lambda}}+ \abs{\m_{\u^a\L^b}}&\lesssim \frac{\xi^2}{q^2}\frac{1}{\vert\XX\vert}\left[\abs{\F}\vert\XX\vert+\abs{\E}\right]+\abs{\W}\frac{\xi}{q}\frac{1}{\aabs{\XX}^2},
\end{aligned}
\end{equation}
and hence
\begin{equation}\label{eq:mu_bds_far}
\begin{alignedat}{2}
 \abs{\m_{\u\xi}}\frac{\ww_\u}{\ww_\xi}=\abs{\m_{\u\xi}}\frac{\xi^2}{1+\xi}&\lesssim \eps^2\ip{t}^{-\frac{5}{4}},\qquad &\abs{\m_{\u\eta}}\frac{\ww_\u}{\ww_\eta}=\abs{\m_{\u\eta}}\frac{1+\xi}{\xi^2}&\lesssim \eps^2\ip{t}^{-\frac{5}{4}},\\
 \abs{\m_{\u\u}}&\lesssim \eps^2\ip{t}^{-\frac{5}{4}},\qquad &\abs{\m_{\u\L}}\frac{\ww_\u}{\ww_\L}=\abs{\m_{\u\L}}\frac{1+\xi}{\xi}&\lesssim\eps^2\ip{t}^{-\frac32}.
\end{alignedat} 
\end{equation}

Finally, from \eqref{eq:mult_L} we have the bounds
\begin{equation}
\begin{aligned}
 \abs{\m_{\L^a\xi}}&\lesssim \xi^{-1}[\abs{\F}\aabs{\XX}+\abs{\E}][\aabs{\XX}+tq\xi^{-1}]+\abs{\W}q\xi^{-2}[1+t\xi\aabs{\XX}^{-2}],\\
 \abs{\m_{\L^a\eta}}&\lesssim \frac{\xi^2}{q}[\abs{\F}\aabs{\XX}+\abs{\E}]+\abs{\W}\frac{\xi^3}{q}\frac{1}{\aabs{\XX}^2},\\
 \abs{\m_{\L^a\u^b}}&\lesssim [\abs{\F}\aabs{\XX}+\abs{\E}]\aabs{\XX}+\abs{\W}\frac{q}{\xi},\\
 \abs{\m_{\L^a\lambda}}+ \abs{\m_{\L^a\L^b}}&\lesssim [\abs{F}\aabs{\XX}+\abs{\E}]\frac{\xi}{q}+\abs{\W}\frac{\xi^2}{q}\frac{1}{\aabs{\XX}^2},\\
\end{aligned}
\end{equation}
from which we see that
\begin{equation}\label{eq:mL_bds_far}
\begin{alignedat}{2}
 \abs{\m_{\L\xi}}\frac{\ww_\L}{\ww_\xi}=\abs{\m_{\L\xi}}\frac{\xi^3}{(1+\xi)^2}&\lesssim [\aabs{\XX}^2\aabs{\F(\XX)}+\aabs{\XX}\aabs{\E(\XX)}+\eps^2\ip{t}^{-1}],\qquad &\abs{\m_{\L\eta}}\frac{\ww_\L}{\ww_\eta}= \abs{\m_{\L\eta}}\frac{1}{\xi}&\lesssim\eps^2\ip{t}^{-\frac54},\\
 \abs{\m_{\L\u}}\frac{\ww_\L}{\ww_\u}=\abs{\m_{\L\u}}\frac{\xi}{1+\xi}&\lesssim [\aabs{\XX}^2\aabs{\F(\XX)}+\aabs{\XX}\aabs{\E(\XX)}+\eps^2\ip{t}^{-1}],\qquad &\abs{\m_{\L\L}}&\lesssim\eps^2\ip{t}^{-\frac54}.
\end{alignedat} 
\end{equation}

In conclusion, the first line of \eqref{eq:dtBG_far} follows by combining \eqref{eq:mxi_bds_far} and \eqref{eq:mu_bds_far}, while the second line of \eqref{eq:dtBG_far} follows from \eqref{eq:meta_bds_far} and \eqref{eq:mL_bds_far}.
\end{proof}
 
\end{lemma}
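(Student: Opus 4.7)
The plan is to reduce the pointwise control of $\mathcal{D}[\gamma]$ along a characteristic of \eqref{ODE} to a coupled ODE system of the form
\begin{equation*}
 \frac{d}{dt}(\ww_f\vert\{f,\gamma\}\vert) \leq \sum_{g\in\textnormal{SIC}} \vert\m_{fg}\vert\frac{\ww_f}{\ww_g}\cdot(\ww_g\vert\{g,\gamma\}\vert),
\end{equation*}
which follows directly from \eqref{eq:deriv2} upon multiplication by $\ww_f$. The explicit expressions for the matrix $(\m_{fg})_{f,g\in\textnormal{SIC}}$ are those given in \eqref{eq:mult_xi}--\eqref{eq:mult_L}; the bulk of the work is then to bound each entry $\vert\m_{fg}\vert\ww_f/\ww_g$ sharply in $t$, under the working hypotheses (outgoing region, far region, bootstrap assumption and field decay \eqref{eq:EFdecay}).

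Concretely, I would first substitute into the $\m_{fg}$ the Poisson bracket bounds for $\widetilde{\bf X}$ and $\widetilde{\bf V}$. Since $(\vartheta,\a)\in\mathcal{O}_t$ corresponds to the post-periapsis region $\widetilde{\bf X}\cdot\widetilde{\bf V}\geq -\xi\ip{\kappa}$, I can invoke the improved bounds of Lemma \ref{lem:betterdl} rather than the generic ones of Lemma \ref{lem:moreXVbds}: these give the decisive extra factors of $\kappa/\ip{\kappa}$ and $\xi^2/(q\vert\widetilde{\bf X}\vert)$ on all $\lambda$-derivatives (and on $X_3,V_3$), which combined with $\vert\widetilde{\bf X}\vert\gtrsim\ip{t}$ in $\Omega_t^{far}$ and the decay \eqref{eq:EFdecay} yield pointwise bounds $\vert\m_{fg}\vert\ww_f/\ww_g\lesssim \eps^2\ip{t}^{-5/4}$ for \emph{most} entries of the matrix. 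I would carry out this computation row by row, starting from the $\xi$ row (which involves $\{\xi,\widetilde{\bf X}\}=\xi^2 q^{-1}\widetilde{\bf V}/a$ and $\{\xi,\widetilde{\bf V}\}=\xi^3(2q)^{-1}\widetilde{\bf X}\vert\widetilde{\bf X}\vert^{-3}$) and ending with the $\L$ row (handled via $\{\L^k,\widetilde{\bf X}^j\}=\in^{kjp}\widetilde{\bf X}^p$).

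The key structural observation, which motivates splitting $\mathcal{D}$ into $\mathcal{G}$ (brackets with $\xi,\u$) and $\mathcal{U}$ (brackets with $\eta,\L$), is that four specific entries do \emph{not} decay like $\ip{t}^{-5/4}$: the entries $\m_{\eta\xi}\ww_\eta/\ww_\xi$, $\m_{\L\xi}\ww_\L/\ww_\xi$, $\m_{\eta\u}\ww_\eta/\ww_\u$ and $\m_{\L\u}\ww_\L/\ww_\u$, which contain the terms $\mathcal{F}_{jk}\{\widetilde{\bf X}^j,\cdot\}\widetilde{\bf X}^k$ and $\mathcal{E}_j\widetilde{\bf X}^k$. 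These are controlled only by $\vert\widetilde{\bf X}\vert^2\vert\mathcal{F}(\widetilde{\bf X})\vert+\vert\widetilde{\bf X}\vert\vert\mathcal{E}(\widetilde{\bf X})\vert\lesssim \eps^2\ln\ip{t}/\ip{t}$, which is the true source of logarithmic growth in $\mathcal{U}$. Crucially, the weights in \eqref{eq:def_weights} are chosen so that these slow-decaying contributions appear only in the coupling from $\mathcal{G}$ to $\mathcal{U}$ (i.e. in the $f\in\{\eta,\L\}$, $g\in\{\xi,\u\}$ block), while the $g\in\{\eta,\L\}$ columns are everywhere fast-decaying.

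Once the matrix bounds \eqref{eq:dtBG_far} are established, the endgame is a two-step Gr\"onwall argument. First, the evolution of $\mathcal{G}$ is driven by an integrable coefficient $\ip{t}^{-5/4}$ multiplying $\mathcal{G}+\mathcal{U}$; using the bootstrap assumption $\mathcal{U}\lesssim \eps\ip{t}^\delta$ on the right hand side, this integrates to the uniform bound $\mathcal{G}\lesssim\eps$. Second, plugging this uniform bound into the equation for $\mathcal{U}$ reduces it to $\frac{d}{dt}\mathcal{U}\lesssim \eps^3\ln\ip{t}/\ip{t}+\eps^2\ip{t}^{-5/4}\mathcal{U}$, which yields $\mathcal{U}\lesssim\eps+\eps^2\ln^2(t/T_1)$. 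I expect the main obstacle to be the verification that each choice of weight $\ww_f$ (constrained by the close/far and incoming/outgoing transition losses encoded in $\Upsilon$ and Lemmas \ref{lem:trans_short}, \ref{lem:trans_inout}) is simultaneously compatible with all four rows, so that no $\kappa$ or $\xi$ power is uncontrolled; this is a delicate bookkeeping issue but, given the improved outgoing kinematics, is what makes the argument close.
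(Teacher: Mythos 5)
Your proposal follows essentially the same route as the paper's proof: the same weighted ODE system for $\ww_f\{f,\gamma\}$ with the matrix $(\m_{fg})$ from \eqref{eq:mult_xi}--\eqref{eq:mult_L}, the same use of the improved outgoing bounds of Lemma \ref{lem:betterdl} together with $\aabs{\XX}\gtrsim\ip{t}$ and \eqref{eq:EFdecay}, the same identification of the four slowly decaying entries (coupling $\mathcal{G}$ into $\mathcal{U}$, controlled by $\aabs{\XX}^2\aabs{\F}+\aabs{\XX}\aabs{\E}\lesssim\eps^2\ln\ip{t}\ip{t}^{-1}$), and the same two-step Gr\"onwall argument. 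This matches the paper's proof in structure and in all key estimates.
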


\subsubsection{Propagation in the close formulation}\label{sec:prop_cl}
In contrast to the far region, in the close region $\Omega_t^{cl}=\{\aabs{\XX}\leq  10\ip{t}\}$ the spatial location of a trajectory $\X(t)$ may be comparatively small. As a consequence, for the Gr\"onwall arguments for derivative propagation we need some refined estimates on the dynamics:
\begin{lemma}\label{lem:Xintegral}
 Let $\X(t)$ be a trajectory of the ODE \eqref{ODE}. Then for any $t\geq0$ we have that
 \begin{equation}\label{eq:Xintegral}
  \int_{-\infty}^{\infty} \frac{d\tau}{\aabs{\X(\tau)}^2}\lesssim \xi^{-1}.
 \end{equation}
\end{lemma}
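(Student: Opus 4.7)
The plan is to combine the virial identity along a Kepler trajectory with the lower bound on the periapsis radius, reducing \eqref{eq:Xintegral} to a one-dimensional integral that can be evaluated in closed form. The key observation is that $\abs{\X(t)}^2$ is a strongly convex function of $t$, so it grows at least quadratically away from its unique minimum at periapsis, making $1/\abs{\X}^2$ integrable on all of $\R$ with an explicit bound.

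First, I would invoke \eqref{SecondOrderDerXVX} (equivalently the virial identity of Remark \ref{rem:virials}) to write
\begin{equation*}
\frac{d^2}{dt^2}\abs{\X(t)}^2=\abs{\V(t)}^2+H\ge H=a^2,
\end{equation*}
with $a^2=H$ constant along the trajectory. Convexity then implies the existence of a unique time of periapsis $t_p$ at which $\frac{d}{dt}\abs{\X}^2(t_p)=2\X(t_p)\cdot\V(t_p)=0$, and a Taylor expansion around $t_p$ yields
\begin{equation*}
\abs{\X(t)}^2\ge r_{min}^2+\tfrac{a^2}{2}(t-t_p)^2,\qquad r_{min}:=\abs{\X(t_p)}.
\end{equation*}

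Second, I would use a pointwise lower bound on $r_{min}$ in terms of $\xi$. Evaluating \eqref{CrudeBoundX} at periapsis (where ${\bf x}\cdot{\bf v}=0$), or directly combining \eqref{ConsLaws2dPlanar2} with $a=q/\xi$, gives
\begin{equation*}
r_{min}=\frac{q+\sqrt{q^{2}+4a^{2}L^{2}}}{2a^{2}}\ge\frac{q}{2a^{2}}=\frac{\xi^{2}}{2q}.
\end{equation*}
Together with $a^2=q^2/\xi^2$, the convexity bound becomes
\begin{equation*}
\abs{\X(t)}^2\ge\frac{\xi^{4}}{4q^{2}}+\frac{q^{2}}{2\xi^{2}}(t-t_p)^{2}.
\end{equation*}

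Finally, the change of variables $s=\tau-t_p$ and the standard formula $\int_{-\infty}^{\infty}(A+Bs^{2})^{-1}ds=\pi/\sqrt{AB}$ with $A=\xi^{4}/(4q^{2})$, $B=q^{2}/(2\xi^{2})$ give
\begin{equation*}
\int_{-\infty}^{\infty}\frac{d\tau}{\abs{\X(\tau)}^{2}}\le\int_{-\infty}^{\infty}\frac{ds}{\frac{\xi^{4}}{4q^{2}}+\frac{q^{2}}{2\xi^{2}}s^{2}}=\frac{\pi}{\sqrt{\xi^{2}/8}}=\frac{2\pi\sqrt{2}}{\xi},
\end{equation*}
which is the claim. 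I do not expect any serious technical obstacle here: the only two ingredients are the virial convexity of $\abs{\X}^2$ and the periapsis lower bound $r_{min}\gtrsim\xi^{2}/q$, both of which are already isolated in the preceding sections, and the remaining step is a routine evaluation of a Lorentzian integral.
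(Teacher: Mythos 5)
Your proof is correct and follows essentially the same route as the paper's: the virial convexity $\frac{d^2}{dt^2}\aabs{\X}^2\geq a^2$, the periapsis lower bound $\aabs{\X(t_p)}\gtrsim q/a^2=\xi^2/q$, and then an integration in time (the paper splits the integral at $\aabs{t-t_p}=q/a^3$ rather than using the closed-form Lorentzian formula, a purely cosmetic difference). No gaps.
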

\begin{proof}
 We recall that by \eqref{eq:virials} there holds that
 \begin{equation}
  \frac{d^2}{dt^2}\aabs{\X(t)}^2\gtrsim a^2\geq0.
 \end{equation}
 Denoting by $t_p$ the time of periapsis, we have with $\aabs{\X(t_p)}\gtrsim \frac{q}{a^2}$ that
 \begin{equation}
  \aabs{\X(t)}^2\gtrsim \frac{q^2}{a^4}+a^2(t-t_p)^2=a^{-4}(q^2+a^6(t-t_p)^2).
 \end{equation}
 Since the trajectories of \eqref{ODE} (and thus the integral in \eqref{eq:Xintegral}) are symmetric with respect to periapsis, we may assume without loss of generality that $t_p=0$. We then have that
 \begin{equation}
  \int_{-\infty}^{\infty} \frac{d\tau}{\aabs{\X(\tau)}^2}=2\int_{0}^{\infty} \frac{d\tau}{\aabs{\X(\tau)}^2}\lesssim a^4\int_0^{q/a^{3}}\frac{d\tau}{q^2}+a^4\int_{q/a^{3}}^\infty\frac{d\tau}{a^6(t-t_p)^2}\lesssim \frac{a}{q}.
 \end{equation}
\end{proof}

Also in the close region Poisson brackets with respect to $\xi, \u$ have more favorable bounds than those with respect to $\eta,\L$, and, as in \eqref{SplittingGUDer}, we define
\begin{equation*}
\begin{split}
 \mathcal{G}'(\vartheta,\a;t)&:=\ww_\xi\aabs{\{\xi,\gamma'\}}(\vartheta,\a;t)+\ww_\u\aabs{\{\u,\gamma'\}}(\vartheta,\a;t),\\
 \mathcal{U}'(\vartheta,\a;t)&:=\ww_\eta\aabs{\{\eta,\gamma'\}}(\vartheta,\a;t)+\ww_\L\aabs{\{\L,\gamma'\}}(\vartheta,\a;t),
 \end{split}
\end{equation*}
for the weight functions $\ww_f$, $f\in \textnormal{SIC}$, defined in \eqref{eq:def_weights}. We then have
\begin{lemma}\label{lem:clprop}
 Assume that $(\vartheta,\a)$ is such that for some $T_2>T_1\geq 0$ we have $(\vartheta,\a)\in\Omega_t^{cl}$ for $T_1\leq t\leq T_2$. Assume also that $\gamma'$ is a solution of \eqref{eq:deriv'2} defined on the time interval $[T_1,T_2]$ satisfying for some $0<\delta\ll 1$
 \begin{equation}
  \mathcal{D}[\gamma'](\vartheta,\a;t)\lesssim\eps\ip{t}^\delta,
 \end{equation}
 and we have the decay estimates \eqref{eq:EFdecay}.
 Then
 \begin{equation}\label{eq:B'G'_bds}
  \mathcal{G}'(\vartheta,\a;t)\lesssim\eps,\qquad  \mathcal{U}'(\vartheta,\a;t)\lesssim\eps+\eps^2 \ln^2(t/T_1).
 \end{equation}
\end{lemma}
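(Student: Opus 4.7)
The proof mirrors Lemma \ref{lem:farprop}, with two changes necessitated by working on $\Omega_t^{cl}$: we use the close formulation \eqref{eq:deriv'2} (so the matrix $\m'_{fg}$ obtained from $\m_{fg}$ via the recipe at the end of Section \ref{ssec:mfg}), and we compensate for the absence of a lower bound $\aabs{\XX}\gtrsim\ip{t}$ by integrating in time along the linear trajectory through Lemma \ref{lem:Xintegral}. As in the proof of Proposition \ref{prop:global_derivs} we may restrict to the outgoing subregion $\mathcal{O}_t^{cl}$, on which the refined Poisson bracket bounds of Lemma \ref{lem:betterdl} still apply (its defining condition $\{\X\cdot\V\ge -\sqrt{\xi^2+\lambda^2}\}$ coinciding with that of Lemma \ref{lem:betterdl}).

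First I would read off pointwise bounds on each entry of $\m'_{fg}$ from the expressions \eqref{eq:mult_xi}--\eqref{eq:mult_L}, using the rule that in the primed system the $\mathcal{W}$-terms disappear and $Q\mathcal{E}_j$ is replaced by $Q\mathcal{E}_j-\dot{\mathcal{W}}_j$. Since $|\dot{\mathcal{W}}(t)|\lesssim \eps^2 \ip{t}^{-2}$ by \eqref{DerW} and \eqref{eq:EFdecay}, this substitution does not worsen any bound. The decay \eqref{eq:EFdecay} combined with $\aabs{\XX}\leq 10\ip{t}$ still gives $|\mathcal{E}(\XX)|\lesssim \eps^2 \ip{t}^{-2}$ and $|\mathcal{F}(\XX)|\lesssim \eps^2 \ln\ip{t}\cdot \ip{t}^{-3}$, so all combinations of the form $\aabs{\XX}^k|\mathcal{E}|$ and $\aabs{\XX}^k|\mathcal{F}|$ with $k\ge 0$ admit the same uniformly integrable $\ip{t}^{-5/4}$ (or better) decay as in the far case -- this handles most entries of $\m'_{fg}\cdot(\ww_f/\ww_g)$ exactly as in \eqref{eq:mxi_bds_far}--\eqref{eq:mL_bds_far}. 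The genuinely new contributions come from terms containing $\aabs{\XX}^{-\ell}$ with $\ell\in\{1,2\}$, which appear in brackets like $\{\xi,\VV\}$, $\partial_\lambda\VV$ etc.\ through Lemma \ref{lem:moreXVbds} and its refinement Lemma \ref{lem:betterdl}. These are not individually integrable in $t$, but Lemma \ref{lem:Xintegral} supplies the correct viewpoint: along the linear trajectory $\tau\mapsto\XX(\vartheta,\a;\tau)=\X(\tau+t_0)$ one has
\[
\int_{T_1}^{T_2} \frac{d\tau}{\aabs{\XX(\tau)}^{2}} \;\leq\; \int_{-\infty}^{+\infty} \frac{d\tau}{|\X(\tau)|^{2}} \;\lesssim\; \xi^{-1},
\]
with the $\xi^{-1}$ loss absorbed by the $\xi$-weights built into $\Upsilon$ from \eqref{eq:defUps}; an analogous Cauchy--Schwarz trajectory bound handles the few $|\XX|^{-1}\ip{\tau}^{-1}$ terms.

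Next I would split $\mathcal{D}[\gamma']$ into $\mathcal{G}'$ and $\mathcal{U}'$ exactly as in the far case, since the structural dichotomy persists: the rows $f\in\{\xi,\u\}$ of $\m'_{fg}$ feed into $\mathcal{G}'$ with an integrable rate, whereas the rows $f\in\{\eta,\L\}$ pick up a borderline $\eps^2\ln\ip{t}\cdot\ip{t}^{-1}$ contribution from the $\mathcal{G}'$-component (mirroring \eqref{eq:meta_bds_far} and \eqref{eq:mL_bds_far}). Assembling the above estimates one obtains a system of the form
\[
\frac{d}{dt}\mathcal{G}' \;\lesssim\; \eps^2 \mathfrak{r}(t)[\mathcal{G}'+\mathcal{U}'], \qquad \frac{d}{dt}\mathcal{U}' \;\lesssim\; \eps^2 \frac{\ln\ip{t}}{\ip{t}}\mathcal{G}' + \eps^2 \mathfrak{r}(t)\,\mathcal{U}',
\]
with a rate $\mathfrak{r}(t)$ that is the sum of a pointwise bound $\lesssim \ip{t}^{-5/4}$ and a $\aabs{\XX(\vartheta,\a;t)}^{-2}$ piece, both satisfying $\int_{T_1}^{T_2}\mathfrak{r}(\tau)\,d\tau\lesssim 1$ thanks to Lemma \ref{lem:Xintegral}. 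A two-step Gr\"onwall argument -- first using the bootstrap $\mathcal{G}'+\mathcal{U}'\lesssim \eps\ip{t}^\delta$ to close $\mathcal{G}'\lesssim\eps$, and then integrating the borderline feedback -- then yields $\mathcal{U}'\lesssim \eps+\eps^2\ln^2(t/T_1)$, which are exactly the bounds \eqref{eq:B'G'_bds}.

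The main obstacle is accounting for the $\aabs{\XX}^{-\ell}$ singularities: in the close region these can no longer be controlled pointwise by $\ip{t}^{-\ell}$ and must be absorbed trajectory-by-trajectory. The invocation of Lemma \ref{lem:Xintegral} is clean, but tracking exactly how much of a $\xi$-loss each such term forces on the weights is the combinatorial burden, and is precisely what motivated the extra $(\xi+\xi^{-3})$ factor in the close-region part of $\Upsilon$; once those weights are in place, the Gr\"onwall bookkeeping proceeds in parallel with the far case.
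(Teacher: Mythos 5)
Your blueprint is essentially the paper's proof: read off the $\m'_{fg}$ from the $\m_{fg}$ by dropping the $\W$-terms and replacing $Q\E_j$ by $Q\E_j-\dot{\W}_j$, use \eqref{eq:EFdecay} together with $\aabs{\XX}\lesssim\ip{t}$ (and the outgoing refinements of Lemma \ref{lem:betterdl}) for the nonsingular entries, treat the $\aabs{\XX}^{-2}$ entries by integrating along the linear trajectory via Lemma \ref{lem:Xintegral}, keep the $\mathcal{G}'/\mathcal{U}'$ dichotomy, and close with the same two-step Gr\"onwall yielding $\mathcal{G}'\lesssim\eps$ and $\mathcal{U}'\lesssim\eps+\eps^2\ln^2(t/T_1)$.

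One part of your justification is off, and it concerns exactly the step that distinguishes this lemma from Lemma \ref{lem:farprop}: you claim the $\xi^{-1}$ loss from Lemma \ref{lem:Xintegral} is ``absorbed by the $\xi$-weights built into $\Upsilon$''. Within a single region $\Upsilon$ of \eqref{eq:defUps} depends only on $\xi$ and the region label, hence is constant along the trajectory; it multiplies both sides of the Gr\"onwall inequality and cancels, so it can transfer $\xi$-losses across transitions but cannot absorb a $\xi$-dependent growth of the integrated rate inside one region. What actually closes the argument is that each singular entry of $\m'_{fg}\cdot\ww_f/\ww_g$ carries enough positive powers of $\xi$, combined with the close-region constraint $\xi^2/q\lesssim\aabs{\XX}\lesssim\ip{t}$: the dangerous pieces are of the form $\eps^2\ip{t}^{-1}\frac{\xi^2/q}{\aabs{\XX}^2}$ (or better), and then $\int_{T_1}^{T_2}\ip{\tau}^{-1}\frac{\xi^2/q}{\aabs{\XX(\tau)}^{2}}d\tau\lesssim q^{-1}\min\{\xi,\xi^{-1/2}\}\lesssim_q1$, uniformly in $\xi$, with no residual loss and no appeal to $\Upsilon$. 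This entry-by-entry verification (the analogues of \eqref{eq:mxi_bds_cl}--\eqref{eq:mL_bds_cl}) is precisely the ``combinatorial burden'' you defer, and it has to be carried out rather than delegated to the weight $\Upsilon$. Two smaller imprecisions: the defining condition of $\mathcal{O}_t$ is $\XX\cdot\VV\ge-(\xi^2+\lambda^2)^{1/2}$, which does not coincide with the region $\{\X\cdot\V>-\xi\ip{\kappa}\}=\{\X\cdot\V>-(4\xi^2+\lambda^2)^{1/2}\}$ of Lemma \ref{lem:betterdl} but is contained in it (which is all you need); and the blanket claim that $\aabs{\XX}^k\abs{\F}$ is $\lesssim\ip{t}^{-5/4}$ for all $k\ge0$ fails at $k=2$, where one only gets the borderline $\eps^2\ln\ip{t}\,\ip{t}^{-1}$ rate that you correctly route into the $\mathcal{U}'$ equation.
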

\begin{proof}

 As note at the end of Section \ref{ssec:mfg}, the coefficients $\m_{fg}'$ can be deduced easily from $\m_{fg}$, and thus from the proof of Lemma \ref{lem:farprop} we directly have the bounds
\begin{equation}
\begin{aligned}
 \abs{\m'_{\xi\xi}}&\lesssim \abs{\F}\cdot\xi/q[\vert\XX\vert+tq\xi^{-1}]+ (\abs{\E}+\aabs{\dot\W})\xi/q\cdot[1+t\xi\vert\XX\vert^{-2}],\\
 \abs{\m'_{\xi\eta}}&=\abs{\F}\xi^4/q^2+(\abs{\E}+\aabs{\dot\W})\xi^2/q^2(\xi^4/q\vert\XX\vert^{-2}),\\
  \abs{\m'_{\xi\u^a}}&\lesssim \frac{\xi^2}{q}\left[\abs{\F}\vert\XX\vert+\abs{\E}+\aabs{\dot\W}\right],\\
 \abs{\m'_{\xi\lambda}}+ \abs{\m'_{\xi\L^a}} &\lesssim \frac{\xi^3}{q^2}\abs{\F}+(\abs{\E}+\aabs{\dot\W})\frac{\xi^5}{q^3}\frac{1}{\aabs{\XX}^2},\\
\end{aligned}
\end{equation}
as well as
\begin{equation}
\begin{aligned}
 \abs{\m'_{\eta\xi}} &\lesssim \abs{\F}\xi^{-2}[\aabs{\XX}^2+t^2q^2\xi^{-2}]+(\abs{\E}+\aabs{\dot\W})\xi^{-2}[\vert\XX\vert+tq\xi^{-1}+t^2q\vert\XX\vert^{-2}],\\
 \abs{\m'_{\eta\eta}}&=\abs{\m'_{\xi\xi}},\\
 \abs{\m'_{\eta\u^a}}&\lesssim [\abs{\F}\aabs{\XX}+\abs{\E}+\aabs{\dot\W}]\xi^{-1}[\aabs{\XX}+tq\xi^{-1}],\\
 \abs{\m'_{\eta\lambda}}+ \abs{\m'_{\eta\L^a}}&\lesssim \abs{\F}[\aabs{\widetilde{X}_3}+\aabs{\partial_\lambda\XX}]\xi^{-1}[\aabs{\XX}+tq\xi^{-1}]+(\abs{\E}+\aabs{\dot\W})\frac{1}{q}[1+t\xi\aabs{\XX}^{-2}],
\end{aligned}
\end{equation}
and
\begin{equation}
\begin{aligned}
 \abs{\m'_{\u^a\xi}}&\lesssim q^{-1}\left[\abs{\F}\aabs{\XX}+\vert\mathcal{E}\vert+\vert\dot{\mathcal{W}}\vert\right]\cdot \left[1+tq/(\xi\vert\XX\vert)\right],\\
 \abs{\m'_{\u^a\eta}}&\lesssim \frac{\xi^3}{q^2}\abs{\F}+(\abs{\E}+\aabs{\dot\W})\frac{\xi^5}{q^3}\frac{1}{\aabs{\XX}^2},\\
 \abs{\m'_{\u^a\u^b}}&\lesssim \frac{\xi}{q}[\abs{\F}\aabs{\XX}+\abs{\E}+\aabs{\dot\W}],\\
 \abs{\m'_{\u^a\lambda}}+ \abs{\m'_{\u^a\L^b}}&\lesssim \frac{\xi^2}{q^2}\left[\abs{\F}+[\abs{\E}+\aabs{\dot\W}]\cdot\aabs{\XX}^{-1}\right],
\end{aligned}
\end{equation}
and also
\begin{equation}
\begin{aligned}
 \abs{\m'_{\L^a\xi}}&\lesssim \xi^{-1}[\abs{\F}\aabs{\XX}+\abs{\E}+\aabs{\dot\W}][\aabs{\XX}+tq\xi^{-1}],\\
 \abs{\m'_{\L^a\eta}}&\lesssim \frac{\xi^2}{q}[\abs{\F}\aabs{\XX}+\abs{\E}+\aabs{\dot\W}],\\
 \abs{\m'_{\L^a\u^b}}&\lesssim [\abs{\F}\aabs{\XX}+\abs{\E}+\aabs{\dot\W}]\aabs{\XX},\\
 \abs{\m'_{\L^a\lambda}}+\abs{\m'_{\L^a\L^b}}&\lesssim \frac{\xi}{q}[\abs{F}\aabs{\XX}+\abs{\E}+\aabs{\dot\W}].\\
\end{aligned}
\end{equation}

Observing that in the close region we have $\xi^2/q\lesssim\aabs{\XX}\lesssim \ip{t}$, it follows that
 \begin{equation}\label{eq:mxi_bds_cl}
 \begin{aligned}
  \abs{\m'_{\xi\xi}}&\lesssim \abs{\F}\ip{t}^{\frac{3}{2}}+(\abs{\E}+\aabs{\dot\W})\bigg(q^{-\frac{1}{2}}\ip{t}^{\frac{1}{2}}+\frac{t\xi^2/q}{\aabs{\XX}^2}\bigg)\lesssim \eps^2\ip{t}^{-\frac{5}{4}}+\eps^2\ip{t}^{-1}\frac{\xi^2/q}{\aabs{\XX}^2},\\
  \abs{\m'_{\xi\eta}}\frac{\ww_\xi}{\ww_\eta}&= \abs{\m'_{\xi\eta}}\frac{(1+\xi)^2}{\xi^4}\lesssim \abs{\F}(1+\xi)^2+(\abs{\E}+\aabs{\dot\W})\frac{(1+\xi)^2\xi^2}{\aabs{\XX}^2}\lesssim\eps^2\ip{t}^{-\frac32}+\eps^2\ip{t}^{-2}\frac{\xi^2/q}{\aabs{\XX}^2},\\
  \abs{\m'_{\xi\L}}\frac{\ww_\xi}{\ww_\L}&=\abs{\m'_{\xi\L}}\frac{(1+\xi)^2}{\xi^3}\lesssim \abs{\F}(1+\aabs{\XX})+\frac{(1+\xi)^2\xi^2/q}{\vert\XX\vert^2}(\abs{\E}+\aabs{\dot\W})\lesssim\eps^2\ip{t}^{-\frac32}+\eps^2\ip{t}^{-2}\frac{\xi^2/q}{\aabs{\XX}^2},\\
  \abs{\m'_{\xi\u}}\frac{\ww_\xi}{\ww_\u}&= \abs{\m'_{\xi\u}}\frac{(1+\xi)}{\xi^2}\lesssim(1+\xi)[\abs{\F}\aabs{\XX}+\abs{\E}+\aabs{\dot\W}]\lesssim\eps^2\ip{t}^{-\frac54},
 \end{aligned} 
 \end{equation}
 and this suffices, since by Lemma \ref{lem:Xintegral} and $\xi^2/q\lesssim\aabs{\XX}\lesssim\ip{t}$ we have that
 \begin{equation}
  \int_0^t\ip{\tau}^{-1}\frac{\xi^2/q}{\aabs{\X(\vartheta+\tau\a,\a)}^2}d\tau\lesssim q^{-1}\min\{\xi,\xi^{-\frac{1}{2}}\}\lesssim_q 1.
 \end{equation}
Similarly, there holds that
\begin{equation}\label{eq:mu_bds_cl}
\begin{alignedat}{2}
 \abs{\m'_{\u\xi}}\frac{\ww_\u}{\ww_\xi}=\abs{\m'_{\u\xi}}\frac{\xi^2}{1+\xi}&\lesssim \eps^2\langle t\rangle^{-1}\Big(\ip{t}^{-\frac{1}{4}}+\frac{\xi^\frac{1}{2}}{\vert\XX\vert}\Big),\\
  \abs{\m'_{\u\eta}}\frac{\ww_\u}{\ww_\eta}=\abs{\m'_{\u\eta}}\frac{1+\xi}{\xi^2}&\lesssim \eps^2\ip{t}^{-\frac{5}{4}}\Big(1+\frac{\xi^\frac{1}{2}}{\vert\XX\vert}\Big),\\
 \abs{\m'_{\u\u}}&\lesssim \eps^2\ip{t}^{-\frac{5}{4}},\\
 \abs{\m'_{\u\L}}\frac{\ww_\u}{\ww_\L}=\abs{\m'_{\u\L}}\frac{1+\xi}{\xi}&\lesssim\eps^2\ip{t}^{-\frac{5}{4}}
 \Big(1+\frac{\xi^\frac{1}{2}}{\vert\XX\vert}\Big),
\end{alignedat} 
\end{equation}
and hence the bounds for $\mathcal{G}'$ in \eqref{eq:B'G'_bds} are established.

Turning to $\mathcal{U}'$, we have that
\begin{equation}\label{eq:meta_bds_cl}
\begin{aligned}
 \abs{\m'_{\eta\xi}}\frac{\ww_\eta}{\ww_\xi}&=\abs{\m'_{\eta\xi}}\frac{\xi^4}{(1+\xi)^2}
 \lesssim \eps^2\Big(\ip{t}^{-1}\ln\ip{t}+\frac{\xi}{\aabs{\XX}^2}\Big),\\
 \abs{\m'_{\eta\eta}}&= \abs{\m'_{\xi\xi}},\\
 \abs{\m'_{\eta\u}}\frac{\ww_\eta}{\ww_\u}&=\abs{\m'_{\eta\u}}\frac{\xi^2}{1+\xi}\lesssim \left[\aabs{\F(\XX)}\aabs{\XX}+\aabs{\E(\XX)}+\aabs{\dot\W}\right](\aabs{\XX}+t)\lesssim \eps^2\ip{t}^{-1}\ln\ip{t},\\
 \abs{\m'_{\eta\L}}\frac{\ww_\eta}{\ww_\L}&=\abs{\m'_{\eta\L}}\xi\lesssim\varepsilon^2\langle t\rangle^{-1}\Big(\langle t\rangle^{-\frac{1}{4}}+\frac{q}{\vert\XX\vert^2}\Big),
\end{aligned} 
\end{equation}
and since in the $\Omega_t^{cl}$ we have $\frac{\xi}{q}\lesssim\ip{t}^{\frac12}$, there holds that
\begin{equation}\label{eq:mL_bds_cl}
\begin{aligned}
 \abs{\m'_{\L\xi}}\frac{\ww_\L}{\ww_\xi}&=\abs{\m'_{\L\xi}}\frac{\xi^3}{(1+\xi)^2}\lesssim \left[\aabs{\F(\XX)}\aabs{\XX}+\aabs{\E(\XX)}+\aabs{\dot\W}\right](\aabs{\XX}+t)\lesssim \eps^2\ip{t}^{-1}\ln\ip{t},\\
 \abs{\m'_{\L\eta}}\frac{\ww_\L}{\ww_\eta}&= \abs{\m'_{\L\eta}}\frac{1}{\xi}\lesssim\eps^2\ip{t}^{-\frac54},\\
 \abs{\m'_{\L\u}}\frac{\ww_\L}{\ww_\u}&=\abs{\m'_{\L\u}}\frac{\xi}{1+\xi}\lesssim \left[\aabs{\XX}\aabs{\F(\XX)}+\aabs{\E(\XX)}+\aabs{\dot\W}\right]\aabs{\XX}\lesssim\eps^2\ip{t}^{-1}\ln\ip{t},\\
 \abs{\m'_{\L\L}}&\lesssim\left[\aabs{\F(\XX)}\aabs{\XX}+\aabs{\E(\XX)}+\aabs{\dot\W}\right]\ip{t}^{\frac12}\lesssim\eps^2\ip{t}^{-\frac54}.
\end{aligned} 
\end{equation}
As above, the terms involving $\aabs{\XX}^{-2}$ will give bounded contributions by Lemma \ref{lem:Xintegral}, and we thus obtain the bounds for $\mathcal{U}'$ in \eqref{eq:B'G'_bds}.
\end{proof}

\section{Main theorem and asymptotic behavior}\label{sec:main-asympt}

We are now ready to state and prove our main theorem which asserts global existence of small perturbations, optimal decay of the electric field and convergence of the particle distribution along modified linear trajectories. In particular, this implies Theorem \ref{thm:main_rough}.

\begin{theorem}\label{MainThm}

There exists $\varepsilon_\ast>0$ such that the following holds for all $0<\varepsilon_0<\varepsilon_\ast$. 
\begin{enumerate}[wide]
\item Given $\gamma_0\in L^2\cap L^\infty$ satisfying the smallness assumptions \eqref{eq:mom_id} for moments as well as the derivative bounds
\begin{equation}\label{eq:efield_optimal}
 \nnorm{(\xi^{5}+\xi^{-8})\sum_{f\in \textnormal{SIC}}\ww_f\aabs{\{f,\gamma_0\}}}_{L^\infty_{\vartheta,\a}}\lesssim\eps_0,
\end{equation}
there exists a unique, global $C^1$ solution $\gamma$ of \eqref{NewNLEq} satisfying the moment bounds \eqref{eq:gl_mom_bds} of Theorem \ref{thm:global_moments} as well as the slow growth of derivatives
\begin{equation}\label{eq:derivs_MainThm}
 \nnorm{\mathcal{D}(t)}_{L^\infty_{\vartheta,\a}}\lesssim \eps_0\ln^2\ip{t}
\end{equation}
with $\mathcal{D}(t)$ as in Proposition \eqref{prop:global_derivs}. The associated electric field decays at an optimal rate,
\begin{equation}\label{OptimalDecayEFMainThm}
\left[t^2+\vert\y\vert^2\right]\vert\mathcal{E}({\bf y},t)\vert\lesssim\varepsilon_0^2,
\end{equation}
and there exist an asymptotic field $\mathcal{E}^\infty$ that converges along linearized trajectories
\begin{equation}\label{ConvergenceEFMainThm}
\begin{split}
\Vert \langle \a\rangle^2(t^2\mathcal{E}(t\a,t)-\mathcal{E}^\infty({\bf a}))\Vert_{L^\infty(\mathcal{P}_{\vartheta,\a})}&\lesssim\varepsilon_0^4\langle t\rangle^{-1/10},
\end{split}
\end{equation}
and a limit gas distribution $\gamma_\infty$ that converges along a modified scattering dynamic
\begin{equation}\label{PDFConverges}
\Vert \gamma(\vartheta+\ln(t)\left[Q\mathcal{E}^\infty(\a)+\mathcal{Q}\mathcal{E}^\infty(0)\right],\a,t)-\gamma_\infty(\vartheta,\a)\Vert_{L^\infty_{\vartheta,\a}\cap L^2_{\vartheta,\a}}\lesssim \varepsilon_0\ip{t}^{-1/30}.
\end{equation}

\item As a consequence, given any $(\mathcal{X}_0,\mathcal{V}_0)\in\mathbb{R}^3_{\bf x}\times\mathbb{R}^3_{\bf v}$ and $\mu_0\in L^2\cap L^\infty$ such that the function
\begin{equation}
 \gamma_0(\vartheta,\a):=\mu_0(\X(\vartheta,\a)+\mathcal{X}_0,\V(\vartheta,\a)+\mathcal{V}_0)
\end{equation}
satisfies the assumptions of part $(1)$, there exists a unique, global $C^1$ solution of \eqref{VPPC} with $(\mathcal{X}(0),\mathcal{V}(0))=(\mathcal{X}_0,\mathcal{V}_0)$. The associated electric field decays at the optimal rate \eqref{eq:efield_optimal} and the point charge has an almost free asymptotic dynamic: There exist $(\mathcal{X_\infty},\mathcal{V}_\infty)\in\R^3_\x\times\R^3_\v$ such that
\begin{equation}\label{AsymptoticsXV}
\begin{split}
\mathcal{X}(t)=\mathcal{X}_\infty+\mathcal{V}_\infty t-\mathcal{Q}\mathcal{E}^\infty(0)\ln(t)+O(t^{-1/10}),\qquad\mathcal{V}(t)=\mathcal{V}_\infty-\frac{\mathcal{Q}}{t}\mathcal{E}^\infty(0)+O(t^{-1/10}).
\end{split}
\end{equation}
Moreover, $\mu(t)$ converges to $\gamma_\infty$ as $t\to +\infty$
\begin{equation}\label{eq:mu_mod_scat}
 \mu({\bf Y}({\bf x},{\bf v},t),{\bf W}({\bf x},{\bf v},t), t)\to \gamma_\infty({\bf x},{\bf v}),\qquad t\to\infty,
\end{equation}
along the modified trajectories
\begin{equation}\label{eq:mu_mod_traj}
\begin{aligned}
 {\bf Y}({\bf x},{\bf v},t)&=\left(at-\frac{1}{2}\frac{q}{a^2}\ln(ta^3/q)+\ln(t)[Q\mathcal{E}^\infty(\a)+\mathcal{Q}\mathcal{E}^\infty(0)]\right)\cdot \frac{q^2}{4a^2L^2+q^2}\left(\frac{2}{q}{\bf R}+\frac{4a}{q^2}{\bf L}\times{\bf R}\right)\\
 &\qquad+
 \mathcal{V}_\infty t+\mathcal{Q}\mathcal{E}^\infty(0)\ln(t)+O(1),\\
{\bf W}({\bf x},{\bf v},t)&=a\left(1-\frac{q}{2ta^3}\right)\cdot \frac{q^2}{4a^2L^2+q^2}\left(\frac{2}{q}{\bf R}+\frac{4a}{q^2}{\bf L}\times{\bf R}\right)+\mathcal{V}_\infty+O(\ln(t)t^{-2}).
\end{aligned}
\end{equation}

\end{enumerate}

\end{theorem}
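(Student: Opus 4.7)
The strategy is to chain together the results of the previous sections: first secure global existence and the bounds on moments, derivatives, and the electric field, then derive the modified scattering for $\gamma$ from an asymptotic shear equation, and finally translate back to $(\mu,\mathcal{X},\mathcal{V})$.

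\textbf{Step 1: Global solution and optimal decay of $\mathcal{E}$.} Since the hypotheses \eqref{eq:mom_id} and \eqref{eq:efield_optimal} are exactly those of Theorem \ref{thm:global_moments} and Proposition \ref{prop:global_derivs}, a standard continuity argument combining the two bootstraps yields a unique global $C^1$ solution $\gamma$ to \eqref{NewNLEq} satisfying \eqref{eq:gl_mom_bds} and \eqref{eq:derivs_MainThm}. The derivative control gives $N_2\lesssim \varepsilon_0^2 \ln^{C}\ip{t}$ in Proposition \ref{PropControlEF}, and together with Proposition \ref{PropEeff}$(ii)$ this upgrades the almost optimal bound \eqref{AlmostSharpDecayEF} to the optimal decay \eqref{OptimalDecayEFMainThm}. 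The convergence \eqref{ConvergenceEFMainThm} of $t^2\mathcal{E}(t\a,t)$ to an asymptotic profile $\mathcal{E}^\infty=\nabla\Psi^\infty$ then follows from Proposition \ref{PropEeff}$(iii)$, specifically \eqref{AsymptoticEFCCL}--\eqref{AsymptoticEFCCL2}, combined with Proposition \ref{PropControlEF} to transfer bounds from $\mathcal{E}^{eff}$ to $\mathcal{E}$ itself.

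\textbf{Step 2: The asymptotic shear equation for $\gamma$.} Set $\mathbf{C}(\a):=Q\mathcal{E}^\infty(\a)+\mathcal{Q}\mathcal{E}^\infty(0)$ and define
\[
 h(\vartheta,\a,t):=\gamma(\vartheta+\ln(t)\mathbf{C}(\a),\a,t).
\]
Since $\{F(\a),\gamma\}=-\nabla_\a F\cdot\nabla_\vartheta\gamma$, taking $F_\infty(\a):=Q\Psi^\infty(\a)+\mathcal{Q}\mathcal{E}^\infty(0)\cdot\a$ and using \eqref{NewNLEq} a direct computation gives
\[
 \partial_t h(\vartheta,\a,t)=-\bigl\{\mathbb{H}_4-t^{-1}F_\infty,\,\gamma\bigr\}\bigr|_{\vartheta\mapsto\vartheta+\ln(t)\mathbf{C}(\a)}.
\]
The plan is to show that the right-hand side is integrable in $t$. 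Recall $\mathbb{H}_4=Q\psi(\widetilde{\bf X})+\mathcal{W}\cdot\widetilde{\bf V}$. Inside the bulk $\mathcal{B}$ of \eqref{eq:bulk}, \eqref{AsymptoticEFCCL2} and Lemma \ref{ImprovedBoundsOnXBulk} yield
\[
 Q\psi(\widetilde{\bf X},t)=t^{-1}Q\Psi^\infty(\a)+O(t^{-11/10}),\qquad \widetilde{\bf V}=\a+O(t^{-1}),
\]
while a separate analysis of $\dot{\mathcal{W}}=\mathcal{Q}\mathcal{E}(0,t)$ together with \eqref{ConvergenceEFMainThm} produces
\[
 \mathcal{W}(t)=-t^{-1}\mathcal{Q}\mathcal{E}^\infty(0)+O(t^{-11/10}),
\]
so that $\mathbb{H}_4-t^{-1}F_\infty=O(t^{-11/10})$ on $\mathcal{B}$. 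Taking the Poisson bracket with $\gamma$ and invoking the kinematic bracket bounds of Lemma \ref{ZPBFormula} together with the derivative control \eqref{eq:derivs_MainThm} (which only grows like $\ln^2\ip{t}$), we obtain $\partial_t h=O(t^{-11/10}\ln^3\ip{t})$ on $\mathcal{B}$. Outside the bulk, the prefactors $\mathfrak{1}_{\mathcal{B}^c}$ are exchanged for inverse powers of $t$ using \eqref{eq:nonbulk-bds} at the cost of large, but controlled, moments of $\gamma$; this yields a time-integrable upper bound. Integrating from $t$ to infinity then gives the existence of a pointwise and $L^2\cap L^\infty$ limit $\gamma_\infty$ with the rate in \eqref{PDFConverges}.

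\textbf{Step 3: Point charge asymptotics and transfer to physical variables.} With the refined bound on $\mathcal{E}(0,s)$ from Step 1, the identity $\mathcal{W}(t)=-\int_t^\infty\mathcal{Q}\mathcal{E}(0,s)\,ds$ (from $\mathcal{W}(\infty)=0$ and \eqref{DerW}) gives the second asymptotic in \eqref{AsymptoticsXV}, and integrating $\mathcal{V}(t)=\mathcal{V}_\infty+\mathcal{W}(t)$ once more from $0$ to $t$ produces the logarithmically corrected trajectory $\mathcal{X}(t)$ with a well-defined asymptotic shift $\mathcal{X}_\infty$. To obtain the modified scattering \eqref{eq:mu_mod_scat}--\eqref{eq:mu_mod_traj} in physical variables, compose the convergence of $\gamma$ along the $\ln(t)\mathbf{C}(\a)$-shift with the change of variables $(\x,\v)\mapsto(\vartheta,\a)=\mathcal{T}(\x,\v)$ of Proposition \ref{PropAA}, followed by the free-streaming unwinding \eqref{DefPhiFreeStreaming} and the translation by $(\mathcal{X}(t),\mathcal{V}(t))$ from Section \ref{sec:red_gas}. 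The explicit formulas \eqref{ExpressionsX}--\eqref{ExpressionsV} for $(\X,\V)$ together with the bulk expansion in Remark \ref{rem:XVexpansion} produce exactly the maps $({\bf Y},{\bf W})$ in \eqref{eq:mu_mod_traj}.

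\textbf{Main obstacle.} Step 2 is the heart of the argument: one must propagate \emph{logarithmically growing} derivative control through the weighted Poisson brackets while simultaneously exploiting $t^{-1-\delta}$ decay of the coefficients in $\mathbb{H}_4-t^{-1}F_\infty$. In particular, the nonuniform approximation of $\widetilde{\bf V}$ by $\a$ outside the bulk, and the need to correctly match the $t^{-1}\mathcal{Q}\mathcal{E}^\infty(0)$ tail of $\mathcal{W}(t)$ to the $\mathcal{Q}\mathcal{E}^\infty(0)$ contribution in $\mathbf{C}(\a)$, require a careful decomposition into bulk/non-bulk and close/far regions exactly as in Section \ref{SecBootstrap}. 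Everything else reduces to applying results already established.
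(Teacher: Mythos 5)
Your Steps 1 and 3 follow the paper's proof essentially verbatim, and your Step 2 is the paper's asymptotic shear argument (your $h$ is the paper's $\sigma=\gamma\circ s_\infty^{-1}$). However, Step 2 as written has a genuine gap in the treatment of the complement of the bulk. You propose to bound $\partial_t h$ on $\mathcal{B}^c$ by trading the indicator for decay via \eqref{eq:nonbulk-bds} ``at the cost of large, but controlled, moments of $\gamma$''. This cannot work: the right-hand side of the equation for $\partial_t h$ involves only \emph{derivatives} of $\gamma$ (through $\{\widetilde{\bf Z},\gamma\}$, $\{\VV,\gamma\}$, $\{\a,\gamma\}$), and the propagated derivative control \eqref{eq:global_derivs} carries only $\xi$-weights, no moments in $a,\eta,\lambda$; meanwhile the general (non-bulk) kinematic bounds \eqref{GeneralBoundZBracket} contain factors like $t(\xi^{-1}+\xi^{-3})+\vert\eta\vert^2+\lambda^2$, so the $t^{-k}$ gain from \eqref{eq:nonbulk-bds} has nothing to pay for it, and even the best available bound is of size $\vert\mathcal{E}\vert\cdot t\cdot\mathcal{D}\sim t^{-1}\ln^2 t$, which is not time-integrable. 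The paper avoids this entirely: it only proves time-integrability of $\mathfrak{1}_{\mathcal{B}}\partial_t\sigma$, and handles $\mathcal{B}^c$ by observing that $\Vert\sigma\mathfrak{1}_{\mathcal{B}^c}\Vert_{L^\infty}$ is already $O(\eps_0 t^{-})$ directly from the moment bounds \eqref{eq:gl_mom_bds} combined with \eqref{eq:nonbulk-bds} applied to $\gamma$ itself, so the limit $\gamma_\infty$ is determined by the bulk alone. Your argument should be repaired in the same way.

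A second, smaller point: you phrase the bulk estimate as ``$\mathbb{H}_4-t^{-1}F_\infty=O(t^{-11/10})$ on $\mathcal{B}$, then take the Poisson bracket with $\gamma$''. Pointwise smallness of a Hamiltonian does not control its symplectic gradient, so this step must be done structurally, as in the paper: write the bracket of the difference as $-Q\mathcal{E}_j(\XX,t)\{\widetilde{\bf Z}^j,\gamma\}-(tQ\mathcal{E}_j+\mathcal{W}_j)\{\VV^j-\a^j,\gamma\}$ plus a field-difference term multiplying $\{\a^j,\gamma\}$, and estimate each piece with the improved bulk bounds \eqref{PrecisedBoundZBracketBulk}, the optimal field decay, \eqref{AsymptoticEFCCL2} and \eqref{AsymptoticXVCCL}, together with \eqref{eq:PBcl_byfar_bulk} to convert close-region derivatives. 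You cite Lemma \ref{ZPBFormula}, so the ingredients are present, but the cancellation must be exhibited at the level of brackets, not of the scalar Hamiltonian; relatedly, with $\mathcal{W}(t)\to-\mathcal{Q}t^{-1}\mathcal{E}^\infty(0)$ the asymptotic Hamiltonian is $t^{-1}[Q\Psi^\infty(\a)-\mathcal{Q}\mathcal{E}^\infty(0)\cdot\a]$, so your $F_\infty$ as written (with a plus sign on the $\mathcal{Q}$ term) leaves an uncancelled $2\mathcal{Q}t^{-1}\mathcal{E}^\infty(0)\cdot\a$ contribution, and the sign bookkeeping in your formula for $\partial_t h$ needs to be fixed for the cancellation to close.
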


\subsection{Proof of Theorem \ref{MainThm}}

\begin{proof}[Proof of Part $(1)$]
In view of the hypothesis, from Theorem \ref{thm:global_moments}  we obtain a global solution $\gamma(t)$ with the required moment bounds, and Propostion \ref{prop:global_derivs} provides the claimed derivative control \eqref{eq:derivs_MainThm}.
Optimal decay \eqref{eq:efield_optimal} and the convergence \eqref{ConvergenceEFMainThm} for the electric field then follow from $(ii)$ and $(iii)$ of Proposition \ref{PropEeff}, respectively.

\medskip
We deduce next the motion of the point charge. Starting from the fact that
\begin{equation*}
\begin{split}
\dot{\mathcal{W}}(t)=\mathcal{Q}\mathcal{E}(0,t),\qquad\mathcal{W}(t)\to_{t\to\infty}0,
\end{split}
\end{equation*}
we obtain  $\mathcal{V}_\infty\in\R^3$ for which
\begin{equation}
 \dot{\mathcal{X}}=\mathcal{V}_\infty+\mathcal{W}(t),
\end{equation}
and from \eqref{ConvergenceEFMainThm} and integration we deduce that there exists $\mathcal{X}_\infty$ such that
\begin{equation}\label{AsymptoticXVCCL}
\begin{split}
\vert\mathcal{W}(t)+\frac{\mathcal{Q}}{t}\mathcal{E}^\infty(0)\vert&\le \varepsilon_0^4\ip{t}^{-11/10},\qquad\vert\mathcal{X}(t)-\mathcal{V}_\infty t+\mathcal{Q}\mathcal{E}^\infty(0)\ln(t)-\mathcal{X}_\infty\vert\le\varepsilon_0^4\ip{t}^{-1/10}.
\end{split}
\end{equation}
This gives \eqref{AsymptoticsXV}, and informs the derivation of the modified scattering dynamic for the gas distribution, which we give next.

Since we have convergence of the electric field, we can now integrate the leading order evolution of the gas distribution. We do this in the far formulation of the equations. Motivated by the above convergence and the expression for the Hamiltonian \eqref{NewNLEq}, we introduce the asymptotic Hamiltonian
\begin{equation*}
\begin{split}
\mathbb{H}_4^\infty:=\frac{1}{t}\left[Q\Psi^\infty(\a)-\mathcal{Q}\mathcal{E}^\infty(0)\cdot\a\right].
\end{split}
\end{equation*}
The flow of $\mathbb{H}_4^\infty$ can be integrated explicitly: its characteristics simply \emph{shear} with the diffeomorphism
\begin{equation}
 s_\infty:(\vartheta,\a)\mapsto(\vartheta-\ln(t)\left[Q\mathcal{E}^\infty(\a)+\mathcal{Q}\mathcal{E}^\infty(0)\right],\a).
\end{equation}
To account for this dynamic, we thus introduce
\begin{equation*}
\begin{split}
\sigma:=\gamma\circ s_\infty^{-1},\qquad \sigma(\vartheta,\a,t):=\gamma(\vartheta+\ln(t)\left[Q\mathcal{E}^\infty(\a)+\mathcal{Q}\mathcal{E}^\infty(0)\right],\a,t),
\end{split}
\end{equation*}
which satisfies
\begin{equation}\label{eq:dtsigma}
\begin{aligned}
\partial_t\sigma&=(\partial_t\gamma+\{\mathbb{H}_4^\infty,\gamma\})\circ s_\infty^{-1}\\
&=\left(-\{\mathbb{H}_4,\gamma\}+\{\mathbb{H}_4^\infty,\gamma\}\right)\circ s_\infty^{-1}\\
&=\left(-Q\mathcal{E}_j(\XX,t)\{\XX^j,\gamma\}-\W_j(t)\{\VV^j,\gamma\}+t^{-1}(Q\mathcal{E}_j^\infty(a)-\mathcal{Q}\mathcal{E}_j^\infty(0))\{\a^j,\gamma\}\right)\circ s_\infty^{-1} \\
&=-Q\mathcal{E}_j(\XX,t)\{\widetilde{\bf Z}^j,\gamma\}\circ s_\infty^{-1}-(tQ\mathcal{E}_j(\XX,t)+\mathcal{W}_j(t))\{\VV^j-\a^j,\gamma\}\circ s_\infty^{-1}\\
&\quad +\left(t^{-1}Q(\mathcal{E}^\infty_j(\a)-t^2\mathcal{E}_j(\widetilde{\bf X},t))+(\mathcal{Q}t^{-1}\mathcal{E}^\infty(0)+\mathcal{W}_j(t))\right)\{\a^j,\gamma\}\circ s_\infty^{-1}.
\end{aligned}
\end{equation}
In the bulk, these terms can be bounded as follows: By the bounds \eqref{PrecisedBoundZBracketBulk} in Lemma \ref{ZPBFormula} and \eqref{OptimalDecayEFMainThm}, we have that
\begin{equation}
\begin{aligned}
 \aabs{\mathfrak{1}_{\mathcal{B}}Q\mathcal{E}_j(\XX,t)\{\widetilde{\bf Z}^j,\gamma\}}+\aabs{\mathfrak{1}_{\mathcal{B}}(tQ\mathcal{E}_j(\XX,t)+\mathcal{W}_j(t))\{\VV^j-\a^j,\gamma\}}&\lesssim \eps_0^2\ip{t}^{-2}\cdot \ip{\xi}^3\ln\ip{t}\mathfrak{1}_{\mathcal{B}}\aabs{\mathcal{D}[\gamma](t)}\\
 &\lesssim \eps_0^3\ip{t}^{-2}\ln^3\ip{t},
\end{aligned} 
\end{equation}
where in the last inequality we have used \eqref{eq:PBcl_byfar_bulk} to control the contribution from the derivatives of $\gamma$ in the close region. Similarly, using also \eqref{AsymptoticEFCCL} and \eqref{AsymptoticXVCCL} we obtain that
\begin{equation}
\begin{aligned}
 &\mathfrak{1}_{\mathcal{B}}\abs{t^{-1}Q(\mathcal{E}^\infty_j(\a)-t^2\mathcal{E}_j(\widetilde{\bf X},t))+(\mathcal{Q}t^{-1}\mathcal{E}^\infty(0)+\mathcal{W}_j(t))}\aabs{\{\a^j,\gamma\}}\lesssim \eps_0^4\ip{t}^{-11/10}\cdot\eps_0\ln^2\ip{t},
\end{aligned}
\end{equation}
where we used \eqref{eq:PBcl_byfar_bulk} to bound $\mathfrak{1}_{\mathcal{B}\cap\Omega_t^{cl}}\aabs{\{\a^j,\gamma\}}\lesssim (1+\xi^{-1})(\xi^2+\xi^{-3})\aabs{\mathcal{D}[\gamma'](t)}$ in the close region. Altogether we thus see from the last two lines of \eqref{eq:dtsigma} that 
\begin{equation*}
\begin{split}
\Vert \mathfrak{1}_{\mathcal{B}}\partial_t\sigma\Vert_{L^\infty_{\vartheta,\a}}&\lesssim\varepsilon_0^3 t^{-21/20}.
\end{split}
\end{equation*}
Upon mollification it follows that $\mathfrak{1}_{\mathcal{B}}\sigma$ converges in $L^\infty$ to a limit $\gamma_\infty$ as $t\to\infty$. 
We thus have that
\begin{equation}
 \norm{\gamma(\vartheta+\ln(t)\left[Q\mathcal{E}^\infty(\a)+\mathcal{Q}\mathcal{E}^\infty(0)\right],\a,t)-\gamma_\infty(\vartheta,\a)}_{L^\infty_{\vartheta,\a}}\leq \norm{\sigma\mathfrak{1}_{\mathcal{B}^c}}_{L^\infty_{\vartheta,\a}}+\norm{\sigma\mathfrak{1}_{\mathcal{B}}-\gamma_\infty}_{L^\infty_{\vartheta,\a}}\lesssim \eps_0\ip{t}^{-21/20},
\end{equation}
where outside of the bulk we used \eqref{eq:nonbulk-bds} and \eqref{eq:gl_mom_bds}. Convergence in $L^2$ then follows from interpolation with the conserved $L^1$ norm of $\gamma$.

\end{proof}

Part $(2)$ now follows with relative ease:
\begin{proof}[Proof of Part $(2)$]
 It remains to establish the expression for the modified trajectories \eqref{eq:mu_mod_traj} and the convergence of $\mu$ \eqref{eq:mu_mod_scat} along them, the rest of Part $(2)$ follows directly from Part $(1)$. 
 
 By construction, from \eqref{NewNLUnknown} and \eqref{NewVariables} we have that
 \begin{equation}
  \gamma(\vartheta,\a,t)=\nu(\X(\vartheta+t\a,\a),\V(\vartheta+t\a,\a),t)=\mu(\X(\vartheta+t\a,\a)+\mathcal{X}(t),\V(\vartheta+t\a,\a)+\mathcal{V_\infty},t).
 \end{equation}
Hence $\mu(t)$ converges along the trajectories
\begin{equation}
\begin{aligned}
 {\bf Y}(\x,\v,t)&=\X(\vartheta+\ln(t)[Q\mathcal{E}^\infty(\a)+\mathcal{Q}\mathcal{E}^\infty(0)]+t\a,\a)+\mathcal{X}(t)\\
 &=\left(at-\frac{1}{2}\frac{q}{a^2}\ln(ta^3/q)+\ln(t)[Q\mathcal{E}^\infty(\a)+\mathcal{Q}\mathcal{E}^\infty(0)]\right)\cdot \frac{q^2}{4a^2L^2+q^2}\left(\frac{2}{q}{\bf R}+\frac{4a}{q^2}{\bf L}\times{\bf R}\right)\\
 &\qquad+
\mathcal{V}_\infty t+\mathcal{Q}\mathcal{E}^\infty(0)\ln(t)+O(1),\\
 {\bf W}(\x,\v,t)&=\V(\vartheta+\ln(t)[Q\mathcal{E}^\infty(\a)+\mathcal{Q}\mathcal{E}^\infty(0)]+t\a,\a)+\mathcal{V_\infty}\\
 &=a\left(1-\frac{q}{2ta^3}\right)\cdot \frac{q^2}{4a^2L^2+q^2}\left(\frac{2}{q}{\bf R}+\frac{4a}{q^2}{\bf L}\times{\bf R}\right)+\mathcal{V}_\infty+O(\ln(t)t^{-2}),
\end{aligned} 
\end{equation}
where we have used Remark \ref{rem:XVexpansion}.
\end{proof}

\subsection*{Acknowledgments}\vfill
B.\ Pausader is supported in part by NSF grant DMS-1700282 and by a Simons fellowship. K.\ Widmayer gratefully acknowledges support of the SNSF through grant PCEFP2\_203059. J.\ Yang was supported by NSF grant DMS-1929284 while in residence 
at ICERM (Fall 2021-Spring 2022). This work was partly carried out while the authors where participating at the ICERM program on ``Hamiltonian methods in dispersive and wave equations'' and they gratefully acknowledge the hospitality of the institute. B.\ Pausader  also thanks P.\ G\'erard, P.\ Rapha\"{e}l and M.\ Rosenzweig for stimulating and informative conversations on the Vlasov-Poisson equation and its symplectic structure.

\appendix
\section{Auxiliary results}\label{sec:appdx_trans}
\begin{lemma}\label{lem:trans_full}
Under the assumptions of Proposition \ref{prop:global_derivs} (where in particular $\ip{t}\abs{\mathcal{W}(t)}\lesssim \eps^2$) and with outgoing (resp.\ incoming) asymptotic actions, on the close region $\Omega_t^{cl}$ there holds that for some $C>0$
\begin{equation}\label{EquivXiXi'App}
 C^{-1}\xi'\leq\xi\leq C\xi',
\end{equation}
and for any scalar function $\zeta$ we have
 \begin{equation}\label{TransitionMapBoundsApp}
 \begin{aligned}
  \ww_{\xi'}\aabs{\{\xi',\zeta\}}&\lesssim \ww_{\xi}\aabs{\{\xi,\zeta\}}+\eps^2(\langle t\rangle^{-1/2}+\aabs{\XX}^{-1})\mathcal{D}[\zeta](t),\\
  \ww_{\L'}\aabs{\{\L',\zeta\}}&\lesssim \ww_{\L}\aabs{\{\L,\zeta\}}+\eps^2\mathcal{D}[\zeta](t),\\
  \aabs{\{\u',\zeta\}}&\lesssim \xi^{-3}\ww_{\bf L}\aabs{\{\L,\zeta\}}+(1+\eps^2\xi^{-1}+\eps^2\aabs{\XX}^{-1})\mathcal{D}[\zeta](t),\\
  \ww_{\eta'}\aabs{\{\eta',\zeta\}}&\lesssim\ww_\eta\aabs{\{\eta,\zeta\}}+\ww_{\L}\aabs{\{\L,\zeta\}}+(1+\xi)\ip{t}\aabs{\XX}^{-1}\ww_\xi\aabs{\{\xi,\zeta\}}+\eps^2(1+\ip{t}\aabs{\XX}^{-1})\mathcal{D}[\zeta](t).
 \end{aligned} 
 \end{equation} 
The analogous estimates hold with the roles of the primed and unprimed variables interchanged. 
\end{lemma}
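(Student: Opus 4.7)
The equivalence \eqref{EquivXiXi'App} is the starting point. From Lemma \ref{lem:prime_bds} one has $q|\xi-\xi'|/(\xi\xi')\lesssim|\mathcal{W}(t)|$, and by Proposition \ref{PropEeff}(ii) the bootstrap assumptions force $|\mathcal{W}(t)|\lesssim\eps^2\ip{t}^{-1}$. Combined with the close-region bound $\xi\lesssim\sqrt{q\ip{t}}$ coming from $\xi^2/q\lesssim|\widetilde{\bf X}|\lesssim\ip{t}$ (via \eqref{CrudeBoundX}), this yields $|\xi-\xi'|/\xi\lesssim\eps^2\ip{t}^{-1/2}$, and hence the uniform comparability $\xi\sim\xi'$ (for $\eps$ small). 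In particular the weights $\ww_{\xi'}, \ww_{\eta'}, \ww_{\L'}$ of \eqref{eq:def_weights} are comparable to their unprimed analogues on $\Omega_t^{cl}$.

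The general strategy for the Poisson bracket estimates in \eqref{TransitionMapBoundsApp} is to split
\[
\{f',\zeta\}=\{f,\zeta\}+\{f'-f,\zeta\},\qquad f\in\textnormal{SIC},
\]
compute $f'-f$ explicitly from the fact that $\mathcal{M}_t^{-1}$ corresponds to the shift $\v\mapsto\v-\mathcal{W}(t)$ in physical coordinates, and bound the error term using the Leibniz rule together with the kinematic Poisson bracket bounds established in Section \ref{sec:PB_bounds}. For $f=\xi$ one has
\[
\xi'-\xi=-q\frac{2\mathcal{W}\cdot\widetilde{\V}-|\mathcal{W}|^2}{aa'(a+a')},
\]
so that by the Leibniz rule $\{\xi'-\xi,\zeta\}$ is a linear combination of $\{\widetilde{\V},\zeta\}$ and $\{a,\zeta\}=-(a^2/q)\{\xi,\zeta\}$ with coefficients of size $O(|\mathcal{W}|\xi^3/q^2)$ and $O(|\mathcal{W}|^2\xi^4/q^3)$ respectively. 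The bound \eqref{GeneralBoundZBracket} on $\{\widetilde{\V},\zeta\}$, combined with $|\mathcal{W}|\lesssim\eps^2\ip{t}^{-1}$ and the close-region constraints, produces the required $\eps^2(\ip{t}^{-1/2}+|\widetilde{\X}|^{-1})\mathcal{D}[\zeta]$ error after multiplication by $\ww_\xi=(1+\xi)/\xi^2$. The $\L'$ estimate is the cleanest: $\L'-\L=-\widetilde{\X}\times\mathcal{W}$, so $\{\L'-\L,\zeta\}=-\{\widetilde{\X},\zeta\}\times\mathcal{W}$, and the bound $|\{\widetilde{\X},\zeta\}|\lesssim|\widetilde{\X}|\,\mathcal{D}[\zeta]$ (from Lemma \ref{ZPBFormula} with crude coefficients) combined with $|\mathcal{W}||\widetilde{\X}|\lesssim\eps^2$ on $\Omega_t^{cl}$ yields the claimed $\eps^2\mathcal{D}[\zeta]$ error.

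For $\{\u',\zeta\}$, note that $\u'=\mathcal{A}'/a'$ with $\mathcal{A}'(\x,\v)=\mathcal{A}(\x,\v-\mathcal{W})$. Since $\u'$ is a unit vector close to $\u$, its deviation is essentially a rotation in the orbital plane, so we decompose $\u'-\u$ in the basis $\{\u,\L\times\u/\lambda\}$. The coefficient along $\L\times\u/\lambda$ (which is of order $|\mathcal{W}|/a$ by direct computation from \eqref{ExplicitA1}) produces, via the Leibniz rule, a term proportional to $\{\L,\zeta\}/\lambda$; since $1/\lambda$ is homogeneously of order $\xi^{-1}/\kappa$, and the kinematic prefactor costs further powers of $\xi^{-2}$ relative to $\ww_\L=\xi/(1+\xi)$, the net contribution is the $\xi^{-3}\ww_\L|\{\L,\zeta\}|$ term on the right-hand side. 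The remaining (non-rotational) components of $\u'-\u$ give ordinary contributions to $\mathcal{D}[\zeta]$ with small coefficients $\eps^2\xi^{-1}+\eps^2|\widetilde{\X}|^{-1}$ arising from $|\mathcal{W}(t)|$ times the kinematic bounds of Lemma \ref{ZPBFormula}.

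The main obstacle is the estimate for $\{\eta',\zeta\}$: from \eqref{eq:diffeta},
\[
\eta-\eta'=t\frac{(a')^3-a^3}{q}+\frac{a-a'}{q}(\widetilde{\X}\cdot\widetilde{\V})+\frac{a'}{q}(\widetilde{\X}\cdot\mathcal{W})+(\sigma\circ\Phi_t^{-1}\circ\mathcal{M}_t^{-1}-\sigma\circ\Phi_t^{-1}),
\]
and each of the four terms contributes a distinct type of error. The first two reduce to brackets of $\widetilde{\X}, \widetilde{\V}, a$ weighted by $t|\mathcal{W}|\lesssim\eps^2$, controlled by Lemma \ref{ZPBFormula}. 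The virial-type term $a'(\widetilde{\X}\cdot\mathcal{W})/q$ is the critical one: its Poisson bracket with $\zeta$ produces $a'\mathcal{W}\cdot\{\widetilde{\X},\zeta\}/q$, and resolving $\{\widetilde{\X},\zeta\}$ via \eqref{eq:U_resol} into the SIC basis generates precisely the three explicit terms $\ww_\eta|\{\eta,\zeta\}|$, $\ww_\L|\{\L,\zeta\}|$ and $(1+\xi)\ip{t}|\widetilde{\X}|^{-1}\ww_\xi|\{\xi,\zeta\}|$ (with the $(1+\xi)\ip{t}/|\widetilde{\X}|$ factor arising from the coefficient $\{\widetilde{\X},\eta\}/\{\widetilde{\X},\xi\}$ times $a'|\mathcal{W}|/q$). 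Finally, for the $\sigma$-difference, we apply the implicit equation \eqref{RelTauOut} and the derivative bounds of Lemma \ref{EstimRho}, noting that $\sigma$ depends smoothly on $(\eta,\kappa)$ while the argument shift is of size $|\mathcal{W}|(a+a')\ip{t}$; after tracking $\kappa$-weights via \eqref{BoundsOnSigma} this yields a contribution bounded by $\eps^2(1+\ip{t}|\widetilde{\X}|^{-1})\mathcal{D}[\zeta]$. The symmetric bounds (unprimed in terms of primed) follow by exchanging the roles of $\mathcal{W}$ and $-\mathcal{W}$ throughout, using that $\mathcal{M}_t$ is canonical with inverse of the same structure.
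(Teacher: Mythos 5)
Your skeleton matches the paper's (establish $\xi\sim\xi'$ from Lemma \ref{lem:prime_bds} and $\xi^2\lesssim q\aabs{\XX}\lesssim q\ip{t}$, split $f'-f$, use \eqref{eq:diffeta} for $\eta$), but the quantitative inputs you rely on are too weak, and this is not cosmetic. For the $\xi'$ and $\L'$ bounds you invoke the crude estimates \eqref{GeneralBoundZBracket} of Lemma \ref{ZPBFormula}; these do not exploit the restriction to outgoing (resp.\ incoming) asymptotic actions, which is precisely what makes the stated rates attainable. The paper first derives the refined resolutions \eqref{PBXZetaApp} of $\{\VV,\zeta\}$ and $\{\XX,\zeta\}$ from the improved $\lambda$-derivative bounds of Lemma \ref{lem:betterdl}. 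Concretely, with \eqref{GeneralBoundZBracket} the coefficient of $\aabs{\{\u,\zeta\}}$ is $O(1)$ rather than $q/\xi$, so the $\xi'$ error is of size $(1+\xi)\xi q^{-2}\aabs{\W(t)}\,\mathcal{D}[\zeta]$, which at the admissible endpoint $\xi\sim (q\ip{t})^{1/2}$ is only $O(\eps^2)\mathcal{D}[\zeta]$, not $O(\eps^2\ip{t}^{-1/2})\mathcal{D}[\zeta]$; likewise the crude $\L$-coefficient $\sim q\xi^{-2}$ carries no $\aabs{\XX}^{-2}$ decay and ruins the $\eps^2\aabs{\XX}^{-1}$ structure. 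Your intermediate claim $\aabs{\{\XX,\zeta\}}\lesssim\aabs{\XX}\,\mathcal{D}[\zeta]$ is false on $\Omega_t^{cl}$: the $\xi$-component of $\{\XX,\zeta\}$ carries a factor $\ip{t}$, which can vastly exceed $\aabs{\XX}$ in the close region; the $\L'$ conclusion still holds, but only because $\ip{t}\aabs{\W(t)}\lesssim\eps^2$, which is not the argument you wrote.

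Two further steps would fail as sketched. For $\{\u',\zeta\}$, decomposing $\u'-\u$ along $\{\u,\L\times\u/\lambda\}$ is singular as $\lambda\to0$ (near-radial trajectories) and ignores the out-of-plane component, since $\L'$ is not parallel to $\L$; the $1/\lambda$ coefficients you produce are not controlled by the right-hand side of \eqref{TransitionMapBoundsApp}. The paper avoids any division by $\lambda$ by passing through the Runge--Lenz vector, expressing $\frac{q}{2}(1+4(\kappa')^2)\{\u',\zeta\}$ in terms of $\{{\bf R}',\zeta\}$, $\{\xi',\zeta\}$, $\{\L',\zeta\}$; there the $\xi^{-3}\ww_\L\aabs{\{\L,\zeta\}}$ term is an $O(1)$ change-of-basis loss, not a $\aabs{\W}$-small remainder as you suggest. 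For $\{\eta',\zeta\}$ you miss the self-referential structure: $\sigma\circ\Phi_t^{-1}\circ\mathcal{M}_t^{-1}=\sigma(\eta'+tq^2(\xi')^{-3},\kappa')$ depends on $\eta'$ itself, so its bracket reintroduces $\{\eta',\zeta\}$ with coefficient $\partial_\eta\sigma$, which is bounded by $1/2$ but not small; one must absorb it into the left-hand side, as the paper does using $1+\partial_\eta\sigma\geq 1/2$ together with $\aabs{\partial_\eta\sigma}\lesssim q/(a^2\aabs{\XX})$ and the $\kappa$-derivative bounds. Smoothness of $\sigma$ plus the size of the argument shift controls the difference of values (that is Lemma \ref{lem:prime_bds}), not the Poisson bracket of that difference. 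You also drop the Leibniz term $\{a',\zeta\}(\XX\cdot\W)/q$ in the virial piece, which the paper estimates as part of $\{\delta A,\zeta\}$.
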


\begin{remark}\label{rem:transition_bds}
We highlight two consequences of these bounds:
\begin{enumerate}
 \item For the transition from close to far (resp.\ far to close), on $\Omega_t^{cl}\cap\Omega_t^{far}$ we have that $\ip{t}\leq\aabs{\XX}\leq 10\ip{t}$ and thus the bounds \eqref{eq:deriv-deriv-PB} from Lemma \ref{lem:trans_short} follow, i.e.\ we have that
 \begin{equation}
  \mathcal{D}[\gamma'](t)\lesssim (\xi +\xi^{-3})\mathcal{D}[\gamma](t),\qquad \mathcal{D}[\gamma](t)\lesssim (\xi +\xi^{-3})\mathcal{D}[\gamma'](t).
 \end{equation}
 \item On $\Omega_t^{cl}$ we have the following control of Poisson brackets of $\gamma$ in terms of those of $\gamma'$:
 \begin{equation}\label{eq:PBcl_byfar}
  \mathfrak{1}_{\Omega_t^{cl}}\mathcal{D}[\gamma](t)\lesssim (\xi+\xi^{-3})(1+\ip{t}\aabs{\XX}^{-1})\mathcal{D}[\gamma'](t).
 \end{equation} 
By \eqref{eq:bulk-bds}, in the bulk this improves to
\begin{equation}\label{eq:PBcl_byfar_bulk}
 \mathfrak{1}_{\Omega_t^{cl}\cap\mathcal{B}}\mathcal{D}[\gamma](t)\lesssim (\xi^2+\xi^{-3})\mathcal{D}[\gamma'](t).
\end{equation}
\end{enumerate}
\end{remark}

\begin{proof}[Proof of Lemma \ref{lem:trans_full}]
We will use repeatedly the following bounds which follow from \eqref{eq:XX1X3}, \eqref{PBX1}-\eqref{PBX1'} and \eqref{eq:betterdl}, together with the bound $\vert\XX\vert\lesssim\langle t\rangle$ on $\Omega_t^{cl}$ (with outgoing resp.\ incoming asymptotic actions):
\begin{equation}\label{PBXZetaApp}
\begin{aligned}
 \aabs{\{\VV,\zeta\}}&\lesssim\frac{q}{\xi^2}[1+\frac{t\xi}{\aabs{\XX}^2}]\aabs{\{\xi,\zeta\}}+\frac{\xi^3}{2q\aabs{\XX}^2}\aabs{\{\eta,\zeta\}}+\aabs{\VV}\aabs{\{\u,\zeta\}}+\frac{\xi^2}{q\aabs{\XX}^2}\aabs{\{\L,\zeta\}}\\
 &\lesssim q\ip{\xi}^{-1}(1+\frac{t\xi}{\aabs{\XX}^2})\ww_\xi\aabs{\{\xi,\zeta\}}+\frac{\xi\ip{\xi}}{q\aabs{\XX}^2}\ww_\eta\aabs{\{\eta,\zeta\}}+\frac{q}{\xi}\aabs{\{\u,\zeta\}}+\frac{\xi\ip{\xi}}{q\aabs{\XX}^2}\ww_\L\aabs{\{\L,\zeta\}},\\
 \aabs{\{\XX,\zeta\}}&\lesssim \xi^{-1}[\aabs{\XX}+\frac{tq}{\xi}]\aabs{\{\xi,\zeta\}}+\frac{\xi^2}{q}\aabs{\{\eta,\zeta\}}+\aabs{\XX}\aabs{\{\u,\zeta\}}+\frac{\xi}{q}\aabs{\{\L,\zeta\}}\\
 &\lesssim \ip{t}\ww_\xi\aabs{\{\xi,\zeta\}}+\frac{\ip{\xi}}{q}\ww_\eta\aabs{\{\eta,\zeta\}}+\aabs{\XX}\aabs{\{\u,\zeta\}}+\frac{\ip{\xi}}{q}\ww_\L\aabs{\{\L,\zeta\}}.
\end{aligned} 
\end{equation}
On $\Omega_t^{cl}$ there holds that $\aabs{\XX}\lesssim\ip{t}$, and thus $\xi\lesssim\ip{t}^{1/2}$. It then follows for $\xi'\neq 0$ that
\begin{equation}\label{eq:xiquot}
 \frac{\xi}{\xi'}=\frac{\xi}{q}a'=\frac{\xi}{q}(a'-a)+1,\qquad \frac{\xi}{q}\abs{a'-a}\leq 4\frac{\xi}{q}\abs{\W(t)}\lesssim \frac{\eps}{q\ip{t}^{1/2}}\qquad \Rightarrow \qquad \frac{1}{2}\xi'\leq \xi\leq 2\xi',
\end{equation}
which gives \eqref{EquivXiXi'App}.

\bigskip
\begin{enumerate}[wide]
 \item In $\xi$: Note that since $\{a,\zeta\}=\frac{1}{2a}\{a^2,\zeta\}$, we have that
\begin{equation}
 \{\xi,\zeta\}=-\frac{\xi^2}{q}\{a,\zeta\}=-\frac{\xi^3}{2q^2}\{a^2,\zeta\}
\end{equation}
and thus by \eqref{eq:diffa2}
\begin{equation*}
 \{\xi',\zeta\}=\frac{(\xi')^3}{2q^2}\{a^2-(a')^2,\zeta\}-\frac{(\xi')^3}{2q^2}\{a^2,\zeta\}=\frac{(\xi')^3}{q^2}\W(t)\cdot\{\VV,\zeta\}+(\frac{\xi'}{\xi})^3\{\xi,\zeta\}.
\end{equation*}
Hence
\begin{equation}\label{eq:xixi'PB}
\begin{aligned}
 \ww_{\xi'}\aabs{\{\xi',\zeta\}}&\lesssim\frac{\xi'\ip{\xi'}}{q^2}\abs{\W(t)}\aabs{\{\VV,\zeta\}}+\frac{\xi'\ip{\xi'}}{\xi\ip{\xi}}\ww_\xi\aabs{\{\xi,\zeta\}}\\
 &\lesssim \left(\xi'\frac{\ip{\xi'}}{\ip{\xi}}q^{-1}\Big(1+\frac{t\xi}{\aabs{\XX}^2}\Big)\abs{\W(t)}+\frac{\xi'\ip{\xi'}}{\xi\ip{\xi}}\right)\ww_\xi\aabs{\{\xi,\zeta\}}\\
 &\qquad +\frac{\xi'\ip{\xi'}}{q^2}\abs{\W(t)}\left(\frac{\xi\ip{\xi}}{q\aabs{\XX}^2}\ww_\eta\aabs{\{\eta,\zeta\}}+\frac{q}{\xi}\aabs{\{\u,\zeta\}}+\frac{\xi\ip{\xi}}{q\aabs{\XX}^2}\ww_\L\aabs{\{\L,\zeta\}}\right).
\end{aligned} 
\end{equation}
By \eqref{eq:xiquot} we then have
\begin{equation}\label{PBXIprimeZetaApp}
 \ww_{\xi'}\aabs{\{\xi',\zeta\}}\lesssim (1+\frac{\eps^2}{\aabs{\XX}})\ww_\xi\aabs{\{\xi,\zeta\}}+\eps^2\frac{1}{\aabs{\XX}}\left(\ww_\eta\aabs{\{\eta,\zeta\}}+\ww_\L\aabs{\{\L,\zeta\}}\right)+\eps^2\ip{t}^{-1/2}\aabs{\{\u,\zeta\}}.
\end{equation}
This gives the first inequality in \eqref{TransitionMapBoundsApp}.

\item In $\L$: Since $\L'-\L=-\XX\times\W=:-\delta\L$, we have that
\begin{equation}
 \{\L',\zeta\}-\{\L,\zeta\}=\W(t)\times\{\XX,\zeta\}.
\end{equation}
Hence
\begin{equation}
 \aabs{\{\L',\zeta\}}\lesssim \aabs{\{\L,\zeta\}}+\abs{\W(t)}\aabs{\{\XX,\zeta\}}
\end{equation}
and thus
\begin{equation}
\begin{aligned}
 \ww_{\L'}\aabs{\{\L',\zeta\}}&\lesssim \left(\frac{\ww_{\L'}}{\ww_\L}+\abs{\W(t)}\frac{\ip{\xi}\ww_{\L'}}{q}\right)\ww_{\L}\aabs{\{\L,\zeta\}}\\
 &\qquad +\abs{\W(t)}\ww_{\L'}\left(\langle t\rangle\ww_\xi\aabs{\{\xi,\zeta\}}+\frac{\ip{\xi}}{q}\ww_\eta\aabs{\{\eta,\zeta\}}+\aabs{\XX}\aabs{\{\u,\zeta\}}\right),
\end{aligned}
\end{equation}
and hence
\begin{equation}
 \ww_{\L'}\aabs{\{\L',\zeta\}}\lesssim \ww_{\L}\aabs{\{\L,\zeta\}}+\eps^2\left(\ww_\xi\aabs{\{\xi,\zeta\}}+\ww_\eta\aabs{\{\eta,\zeta\}}+\aabs{\{\u,\zeta\}}\right),
\end{equation}
where we used that $\m_\L\m_{\L'}^{-1}\lesssim 1$ by \eqref{eq:xiquot}. This gives the second inequality in \eqref{TransitionMapBoundsApp}.

\item In $\u$: Recall that ${\bf R}=\frac{q}{2}\frac{\X}{\abs{\X}}+\V\times\L=\frac{q}{2}\u+a\u\times\L$ and thus
\begin{equation}
 \{{\bf R},\zeta\}=\frac{q}{2}\{\u,\zeta\}-q\{\xi^{-1}\L\times\u,\zeta\}=\frac{q}{2}\{\u,\zeta\}+q\xi^{-2}(\L\times\u)\{\xi,\zeta\}-q\xi^{-1}[\L\times\{\u,\zeta\}-\u\times\{\L,\zeta\}].
\end{equation}
It follows that
\begin{equation}
 \L\times\{{\bf R},\zeta\}=\frac{q}{2}\L\times\{\u,\zeta\}-q\lambda^2\xi^{-2}\u\{\xi,\zeta\}+q\xi^{-1}[\lambda^2\{\u,\zeta\}+\L(\u\cdot\{\L,\zeta\})+\u(\L\cdot\{\L,\zeta\})],
\end{equation}
and hence
\begin{equation}
\begin{aligned}
 \{{\bf R},\zeta\}+2\xi^{-1}\L\times\{{\bf R},\zeta\}&=q\xi^{-2}(\L\times\u)\{\xi,\zeta\}+q\xi^{-1}\u\times\{\L,\zeta\}-2\xi^{-1}q\lambda^2\xi^{-2}\u\{\xi,\zeta\}\\
 &\quad +2q\xi^{-2}[\L(\u\cdot\{\L,\zeta\})+\u(\L\cdot\{\L,\zeta\})]+\Big(\frac{q}{2}+2q\xi^{-2}\lambda^2\Big)\{\u,\zeta\},
\end{aligned} 
\end{equation}
i.e.\
\begin{equation}
\begin{aligned}
 \frac{q}{2}(1+4\kappa^2)\{\u,\zeta\}&=\{{\bf R},\zeta\}+2\xi^{-1}\L\times\{{\bf R},\zeta\}-q\xi^{-2}(\L\times\u)\{\xi,\zeta\}-q\xi^{-1}\u\times\{\L,\zeta\}\\
 &\quad +2q\kappa^2\u\xi^{-1}\{\xi,\zeta\}-2q\xi^{-2}[\L(\u\cdot\{\L,\zeta\})+\u(\L\cdot\{\L,\zeta\})]\\
 &=\{{\bf R},\zeta\}+2\kappa\l\times\{{\bf R},\zeta\}+q\xi^{-1}[2\kappa^2\u-\kappa(\l\times\u)]\{\xi,\zeta\}\\
 &\quad -q\xi^{-1}[\u\times\{\L,\zeta\}+2\kappa(\l(\u\cdot\{\L,\zeta\})+\u(\l\cdot\{\L,\zeta\}))].
\end{aligned}
\end{equation}
Applying this to $({\bf u}^\prime,{\bf R}^\prime,\xi^\prime,\kappa^\prime,{\bf L}^\prime)$, we get
\begin{equation}
\begin{aligned}
 \aabs{\{\u',\zeta\}}&\lesssim \ip{\kappa'}^{-1}\aabs{\{{\bf R'},\zeta\}}+(\xi')^{-1}\aabs{\{\xi',\zeta\}}+\ip{\kappa'}^{-1}(\xi')^{-1}\aabs{\{\L',\zeta\}}.
\end{aligned} 
\end{equation}
\begin{itemize}
\item For the third term we obtain
\begin{equation}
 \ip{\kappa'}^{-1}(\xi')^{-1}\aabs{\{\L',\zeta\}}\lesssim \ip{\kappa'}^{-1}(\xi')^{-1}\ww_{\L}^{-1}\cdot\ww_\L\aabs{\{\L,\zeta\}}+\ip{\kappa'}^{-1}(\xi')^{-1}\abs{\W(t)}\aabs{\{\XX,\zeta\}}
\end{equation}
with, using \eqref{EquivXiXi'App},
\begin{equation}
 (\xi')^{-1}\ww_{\L}^{-1}\cdot\ww_\L\aabs{\{\L,\zeta\}}\lesssim \ip{\xi}\xi^{-2}\cdot\ww_\L\aabs{\{\L,\zeta\}}
\end{equation}
and, using \eqref{PBXZetaApp},
\begin{equation}
\begin{aligned}
 (\xi')^{-1}\abs{\W(t)}\aabs{\{\XX,\zeta\}}&\lesssim\abs{\W(t)}\xi^{-1}\left(\ip{t}\ww_\xi\aabs{\{\xi,\zeta\}}+\frac{\ip{\xi}}{q}\ww_\eta\aabs{\{\eta,\zeta\}}+\ip{t}\aabs{\{\u,\zeta\}}+\frac{\ip{\xi}}{q}\ww_\L\aabs{\{\L,\zeta\}}\right)\\
 &\lesssim \eps^2\cdot\xi^{-1}\cdot\left(\ww_\xi\aabs{\{\xi,\zeta\}}+\ip{t}^{-1/2}\ww_\eta\aabs{\{\eta,\zeta\}}+\aabs{\{\u,\zeta\}}+\ip{t}^{-1/2}\ww_\L\aabs{\{\L,\zeta\}}\right).
\end{aligned} 
\end{equation}

\item For the second term we obtain using \eqref{PBXIprimeZetaApp},
\begin{equation}
 (\xi')^{-1}\aabs{\{\xi',\zeta\}}\lesssim \ww_{\xi'}\aabs{\{\xi',\zeta\}}\lesssim \ww_\xi\aabs{\{\xi,\zeta\}}+\eps^2\langle t\rangle^{-\frac{1}{2}}\left(\ww_\eta\aabs{\{\eta,\zeta\}}+\aabs{\{\u,\zeta\}}+\ww_\L\aabs{\{\L,\zeta\}}\right).
\end{equation}

\item The first term is more involved. With ${\bf R}'={\bf R}-\W\times\L-\V\times\delta\L+\W\times\delta\L$ we obtain that
\begin{equation}
\begin{aligned}
 \ip{\kappa'}^{-1}\aabs{\{{\bf R'},\zeta\}}&\lesssim \ip{\kappa'}^{-1}\aabs{\{\W\times\L+\V\times\delta\L-\W\times\delta\L,\zeta\}}+\frac{\ip{\kappa}}{\ip{\kappa'}}\ip{\kappa}^{-1}\aabs{\{{\bf R},\zeta\}}\\
 &\lesssim \abs{\W(t)}\ip{\kappa'}^{-1}\left(\aabs{\{\L,\zeta\}}+(q\xi^{-1}+\abs{\W(t)})\aabs{\{\XX,\zeta\}}+\aabs{\XX}\aabs{\{\VV,\zeta\}}\right)+\frac{\ip{\kappa}}{\ip{\kappa'}}\ip{\kappa}^{-1}\aabs{\{{\bf R},\zeta\}}.
\end{aligned} 
\end{equation}
By Lemma \ref{lem:prime_bds}
\begin{equation}
 \abs{\kappa-\kappa'}\leq \xi^{-1}\abs{\lambda-\lambda'}+\lambda'\frac{\abs{\xi'-\xi}}{\xi\xi^\prime}\lesssim \abs{\W(t)}\left[\xi^{-1}\ip{t}+\xi^\prime\kappa'\right],
\end{equation}
and thus, using also that $\xi\lesssim_q \langle t\rangle$,
\begin{equation}
 \frac{\ip{\kappa}}{\ip{\kappa'}}\lesssim 1+\xi^{-1}\ip{t}\aabs{\W(t)}\lesssim 1+\eps^2\xi^{-1}.
\end{equation}
Next we use that
\begin{equation}
 \aabs{\{{\bf R},\zeta\}}\lesssim q\left[\ip{\kappa}\aabs{\{\u,\zeta\}}+\xi^{-1}\kappa\aabs{\{\xi,\zeta\}}+\xi^{-1}\aabs{\{\L,\zeta\}}\right]
\end{equation}
to conclude that
\begin{equation}
\begin{aligned}
 \ip{\kappa'}^{-1}\aabs{\{{\bf R'},\zeta\}}&\lesssim_q \left(1+\eps^2\xi^{-1}+\frac{\eps^2}{\aabs{\XX}}\right)\ww_\xi\aabs{\{\xi,\zeta\}}+\varepsilon^2\left(\langle t\rangle^{-\frac{1}{2}}(1+\xi^{-1})+\frac{1}{\aabs{\XX}}\right)\ww_\eta\aabs{\{\eta,\zeta\}}\\
 &\quad +\left(1+\eps^2\xi^{-1}\right)\aabs{\{\u,\zeta\}} +\left(1+\xi^{-3}+\frac{\eps^2}{\aabs{\XX}}\right)\ww_\L\aabs{\{\L,\zeta\}}.
\end{aligned}
\end{equation}

\end{itemize}
Therefore,
\begin{equation}
\begin{aligned}
 \aabs{\{\u',\zeta\}}&\lesssim (1+\eps^2\xi^{-1}+\eps^2\aabs{\XX}^{-1})\ww_\xi\aabs{\{\xi,\zeta\}}+\eps^2\left((1+\xi^{-1})\langle t\rangle^{-\frac{1}{2}}+\aabs{\XX}^{-1}\right)\ww_\eta\aabs{\{\eta,\zeta\}}\\
 &\quad+(1+\eps^2\xi^{-1})\aabs{\{\u,\zeta\}}+(1+\xi^{-3}+\eps^2\aabs{\XX}^{-1})\ww_\L\aabs{\{\L,\zeta\}},
 \end{aligned}
\end{equation}
which gives the third bound in \eqref{TransitionMapBoundsApp}.

\item In $\eta$: By \eqref{eq:diffeta} there holds that
\begin{equation}
\begin{aligned}
 \eta'+\sigma\circ\Phi_t^{-1}\circ\mathcal{M}_t^{-1}&=(\eta+\sigma\circ\Phi_t^{-1})-\delta A\\
 \delta A&:=t\frac{(a')^3-a^3}{q}+\frac{a-a'}{q}(\XX\cdot\VV)+\frac{a'}{q}(\XX\cdot\mathcal{W}(t)).
\end{aligned} 
\end{equation}
Note that since $\sigma\circ\Phi_t^{-1}\circ\mathcal{M}_t^{-1}=\sigma(\eta'+tq^2(\xi')^{-3},\kappa')$ we have that
\begin{equation}
\begin{aligned}
 \{\sigma\circ\Phi_t^{-1}\circ\mathcal{M}_t^{-1},\zeta\}&=\partial_\eta\sigma(\eta'+tq^2(\xi')^{-3},\kappa')(\{\eta',\zeta\}-3tq^2(\xi')^{-4}\{\xi',\zeta\})\\
 &\quad+\partial_\kappa\sigma(\eta'+tq^2(\xi')^{-3},\kappa')\{\kappa',\zeta\},
\end{aligned} 
\end{equation}
and similarly for $\sigma\circ\Phi_t^{-1}$, where by \eqref{eq:deriv_rho}, \eqref{BoundsOnSigma}-\eqref{ImprovedBoundsOnSigma} and the bound $\rho\gtrsim \vert\XX\vert\cdot q\xi^{-2}$, we have that 
\begin{equation}
 \abs{\partial_\eta\sigma}\leq \frac{1}{2\rho}\leq\frac{1}{2},\qquad 
 \abs{\partial_\eta\sigma(\eta,\kappa)}\lesssim\frac{q}{a^2\aabs{\X}},\qquad\abs{\partial_\kappa\sigma}\lesssim \min\{\kappa\rho^{-2},\ip{\kappa}^{-1}\}.
\end{equation}
Hence
\begin{equation}
\begin{aligned}
 \frac{1}{2}\aabs{\{\eta',\zeta\}}&\leq [1+\partial_\eta\sigma(\eta'+tq^2(\xi')^{-3},\kappa')]\cdot \aabs{\{\eta',\zeta\}}\\
 &\lesssim \aabs{\partial_\eta\sigma\circ\Phi_t^{-1}\circ\mathcal{M}_t^{-1}}\cdot tq^2(\xi')^{-4}\aabs{\{\xi',\zeta\}}+\aabs{\partial_\kappa\sigma\circ\Phi_t^{-1}\circ\mathcal{M}_t^{-1}}\aabs{\{\kappa',\zeta\}}\\
 &\quad +\aabs{\{\eta+\sigma\circ\Phi_t^{-1},\zeta\}}+\aabs{\{\delta A,\zeta\}}.
\end{aligned} 
\end{equation}
Hence with $\xi\sim\xi'$ we have the bounds
\begin{equation}
 \ww_\eta\cdot\aabs{\partial_\eta\sigma\circ\Phi_t^{-1}\circ\mathcal{M}_t^{-1}}\cdot tq^2(\xi')^{-4}\aabs{\{\xi',\zeta\}}\lesssim \frac{q}{(1+\xi)^2}t\aabs{\partial_\eta\widetilde{\sigma}}\cdot \ww_{\xi'}\aabs{\{\xi',\zeta\}}\lesssim \frac{t}{\aabs{\XX}}\left(\ww_\xi\aabs{\{\xi',\zeta\}}+\eps^2\mathcal{D}[\zeta]\right),
\end{equation}
and
\begin{equation}
\begin{aligned}
 \ww_\eta\cdot\aabs{\partial_\kappa\sigma\circ\Phi_t^{-1}\circ\mathcal{M}_t^{-1}}\aabs{\{\kappa',\zeta\}}&\lesssim\frac{\xi^2}{1+\xi}\cdot\ip{\kappa'}^{-1}\xi^{-1}\left(\aabs{\{\lambda',\zeta\}}+\kappa'\aabs{\{\xi',\zeta\}}\right)\\
 &\lesssim \ww_{\L}\aabs{\{\L',\zeta\}}+\frac{\xi^3}{(1+\xi)^2}\ww_\xi\aabs{\{\xi',\zeta\}}\\
 &\lesssim \xi\ww_\xi\aabs{\{\xi,\zeta\}}+\ww_{\L}\aabs{\{\L,\zeta\}}+\eps^2(1+\xi\aabs{\XX}^{-1})\mathcal{D}[\zeta](t),
\end{aligned} 
\end{equation}
where we used \eqref{eq:xixi'PB} to absorb one extra factor of $\xi$.
Similarly,
\begin{equation}
 \ww_\eta\cdot\aabs{\{\eta+\sigma\circ\Phi_t^{-1},\zeta\}}\lesssim \ww_\eta\aabs{\{\eta,\zeta\}}+\ww_{\L}\aabs{\{\L,\zeta\}}+(\xi+t\aabs{\XX}^{-1})\ww_\xi\aabs{\{\xi,\zeta\}}.
\end{equation}
Finally, we observe that with $\{a^3,\zeta\}=\frac{3}{2}a\{a^2,\zeta\}$ and using \eqref{eq:diffa2}, it follows that
\begin{equation}
\begin{aligned}
 \{(a')^3-a^3,\zeta\}&=\frac{3}{2}a'\{(a')^2-a^2,\zeta\}+\frac{3}{2}(a'-a)\{a^2,\zeta\}\\
 &=-3a'\W(t)\cdot\{\VV,\zeta\}-3(a'-a)\frac{q^2}{\xi^3}\{\xi,\zeta\},
\end{aligned} 
\end{equation}
and
\begin{equation}
\begin{aligned}
 \{a'-a,\zeta\}&=\frac{\xi'}{2q}\{(a')^2-a^2,\zeta\}+\frac{\xi'-\xi}{2q}\{a^2,\zeta\}\\
 &=-\frac{\xi'}{q}\W(t)\cdot\{\VV,\zeta\}-(\xi'-\xi)\frac{q}{\xi^3}\{\xi,\zeta\},
\end{aligned} 
\end{equation}
so that, using \eqref{prime_bds},
\begin{equation}
\begin{aligned}
 \aabs{\{\delta A,\zeta\}}&\lesssim_q t\abs{\W(t)}\left(\xi^{-1}\aabs{\{\VV,\zeta\}}+\xi^{-3}\aabs{\{\xi,\zeta\}}\right)+\xi\abs{\W(t)}\aabs{\XX}\aabs{\VV}\left(\aabs{\{\VV,\zeta\}}+\xi^{-2}\aabs{\{\xi,\zeta\}}\right)\\
 &\quad +\abs{\W(t)}\left(\aabs{\XX}\aabs{\{\VV,\zeta\}}+\aabs{\VV}\aabs{\{\XX,\zeta\}}\right)+\abs{\W(t)}\left(\aabs{\XX}(\xi')^{-2}\aabs{\{\xi',\zeta\}}+(\xi')^{-1}\aabs{\{\XX,\zeta\}}\right).
\end{aligned} 
\end{equation}
From this and \eqref{PBXZetaApp}, we deduce that
\begin{equation}
\begin{aligned}
 \ww_\eta\aabs{\{\delta A,\zeta\}}&\lesssim \eps^2 \ww_\xi\aabs{\{\xi,\zeta\}}+\xi^2\ip{\xi}^{-2}\aabs{\W(t)}\aabs{\XX}\ww_\xi\aabs{\{\xi',\zeta\}}\\
 &\quad+\abs{\W(t)}(\aabs{\XX}\xi^{2}\ip{\xi}^{-1}+t\xi\ip{\xi}^{-1})\aabs{\{\VV,\zeta\}}+\eps^2\ip{t}^{-1}\frac{\xi}{\ip{\xi}}\aabs{\{\XX,\zeta\}}\\
 &\lesssim \eps^2\left[(1+t\aabs{\XX}^{-1})\ww_\xi\aabs{\{\xi,\zeta\}}+\ww_{\bf u}\aabs{\{{\bf u},\zeta\}}+(1+\aabs{\XX}^{-1})\left(\ww_\eta\aabs{\{\eta,\zeta\}}+\ww_{{\bf L}}\aabs{\{{\bf L},\zeta\}}\right)\right].
\end{aligned} 
\end{equation}
Altogether we obtain that
\begin{equation}
 \ww_{\eta'}\aabs{\{\eta',\zeta\}}\lesssim\ww_\eta\aabs{\{\eta,\zeta\}}+\ww_{\L}\aabs{\{\L,\zeta\}}+(1+\xi)\ip{t}\aabs{\XX}^{-1}\ww_\xi\aabs{\{\xi,\zeta\}}+\eps^2(1+\ip{t}\aabs{\XX}^{-1})\mathcal{D}[\zeta](t),
\end{equation}
which gives the last inequality in \eqref{TransitionMapBoundsApp}.
\end{enumerate}

\end{proof}

\bibliographystyle{abbrv}

\bibliography{vp-lib}

\end{document}